\newtheorem{thm}{Theorem}[section]
\newtheorem{lem}[thm]{Lemma}
\newtheorem{prop}[thm]{Proposition}
\newtheorem{defn} [thm]{Definition}
\newtheorem{rem} [thm]{Remark}
\newenvironment{proof}[1][Proof]{\noindent \textbf{#1:} }{\hspace{\stretch{1}} $\Box$\\}
\newtheorem{step}{Step}
\newcommand{\eps}{\varepsilon}
\newcommand{\ph}{\varphi}
\newcommand\crit{\operatorname{crit}}
\newcommand\supp{\operatorname{supp}}
\def\ad{\operatorname{ad}}
\def\Ad{\operatorname{Ad}}
\def\coker{\operatorname{coker}}
\def\diag{\operatorname{diag}}
\def\dist{\operatorname{dist}}
\def\dom{\operatorname{dom}}
\def\dvol{\operatorname{dvol}}
\def\Hess{\operatorname{Hess}}
\def\im{\operatorname{im}}
\def\ind{\operatorname{ind}}
\def\Ind{\operatorname{Ind}}
\def\interior{\operatorname{int}}
\def\loc{\operatorname{loc}}
\def\mod{\operatorname{mod}}
\def\pr{\operatorname{pr}}
\def\reg{\operatorname{reg}}
\def\Stab{\operatorname{Stab}}
\def\supp{\operatorname{supp}}
\def\univ{\operatorname{univ}}
\def\A{\mathcal A}
\def\AA{\mathbbm A}
\def\B{\mathcal B}
\def\E{\mathcal E}
\def\F{\mathcal F}
\def\G{\mathcal G}
\def\H{\mathcal H}
\def\L{\mathcal L}
\def\p{\mathcal P}
\def\S{\mathcal S}
\def\V{\mathcal V}
\def\Z{\mathcal Z}
\def\YM{\mathcal{YM}}
\def\YMV{\mathcal{YM}^{\V}}
\def\R{\mathbb{R}}
\title{Morse Homology for the Yang--Mills Gradient Flow}
\author{Jan Swoboda (Mathematisches Institut der LMU M\"unchen)}
\begin{document}
\maketitle

\begin{abstract}    
We use the Yang--Mills gradient flow on the space of connections over a closed Riemann surface to construct a Morse chain complex. The chain groups are generated by Yang--Mills connections. The boundary operator is defined by counting the elements of appropriately defined moduli spaces of Yang--Mills gradient flow lines that converge asymptotically to Yang--Mills connections.\\

\end{abstract}



\tableofcontents

\section{Introduction}\label{Introduction}
Let $(\Sigma,g)$ be a closed oriented Riemann surface. Let $G$ be a compact Lie group, $\mathfrak g$ its Lie algebra, and $P$ a principal $G$-bundle over $\Sigma$. On $\mathfrak g$ we choose an $\ad$-invariant inner product $\langle\cdot,\cdot\rangle$. The Riemannian metric $g$ induces for $k\in\{0,1,2\}$ the Hodge star operator $\ast\colon\Omega^k(\Sigma)\to\Omega^{2-k}(\Sigma)$ on differential $k$-forms. We denote by $\A(P)$ the affine space of $\mathfrak g$-valued connection $1$-forms on $P$. The underlying vector space is the space $\Omega^1(\Sigma,\ad(P))$ of sections of the adjoint bundle $\ad(P)\coloneqq P\times_{\Ad}\mathfrak g$. The curvature of a connection $A\in\A(P)$ is the $\ad(P)$-valued $2$-form $F_A=dA+\frac{1}{2}[A\wedge A]$. On $\A(P)$ we consider the \emph{perturbed Yang--Mills functional} defined by
\begin{eqnarray}\label{YMintroduction}
\YMV(A)=\frac{1}{2}\int_{\Sigma}\big\langle F_A\wedge\ast F_A\big\rangle+\V(A)
\end{eqnarray}
for a gauge-invariant perturbation $\V\colon\A(P)\to\R$, the precise form of which will be defined later. The corresponding Euler--Lagrange equation is the second order partial differential equation $d_A^{\ast}F_A+\nabla\V(A)=0$, called \emph{perturbed Yang--Mills equation}. The (negative) $L^2$ gradient flow equation  associated with the functional $\YMV$ is the PDE
\begin{eqnarray}\label{introdYMgrad}
\partial_sA+d_A^{\ast}F_A+\nabla\V(A)=0.
\end{eqnarray}
The group $\G(P)$ of principal $G$-bundle automorphisms of $P$ acts on the space $\A(P)$ by gauge transformations, i.e.~as $g^{\ast}A\coloneqq g^{-1}Ag+g^{-1}dg$. The functional $\YMV$ is invariant under such gauge transformations, and hence are the solutions of the perturbed Yang--Mills (gradient flow) equations. The action is not free. The occuring stabilizer subgroups are Lie subgroups of $G$, hence finite-dimensional. Restricting the action to the group $\G_0(P)$ of so-called \emph{based gauge transformations}, i.e.~those transformations which fix a prescribed fibre of $P$ pointwise, one indeed obtains a free group action. For this reason we will study solutions to the gradient flow equation \eqref{introdYMgrad} only up to based gauge transformations, cf.~however the comment below concerning a $G$-equivariant extension of the theory.\\
\medskip\\
The study of the Yang--Mills functional over a Riemann surface from a Morse theoretical point of view has been initiated by Atiyah and Bott in \cite{AB}. One essential observation made there is that the based gauge equivalence classes of unperturbed (meaning that $\V=0$) Yang--Mills connections come as a family of finite-dimensional closed submanifolds of $\A(P)/\G_0(P)$. As discussed in detail in \cite{AB}, the unperturbed Yang--Mills functional satisfies the so-called Morse--Bott condition. In our context this condition asserts that, for any Yang--Mills connection $A\in\A(P)$, the kernel of the Hessian $H_A\YM$ coincides with the subspace of $T_A\A(P)$ comprising the tangent vectors at $A$ to the critical manifold containing $A$. Equivalently, the restriction of the operator $H_A\YM$ to the normal space at $A$ of this critical manifold is injective. Furthermore, the spectrum of $H_A\YM$ consists solely of eigenvalues, with a finite number of negative ones. Hence the situation one encounters for the functional $\YM$ over a Riemann surface parallels the one for Morse--Bott functions on finite-dimensional manifolds. The Morse theoretical approach taken by Atiyah and Bott indeed turned out to be very fruitful and had remarkable applications e.g.~to the cohomology of moduli spaces of stable vector bundles over $\Sigma$ (cf.~e.g.~\cite{Kirwan} for a review of these results). However, the literature so far still lacked a proper treatment of the analytical aspects of such a Morse--Bott theory and the underlying $L^2$ gradient flow \eqref{introdYMgrad}. The present article aims to close this gap and to introduce and work out in full analytical detail a Yang--Mills Morse homology theory over $\Sigma$.\\
\medskip\\
Let us now briefly describe our setup. Invariance of the functional $\YMV$ allows us to consider it as a map on the manifold $\A(P)/\G_0(P)$ of based gauge equivalence classes. Gauge-invariance also holds for the metric defined on $\A(P)$. One is therefore led to consider an $L^2$ gradient flow for $\YMV$ in this manifold of equivalence classes, giving rise to a well-defined equation for $[A]\in\A(P)/\G_0(P)$ of the form
\begin{eqnarray}\label{eq:eclasspertYMF}
[\partial_sA+d_A^{\ast}F_A+\nabla\V(A)]=0.
\end{eqnarray}
However, for analytical reasons it seems inconvenient to deal with equivalence classes. This can be avoided if we consider instead the equation
\begin{eqnarray}\label{eq:extpertYMF}
\partial_sA+d_A^{\ast}F_A+\nabla\V(A)-d_A\Psi=0
\end{eqnarray}
for some $\Psi\in C^{\infty}(\R,\Omega^0(\Sigma,\ad(P)))$. It is easy to see that every solution $(A,\Psi)$ of \eqref{eq:extpertYMF} is mapped under $(A,\Psi)\mapsto[A]$ to a solution of \eqref{eq:eclasspertYMF}, and this mapping is bijective up to equivalence under \emph{time-dependent} gauge transformations
\begin{eqnarray*}
g^{\ast}(A,\Psi)=(g^{\ast}A,g^{-1}\Psi g+g^{-1}\partial_sg).
\end{eqnarray*}
Every solution $(A,\Psi)$ of \eqref{eq:extpertYMF} is equivalent under a time-dependent gauge transformation $g$ to one where $\Psi=0$, the gauge transformation $g$ being obtained by solving the ODE $\partial_sg=\Psi g$. Hence in principle it is sufficient to only consider those solutions of \eqref{eq:extpertYMF} where $\Psi=0$. However, we will refrain from doing so, for the following reason. Due to the lack of parabolicity of the linearization of the Yang--Mills gradient flow equation \eqref{introdYMgrad} we often have to require a local slice condition to be able to obtain useful estimates for its solutions. For this it is desirable to be able to apply a time-dependent gauge transformation in order to put a given solution $(A,\Psi)$ of \eqref{eq:extpertYMF} into local slice with respect to some reference connection $(A_0,\Psi_0)$.\\
\medskip\\
Gauge fixing terms like $-d_A\Psi$ in \eqref{eq:extpertYMF} are often used in the analysis of equations invariant under an infinite-dimensional symmetry group. Examples from gauge theory include, amongst others, the Chern--Simons instanton equations as studied in the context of instanton Floer homology \cite{Donaldson1, SalWeh}, the symplectic vortex equations \cite{CGMS, Frauenfelder1}, or the Cauchy problem for the Yang--Mills gradient flow \cite{Struwe}.\\
\medskip\\
Since it is not known whether a Morse--Smale condition is automatically satisfied for the unperturbed Yang--Mills functional $\YM\colon A\mapsto\frac{1}{2}\int_{\Sigma}\langle F_A\wedge\ast F_A\rangle$, we have to introduce perturbations for transversality reasons. Here we shall work with a Banach space $Y$ of so-called \emph{abstract perturbations} $\V\colon\A(P)\to\R$. This space $Y$ is generated by a countable set of gauge-invariant model perturbations $\V_{\ell}$ of the form
\begin{eqnarray*}
\V_{\ell}(A)\coloneqq\rho\big(\|\alpha(A)\|_{L^2(\Sigma)}^2\big)\langle\eta,\alpha(A)\rangle,
\end{eqnarray*}
with $\rho=\rho(\ell)\colon\R\to\R$ a cut-off function, $\eta=\eta(\ell)\in\Omega^1(\Sigma,\ad(P))$ a fixed $\ad(P)$-valued $1$-form, and $\alpha(A)=g^{\ast}A-A_0$. Here $g\in\G(P)$ is chosen such that the local slice condition $d_{A_0}^{\ast}\alpha=0$ is satisfied with respect to some reference connection $A_0=A_0(\ell)$. Our construction of model perturbations relies crucially on the recent $L^2$ local slice theorem by Mrowka and Wehrheim \cite{MrowkaWehrheim}. The space $Y$ of perturbations is sufficiently flexible to achieve transversality of Fredholm sections as we shall describe below. This approach to transversality draws from ideas successfully used by Weber \cite{Web} in the related situation of the heat flow for loops on a compact manifold. Let $a>0$ be a fixed regular value of $\YM$. From now on we admit only perturbations $\V\in Y$ supported outside some $L^2$ neighborhood of the critical manifolds of $\YM$ below the energy level $a$. We define
\begin{eqnarray*}
\p(a)\coloneqq\frac{\{A\in\A(P)\mid d_A^{\ast}F_A=0\;\textrm{and}\;\YM(A)\leq a\}}{\G_0(P)}
\end{eqnarray*}
to be the set of based gauge equivalence classes of Yang--Mills connections of energy at most $a$. On $\p(a)$ we fix a Morse function $h\colon\p(a)\to\R$, i.e.~a smooth function $h$ with isolated non-degenerate critical points whose stable and unstable manifolds intersect transversally. To a critical point $x$ of $h$ (which in particular is a critical point of $\YM$) we assign the non-negative number
\begin{eqnarray*}
\Ind(x)\coloneqq\ind_{\YM}(x)+\ind_h(x),
\end{eqnarray*}
where $\ind_h(x)$ is the usual Morse index of $x$ with respect to $h$ and $\ind_{\YM}(x)$ denotes the number of negative eigenvalues (counted with multiplicities) of the Yang--Mills Hessian $H_x\YM$. In order to keep the presentation as short as possible and avoid to discuss orientation issues we use coefficients in $\mathbbm Z_2=\mathbbm Z/2\mathbbm Z$ for the construction of the Yang--Mills Morse complex. For a regular value $a$ of $\YM$ we thus consider the $\mathbbm Z_2$ vector space
\begin{eqnarray*}
CM_{\ast}^a(\A(P)/\G_0(P),\YM,h)\coloneqq\bigoplus_{x\in\crit(h)}\langle x\rangle
\end{eqnarray*}                                                                              
generated by the set $\crit(h)\subseteq\p(a)$ of critical points of $h$. This is a finite-dimensional vector space which is graded by the index $\Ind$. Under certain transversality assumptions (which resemble the usual Morse--Smale transversality required in finite-dimensional Morse theory) there is a well-defined boundary operator 
\begin{eqnarray*}
\partial_{\ast}^{\V}\colon CM_{\ast}^a(\A(P)/\G_0(P),\YM,h)\to CM_{\ast-1}^a(\A(P)/\G_0(P),\YM,h)
\end{eqnarray*}
which arises from counting so-called \emph{cascade configurations} of (negative) $L^2$ gradient flow lines. These are finite tuples of gauge equivalence classes of solutions of the perturbed Yang--Mills gradient flow equation \eqref{eq:extpertYMF} whose asymptotics as $s\to\pm\infty$ obey a certain compatibility condition.

\subsection{Main results}

The purpose of the present work is to establish the following result. 

\begin{thm}[Main result]\label{thm:mainresult}
Let $a\geq0$ be a regular value of $\YM$. For any Morse function $h\colon\p(a)\to\R$ and generic perturbation $\V\in Y$ (which in addition is $a$-admissible in the sense of Definition \ref{def:regperturbation} and satisfies $\|\V\|<\delta$ for some sufficiently small constant $\delta=\delta(a)>0$), the map $\partial_{\ast}^{\V}$ satisfies $\partial_k^{\V}\circ\partial_{k+1}^{\V}=0$ for all $k\in\mathbbm N_0$ and thus there exist well-defined homology groups
\begin{eqnarray*}
HM_k^a\big(\A(P)/\G_0(P),\YMV,h\big)=\frac{\ker\partial_k^{\V}}{\im\partial_{k+1}^{\V}}.
\end{eqnarray*}
The homology $HM_{\ast}^a(\A(P)/\G_0(P),\YMV,h)$ is called {\bf{\emph{Yang--Mills Morse homology}}}. It is independent of the choice of perturbation $\V$ and Morse function $h$.
\end{thm}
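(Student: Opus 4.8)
\medskip

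\noindent The strategy is the one familiar from Morse and Floer theory, built on the analytical results established in the preceding sections: Fredholm theory for the operator linearising the perturbed Yang-Mills flow along a cascade configuration, transversality of the associated sections for generic $\V\in Y$, exponential convergence of flow lines to the critical manifolds as $s\to\pm\infty$, and compactness, modulo based gauge, of the moduli spaces of cascade configurations carrying Yang-Mills energy bounded by $a$. The last item already shows that the $0$-dimensional moduli spaces are finite sets, so that $\partial_\ast$ is defined; it remains to prove $\partial_k\circ\partial_{k+1}=0$ and invariance.

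\medskip

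\noindent \emph{The identity $\partial^2=0$.} Fix critical points $x,z\in\crit(h)\cap\p(a)$ with $\Ind(x)-\Ind(z)=2$ and let $\mathcal M(x,z)$ be the moduli space of cascade configurations from $x$ to $z$, divided by based gauge transformations and by the $\R$-reparametrisations acting on the individual gradient flow lines. For generic $\V$, transversality makes $\mathcal M(x,z)$ a smooth $1$-manifold, and the coefficient $\langle\partial_k\partial_{k+1}x,z\rangle\in\mathbbm Z_2$ equals, mod $2$, the number of two-level broken configurations running from $x$ through some $y$ with $\Ind(y)=\Ind(x)-1$ to $z$. The compactness theorem shows that every sequence in $\mathcal M(x,z)$ has a subsequence converging, in the sense of cascades, to such a broken configuration: the energy bound coming from $a$ being a regular value of $\YMV$, combined with Uhlenbeck-type gauge control, prevents both escape of energy onto critical sets of higher energy and bubbling. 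Conversely, a gluing construction --- gluing consecutive cascades, and gluing the finite-time $h$-flow segments inside the critical manifolds --- produces for each such broken configuration a half-open arc in $\mathcal M(x,z)$ limiting to it, and shows that these arcs exhaust the ends of $\mathcal M(x,z)$. Hence $\mathcal M(x,z)$ is a compact $1$-manifold whose boundary is precisely the set of two-level broken configurations; a compact $1$-manifold has an even number of boundary points, which is the assertion $\partial_k\partial_{k+1}=0$. The bookkeeping of birth and death of cascades, and of the interior breakings of the $h$-flow along a fixed critical manifold, is organised exactly as in Frauenfelder's Morse-Bott homology.

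\medskip

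\noindent \emph{Invariance.} Let $(\V^-,h^-)$ and $(\V^+,h^+)$ be two admissible pairs, with associated chain complexes $CM_\ast^a$. Choose a generic $s$-dependent homotopy $(\V_s,h_s)$ with $(\V_s,h_s)=(\V^-,h^-)$ for $s\ll0$ and $(\V_s,h_s)=(\V^+,h^+)$ for $s\gg0$, remaining within the regular locus throughout; counting the index-$0$ cascade configurations of the resulting non-autonomous flow equation defines a continuation homomorphism $\Phi$ from the first chain complex to the second. Applying the same Fredholm, transversality, compactness and gluing results to the non-autonomous moduli spaces shows that $\Phi$ is a chain map. Running the homotopy backwards produces a chain map $\Psi$ in the opposite direction, and a homotopy of homotopies --- concatenating the two homotopies and deforming the result, again generically, to the constant homotopy --- yields an operator $K$ with $\partial K+K\partial=\Psi\circ\Phi-\id$, and symmetrically; hence $\Phi$ induces an isomorphism on homology, and $HM_\ast^a(\A(P))$ is independent of the choice of $\V$ and $h$.

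\medskip

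\noindent \emph{The main obstacle.} The technical heart of the argument is the compactness theory for the cascade moduli spaces together with the accompanying gluing analysis. The Yang-Mills gradient flow over a surface is the borderline parabolic case: one must establish uniform a priori bounds along flow lines after gauge fixing, combining Uhlenbeck compactness with parabolic bootstrapping; prove exponential decay to the critical manifolds as $s\to\pm\infty$ through a Lojasiewicz-Simon gradient inequality for $\YMV$, which itself must be shown to persist under the abstract perturbations $\V\in Y$; and control the degeneration of the $\R$-reparametrisation families at the ends --- all on the quotient $\A(P)/\G_0(P)$, which fails to be a manifold near reducible connections. Since the model perturbations $\V_\ell$ are defined through local slices, using the slice theorem of Mrowka and Wehrheim, one also has to verify that $\nabla\V$ enters the flow equation only as a parabolically admissible lower-order term, preserving regularity and the energy identity. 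Finally, gluing at a positive-dimensional critical manifold, where the matching of asymptotics is governed by the $h$-flow on the manifold, is genuinely more delicate than in the classical Morse situation, and it is there that most of the work in identifying the boundary of the $1$-dimensional moduli spaces resides.
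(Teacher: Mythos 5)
Your proposal follows the same overall strategy as the paper: combine the Fredholm theorem, transversality, compactness, and exponential decay established in Sections~\ref{sec:Expdecay}--\ref{sect:transversality} and feed them into the cascade construction to deduce $\partial\circ\partial=0$, then prove independence via continuation maps built from $s$-dependent homotopies. Two small points of comparison. First, where you spell out the gluing and ``compact $1$-manifold has an even number of boundary points'' argument, the paper simply cites Frauenfelder's abstract results (Lemmata C.10--C.12 of \cite{Frauenfelder1}) after verifying that the analytic inputs reduce to the pairwise moduli spaces $\mathcal M(\mathcal C_i^-,\mathcal C_j^+)$; both are correct, but the paper avoids redoing the gluing analysis by packaging it abstractly. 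Second, you attribute the exponential decay to a Lojasiewicz--Simon gradient inequality, whereas the paper in fact establishes it directly from the Morse--Bott condition --- the spectral gap of the restricted Hessian $H_{A_0}\!\restriction_{(\ker H_{A_0})^\perp}$ in Propositions~\ref{prop:compdriftterm}--\ref{asymptHessianbiject} together with the convexity estimate in Lemma~\ref{exponentialdecayofthegradient} --- rather than via a Lojasiewicz inequality; this matters because the perturbed functional $\YMV$ is not real-analytic, so a Lojasiewicz--Simon route would require a separate argument, while the Morse--Bott/spectral-gap route works as soon as the perturbation is supported away from the critical manifolds.
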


A proof of this result is given in Section \ref{YMMorsehomology}. The preceding sections are of preparatory nature and comprise the more technical parts of the paper. In Section \ref{sec:YMfunctional} we recall some known facts about the unperturbed Yang--Mills functional and introduce the Banach space $Y$ of perturbations needed later on to make the transversality theory work. A collection of relevant properties and estimates involving the perturbations is contained in the appendix. In Section \ref{sec:YMgradientflow} the moduli space problem for Yang--Mills gradient flow lines with prescribed asymptotics as $s\to\pm\infty$ is put into an abstract Banach manifold setting. The moduli space $\hat{\mathcal M}(\mathcal C^-,\mathcal C^+)$ of gradient flow lines connecting a given pair $(\mathcal C^-,\mathcal C^+)$ of Yang--Mills critical manifolds is exhibited as the zero set of a section $\mathcal F$ of a suitably defined Banach space bundle. The necessary Fredholm theory for the differential operator obtained by linearizing $\mathcal F$ is developed in Section \ref{sec:Fredholm theorem}. In Section \ref{sec:Expdecay} we show exponential decay of any solution $A\colon\R\to\A(P)$ of \eqref{eq:extpertYMF} (satisfying for a fixed energy level $a$ that $\limsup_{s\to-\infty}\YMV(A(s))\leq a$) to a pair $A^{\pm}$ of Yang--Mills connections as $s\to\pm\infty$. The issue of compactness of moduli spaces is addressed in Section \ref{sec:Compactness}. As a main result we show there compactness up to gauge transformations for sequences of solutions of \eqref{introdYMgrad} on compact subsets of $\R\times\Sigma$. The proof of this result (Theorem \ref{thm:compactness}) relies on certain a priori $L^p$ estimates for the curvature form $F_A$, the weak Uhlenbeck compactness theorem (cf.~\cite{Wehrheim}), and a combination of elliptic and parabolic regularity estimates. Transversality of the section $\mathcal F$ at a zero $x\in\mathcal F^{-1}(0)$ is discussed in Section \ref{sect:transversality}. We show, along the usual lines involving Sard's lemma, that surjectivity of the linearized operator holds for generic perturbations $\mathcal V\in Y$.

\subsection{Further comments and related results}

\subsubsection*{Equivariant theory}
For the ease of presentation we develop here a non-equivariant Yang--Mills Morse theory on the space $\A(P)/\G_0(P)$ of \emph{based} gauge equivalence classes of connections. Alternatively, it seems possible to take a $G$-equivariant approach by extending the setup to the space $\A(P)\times E_nG$, with $E_nG$ a suitable finite-dimensional approximation to the classifying space $EG$. This space carries a free action by the full group $\G(P)$ of gauge transformations via
\begin{eqnarray*}
g^{\ast}(A,\lambda)=(g^{\ast}A,\hat g\lambda),
\end{eqnarray*}
where the map $g\mapsto \hat g\in G$ is given by evaluating $g$ at some fixed $p\in P$. By extending $\YMV$ in a suitable $\G(P)$-invariant way, we expect our construction of Morse homology groups to carry over almost literally to the quotient manifold $(\A(P)\times E_nG)/\G(P)$.

\subsubsection*{Connection with Morse homology of loop groups}
Yang--Mills Morse homology is strongly related to the recently introduced heat flow homology due to Weber \cite{Web}, at least in the case of the sphere $\Sigma=S^2$. This connection is due to the following result, cf.~\cite{Swoboda,Swoboda1}. For a compact Lie group $G$ we let $\Omega G$ denote the associated based loop group, i.e.~the space $\Omega G\coloneqq\{\gamma\in C^{\infty}(S^1,G)\mid\gamma(1)=\mathbbm 1\}$ with group structure given by pointwise multiplication. The classical action functional
\begin{eqnarray*}
\mathcal E\colon\Omega G\to\R,\quad\gamma\mapsto\frac{1}{2}\int_0^1\|\partial_t\gamma(t)\|^2\,dt
\end{eqnarray*}
satisfies the Morse--Bott condition. Its critical manifolds are the orbits of closed geodesics under the action of $G$ by conjugation (i.e.~by $(g\cdot\gamma)(t)=g^{-1}\gamma(t)g$ for $g\in G$ and $\gamma\in\Omega G$). Let $b>0$ be a regular value of $\E$ and $h\colon\crit^b(\E)\to\R$ a Morse function on the manifold of critical points of $\E$ below the level $b$. Then as a special case of heat flow homology we can build a chain complex $CM_{\ast}^b\big(\Omega G,\E,h)$ generated by the critical points of $h$. A boundary operator is obtained as in Yang--Mills Morse homology by counting appropriate (cascades of) $L^2$ gradient flow lines of a suitably perturbed action functional $\E^{\V}$. These are formed by solutions of the perturbed nonlinear heat equation
\begin{eqnarray*}
\partial_s\gamma-\nabla_t\partial_t\gamma+\nabla\V(\gamma)=0,  
\end{eqnarray*}
converging, as $s\to\pm\infty$, to critical points $\gamma^{\pm}\in\crit^b(\E)$. It is worth mentioning that by a result going back to Atiyah and Bott \cite{AB} there exists a bijection between the sets of Yang--Mills connections over $S^2$ and closed geodesics on $G$. Building on this correspondence we obtained in \cite{Swoboda,Swoboda1} the following result.

\begin{thm}\label{thm:Swoboda1result} 
Let $G$ be any compact Lie group and $P$ be any principal $G$-bundle $P$ over $\Sigma$. Let $a\geq0$ be a regular value of $\YM$ and set $b:=4a/\pi$. Then, for a generic perturbation $\V=(\V^-,\V^+)$ (in a suitably defined Banach space of perturbations) there exists a natural chain homomorphism
\begin{eqnarray*}
\Theta_{\ast}^{\V}\colon CM_{\ast}^a\big(\A(P)/\mathcal G_0(P),\YM,h\big)\to CM_{\ast}^b\big(\Omega G,\E,h\big),
\end{eqnarray*}
inducing an isomorphism
\begin{eqnarray*}
[\Theta_{\ast}^{\V}]\colon HM_{\ast}^a\big(\A(P)/\mathcal G_0(P),\YM^{\V^-},h\big)\to HM_{\ast}^b\big(\Omega G,\E^{\V^+},h\big)
\end{eqnarray*}
of Morse homology groups.
\end{thm}
It would be interesting to work out a similar correspondence in the case where $\Sigma$ is a Riemann surface of arbitrary genus. In yet another direction one can take into account the action of the Lie group $G$ on $\Omega G$ by conjugation to set up a $G$-equivariant version of heat flow homology for $\Omega G$ (or more generally, for any compact manifold with $G$ action). As discussed in \cite{Swoboda}, but not shown in full detail so far, one can expect a result similar to Theorem \ref{thm:Swoboda1result} to hold true, relating $G$-equivariant Yang--Mills Morse homology as described above to a $G$-equivariant version of heat flow homology.

\subsubsection*{Products}
In finite dimensional Morse homology it is well known how to implement a module structure, cf.~the monograph \cite{Schw}. In infinite dimensional situations one often encounters similar algebraic structures, like e.g.~the quantum product in Floer homology or the Chas--Sullivan loop product in the Morse homology of certain loop spaces, cf.~\cite{AbSchwarz,AbSchwarz1,BiranCornea, CohenSchwarz}. Using finite-dimensional Morse homology as a guiding principle, one should be able to implement a natural product structure in the setup presented here. In a subsequent step one could ask how this relates to products in loop space homology of $\Omega G$.

\subsubsection*{Related work}
For finite dimensional manifolds, the construction of a Morse homology theory from the set of critical points of a Morse functions and the isolated flow lines connecting them goes back to Thom \cite{Thom}, Smale \cite{Smale} and Milnor \cite{Milnor}, and had later been rediscovered by Witten \cite{Witten}. For a historical account we refer to the survey paper by Bott \cite{Bott1}. In infinite dimensions the same sort of ideas underlies the construction of Floer homology of compact symplectic manifolds (cf.~the expository notes \cite{Sal2}), although the equations encountered there are of elliptic rather than parabolic type. More in the spirit of classical finite dimensional Morse homology is the aforementioned heat flow homology for the loop space of a compact manifold due to Weber \cite{Web}, which is based on the $L^2$ gradient flow of the classical action functional. For another approach via the theory of ODEs on Hilbert manifolds and further references, see Abbondandolo and Majer \cite{AbMajer}. The cascade construction of Morse homology in the presence of critical manifolds satisfying the Morse--Bott condition is due to Frauenfelder \cite{Frauenfelder2}.

\subsubsection*{Acknowledgements}
This work extends the author's PhD thesis \cite{Swoboda}. He would like to thank his supervisor Dietmar A.~Salamon for his encouragement while working on this project. For useful comments and discussions he also thanks W.~Ballmann, R.~Janner, M.~Schwarz, M.~Struwe, and J.~Weber. He is grateful to the referee for his valuable suggestions to improve the overall readability of this article.

\section{Yang--Mills functional}\label{sec:YMfunctional}

\subsection{Preliminaries}\label{sec:prelim}
Let $(\Sigma,g)$ be a compact oriented Riemann surface. Let $G$ be a compact Lie group with Lie algebra $\mathfrak g$. On $\mathfrak g$ we fix an $\ad$-invariant inner product $\langle\,\cdot\,,\,\cdot\,\rangle$, which exists by compactness of $G$. Let $P$ be a principal $G$-bundle over $\Sigma$. A gauge transformation is a section of the bundle $\Ad(P)\coloneqq P\times_GG$ associated to $P$ via the action of $G$ on itself by conjugation $(g,h)\mapsto g^{-1}hg$. Let $\ad(P)$ denote the Lie algebra bundle associated to $P$ via the adjoint action
\begin{eqnarray*}
(g,\xi)\mapsto\left.\frac{d}{dt}\right|_{t=0}g^{-1}\exp(t\xi)g\qquad(\textrm{for}\,g\in G,\,\xi\in\mathfrak g)
\end{eqnarray*}
of $G$ on $\mathfrak g$. We denote the space of smooth $\ad(P)$-valued differential $k$-forms by $\Omega^k(\Sigma,\ad(P))$, and by $\A(P)$ the space of smooth connections on $P$. The latter is an affine space over $\Omega^1(\Sigma,\ad(P))$. The group $\G(P)$ acts on $\A(P)$ by gauge transformations. We call a connection $A\in\A(P)$ \emph{irreducible} if the stabilizer subgroup $\Stab A\subseteq\G(P)$ is trivial. Otherwise it is called \emph{reducible}. It is easy to show that $\Stab A$ is a compact Lie group, isomorphic to a subgroup of $G$. Let $z\in\Sigma$ be arbitrary but fixed. We let $\G_0(P)\subseteq\G(P)$ denote the group of \emph{based gauge transformation}, i.e.~those gauge transformations which leave the fibre $P_z\subseteq P$ above $z$ pointwise fixed. It is a well-known fact that $\G_0(P)$ acts freely on $\A(P)$.\\
\medskip\\
On $\A(P)$ we define a gauge-invariant $L^2$ inner product by $\langle\alpha,\beta\rangle=\int_{\Sigma}\langle\alpha\wedge\ast\beta\rangle$ for $\alpha,\beta\in\Omega^1(\Sigma,\ad(P))$. The curvature of the connection $A$ is the $\ad(P)$-valued $2$-form $F_A=dA+\frac{1}{2}[A\wedge A]$. It satisfies the Bianchi identity $d_AF_A=0$. Covariant differentiation with respect to the Levi--Civita connection associated with the metric $g$ and a connection $A\in\A(P)$ defines an operator $\nabla_A\colon\Omega^k(\Sigma,\ad(P))\to\Omega^1(\Sigma)\otimes\Omega^k(\Sigma,\ad(P))$. The covariant exterior differential induced by $A\in\A(P)$ is the operator
\begin{eqnarray*}
d_A\colon\Omega^k(\Sigma,\ad(P))\to\Omega^{k+1}(\Sigma,\ad(P)),\quad\alpha\mapsto d\alpha+[A\wedge\alpha].
\end{eqnarray*}
The formal adjoints of these operators are denoted by $\nabla_A^{\ast}$ and $d_A^{\ast}$. The covariant Hodge Laplacian on forms is the operator $\Delta_A\coloneqq d_A^{\ast}d_A+d_Ad_A^{\ast}$, the covariant Bochner Laplacian on forms is $\nabla_A^{\ast}\nabla_A$. They are related through the Bochner--Weitzenb\"ock formula
\begin{eqnarray}\label{BWformula}
\Delta_A=\nabla_A^{\ast}\nabla_A+\{F_A,\,\cdot\,\}+\{R_{\Sigma},\,\cdot\,\}.
\end{eqnarray}
Here the brackets $\{\,\cdot\,,\,\cdot\,\}$ denote $C^{\infty}$-bilinear expressions with coefficients independent of $A$, and $R_{\Sigma}$ is a term involving the Riemann curvature tensor of $(\Sigma,g)$. The \emph{perturbed Yang--Mills functional} $\YMV$ has been introduced in \eqref{YMintroduction}. If $\V=0$, we write $\YM$ and call this the \emph{unperturbed Yang--Mills functional}. The $L^2$ gradient of $\YMV$ at $A\in\A(P)$ is
\begin{eqnarray*}
\nabla\YMV(A)=d_A^{\ast}F_A+\nabla\V(A)\in\Omega^1(\Sigma,\ad(P)).
\end{eqnarray*}
Its Hessian is the second order differential operator
\begin{eqnarray}\label{eq:YMHess}
H_A\YMV=d_A^{\ast}d_A+\ast[\ast F_A\wedge\,\cdot\,]+H_A\V\colon\Omega^1(\Sigma,\ad(P))\to\Omega^1(\Sigma,\ad(P)).
\end{eqnarray}
We also make use of the notation $H_A\coloneqq d_A^{\ast}d_A+\ast[\ast F_A\wedge\,\cdot\,]$.\\
\medskip\\
Throughout we will use Sobolev spaces of sections of vector bundles and Banach manifolds modeled on such Sobolev spaces, like e.g.~various groups of gauge transformations. A detailed account of this subject is given in the book \cite[Appendix B]{Wehrheim}. We therefore keep the discussion of these matters short. Let $1\leq p\leq\infty$ and $k\geq0$ an integer. We fix a smooth reference connection $A\in\A(P)$. It determines a covariant derivative $\nabla_A$ on $\Omega^{\ast}(\Sigma,\ad(P))$ as above. We employ the notation $W^{k,p}(\Sigma,\ad(P))$ and $W^{k,p}(\Sigma,T^{\ast}\Sigma\otimes\ad(P))$ for the Sobolev spaces of $\ad(P)$-valued $0$- and $1$-forms whose weak derivatives (with respect to $\nabla_A$) up to order $k$ are in $L^p$. These spaces are independent of the choice of $A$. However, for $k\geq1$, the corresponding norms depend on this choice.  The standard Sobolev embedding and Rellich--Kondrachov compactness theorems apply to these spaces. The affine $(k,p)$-Sobolev space of connections on $P$ is defined as
\begin{eqnarray*}
\A^{k,p}(P)\coloneqq A+W^{k,p}(\Sigma,T^{\ast}\Sigma\otimes\ad(P)).
\end{eqnarray*}
This definition is again independent of the choice of smooth reference connection $A$. To define Sobolev spaces of gauge transformations we need to assume $kp>\dim\Sigma=2$. Then let $\G^{k,p}(P)$ denote the set of equivariant maps $P\to G$ which are of the form $g=g_0\exp(\ph)$, where $g_0$ is a smooth such map and $\ph\in W^{k,p}(\Sigma,\ad(P))$. (Here we view $\ph$ as an equivariant map $P\to\mathfrak g$). The space $\G^{k,p}(P)$ is a Banach manifold modeled on $W^{k,p}(\Sigma,\ad(P))$. As a well-known fact we remark that $\G^{k,p}(P)$ is a group with smooth group multiplication and inversion. It acts smoothly on $\A^{k-1,p}(P)$ by gauge transformations. Let $I$ be a finite or infinite interval. We often make use of the parabolic Sobolev space
\begin{eqnarray}\label{eq:defparabolicSobolev}
W^{1,2;p}(I\times\Sigma,\ad(P))\coloneqq L^p(I,W^{2,p}(\Sigma,\ad(P)))\cap W^{1,p}(I,L^p(\Sigma,\ad(P)))
\end{eqnarray} 
of $\ad(P)$-valued $0$-forms admitting one time and two space derivatives in $L^p$ (and similarly for $\ad(P)$-valued $1$-forms). The parabolic Sobolev space $\A^{1,2;p}(P)$ of connections is defined analogously, with $W^{2,p}(\Sigma,\ad(P))$ and $L^p(\Sigma,\ad(P))$ in \eqref{eq:defparabolicSobolev} replaced by $\A^{2,p}(P)$, respectively $\A^{0,p}(P)$. Note that when there is no danger of confusion, we for ease of notation write $L^p(\Sigma)$ instead of $L^p(\Sigma,\ad(P))$ or $L^p(\Sigma,T^{\ast}\Sigma\otimes\ad(P))$ (and similarly for the other Sobolev spaces). Further notation frequently used is $\dot A\coloneqq\partial_sA\coloneqq\frac{d A}{ds}$, etc.~for derivatives with respect to time.

\subsection{Critical manifolds}\label{sect:criticalmanifolds}

We introduce some further notation and recall several results from \cite{AB,Rade,Rade2} concerning the set of critical points of the \emph{unperturbed} Yang--Mills functional $\YM$. Let
\begin{eqnarray}\label{eq:defcritmanifold}
\crit(\YM)\coloneqq\{A\in\A^{1,p}(P)\mid d_A^{\ast}F_A=0\}
\end{eqnarray}
denote the set of critical points of $\YM$, the equation $d_A^{\ast}F_A=0$ being understood in the weak sense. Similarly, the notation $\crit(\YMV)$ refers to the set of critical points of the perturbed Yang--Mills functional. We let $\mathcal{CR}$ denote the set of connected components of $\crit(\YM)$. The group $\G_0^{2,p}(P)$ of based gauge transformations of class $W^{2,p}$ acts freely on each $\mathcal C\in\mathcal{CR}$. The quotient $\mathcal C/\G_0^{2,p}(P)$ is a finite-dimensional compact smooth manifold, cf.~\cite[Section 2]{Rade}. Below a given level set $a>0$ there exist at most finitely many critical manifolds as the following proposition shows.

\begin{prop}\label{prop:finiteset}
For $a\geq0$ let $\mathcal{CR}^a\subseteq\mathcal{CR}$ denote the set of critical manifolds of Yang--Mills energy at most $a$. Then $\mathcal{CR}^a$ is a finite set, for every real number $a\geq0$. 
\end{prop}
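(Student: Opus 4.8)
The plan is to argue by contradiction, combining an a priori estimate special to surfaces with Uhlenbeck's weak compactness theorem and the Morse--Bott structure of $\YM$ near a critical manifold. Suppose $\mathcal{CR}^a$ were infinite. Then one finds a sequence $(A_n)_{n\in\mathbbm N}$ of Yang--Mills connections, i.e.\ $d_{A_n}^{\ast}F_{A_n}=0$, with $\YM(A_n)\leq a$ and such that the $[A_n]$ lie in pairwise distinct critical manifolds $\mathcal C_n\in\mathcal{CR}$. The goal is to produce a subsequence of the $[A_n]$ converging in $\A^{1,p}(\Sigma)/\G_0^{2,p}(P)$; the Morse--Bott condition will then force the tail of the sequence into (finitely many gauge translates of) a single critical manifold, contradicting the choice of the $\mathcal C_n$.

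The first, and only genuinely surface-specific, ingredient is an a priori bound on the curvature. From $\YM(A_n)\leq a$ one already has $\|F_{A_n}\|_{L^2(\Sigma)}^2=2\,\YM(A_n)\leq 2a$, which is enough to feed Uhlenbeck's theorem; but in fact much more is true. For any Yang--Mills connection $A$ the Bianchi identity gives $d_AF_A=0$, and on a surface the equation $d_A^{\ast}F_A=0$ is equivalent to $\nabla_A(\ast F_A)=0$ for the section $\ast F_A\in\Omega^0(\Sigma,\ad(P))$. Since $\nabla_A$ is compatible with the fibre metric, $|F_A|$ is then constant on $\Sigma$, so $\YM(A)=\tfrac12\,|F_A|^2\,\vol(\Sigma)$ and hence $\|F_A\|_{L^{\infty}(\Sigma)}^2=2\,\YM(A)/\vol(\Sigma)\leq 2a/\vol(\Sigma)$. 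In particular the $F_{A_n}$ are bounded in every $L^q(\Sigma)$, uniformly in $n$, which in dimension $2$ rules out any concentration of curvature.

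Next I would invoke Uhlenbeck's local gauge fixing and patching together with the weak compactness theorem (cf.\ \cite{Uhlenbeck2,Wehrheim}): after applying suitable gauge transformations $g_n\in\G^{2,p}(P)$ and passing to a subsequence, $g_n^{\ast}A_n$ converges weakly in $W^{1,q}(\Sigma)$ for every $q<\infty$ to a limit connection $A_\infty$. Writing the Yang--Mills equation for $g_n^{\ast}A_n$ relative to a Coulomb gauge centred at $A_\infty$ turns it into an elliptic system with uniformly bounded coefficients, and the standard bootstrap upgrades the convergence to $C^{\infty}$; the limit $A_\infty$ is then a smooth Yang--Mills connection with $\YM(A_\infty)\leq a$. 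Consequently $[g_n^{\ast}A_n]\to[A_\infty]$ in $\A^{1,p}(\Sigma)/\G_0^{2,p}(P)$, and $[A_\infty]$ lies in some critical manifold $\mathcal C_\infty\in\mathcal{CR}$.

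It remains to derive the contradiction, and this is the step that requires the most care. By the Morse--Bott condition recalled in the Introduction, $H_{A_\infty}\YM$ is nondegenerate transverse to $\mathcal C_\infty$, so the implicit function theorem identifies $\crit(\YM)/\G_0^{2,p}(P)$ with $\mathcal C_\infty$ in a neighbourhood of $[A_\infty]$; hence $[g_n^{\ast}A_n]\in\mathcal C_\infty$, i.e.\ $[A_n]\in(g_n^{-1})^{\ast}\mathcal C_\infty$, for all large $n$. Since the residual action of $\G^{2,p}(P)/\G_0^{2,p}(P)\cong G$ on $\crit(\YM)/\G_0^{2,p}(P)$ induces an action on the set of connected components that factors through the finite group $\pi_0(G)$, the critical manifolds $\mathcal C_n$ with $n$ large all belong to the finite $\pi_0(G)$-orbit of $\mathcal C_\infty$, contradicting that they are pairwise distinct. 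The technical heart is thus the local model of $\crit(\YM)/\G_0^{2,p}$ near a critical manifold and the bookkeeping between the based and the full gauge group; both are available from \cite{AB,Rade,Rade2}. A more algebraic route avoids the analysis entirely: by the Atiyah--Bott classification \cite{AB} the elements of $\mathcal{CR}$ correspond to Harder--Narasimhan ``types'' $\mu$ ranging over a discrete set, with $\YM$ constant equal to $\|\mu\|^2$ on the associated critical manifold, and an elementary Cauchy--Schwarz-plus-lattice estimate shows $\{\mu:\|\mu\|^2\leq a\}$ to be finite. One may also shorten the contradiction argument using the stated discreteness of the set of critical values, since then $\YM(A_n)\to\YM(A_\infty)$ forces $\YM(A_n)=\YM(A_\infty)$ along a further subsequence, after which only the local Morse--Bott statement is needed.
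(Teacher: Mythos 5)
Your argument is correct and follows essentially the same route as the paper: argue by contradiction, use the uniform $L^2$ curvature bound and Uhlenbeck compactness to extract a gauge-transformed subsequence converging smoothly to a Yang--Mills limit $A_{\infty}$, and then invoke the Morse--Bott condition to force the tail of the sequence into the critical manifold of $A_{\infty}$. Your extra observations --- the pointwise constancy of $|F_A|$ for Yang--Mills connections over a surface, and the $\pi_0(G)$ bookkeeping reconciling the full gauge transformations produced by Uhlenbeck's theorem with the based quotient --- are refinements rather than a different method (the latter in fact makes explicit a step that the paper's one-line ``not isolated, contradicting Morse--Bott'' conclusion leaves implicit).
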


\begin{proof}
Assume by contradiction that there exists a sequence $(\mathcal C_{\nu})\subseteq\mathcal{CR}^a$ of pairwise different critical manifolds $\mathcal C_{\nu}$. For each $\nu$ we fix a Yang--Mills connection $A_{\nu}\in\mathcal C_{\nu}$. From $\mathcal C_{\nu}\in\mathcal{CR}^a$ for all $\nu$ it follows that the sequence of numbers $\|F_{A^{\nu}}\|_{L^2(\Sigma)}$ is bounded. Hence by Uhlenbeck's strong compactness theorem (cf.~\cite[Theorem E]{Wehrheim}), after modifying each $A_{\nu}\in\mathcal C_{\nu}$ by suitable gauge transformations and passing to a subsequence, the sequence $(A_{\nu})$ converges uniformly with all derivatives to a smooth connection $A_{\ast}\in\A(P)$. The limit connection $A_{\ast}$ is Yang--Mills and has energy at most $a$, hence $A\in\mathcal C_{\ast}$ for some critical manifold $\mathcal C_{\ast}\in\mathcal{CR}^a$. It follows that $\mathcal C_{\ast}$ is not isolated within the set $\mathcal{CR}^a$ (with respect to any $C^k(\Sigma)$ topology), a contradiction to the fact that $\mathcal C_{\ast}$ satisfies the Morse--Bott condition.
\end{proof}

The Yang--Mills functional $\YM$ satisfies an equivariant version of the Palais--Smale condition in dimension $2$ (which holds true also in dimension $3$ but not in higher dimensions).

\begin{defn}\upshape
A sequence $(A_{\nu})\subseteq\A(P)$ is said to be a \emph{Palais--Smale sequence} if there exists $M>0$ such that $\|F_{A_{\nu}}\|_{L^2(\Sigma)}\leq M$ for all $\nu\in\mathbbm N$, and
\begin{eqnarray*}
\|d_{A_{\nu}}^{\ast}F_{A_{\nu}}\|_{W^{-1,2}(\Sigma)}\to0\qquad\textrm{as}\quad\nu\to\infty.
\end{eqnarray*}
\end{defn}

\begin{thm}[Equivariant Palais--Smale condition]\label{thm:equiPalaisSmale}
Let $(A_{\nu})\subseteq\A(P)$ be a Palais--Smale sequence. Then there exists a subsequence, again denoted by $(A_{\nu})$, and a sequence $(g_{\nu})\subseteq\G(P)$ such that $g_{\nu}^{\ast}A_{\nu}$ converges in $\A^{1,2}(P)$ to a weak Yang--Mills connection $A_{\ast}$ as $\nu\to\infty$.
\end{thm}

\begin{proof} 
For a proof we refer to \cite[Theorem 1]{Rade2}.
\end{proof}

\begin{prop}\label{lem:PalaisSmale} 
Let $\varepsilon,M>0$. There exists a constant $\delta=\delta(\eps,M)>0$ with the following significance. Let $A\in\A^{1,2}(P)$ satisfy $\|F_A\|_{L^2(\Sigma)}\leq M$ and
\begin{eqnarray}\label{eq:L2saparate}
\|A-A_0\|_{L^2(\Sigma)}>\varepsilon
\end{eqnarray}
for every weak Yang--Mills connection $A_0\in\A^{1,2}(P)$ such that $\|F_{A_0}\|_{L^2(\Sigma)}\leq M$. Then $\|d_A^{\ast}F_A\|_{W^{-1,2}(\Sigma)}>\delta$.
\end{prop}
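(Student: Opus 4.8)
\section*{Proof proposal}

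The plan is to argue by contradiction and invoke the equivariant Palais--Smale condition, Theorem~\ref{thm:equiPalaisSmale}. Suppose no such $\delta$ exists. Then for each $i\in\mathbbm N$ I can choose a connection $A_i\in\A(P)$ with $\|F_{A_i}\|_{L^2(\Sigma)}<M$ and $\|A_i-A_0\|_{L^2(\Sigma)}>\varepsilon$ for every Yang--Mills connection $A_0\in\A(P)$, but with $\|d_{A_i}^{\ast}F_{A_i}\|_{W^{-1,2}(\Sigma)}\leq\frac1i\to0$. The uniform bound $\|F_{A_i}\|_{L^2(\Sigma)}<M$ together with $\|d_{A_i}^{\ast}F_{A_i}\|_{W^{-1,2}(\Sigma)}\to0$ means that $(A_i)$ is a Palais--Smale sequence, so Theorem~\ref{thm:equiPalaisSmale} yields, after passing to a subsequence, gauge transformations $g_i\in\G(P)$ and a Yang--Mills connection $A_{\ast}\in\A(P)$ with $g_i^{\ast}A_i\to A_{\ast}$ in $W^{1,2}(\Sigma)$, hence in $L^2(\Sigma)$.

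The one point requiring care is that the separation bound $\|A_i-A_0\|_{L^2}>\varepsilon$ is preserved when one passes to $g_i^{\ast}A_i$. For this I would use that the $L^2$-distance between connections is gauge-invariant: for $g\in\G(P)$ and $A,B\in\A(P)$ the inhomogeneous terms cancel in $g^{\ast}A-g^{\ast}B=g^{-1}(A-B)g$, and pointwise conjugation by $g$ is a fibrewise isometry of $\ad(P)$ by $\ad$-invariance of $\langle\cdot,\cdot\rangle$, so that $\|g^{\ast}A-g^{\ast}B\|_{L^2(\Sigma)}=\|A-B\|_{L^2(\Sigma)}$. Applying this with $B=C_i:=(g_i^{-1})^{\ast}A_{\ast}$, which is again a Yang--Mills connection since $\crit(\YM)$ is gauge-invariant, gives $\|g_i^{\ast}A_i-A_{\ast}\|_{L^2(\Sigma)}=\|g_i^{\ast}A_i-g_i^{\ast}C_i\|_{L^2(\Sigma)}=\|A_i-C_i\|_{L^2(\Sigma)}>\varepsilon$ for every $i$. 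This contradicts the convergence $g_i^{\ast}A_i\to A_{\ast}$ in $L^2(\Sigma)$, which forces $\|g_i^{\ast}A_i-A_{\ast}\|_{L^2(\Sigma)}\to0$.

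I do not expect a genuine obstacle here: the analytic substance is entirely contained in Theorem~\ref{thm:equiPalaisSmale}, and the only additional ingredient is the elementary observation about gauge-invariance of the $L^2$-distance, which is exactly what is needed to transport the lower bound along the gauge orbit so that the hypothesis on $(A_i)$ survives the compactness step. If anything is mildly delicate, it is only bookkeeping—ensuring the sequence produced by negating the statement genuinely lies in $\A(P)$ (immediate from the form of the hypothesis) and that the limit $A_{\ast}$ is a bona fide smooth Yang--Mills connection (provided by the cited theorem).
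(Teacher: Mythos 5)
Your proof is correct and follows essentially the same route as the paper: negate the statement to produce a Palais--Smale sequence, apply Theorem~\ref{thm:equiPalaisSmale}, and transport the limiting Yang--Mills connection back along the gauge orbit to contradict the separation hypothesis. The only difference is that you spell out the gauge-invariance of the $L^2$-distance, which the paper's proof uses implicitly when it asserts $\|(g_i^{-1})^{\ast}A_{\ast}-A_i\|_{L^2(\Sigma)}<\varepsilon$ for large $i$.
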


\begin{proof}
Assume by contradiction that there exists a sequence $(A_{\nu})\subseteq\A(P)$ satisfying $\|F_{A_{\nu}}\|_{L^2(\Sigma)}\leq M$ and \eqref{eq:L2saparate} with 
\begin{eqnarray*}
\lim_{\nu\to\infty}\|d_{A_{\nu}}^{\ast}F_{A_{\nu}}\|_{W^{-1,2}(\Sigma)}=0.
\end{eqnarray*}
Then by Theorem \ref{thm:equiPalaisSmale} there exist a subsequence, still denoted $(A_{\nu})$, a sequence of gauge transformations $(g_{\nu})\subseteq\G(P)$, and a Yang--Mills connection $A_{\ast}$ with $\lim_{\nu\to\infty}g_{\nu}^{\ast}A_{\nu}=A_{\ast}$ in $\A^{1,2}(P)$, hence also in $\A^{0,2}(P)$. Because the map $A\mapsto F_A\colon\A^{1,2}(P)\to L^2(\Sigma)$ is continuous it follows that $\|F_{A_{\ast}}\|_{L^2(\Sigma)}\leq M$. Moreover, for a sufficiently large integer $\nu$, the Yang--Mills connection $A_0\coloneqq(g_{\nu}^{-1})^{\ast}A_{\ast}$ satisfies $\|A_{\nu}-A_0\|_{L^2(\Sigma)}<\varepsilon$, contradicting \eqref{eq:L2saparate}.  
\end{proof}

\subsection{Banach space of abstract perturbations}\label{Bspaceperturbations}

Our construction of a Banach space of perturbations is based on the following $L^2$ local slice theorem due to Mrowka and Wehrheim \cite{MrowkaWehrheim}. We fix $p>2$ and let
\begin{eqnarray*}
\S_{A_0}(\varepsilon)\coloneqq\big\{A=A_0+\alpha\in\A^{0,p}(P)\,\big|\,d_{A_0}^{\ast}\alpha=0,\|\alpha\|_{L^2(\Sigma)}<\varepsilon\big\}
\end{eqnarray*}
denote the set of $L^p$-connections in the local slice of radius $\varepsilon$ with respect to the reference connection $A_0\in\A^{0,p}(P)$.

\begin{thm}[$L^2$ local slice theorem]\label{locslicethm} 
Let $p>2$. For every $A_0\in\A^{0,p}(P)$ there are constants $\varepsilon,\delta>0$ such that the map
\begin{multline*}
\mathfrak m\colon\big(\S_{A_0}(\varepsilon)\times\G^{1,p}(P)\big)/\Stab{A_0}\to\A^{0,p}(P),\\
[(A_0+\alpha,g)]\mapsto(g^{-1})^{\ast}(A_0+\alpha)
\end{multline*}
is a diffeomorphism onto its image, which contains an $L^2$ ball,
\begin{eqnarray*}
B_{\delta}(A_0)\coloneqq\big\{A\in\A^{0,p}(P)\,\big|\,\|A-A_0\|_{L^2(\Sigma)}<\delta\big\}\subseteq\im\mathfrak m.
\end{eqnarray*}
\end{thm}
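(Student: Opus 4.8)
The plan is to prove the two assertions — that $\mathfrak m$ is a diffeomorphism onto its image, and that the image contains the $L^2$-ball $B_\delta(A_0)$ — by first running a standard implicit-function-theorem argument that produces the \emph{local} slice in an $L^p$-neighbourhood of $A_0$, and then upgrading the $L^p$-smallness of that neighbourhood to $L^2$-smallness by a continuity argument that uses the two-dimensionality of $\Sigma$ in an essential way. The basic linear input is that, because $A_0\in\A^{0,p}(\Sigma)$ with $p>2$, the operator $d_{A_0}^\ast d_{A_0}\colon W^{1,p}(\Sigma,\ad(P))\to W^{-1,p}(\Sigma,\ad(P))$ is a compact perturbation of the Hodge Laplacian on $0$-forms — the $A_0$-dependent terms pair an $L^p$ coefficient with the first jet of $\xi$, which is subcritical in dimension $2$ via $W^{1,p}\hookrightarrow C^0$ — hence Fredholm of index $0$, with kernel $\mathrm{Lie}(\Stab A_0)=\{\xi:d_{A_0}\xi=0\}$, and an isomorphism on the $L^2$-orthogonal complement of that kernel. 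The same reasoning supplies the $L^2$-bounded projection $\Pi_{A_0}:=\id-d_{A_0}(d_{A_0}^\ast d_{A_0})^{-1}d_{A_0}^\ast$ onto $\ker d_{A_0}^\ast$. Since $\Stab A_0$ (compact) acts freely on $\S_{A_0}(\varepsilon)\times\G^{1,p}(\Sigma)$ by $u\cdot(A_0+\alpha,g)=(u^\ast(A_0+\alpha),gu^{-1})$, preserving the slice because $u^\ast A_0=A_0$ and $\Ad$ is an $L^2$-isometry, the quotient is a Banach manifold and $\mathfrak m$ is well defined.

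For the diffeomorphism claim I would apply the implicit function theorem to $\Psi(A,g):=d_{A_0}^\ast\big((g^{-1})^\ast A-A_0\big)$: its $g$-derivative at $(A_0,\mathbbm 1)$ is $\xi\mapsto-d_{A_0}^\ast d_{A_0}\xi$, surjective with kernel $\mathrm{Lie}(\Stab A_0)$ by the above, so for $A$ in an $L^p$-neighbourhood of $A_0$ there is a unique $g(A)\in\G^{1,p}$ near $\mathbbm 1$ and orthogonal to the stabilizer, depending smoothly on $A$, with $(g(A)^{-1})^\ast A\in\S_{A_0}(\varepsilon)$; this is a smooth local inverse of $\mathfrak m$ near $[(A_0,\mathbbm 1)]$. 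To upgrade to a diffeomorphism onto the whole image, I would check that $d\mathfrak m$ is an isomorphism at every $[(A_0+\alpha,g)]$ — after substituting $B:=(g^{-1})^\ast(A_0+\alpha)$ this is the Fredholm-index-$0$ statement that $\Omega^1(\Sigma,\ad(P))=\ker d_{A_0}^\ast\oplus d_B(\mathrm{Lie}(\Stab B)^\perp)$ up to the finite-dimensional contributions of the stabilizers, valid since $B$ is an honest $L^p$-connection — and that $\mathfrak m$ is injective modulo $\Stab A_0$: if $(g^{-1})^\ast(A_0+\alpha)=(h^{-1})^\ast(A_0+\beta)$ with both in $\S_{A_0}(\varepsilon)$, then $u:=hg^{-1}$ obeys $u^\ast(A_0+\alpha)=A_0+\beta$ and the uniqueness clause above (after shrinking $\varepsilon$) forces $u\in\Stab A_0$.

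The substantial point is $\im\mathfrak m\supseteq B_\delta(A_0)$: every $A=A_0+\alpha\in\A^{0,p}$ with $\|\alpha\|_{L^2}<\delta$, but with $\|\alpha\|_{L^p}$ possibly arbitrarily large, must be gauged into $\S_{A_0}(\varepsilon)$, and the $L^p$-neighbourhood from the implicit function theorem is of no use here. I would argue by continuity along the path $A_t:=A_0+t\alpha$, $t\in[0,1]$, which stays in $\A^{0,p}$ with $\|A_t-A_0\|_{L^2}=t\|\alpha\|_{L^2}<\delta$. Let $S$ be the set of $t$ admitting $g_t\in\G^{1,p}$ with $(g_t^{-1})^\ast A_t\in\S_{A_0}(\varepsilon)$. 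Then $0\in S$; $S$ is open by the previous paragraph (applied near $(g_t^{-1})^\ast A_t$ in the $L^p$-topology, in which $t\mapsto A_t$ is continuous); and $S$ is closed provided one has the \emph{a priori estimate} $\|(g_t^{-1})^\ast A_t-A_0\|_{L^2}\le C\|A_t-A_0\|_{L^2}$ with $C$ independent of $t$ — for then the slice radius is never exhausted once $\delta\le\varepsilon/C$, and a weak-compactness (Uhlenbeck-type) argument extracts a limiting gauge transformation at the endpoint of a maximal subinterval. Granting the estimate, $1\in S$, so $A\in\im\mathfrak m$ and $B_\delta(A_0)\subseteq\im\mathfrak m$ with the pair $(\varepsilon,\delta)$ thus produced.

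The a priori estimate is where the real difficulty — and the whole content of the theorem — lies: the gauge transformation $g_t$ relating $A_t$ (possibly $L^p$-large) to its Coulomb representative (also possibly $L^p$-large) need \emph{not} be close to $\mathbbm 1$ in $W^{1,p}$, so it cannot be produced by a naive linearization $g_t=\exp\xi_t$ with $\xi_t$ small. One circumvents this by bootstrapping on the a priori-known smallness $\|(g_t^{-1})^\ast A_t-A_0\|_{L^2}<\varepsilon$: writing $\beta_t:=(g_t^{-1})^\ast A_t-A_0\in\ker d_{A_0}^\ast$ and applying the $L^2$-bounded projection $\Pi_{A_0}$ to the identity $A_t-A_0=(g_t)^\ast(A_0+\beta_t)-A_0$, one finds that the quadratic and higher-order terms each carry a factor of $\beta_t$, whose $L^2$-norm is small, multiplied by expressions that the dimension-$2$ multiplication inequalities $W^{1,2}(\Sigma)\hookrightarrow L^q(\Sigma)$ $(q<\infty)$ and $L^p\cdot W^{1,2}\hookrightarrow L^r$ $(1<r<p)$ control in $L^2$ \emph{uniformly in the $L^p$-size of the data}; absorbing them yields $\|\beta_t\|_{L^2}\le C\|A_t-A_0\|_{L^2}$. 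Arranging the nonlinear structure of $g\mapsto g^\ast(\cdot)$ so that the large part of $g_t$ enters only through the bounded operator $\Ad_{g_t}$ and never through a quantity that must be small, while keeping track of the stabilizer normalization, is the technical heart of the argument, and it is precisely here that the two-dimensionality of $\Sigma$ is indispensable.
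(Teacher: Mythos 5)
The paper does not prove this theorem at all: its ``proof'' consists of a single sentence citing Mrowka--Wehrheim, \cite[Theorem~1.7]{MrowkaWehrheim}. You have instead attempted to reconstruct the argument from scratch, and your high-level outline is a plausible reading of how such a proof must go: a Fredholm/implicit-function-theorem step produces the slice in an $L^p$-neighbourhood, and a continuity method along $A_t=A_0+t\alpha$, driven by an $L^2$ a priori estimate for the Coulomb representative, upgrades $L^p$-smallness to $L^2$-smallness. You have also correctly located where the genuine content of the theorem sits.

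But that is precisely where your argument stops being a proof. The crucial estimate $\|(g_t^{-1})^\ast A_t-A_0\|_{L^2}\le C\|A_t-A_0\|_{L^2}$, uniform in the (possibly unbounded) $L^p$-size of $\alpha$ and of $g_t$, is asserted rather than established. The heuristic you offer --- project with $\Pi_{A_0}$, observe that each nonlinear term ``carries a factor of $\beta_t$,'' absorb using two-dimensional multiplication inequalities --- glosses over the real obstruction: the gauge transformation $g_t$ is not close to the identity in any topology, so the identity $A_t-A_0=(g_t)^\ast(A_0+\beta_t)-A_0$ contains the term $g_t^{-1}dg_t$, which is not a priori $L^2$-small, and which cannot simply be ``arranged'' to appear only inside $\Ad_{g_t}$. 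Controlling $\|g_t^{-1}dg_t\|$ in terms of $\|A_t-A_0\|_{L^2}$ (rather than in terms of $\|\beta_t\|$, which is circular) requires a separate argument; this, together with the compactness needed to close $S$ in your continuity method while only an $L^2$-bound is available, is the nontrivial analytic input that Mrowka--Wehrheim supply and that your sketch defers. As it stands, your last paragraph describes what a proof would have to accomplish without accomplishing it, so the claim $B_\delta(A_0)\subseteq\im\mathfrak m$ is not established. The diffeomorphism half of your argument (IFT plus index-zero linearization plus injectivity modulo $\Stab A_0$) is, by contrast, essentially complete and standard.
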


\begin{proof}
For a proof we refer to \cite[Theorem 1.7]{MrowkaWehrheim}.
\end{proof}

We fix the following data.

\begin{compactenum}[(i)]
\item
A dense sequence $(A_i)_{i\in\mathbbm N}$ of irreducible smooth connections in $\A(P)$.
\item
For each $i\in\mathbbm N$ a dense sequence $(\eta_{ij})_{j\in\mathbbm N}$ of smooth $1$-forms in $\Omega^1(\Sigma,\ad(P))$ satisfying $d_{A_i}^{\ast}\eta_{ij}=0$ for all $j\in\mathbbm N$.
\item
A smooth cutoff function $\rho\colon\mathbbm R\to[0,1]$ such that $\rho=1$ on $[-1,1]$, $\supp\rho\subseteq[-4,4]$, and $\|\rho'\|_{L^{\infty}(\R)}<1$. Set $\rho_k(r)\coloneqq\rho(k^2r)$ for $k\in\mathbbm N$.
\end{compactenum}
We fix $i\in\mathbbm N$ and a constant $\eps_i>0$ such that the conclusion of Theorem \ref{locslicethm} applies for $A_0\coloneqq A_i$ and this constant $\eps_i$. Note that by assumption, $\Stab A_i=\mathbbm 1$. Theorem \ref{locslicethm} thus implies that the map
\begin{eqnarray*}
\mathfrak m_i\colon\mathcal S_{A_i}(\eps_i)\times\G^{1,p}(P)\to\A^{0,p}(P),\quad(A_i+\alpha,g)\mapsto(g^{-1})^{\ast}(A_i+\alpha)
\end{eqnarray*}
is a diffeomorphism onto its image. Hence 
\begin{eqnarray}\label{def:alphai}
\alpha_i\colon\im\mathfrak m_i\to L^p(\Sigma,\ad(P)),\quad A\mapsto(\pr_1\circ\mathfrak m^{-1})(A)-A_i
\end{eqnarray}
(with $\pr_1\colon\mathcal S_{A_i}(\eps_i)\times\G^{1,p}(P)\to\mathcal S_{A_i}(\eps_i)$ denoting projection) is a well-defined smooth map with image being contained in $\mathcal S_{A_i}(\eps_i)-A_i$. We extend $\alpha_i$ to a map on $\A(P)$ by setting $\alpha_i(A)=0$ for $A\in\A^{0,p}(P)\setminus\im\mathfrak  m_i$. Hence 
\begin{eqnarray}\label{modelperturbation}
\V_{\ell}\colon\A^{0,p}(P)\to\mathbbm R,\qquad A\mapsto\rho_k(\|\alpha_i(A)\|_{L^2(\Sigma)}^2)\langle\alpha_i(A),\eta_{ij}\rangle
\end{eqnarray}
is a well-defined map for every triple $\ell=(i,j,k)\in\mathbbm N^3$. Note also that $\V_{\ell}$ is invariant under the action of $\G^{1,p}(P)$ by gauge transformations.

\begin{prop}\label{prop:condlocslicepert}
For every $A\in\im\mathfrak m_i$ there exists a unique $g\in\G^{1,p}(P)$ such that
\begin{eqnarray}\label{condlocslicepert}
g^{\ast}A-A_i=\alpha_i(A)\qquad\textrm{and}\qquad d_{A_i}^{\ast}\alpha_i(A)=0.
\end{eqnarray}
\end{prop}

\begin{proof}
Set $g^{-1}\coloneqq(\pr_2\circ\mathfrak m_i^{-1})(A)$ with $\pr_2$ denoting projection to the second factor in $\S_{A_i}(\eps_i)\times\G^{1,p}(P)$. That $g$ satisfies the first identity in \eqref{condlocslicepert} follows from the definition of $\mathfrak m_i$. Uniqueness is a consequence of injectivity of $\mathfrak m_i$. The second identity is satisfied because $A_i+\alpha_i(A)$ is by definition contained in the local slice $\S_{A_i}(\eps_i)$.
\end{proof}

For $i\in\mathbbm N$ we fix a constant $\delta_i>0$ such that the $L^2$ ball $B_{\delta_i}(A_i)$ is contained in $\im\mathfrak m_i$. The existence of such $\delta_i$ follows from Theorem \ref{locslicethm}. We denote
\begin{eqnarray*}
X_i\coloneqq\G^{1,p}(P)^{\ast}\{A\in\A^{0,p}(P)\mid\|A-A_i\|_{L^2(\Sigma)}<\delta_i\}.
\end{eqnarray*}
Note that $X_i\subseteq\im\mathfrak m_i$ by gauge invariance of the set $\im\mathfrak m_i$.

\begin{prop}\label{prop:perturbsmooth}
Let $\ell=(i,j,k)\in\mathbbm N^3$ such that $k>\frac{10}{\delta_i}$. Then the map $\V_{\ell}\colon\A^{0,p}(P)\to\mathbbm R$ defined in \eqref{modelperturbation} is smooth.
\end{prop}

\begin{proof}
Let $A\in X_i$ be given. We first show smoothness of $\V_{\ell}$ at the point $A$. Assuming that $A$ has an $\A^{0,p}(P)$ neighborhood $U\subseteq\im\mathfrak m_i$ the claim follows from smoothness of the map $\alpha_i|_{\im\mathfrak m_i}$ as given by \eqref{def:alphai}. Namely then, $\V_{\ell}|_U$ arises as composition of $\alpha_i|_U$ with smooth maps. To prove the existence of such a neighborhood $U$ we assume by contradiction that there is a sequence $(A^{\nu})\subseteq\A^{0,p}(P)\setminus\im\mathfrak m_i$ which converges in $\A^{0,p}(P)$, hence also in $\A^{0,2}(P)$ to $A$. As $A\in X_i$ holds by assumption there exists some $g\in\G^{1,p}(P)$ such that $\delta\coloneqq\|g^{\ast}A-A_i\|_{L^2(\Sigma)}<\delta_i$. Set $\delta_1\coloneqq\frac{1}{2}(\delta_i-\delta)>0$. Because $A^{\nu}\to A$ as $\nu\to\infty$ in $\A^{0,2}(P)$ it follows for every sufficiently large $\nu\geq\nu_0$ that $\|A^{\nu}-A\|_{L^2(\Sigma)}<\delta_1$. Since the $L^2$ norm is preserved under gauge transformations this implies that $\|g^{\ast}A^{\nu}-g^{\ast}A\|_{L^2(\Sigma)}<\delta_1$. Thus by choice of $\delta_1$ we see that for $\nu\geq\nu_0$ the connection $g^{\ast}A^{\nu}\in W^{1,p}(\Sigma)$ satisfies
\begin{eqnarray*}
\|g^{\ast}A^{\nu}-A_i\|_{L^2(\Sigma)}\leq\|g^{\ast}A^{\nu}-g^{\ast}A\|_{L^2(\Sigma)}+\|g^{\ast}A-A_i\|_{L^2(\Sigma)}<\delta_1+\delta<\delta_i.
\end{eqnarray*}
Hence $g^{\ast}A^{\nu}\in\im\mathfrak m_i$ by choice of the constant $\delta_i$. Invariance of $\im\mathfrak m_i$ under the action of $\G^{1,p}(P)$ now implies that $A^{\nu}\in\im\mathfrak m_i$ for all $\nu\geq\nu_0$. This contradicts our assumption and proofs the claim. Now let $A\in\A^{0,p}(P)\setminus X_i$. We argue that for $k>\frac{10}{\delta_i}$ there exists a neighborhood $U$ of $A$ in $\A^{0,p}(P)$ such that $\V_{\ell}|_U=0$. This implies smoothness of $\V_{\ell}$ at the point $A$. Assume by contradiction that such $U$ does not exist. Then there is a sequence $A^{\nu}\in\A^{0,p}(P)$ such that $\lim_{\nu\to\infty}A^{\nu}=A$ in $\A^{0,p}(P)$, hence also in $\A^{0,2}(P)$, and $\V_{\ell}(A^{\nu})\neq0$ for all $\nu$. By definition of $\V_{\ell}$ and choice of $k<\frac{10}{\delta_i}$ this implies that $A^{\nu}\in X_i$ and $\|\alpha_i(A^{\nu})\|_{L^2(\Sigma)}<\frac{\delta_i}{5}$ for all $\nu\in\mathbbm N$. It follows from the definition of the set $X_i$ and the triangle inequality that
\begin{eqnarray*}
B^{\nu}\coloneqq\big\{A\in\A^{0,p}(P)\,\big|\,\|A-(A_i+\alpha_i(A^{\nu}))\|_{L^2(\Sigma)}<\frac{\delta_i}{5}\big\}\subseteq X_i.
\end{eqnarray*}
With $A^{\nu}\in X_i\subseteq\im\mathfrak m_i$, Proposition \ref{prop:condlocslicepert} applies and yields a gauge transformation $g^{\nu}\in\G^{1,p}(P)$ such that $(g^{\nu})^{\ast}A^{\nu}=A_i+\alpha_i(A^{\nu})$. Because $A^{\nu}\to A$ in $\A^{0,2}(P)$ as $\nu\to\infty$ it follows for sufficiently large $\nu\geq\nu_0$ that $(g^{\nu})^{\ast}A\in B^{\nu}\subseteq X_i$. Since $X_i$ is invariant under gauge transformations in $\G^{1,p}(P)$ this implies $A\in X_i$ and contradicts our assumption. Thus the claim follows. Because every $A\in\A^{0,p}(P)$ is either contained in $X_i$ or in its complement in $\A^{0,p}(P)$ we conclude that $\V_{\ell}$ is smooth on $\A^{0,p}(P)$.
\end{proof}

In the following we shall consider only those perturbations $\V_{\ell}$ which meet the assumptions of Propositions \ref{prop:perturbsmooth} and \ref{prop:L2estgrad}. These are precisely satisfied for triples $\ell=(i,j,k)\in\mathbbm N^3$ such that $k>\max\{\frac{10}{\delta_i},\frac{2}{\delta(A_i,p)}\}$ where $\delta_i$ denotes the constant of Proposition \ref{prop:perturbsmooth}, and $\delta(A_i,p)$ is as in Proposition \ref{prop:L2estgrad}.  For the remainder of this article we allow only for triples $(i,j,k)\in\mathbbm N^3$ such that $k$ satisfies this condition and renumber the subset of such triples by integers $\ell\in\mathbbm N$. Given $\ell\in\mathbbm N$, we fix a constant $C_{\ell}>0$ such that the following conditions are satisfied.

\begin{compactenum}[(i)]
\item
$\sup_{A\in\A(P)}|\V_{\ell}(A)|\leq C_{\ell}$,
\item
$\sup_{A\in\A(P)}\|\nabla\V_{\ell}(A)\|_{L^2(\Sigma)}\leq C_{\ell}$,
\item
$\|\nabla\V_{\ell}(A)\|_{C^0(\Sigma)}\leq C_{\ell}(1+\|F_A\|_{L^3(\Sigma)})$ for all $A\in\A(P)$.
\item
$\|H_A\V_{\ell}\beta\|_{L^p(\Sigma)}\leq C_{\ell}(1+\|F_A\|_{L^3(\Sigma)})\|\beta\|_{L^p(\Sigma)}$ for all $A\in\A(P)$, $\beta\in\Omega^1(\Sigma,\ad(P))$, and $1<p<\infty$. 
\end{compactenum}
Here the notation $\nabla\V_{\ell}\colon\A(P)\to\Omega^1(\Sigma,\ad(P))$ and $H_A\V_{\ell}\colon\Omega^1(\Sigma,\ad(P))\to\Omega^1(\Sigma,\ad(P))$ refers to the $L^2$ gradient and Hessian (at the point $A\in\A(P)$) of the map $\V_{\ell}$. We prove in Proposition \ref{prop:estimatespertVell} the existence of such a constant $C_{\ell}$, for every $\ell\in\mathbbm N$. The \emph{universal space of perturbations} is the normed linear space
\begin{eqnarray}\label{def:univspacepert}
Y\coloneqq\Big\{\V\coloneqq\sum_{\ell=1}^{\infty}\lambda_{\ell}\V_{\ell}\,\Big|\,\lambda_{\ell}\in\R\;\textrm{and}\;\|\V\|\coloneqq\sum_{\ell=1}^{\infty}C_{\ell}|\lambda_{\ell}|<\infty\Big\}.
\end{eqnarray}
It is a separable Banach space isomorphic to the space $\ell^1$ of summable real sequences. Further relevant properties of the perturbations $\V_{\ell}$ are discussed in \ref{sect:Properties of the perturbations}. To prove Theorem \ref{thm:mainresult} we need to consider the Yang--Mills gradient flow for connections of Yang--Mills energy below a fixed level set $a>0$. For a given regular value $a>0$ of $\YM$ and each critical manifold $\mathcal C\in\mathcal{CR}^a$ we fix a gauge-invariant closed $L^2$ neighborhood $U_{\mathcal C}$ of $\mathcal C$ such that $U_{\mathcal C_1}\cap U_{\mathcal C_2}=\emptyset$ for all $\mathcal C_1,\mathcal C_2\in\mathcal{CR}^a$ with $\mathcal C_1\neq\mathcal C_2$. From Proposition \ref{prop:finiteset} it follows that such a choice is possible. We then restrict the universal Banach space $Y$ of perturbations as follows.

\begin{defn}\label{def:regperturbation}
\upshape
Let $a>0$ be a regular value of $\YM$. A perturbation $\V=\sum_{\ell=1}^{\infty}\lambda_{\ell}\V_{\ell}\in Y$ is called $a$\emph{-admissible} if it satisfies for every $\ell\in\mathbbm N$ the condition
\begin{eqnarray*}
\supp\V_{\ell}\cap U_{\mathcal C}\neq\emptyset\quad\textrm{for some}\quad\mathcal C\in\mathcal{CR}^a\quad\Longrightarrow\quad\lambda_{\ell}=0.
\end{eqnarray*}
\end{defn}

It is straightforward to show that the space of $a$-admissible perturbation is a closed subspace of the Banach space $Y$. The following proposition shows that adding a small $a$-admissible perturbation to $\YM$ leaves the set of critical points below level $a$ unchanged.

\begin{prop}\label{prop:samecritpoints} 
For every regular value $a>0$ of $\YM$ and every $\eps>0$ there is a constant $\delta=\delta(a,\eps)>0$ with the following significance. Assume $\V$ is an $a$-admissible perturbation with $\|\V\|<\delta$. Then it holds that
\begin{multline*}
\crit(\YMV)\cap\{A\in\A^{1,p}(P)\mid \YM(A)<a\}\\
=\crit(\YM)\cap\{A\in\A^{1,p}(P)\mid \YM(A)<a\}.
\end{multline*}
\end{prop}

\begin{proof}
The inclusion $\crit(\YM)\subseteq\crit(\YMV)$ is clear because $\V$ is by Definition \ref{def:regperturbation} supported away from $\crit(\YM)$. It remains to show that the set $\crit(\YMV)\cap\{A\in\A^{1,p}(P)\mid\YM(A)<a\}$ only contains points that are also critical for $\YM$. Thus let $A$ be contained in this set. In addition we may assume that $A\in\supp\V$ and hence in particular $A\notin\cup_{\mathcal C\in\mathcal{CR}^a}U_{\mathcal C}$. Proposition \ref{lem:PalaisSmale} now shows the existence of a constant $\delta_1=\delta_1(a)>0$ such that $\|\nabla\YM(A)\|_{W^{-1,2}(\Sigma)}\geq\delta_1$. Choosing $\delta<\delta_1$ it follows for every $a$-admissible perturbation $\V$ with $\|\V\|<\delta$ that
\begin{multline*}
\|\nabla\YMV(A)\|_{W^{-1,2}(\Sigma)}\geq\|\nabla\YM(A)\|_{W^{-1,2}(\Sigma)}-\|\nabla\V(A)\|_{W^{-1,2}(\Sigma)}\\
\geq\|\nabla\YM(A)\|_{W^{-1,2}(\Sigma)}-\|\nabla\V(A)\|_{L^2(\Sigma)}>\delta_1-\delta>0.
\end{multline*}
The second inequality is by Proposition \ref{prop:l2w-12estimate}. The third one is a consequence of condition (ii) above from which it follows that
\begin{eqnarray*}
\|\nabla\V(A)\|_{L^2(\Sigma)}\leq\sum_{\ell=1}^{\infty}|\lambda_{\ell}|\cdot\|\nabla\V_{\ell}(A)\|_{L^2(\Sigma)}\leq\sum_{\ell=1}^{\infty}|\lambda_{\ell}|C_{\ell}=\|\V\|<\delta.
\end{eqnarray*}
Hence $A\notin\crit(\YMV)$, and this proves the remaining inclusion.
\end{proof}

\section{Yang--Mills gradient flow}\label{sec:YMgradientflow}

As discussed in the introduction it is convenient for analytical reasons to introduce in the perturbed Yang--Mills gradient flow equation $\partial_sA+d_A^{\ast}F_A+\nabla\V(A)=0$ an additional \emph{gauge fixing term} $-d_A\Psi$, $\Psi\in\Omega^0(\Sigma,\ad(P))$, such that solutions of the flow equation become invariant under \emph{time-dependent} gauge transformations. This requires to introduce some further notation. For an interval $I\subseteq\R$ we denote by $\hat P_I\coloneqq I\times P$ the trivial extension of the principle $G$-bundle $P$ to the base manifold $I\times\Sigma$, and set $\hat P\coloneqq\hat P_{\R}$. We let $\G(\hat P)$ denote the group of smooth gauge transformations of the principle $G$-bundle $\hat P$ and call this the group of time-dependent gauge-transformations (and similarly for $\G(\hat P_I)$). A pair $(A,\Psi)\in C^{\infty}(\R,\A(P)\times\Omega^0(\Sigma,\ad(P)))$ can naturally be identified with the connection $\AA=A+\Psi\,ds\in\A(\hat P)$. The action of the group $\G(\hat P)$ on $\A(\hat P)$ by gauge transformations is given by
\begin{eqnarray}\label{eq:extendedgaugetransf}
g^{\ast}\AA=g^{\ast}A+(g^{-1}\Psi g+g^{-1}\partial_sg)\wedge ds.
\end{eqnarray}

\begin{defn}\label{defnEYF}
\upshape
Let $\V\in Y$ be a perturbation. The $\G(\hat P)$-\emph{invariant, perturbed Yang--Mills gradient flow} is the nonlinear PDE
\begin{eqnarray}\label{EYF}
0=\partial_sA+d_A^{\ast}F_A-d_A\Psi+\nabla\mathcal V(A)
\end{eqnarray}
for connections $\AA=A+\Psi\,ds\in\A(\hat P)$.
\end{defn}

One easily checks that with $\AA\in\A(\hat P)$, also $g^{\ast}\AA$ is a solution of \eqref{EYF}, for every $g\in\G(\hat P)$. We show in Proposition \ref{prop:temporalgauge} below the existence of a gauge transformation $g\in\G(\hat P)$ which transforms every solution $\AA$ of \eqref{EYF} in \emph{temporal gauge} $g^{\ast}\AA=g^{\ast}A+0\,ds$. This gauge transformation is determined as solution of the ODE $\partial_sg=-\Psi g$ and is unique up to multiplication with a constant gauge transformation $h\in\G(P)$. In this way moduli spaces (modulo $\G(\hat P)$ equivalence) of connecting trajectories of \eqref{EYF} become naturally identified with those of the Yang--Mills gradient flow equation $\partial_sA+d_A^{\ast}F_A+\nabla\V(A)=0$ (modulo $\G(P)$ equivalence). In order to obtain an identification only up to $\G_0(P)$ equivalence we restrict $\G(\hat P)$ to gauge transformations $g(s)$ which converge to some based gauge transformation $g^-\in\G_0(P)$ as $s\to-\infty$, cf.~the following section.

\subsection{Banach manifolds}\label{Banachmanifolds}

We introduce the functional analytic setup used to construct moduli spaces of solutions to equation \eqref{defnEYF}. The Banach manifolds we shall work with are modeled on weighted Sobolev spaces in order to make the Fredholm theory work. We therefore fix a number $\delta>0$ and a smooth cut-off function $\beta$ such that $\beta(s)=-1$ if $s<0$ and $\beta(s)=1$ if $s>1$. We define the $\delta$-weighted $(k,p)$-Sobolev norm (for $1\leq p\leq\infty$ and an integer $k\geq0$) of a measurable function (respectively, a measurable section of a vector bundle) $u$ over $\R\times\Sigma$ to be the usual $(k,p)$-Sobolev norm of the function (or section) $e^{\delta\beta(s)s}u$.\\
\medskip\\
Recall the definition of parabolic Sobolev spaces at the end of Section \ref{sec:prelim}. We fix numbers $\delta>0$, $p>3$. Let $\A_{\delta}^{1,2;p}(P)$ denote the space of time-dependent connections on $P$ which are locally of class $W^{1,2;p}$ and for which there exist limiting connections $A^{\pm}\in\hat{\mathcal C}^{\pm}\subseteq\crit(\YM)$ and times $T^{\pm}\in\R$ such that the time-dependent $1$-forms $\alpha^{\pm}\coloneqq A-A^{\pm}$ satisfy 
\begin{align}\label{eq:regularityatends}
\nonumber\alpha^-\in W_{\delta}^{1,p}((-\infty,T^-],L^p(\Sigma,\ad(P)))\cap L_{\delta}^p((-\infty,T^-],W^{2,p}(\Sigma,\ad(P))),\\
\alpha^+\in W_{\delta}^{1,p}([T^+,\infty),L^p(\Sigma,\ad(P)))\cap L_{\delta}^p([T^+,\infty),W^{2,p}(\Sigma,\ad(P))).
\end{align}
Similarly, let $\G_{\delta}^{2,p}(\hat P)$ denote the group of gauge transformations of $\hat P$ which are locally of class $W^{2,p}$ and in addition satisfy the following two conditions. First, the time-dependent $\ad(P)$-valued $1$-form $g^{-1}dg$ satisfies 
\begin{eqnarray*}
g^{-1}dg\in L_{\delta}^p(\R,W^{2,p}(\Sigma,T^{\ast}\Sigma\otimes\ad(P)))
\end{eqnarray*}
(this condition being necessary to make sure that $g^{\ast}A\in L_{\delta}^p(\R,\A^{2,p}(P))$ for every $A$ with this property).
Second, there exist limiting gauge transformations $g^-\in\G_0^{2,p}(P)$, $g^+\in\G^{2,p}(P)$, numbers $T^{\pm}\in\R$, and bundle valued $0$-forms
\begin{align*}
\gamma^-\in W_{\delta}^{2,p}((-\infty,T^-]\times\Sigma,\ad(\hat P_{(-\infty,T^-]})),\\\gamma^+\in W_{\delta}^{2,p}([T^+,\infty)\times\Sigma,\ad(\hat P_{[T^+,\infty)}))
\end{align*}
with 
\begin{eqnarray*}
g(s)=g^-\exp(\gamma^-(s))\quad(s\leq T^-),\qquad g(s)=g^+\exp(\gamma^+(s))\quad(s\geq T^+).
\end{eqnarray*}
Here the notation $\hat P_{(-\infty,T^-]}$ refers to the trivial extension of the principle $G$-bundle $P$ to the base manifold $(-\infty,T^-]\times\Sigma$ (and analogously for the interval $[T^+,\infty)$).\\
\medskip\\
We now fix critical manifolds $\hat{\mathcal C}^{\pm}\subseteq\crit(\YM)$, with the set $\crit(\YM)$ being defined in \eqref{eq:defcritmanifold}, and denote $\mathcal C^{\pm}\coloneqq\hat{\mathcal C}^{\pm}/\G_0^{2,p}(P)$. (The notation $\hat{\mathcal C}$ deviates slightly from the one used in Section \ref{sec:YMfunctional}). We then denote by $\hat\B\coloneqq\hat\B(\hat{\mathcal C}^-,\hat{\mathcal C}^+,\delta,p)$ the Banach manifold of pairs
\begin{eqnarray*}
(A,\Psi)\in\A_{\delta}^{1,2;p}(P)\times W_{\delta}^{1,p}(\R\times\Sigma)
\end{eqnarray*}
such that $\lim_{s\to\pm\infty}A(s)=A^{\pm}$ holds in the sense of \eqref{eq:regularityatends} for some $A^{\pm}\in\hat{\mathcal C}^{\pm}$. We identify such pairs as before with connections $\AA=A+\Psi\,ds$ on $\hat P$. The action of the group $\G_{\delta}^{2,p}(\hat P)$ on $\hat\B$ by gauge transformations as in \eqref{eq:extendedgaugetransf} is smooth. It is free by our requirement that $\lim_{s\to-\infty}g(s)=g^-$ be a based gauge transformation. The resulting quotient space
\begin{eqnarray*}
\B\coloneqq\B(\mathcal C^-,\mathcal C^+,\delta,p)\coloneqq\frac{\hat\B(\hat{\mathcal C}^-,\hat{\mathcal C}^+,\delta,p)}{\G_{\delta}^{2,p}(\hat P)}
\end{eqnarray*}
therefore is again a smooth Banach manifold. We define the Banach space bundle $\E=\E(\mathcal C^-,\mathcal C^+,\delta,p)$ over $\B$ as follows. Let $\hat\E$ be the trivial Banach space bundle over $\hat\B$ with fibres  
\begin{eqnarray*}
\hat{\mathcal E}_{(A,\Psi)}\coloneqq L_{\delta}^p(\mathbbm R,L^p(\Sigma,T^{\ast}\Sigma\otimes\ad(P))).
\end{eqnarray*}
The action of $\G_{\delta}^{2,p}(\hat P)$ on $\hat\B$ lifts to a free action on $\hat\E$ (which is by conjugation on the fibres $\hat{\mathcal E}_{(A,\Psi)}$). Let $\E$ denote the respective quotient space. We finally define the section $\F\colon\B\to\E$ by 
\begin{eqnarray}\label{sectionF}
\F\colon[(A,\Psi)]\mapsto[\partial_sA+d_A^{\ast}F_A-d_A\Psi+\nabla\V(A)].
\end{eqnarray}

\subsection{Moduli spaces}\label{sect:modspaces}
We continue with the definition of moduli spaces of connecting trajectories of the perturbed Yang--Mills gradient flow. Let $a\geq0$ be a regular value of $\YM$. We fix an $a$-admissible perturbation $\V\in Y$ with $\|\V\|$ sufficiently small such that Proposition \ref{prop:samecritpoints} applies. Let a pair $(\hat{\mathcal C}^-,\hat{\mathcal C}^+)\in\mathcal{CR}^a\times\mathcal{CR}^a$ of critical manifolds be given and denote as before $\mathcal C^{\pm}\coloneqq\hat{\mathcal C}^{\pm}/\G_0^{2,p}(P)$. Let us define the space
\begin{multline*}
\hat{\mathcal M}(\hat{\mathcal C}^-,\hat{\mathcal C}^+)\coloneqq\Big\{(A,\Psi)\in\A_{\delta}^{1,2;p}(P)\times W_{\delta}^{1,p}(\R\times\Sigma)\;\Big|\;(A,\Psi)\,\textrm{satisfies}\,\eqref{EYF},\\
\lim_{s\to\pm\infty}A(s)=A^{\pm}\,\textrm{for some}\,A^{\pm}\in\hat{\mathcal C}^{\pm}\Big\}.
\end{multline*}
It is invariant under the action of the group $\G_{\delta}^{2,p}(\hat P)$ by gauge transformations. The limit $\lim_{s\to\pm\infty}A(s)=A^{\pm}$ is to be understood as in \eqref{eq:regularityatends}. We define the moduli space of gradient flow lines between $\mathcal C^-$ and $\mathcal C^+$ as the quotient
\begin{eqnarray}\label{modulispace}
\mathcal M(\mathcal C^-,\mathcal C^+)\coloneqq\frac{\hat{\mathcal M}(\hat{\mathcal C}^-,\hat{\mathcal C}^+)}{\G_{\delta}^{2,p}(\hat P)}.
\end{eqnarray}
This quotient equals the zero set of the section $\F$ in \eqref{sectionF}. It will be shown that for a generic perturbation $\V\in Y$ the vertical differential $d_x\mathcal F$ at every zero $x$ of $\F$ is a surjective Fredholm operator. It then follows from the implicit function theorem that $\mathcal M(\mathcal C^-,\mathcal C^+)$ is a finite-dimensional smooth manifold.

\subsection{Temporal gauges and regularity}

We establish two results concerning regularity properties and the existence of temporal gauges of solutions of the $\G(\hat P)$-invariant, perturbed Yang--Mills gradient flow equation \eqref{EYF}.

\begin{prop}\label{prop:regularity}
Let $a\geq0$ and $\V$ be an $a$-admissible perturbation with $\|\V\|<\delta$ sufficiently small such that Proposition \ref{prop:samecritpoints} applies. Let $\AA=A\in\A_{\loc}^{1,2;p}(P)$ be a solution of \eqref{EYF} in temporal gauge, meaning that $\Psi=0$, and assume that $\limsup_{s\to-\infty}\YM(A(s))\leq a$. Then there is a constant $T>0$ and a gauge transformation $g\in\G_{\loc}^{2,p}(\hat P)$ such that $g^-\coloneqq g|_{(-\infty,T]}$ and $g^+\coloneqq g|_{[T,\infty)}$ are independent of $s$, and such that the restriction $A_1\coloneqq g^{\ast}A|_{\R\setminus(-T,T)}$ is smooth. 
\end{prop}

\begin{proof}
Fix a constant $\eps>0$. Under the given assumptions we may apply Lemma \ref{lem:approxcritptaftergauge}. This result together with gauge invariance of the $L^2$ norm implies for every $\delta>0$ the existence of some $T>0$ such that $\|\partial_sA(s)\|_{L^2(\Sigma)}<\delta$ for all $|s|\geq T$. Choosing $\delta=\delta(\eps)$ sufficiently small and $T=T(\delta)$ sufficiently large we obtain from Proposition \ref{lem:PalaisSmale} the existence of a Yang--Mills connection $A_0(s)$ such that $\|A(s)-A_0(s)\|_{L^2(\Sigma)}<\eps$ for every $|s|\geq T$. By definition of $a$-admissible perturbations this implies that $\V(A(s))=0$ for all $|s|\geq T$. The rest of our argumentation follows the proof of the related result \cite[Theorem A.3]{Frauenfelder1}. For $n\in\mathbbm N$ denote $I_n\coloneqq[-T-n,-T]$. Then by Theorem \ref{thm:boundedness} and its proof there exists for each $n\in\mathbbm N$ a gauge transformation $g_n\in\G^{2,p}(\hat P|_{I_n})$ such that $g_n^{\ast}A|_{I_n}$ satisfies a local slice condition as in \eqref{locslicestep4} with respect to some smooth reference connection in $\A(\hat P|_{I_n})$. Then the same bootstrapping arguments as in the proof of Theorem \ref{thm:boundedness} show that $g_n^{\ast}A|_{I_n}$ is smooth. Bootstrapping here is not limited by lack of smoothness of $\V(A)$ because $\V(A)$ vanishes on $I_n$ as seen above. Finally,  we modify the gauge transformations $g_n$ in the following way. As shown in \cite[Theorem A.3]{Frauenfelder1} it exists for every $n\in\mathbbm N$ a smooth gauge transformation $h_n\in\G(\hat P|_{I_{n+1}})$ such that $\hat g_{n+1}\coloneqq h_n\circ g_{n+1}$ satisfies $\hat g_{n+1}|_{I_{n-1}}=g_n|_{I_{n-1}}$. Now replace $g_n$ by $\hat g_n$. Note that the gauge transformation $\hat g\in\G_{\loc}^{2,p}(\hat P|_{(-\infty,T]})$ given by $\hat g(s)\coloneqq\hat g_{n+1}(s)$ if $s\in I_n$ is well-defined. Also, $\hat g_{n+1}^{\ast}A|_{I_n}$ is smooth, and thus $\AA_1=A_1+\Psi_1\,ds\coloneqq\hat g^{\ast}A|_{(-\infty,-T]}\in\A(\hat P|_{(-\infty,-T]})$ is smooth. Solving the ODE $\partial_sh+\Psi_1h=0$ with $h(-T)=\mathbbm 1$ on $(-\infty,-T]$ yields a smooth gauge transformation $h$ such that the connection $\AA_2\coloneqq h^{\ast}\AA_1$ is smooth and in temporal gauge, i.e.~of the form $\AA_2=A_2+0\,ds$. Finally, by construction we have that $\AA_2=g^{\ast}A|_{(-\infty,-T]}$ for some gauge transformation $g^-\in\G_{\loc}^{2,p}(\hat P|_{(-\infty,-T]})$. As both $\AA_2$ and $A$ are in temporal gauge it follows that $g^-$ does not depend on $s$ and hence defines a gauge transformation $g^-\in\G^{2,p}(P)$. In the same way we can find a gauge transformation $g^+\in\G^{2,p}(P)$ such that $(g^+)^{\ast}A|_{[T,\infty)}$ is smooth. Then choose $g\in\G_{\loc}^{2,p}(\hat P)$ satisfying $g|_{(-\infty,-T]}=g^-$ and $g|_{[T,\infty)}=g^+$. With this choice of $g$, the claim follows.
\end{proof}

Subsequently we sometimes make use of the fact that any point $[(A,\Psi)]\in\mathcal M(\mathcal C^-,\mathcal C^+)$ in the moduli space has a representative in temporal gauge. This is the content of the following proposition.

\begin{prop}\label{prop:temporalgauge}
Let $(A,\Psi)\in\hat{\mathcal M}(\hat{\mathcal C}^-,\hat{\mathcal C}^+)$. Then there exists $A_1\in\A_{\delta}^{1,2;p}(P)$ such that $[(A,\Psi)]=[(A_1,0)]\in\mathcal M(\mathcal C^-,\mathcal C^+)$.
\end{prop}

\begin{proof}
Set $\AA\coloneqq A+\Psi\,ds$. For $n\in\mathbbm N$ we denote $I_n\coloneqq[-n,n]$. By Theorem \ref{thm:boundedness} there exists for each $n\in\mathbbm N$ a gauge transformation $h_n\in\G^{2,p}(\hat P|_{I_n})$ such that $\AA^n=A^n+\Psi^n\,ds\coloneqq h_n^{\ast}\AA|_{I_n}$ has at least the regularity properties $A^n\in\A^{2,p}(\hat P|_{I_n})$ and $\Psi^n\in W^{3,p}(I_n\times\Sigma)$. By standard arguments as carried out in \cite[Theorem A.3]{Frauenfelder1} we can replace each $h_n$ by a gauge transformation $\hat h_n\in\G^{2,p}(\hat P|_{I_n})$ satisfying the following. The gauge transformation $g$ of $\hat P$ given by $g(s)\coloneqq\hat h_{n+1}(s)$ if $s\in I_n$ is well-defined and contained in $\G_{\loc}^{2,p}(\hat P)$, and furthermore $\AA_0=A_0+\Psi_0\,ds\coloneqq g^{\ast}\AA$ satisfies $A_0\in\A_{\loc}^{2,p}(\hat P)$ and $\Psi_0\in W_{\loc}^{3,p}(\R\times\Sigma)$. Now define the gauge transformation $g_1$ of $\hat P$ as the unique solution of the ordinary differential equation
\begin{eqnarray*} 
\partial_sg_1+\Psi_0g_1=0,\qquad g_1(0)=\mathbbm 1.
\end{eqnarray*}
Then, as shown in \cite[Proposition 7.1]{SalWeh}, $g_1\in\G_{\loc}^{3,p}(\hat P)$ and $g_1^{\ast}\AA_0\eqqcolon\AA_1=A_1+0\,ds$. It furthermore follows that $A_1=g_1^{\ast}A_0\in\A_{\loc}^{2,p}(\hat P)$. It now follows from Proposition \ref{prop:regularity} that there exists a constant $T>0$ such that $A_1^-\coloneqq A_1|_{(-\infty,-T]}$ and $A_1^+\coloneqq A_1|_{[T,\infty)}$ differ from smooth connections $A_2^{\pm}$ on $\hat P|_{(-\infty,-T]}$, respectively $\hat P|_{[T,\infty)}$ by gauge transformations $g^{\pm}$ independent of the time variable $s$. Furthermore, $g^{\pm}\in\G^{3,p}(P)$ because of smoothness of $A_2^{\pm}$ and the regularity property $A_1\in\A_{\loc}^{2,p}(\hat P)$. Because of smoothness of $A_2^{\pm}$ we can apply Theorem \ref{YMExponentialdecay} which yields the existence of Yang--Mills connections $A^{\pm}\in\A(P)$ and exponential convergence $A_2^{\pm}(s)\to A^{\pm}$ in $C^{\ell}$ for all $\ell\geq0$, as $s\to\pm\infty$. Because the gauge transformations $g^{\pm}$ are constant in $s$ it follows that the connections $A_1^{\pm}=(g^{\pm})^{\ast}A_2^{\pm}$ converge exponentially to $(g^{\pm})^{\ast}A^{\pm}$ with respect to the $W^{2,p}$ norm on $(-\infty,-T]$, respectively $[T,\infty)$. This shows in particular that $(A_1,0)$ is contained in the Banach manifold $\hat B$ as introduced in Section \ref{Banachmanifolds}. So far we have shown that $A_1+0\,ds$ is gauge equivalent to $\AA$ via some gauge transformation $g_2\in\G_{\loc}^{2,p}(\hat P)$ and hence is again a solution of \eqref{EYF}. It remains to check that $g_2\in\G_{\delta}^{2,p}(\hat P)$, hence satisfying the exponential decay properties for $s\to\pm\infty$ as required in Section \ref{Banachmanifolds}. These follow form standard arguments as in \cite[Appendix E]{Ziltener} using the relations $g_2^{-1}dg_2=A_1-g_2^{-1}Ag_2$ and $g_2^{-1}\dot g_2=-g_2^{-1}\Psi g_2$ together with the exponential decay properties of $(A,\Psi)$ and $A_2$. Multiplying $g_2$ by a suitable gauge transformation $h\in\G^{2,p}(P)$ we can achieve that the limit $\lim_{s\to-\infty}hg_2(s)=hg_2^-$ is a based gauge transformation contained in $\G_0^{2,p}(P)$. This shows that $[(h^{\ast}A_1,0)]=[(A,\Psi)]$ and proves the claim.
\end{proof}

\begin{rem}\label{rem:identofmodspaces}
\upshape
We remark that in view of Proposition \ref{prop:temporalgauge} the moduli space $\mathcal M(\mathcal C^-,\mathcal C^+)$ can naturally be identified with the moduli space of connecting trajectories  (up to constant gauge transformations in $\G_0(P)$) between $\hat{\mathcal C^-}$ and $\hat{\mathcal C^+}$ of the Yang--Mills equation $\partial_sA+d_A^{\ast}F_A+\nabla\V(A)=0$, with analogous regularity and decay properties. We however do not make this statement precise since it will not be used in this work.
\end{rem}

\section{Exponential decay}\label{sec:Expdecay}

We fix a regular value $a\geq0$ of $\YM$ and an $a$-admissible perturbation $\V\in Y$ with $\|\V\|<\delta$ sufficiently small such that Proposition \ref{prop:samecritpoints} applies. The aim of this section is to establish exponential decay towards Yang--Mills connections for solutions of the gradient flow equation \eqref{EYF} below the energy level $a$. This result is a crucial step in proving compactness of moduli spaces of connecting gradient flow lines. Namely, the compactness Theorem \ref{thm:compactness} yields for every sequence of connecting trajectories below level $a$ (up to gauge transformations) a subsequence, converging to a (possibly broken) flow line. The exponential decay theorem below then shows that also its limit represents an element of the relevant moduli space of connecting trajectories. Thanks to Proposition \ref{prop:regularity} it suffices to consider solutions $A+\Psi\,ds$ of \eqref{EYF} such that $\Psi=0$ and $A$ is smooth outside some finite interval $(-T,T)$. In this situation there holds the following result.

\begin{thm}[Exponential decay]\label{YMExponentialdecay}
Let $a\geq0$ and $\V\in Y$ be as in the preceding paragraph. Then there is a constant $\lambda>0$ such that the following holds. Assume $A+\Psi\,ds$ is a solution of $\eqref{EYF}$ with $A\in\A_{\loc}^{1,2;p}(P)$ and $\Psi\in W^{1,p}(\R\times\Sigma)$ such that 
\begin{eqnarray}\label{eq:boundedenergysol}
\limsup_{s\to-\infty}\YMV(A(s))\leq a.
\end{eqnarray}
Assume furthermore the existence of a constant $T_1>0$ such that the restriction of $A$ to $\R\setminus(-T_1,T_1)$ is smooth. Then there exist smooth Yang--Mills connections $A^{\pm}$, a constant $T\geq T_1$, and for each integer $\ell\geq0$ a constant $c_{\ell}\geq0$ such that the estimate
\begin{eqnarray*} 
\|A-A^-\|_{C^{\ell}([-s-1,-s+1]\times\Sigma)}+\|A-A^+\|_{C^{\ell}([s-1,s+1]\times\Sigma)}\leq c_{\ell}e^{-\lambda(s-T)}
\end{eqnarray*}
holds for all $s\geq T+1$. (Here we abuse notation slightly in writing $A$ instead of $A|_{[-s-1,-s+1]}$ or $A|_{[s-1,s+1]}$).
\end{thm}

For the proof of Theorem \ref{YMExponentialdecay} we need to establish some further notation and auxiliary results. In the following we denote $I\coloneqq[-1,1]$. We fix a solution $A\in W_{\loc}^{1,2;p}(\R\times\Sigma)$ of \eqref{EYF} in temporal gauge satisfying \eqref{eq:boundedenergysol}. For each $\nu\in\mathbbm N$ and $s\in I$ we define the connection
\begin{eqnarray}\label{eq:mapsAnu}
A^{\nu}(s)\coloneqq A|_{[-\nu-1,-\nu+1]}(s-\nu)\in\A^{1,2;p}(P).
\end{eqnarray}
Note that these again satisfy equation \eqref{EYF}. We introduce the energy
\begin{eqnarray*} 
\mathcal E(\AA)\coloneqq\int_{-1}^1\|\dot A(s)-d_{A(s)}\Psi(s)\|_{L^2(\Sigma)}^2\,ds
\end{eqnarray*}
for connections $\AA=A+\Psi\,ds\in\A^{1,p}(\hat P_I)$. It is easily checked that $\mathcal E$ is invariant under gauge transformations in $\G^{2,p}(\hat P_I)$ and continuous as a map $\mathcal E\colon\A^{1,p}(\hat P_I)\to\R$. Considering $A^{\nu}$ as a connection on $\hat P_I$ it follows from our assumptions on $A$ that 
\begin{eqnarray}\label{eq:energyonintervalconvergtozero}
\lim_{\nu\to\infty}\E(A^{\nu})=0.
\end{eqnarray}

\begin{lem}\label{lem:approxcritptaftergauge}
Let constants $k\in\mathbbm N_0$ and $\kappa>0$ be given. Then there exists a smooth connection $\AA^{\infty}\in\A(\hat P_I)$, a positive integer $\nu_0=\nu_0(k,\kappa)$ and a sequence $g^{\nu}\in\G^{2,p}(\hat P_I)$ of gauge transformations such that for $\AA^{\nu}\coloneqq(g^{\nu})^{\ast}A^{\nu}$ and every $\nu\geq\nu_0$ the inequality
\begin{eqnarray}\label{eq:kappaW1pbound}
\|\AA^{\nu}-\AA^{\infty}\|_{W^{k,p}(I\times\Sigma)}<\kappa
\end{eqnarray}
is satisfied. Moreover, the connection $\AA^{\infty}$ can be chosen to be of the form
\begin{eqnarray}\label{eq:limitindepofs}
\AA^{\infty}(s)=A^{\infty}\qquad\textrm{for all}\quad s\in I,
\end{eqnarray}
where $A^{\infty}\in\A(P)$ is a Yang--Mills connection. Furthermore, the gauge transformations $g^{\nu}$ can be chosen to be independent of the time variable $s$.
\end{lem}

\begin{proof}
\setcounter{step}{0}
\begin{step}
There exists a number $\nu_0$ and for each $\nu\geq\nu_0$ a smooth Yang--Mills connection $\AA^{\infty,\nu}$ such that the statement holds true with $\AA^{\infty}$ replaced by $\AA^{\infty,\nu}$.
\end{step}
Let $\kappa>0$ be given and assume by contradiction that such a number $\nu_0$ does not exist. Then we can extract a subsequence of $(A^{\nu})$, which we again label by $\nu$, such that \eqref{eq:kappaW1pbound} is contradicted for every sequence $(g^{\nu})\subseteq\G^{2,p}(\hat P_I)$ of gauge transformations and every $\AA^{\infty}$ of the form \eqref{eq:limitindepofs} where $A^{\infty}$ is Yang--Mills. In the following we consider $A^{\nu}$ as before as a connection on $\hat P_I$. Note that by assumption it follows
\begin{eqnarray*}
\limsup_{\nu\to\infty}\YMV(A^{\nu}(0))\leq a<\infty.
\end{eqnarray*}
Hence Theorem \ref{thm:boundedness} applies. In combination with the compact embedding $W^{2,p}(I\times\Sigma)\hookrightarrow W^{1,p}(I\times\Sigma)$ this yields the existence of a sequence $(g^{\nu})\subseteq\G^{2,p}(\hat P_I)$ of gauge transformations and a connection $\AA_1=A_1+\Psi_1\,ds\in\A^{1,p}(\hat P_I)$ such that (after passing to a subsequence)
\begin{eqnarray}\label{eq:limapproxYM}
(g^{\nu})^{\ast}A^{\nu}\to\AA_1\qquad\textrm{as}\quad\nu\to\infty
\end{eqnarray}
in $\A^{1,p}(\hat P_I)$. Note that each $\AA_1^{\nu}\coloneqq(g^{\nu})^{\ast}A^{\nu}$ satisfies the $\G^{2,p}(\hat P_I)$-invariant gradient flow equation \eqref{EYF}. We claim that $A_1$ satisfies the Yang--Mills equation $d_{A_1}^{\ast}F_{A_1}=0$ on $I\times\Sigma$ (however, $A_1$ need not be stationary). Namely, from \eqref{eq:energyonintervalconvergtozero} and \eqref{eq:limapproxYM} it follows that $\mathcal E(\AA_1)=0$. Then \eqref{EYF} and \eqref{eq:limapproxYM} imply that $d_{A_1}^{\ast}F_{A_1}+\nabla\V(A_1)=0$ (considered as an equation in $L^2$) and therefore $A_1(s)$ is a critical point of $\YMV$ for almost every $s\in I$. From our choice of perturbation $\V$ and Proposition \ref{prop:samecritpoints} it follows that $d_{A_1(s)}^{\ast}F_{A_1(s)}=0$ and $\V(A(s))=0$, for almost every $s\in I$. The claim follows. As $\V|_U=0$ for a suitable $L^2(\Sigma)$ neighborhood of $A(s)$ (again by the choice of $\V$) we conclude from \eqref{eq:limapproxYM} that $\AA_1^{\nu}$ satisfies \eqref{EYF} with vanishing perturbation $\V=0$ for every $\nu\geq\nu_1$ sufficiently large. In this situation the bootstrap arguments in the proof of Theorem \ref{thm:boundedness} can straightforwardly be repeated and show that the convergence in \eqref{eq:limapproxYM} holds true in $\A^{k,p}(\hat P_I)$, for every  $k\in\mathbbm N$. Therefore the limit $\AA_1=A_1+\Psi_1\,ds$ is smooth. Let $h\in\G(\hat P)$ satisfy $h^{-1}\Psi_1h+h^{-1}\partial_sh=0$. Thus denoting $A_2^{\nu}+\Psi_2^{\nu}\,ds\coloneqq h^{\ast}(A_1^{\nu}+\Psi_1^{\nu}\,ds)$ and $A_2\coloneqq h^{\ast}A_1$ it follows that 
\begin{eqnarray}\label{eq:convergtozero}
\lim_{\nu\to\infty}(A_2^{\nu}+\Psi_2^{\nu}\,ds)=h^{\ast}A_1+(h^{-1}\Psi_1h+h^{-1}\partial_sh)\,ds=A_2
\end{eqnarray}
in $C^{\infty}(I\times\Sigma)$. Because $\E(h^{\ast}\AA_1)=\E(\AA_1)=0$ it follows that $\partial_sA_2=0$ and $A_2(s)\equiv A_2$ is a smooth Yang--Mills connection. Now we choose a sequence $(g_1^{\nu})\subseteq\G(\hat P_I)$ of smooth gauge transformation satisfying $(g_1^{\nu})^{-1}\Psi_2^{\nu}g_1^{\nu}+(g_1^{\nu})^{-1}\partial_sg_1^{\nu}=0$ for all $\nu$. From \eqref{eq:convergtozero} it follows that this choice is possible such that we have uniform convergence $g_1^{\nu}\to\mathbbm 1$ as $\nu\to\infty$. Hence the gauge transformed sequence of connections $\AA_3^{\nu}\coloneqq(g_1^{\nu})^{\ast}(A_2^{\nu}+\Psi_2^{\nu}\,ds)$ still satisfies \eqref{eq:convergtozero}. By the choice of gauge transformations $g_1^{\nu}$ it follows that each connection $\AA_3^{\nu}$ is in temporal gauge, i.e.~of the form $\AA_3^{\nu}=A_3^{\nu}$ and hence differs from $A^{\nu}$ by a gauge transformation which is constant in $s\in I$. Therefore, denoting $A^{\infty}\coloneqq A_2$ we have found a smooth Yang--Mills connection which contradicts our assumption. Hence there exists an integer $\nu_0$ such that for every $\nu\geq\nu_0$ inequality \eqref{eq:kappaW1pbound} is satisfied, for some gauge transformation $g^{\nu}$ and some smooth Yang--Mills connection $\AA^{\infty,\nu}$ of the form  \eqref{eq:limitindepofs}.
\begin{step}
The connections $\AA^{\infty,\nu}$ obtained in Step 1 can be chosen independently of $\nu\geq\nu_0$.
\end{step}
As shown in Step 1, $\AA^{\infty,\nu}(s)=A^{\infty,\nu}$ for all $s\in I$ where $A^{\infty,\nu}\in\A(P)$ is a Yang--Mills connection. To prove the claim it suffices to show that for all  sufficiently large $\nu\geq\nu_0$ the connections $A^{\infty,\nu}$ are contained in a single gauge orbit of $\G(P)$. Assume by contradiction that such a number $\nu_0$ does not exist. Then we can partition the set $N\coloneqq\{\nu\in\mathbbm N\mid\nu\geq\nu_0\}$ into two disjoint infinite subsets $N_1,N_2$ such that the two sequences $(A^{\lambda})_{\lambda\in N_j}$, $j=1,2$, do not have a common accumulation point up to gauge transformations. We choose an increasing sequence $(\mu_{\ell})\subseteq N$ of integers such that $\mu_{\ell}\in N_1$ and $\mu_{\ell}+1\in N_2$ holds for all $\ell\in\mathbbm N$. Then define for each $\ell\in\mathbbm N$ and $s\in[-2,2]$ the connection $A_1^{\ell}\in\A^{1,2;p}(P)$ by
\begin{eqnarray*}
A_1^{\ell}(s)=\begin{cases}
A^{\mu_{\ell}}(s+1)&\textrm{if}\quad-2\leq s\leq0,\\
A^{\mu_{\ell}+1}(s-1)&\textrm{if}\quad0\leq s\leq2.
\end{cases}
\end{eqnarray*}
Arguing as in Step 1 we can find a sequence $(g_{\ell})$ of gauge transformations such that (up to passing to a further subsequence) $g_{\ell}^{\ast}A_1^{\ell}$ converges uniformly to some $\AA_2^{\infty}\in\A(\hat P_{[-2,2]})$, where $\AA_2^{\infty}(s)\equiv A_2^{\infty}$ is a smooth Yang--Mills connection. This contradicts the assumption that a common accumulation point up to gauge transformations of the sequences $(A^{\lambda})_{\lambda\in N_j}$, $j=1,2$, does not exist. The claim now follows.
\end{proof}

The next step is to show that the connections $\AA^{\nu}$ obtained in Lemma \ref{lem:approxcritptaftergauge} converge exponentially in $L^2$ to $\AA^{\infty}$ as $\nu\to\infty$. As shown in the lemma, $\AA^{\infty}(s)\equiv A^{\infty}$ for some smooth Yang--Mills connection $A^{\infty}$. We note that $A^{\infty}\in\mathcal C_0$ for some smooth Banach submanifold $\mathcal C_0\subseteq\A^{2,p}(P)$ of Yang--Mills connections of energy at most $a$. A straightforward application of the implicit function theorem shows the existence of positive constants $\kappa_0=\kappa_0(A^{\infty})$ and $c_0=c_0(A^{\infty})$ such that for every $A\in\A^{2,p}(P)$ with $\|A-A^{\infty}\|_{W^{2,p}(\Sigma)}<\kappa_0$ there exists a connection $A_0\in\mathcal C_0$ with 
\begin{eqnarray}\label{eq:normalspace}
\|A-A_0\|_{W^{2,p}(\Sigma)}\leq c_0\|A-A^{\infty}\|_{W^{2,p}(\Sigma)}\qquad\textrm{and}\qquad A-A_0\in(T_{A_0}\mathcal C_0)^{\perp}.
\end{eqnarray}
Note that in particular $A-A_0$ is orthogonal to the gauge orbit through $A_0$ and hence satisfies $d_{A_0}^{\ast}(A-A_0)=0$. Subsequently we use standard elliptic estimates for the Hessian $H_{A_0}\colon W^{2,r}(\Sigma)\to L^r(\Sigma)$. This is a bounded operator for every $A_0\in\A^{2,p}(P)$ and $2\leq r<\infty$. Standard elliptic theory shows that its range is closed and every $\beta\in L^r(\Sigma)$ admits a unique $L^2(\Sigma)$-orthogonal decomposition
\begin{eqnarray}\label{eq:decgradientHessian1}
\beta	=\beta_0+\beta_1\in\im H_{A_0}\oplus\ker H_{A_0}.
\end{eqnarray}
Subsequently, we apply this with $r=p$ and $A\in\A^{2,p}(P)$ to $\beta=\beta(A)\coloneqq d_A^{\ast}F_A\in L^p(\Sigma)$. Assuming $\|A-A^{\infty}\|_{W^{2,p}(\Sigma)}<\kappa_0$, where $\kappa_0$ is the constant as above, we fix $A_0\in\mathcal C_0$ such that \eqref{eq:normalspace} holds. We also set $\alpha\coloneqq A-A_0$ and define 
\begin{eqnarray*}
R(\alpha)\coloneqq\frac{1}{2}d_{A_0}^{\ast}[\alpha\wedge\alpha]-[\ast\alpha\wedge\ast(d_{A_0}\alpha+\frac{1}{2}[\alpha\wedge\alpha])].
\end{eqnarray*}
Expanding the term $\beta=d_{A_0+\alpha}^{\ast}F_{A_0+\alpha}$ in $\alpha$ we obtain the identity
\begin{eqnarray}\label{eq:decompositiongradient}
\beta=H_{A_0}\alpha+R(\alpha).
\end{eqnarray}
Here we used that $A_0$ satisfies the Yang--Mills equation $d_{A_0}^{\ast}F_{A_0}=0$.

\begin{prop}\label{prop:compdriftterm}
Let $A^{\infty}\in\mathcal C_0$ and $\kappa_0>0$ be as in the previous paragraph. Then for every $\eps>0$ there exist positive constants $\kappa_1=\kappa_1(A^{\infty})\leq\kappa_0$ and $c_1=c_1(A^{\infty})$ with the following significance. For every $A\in\A^{2,p}(P)$ with $\|A-A^{\infty}\|_{W^{2,p}(\Sigma)}<\kappa_1$ the decomposition \eqref{eq:decgradientHessian1} of $\beta=d_A^{\ast}F_A$ satisfies the estimate
\begin{eqnarray*}
\|\beta_1\|_{L^2(\Sigma)}\leq\eps c_1\|d_A^{\ast}F_A\|_{L^2(\Sigma)}.
\end{eqnarray*}
\end{prop}

\begin{proof}
Let $A_0\in\mathcal C_0$ and $\alpha=A-A_0$ be as before. Note that (assuming $\kappa_1$ sufficiently small such that $\|[\alpha\wedge\alpha]\|_{L^2(\Sigma)}\leq\|\alpha\|_{L^2(\Sigma)}$, and using the Sobolev embedding $W^{2,p}(\Sigma)\hookrightarrow C^1(\Sigma)$) there exists a constant $c=c(\Sigma)>0$ with
\begin{eqnarray*}
\|R(\alpha)\|_{L^2(\Sigma)}\leq c\|\alpha\|_{C^1(\Sigma)}\|\alpha\|_{L^2(\Sigma)}\leq c\kappa_1\|\alpha\|_{L^2(\Sigma)}. 
\end{eqnarray*}
From \eqref{eq:decompositiongradient} and orthogonality of $\im H_{A_0}$ and $\ker H_{A_0}$ it follows that
\begin{eqnarray*}
\|\beta_1\|_{L^2(\Sigma)}^2=\langle\beta_1,H_{A_0}\alpha+R(\alpha)-\beta_0\rangle=\langle\beta_1,R(\alpha)\rangle\leq\|\beta_1\|_{L^2(\Sigma)}\|R(\alpha)\|_{L^2(\Sigma)},
\end{eqnarray*}
hence $\|\beta_1\|_{L^2(\Sigma)}\leq\|R(\alpha)\|_{L^2(\Sigma)}\leq c\kappa_1\|\alpha\|_{L^2(\Sigma)}$. Denoting by $\lambda=\lambda(A_0)>0$ the smallest (in absolute value) non-zero eigenvalue of $H_{A_0}$ it follows that $\|H_{A_0}\alpha\|_{L^2(\Sigma)}\geq\lambda\|\alpha\|_{L^2(\Sigma)}$. Here we used the second condition in \eqref{eq:normalspace} which by the Morse--Bott condition is equivalent to $\alpha\in(\ker H_{A_0})^{\perp}$. In fact, $\lambda$ can be chosen uniformly for all $A_0\in\mathcal C_0$ by compactness of $\mathcal C_0$ up to gauge transformations and the fact that the operator norm of $H_{A_0}\colon W^{2,2}(\Sigma)\to L^2(\Sigma)$, and hence $\lambda(A_0)$, depend continuously on $A_0\in\A^{2,p}(P)$. Then it follows (we drop the subscript $L^2(\Sigma)$ after $\|\cdot\|$),
\begin{multline*}
\frac{\|\beta_1\|}{\|\beta_0+\beta_1\|}\leq\frac{c\kappa_1\|\alpha\|}{\|H_{A_0}\alpha\| -\|R(\alpha)\|}\leq\frac{c\kappa_1\|\alpha\|}{\lambda\|\alpha\|-\|R(\alpha)\|}=\frac{c\kappa_1}{\lambda}+\frac{c\kappa_1\|R(\alpha)\|}{\lambda^2\|\alpha\|-\lambda\|R(\alpha)\|}\\
\leq\frac{c\kappa_1}{\lambda}+\frac{c\kappa_1\|R(\alpha)\|}{\lambda^2c^{-1}\kappa_1^{-1}\|R(\alpha)\|-\lambda\|R(\alpha)\|}=\frac{c\kappa_1}{\lambda}+\frac{c\kappa_1}{\lambda^2c^{-1}\kappa_1^{-1}-\lambda}.
\end{multline*}
Now choose $\kappa_1>0$ still smaller if necessary, such that $\frac{c\kappa_1}{\lambda}+\frac{c\kappa_1}{\lambda^2c^{-1}\kappa_1^{-1}-\lambda}<\eps$ is satisfied. The claim then follows. 
\end{proof}

\begin{lem}[$L^2$ exponential decay of the gradient]\label{exponentialdecayofthegradient}
Let $a\geq0$ be the energy level fixed before. There exists a constant $\lambda=\lambda(a)>0$ such that the following estimate is satisfied. Assume $A\in\A_{\loc}^{1,2;p}(P)$ is a solution of \eqref{EYF} as specified in the paragraph preceding Lemma \ref{lem:approxcritptaftergauge}. Then there exists $T_0\geq0$ such that for all $T\geq T_0$ and $s\leq-T$
\begin{eqnarray}\label{aprioriexpdecay}
\|\partial_sA(s)\|_{L^2(\Sigma)}\leq e^{\lambda(s+T)}\|\partial_sA(-T)\|_{L^2(\Sigma)}.
\end{eqnarray}
An analogous exponential decay estimate holds for all $s\geq T$.
\end{lem}

\begin{proof}
To prove exponential decay, we shall apply Lemma \ref{apriori0} to the map $s\mapsto\frac{1}{2}\|\dot A(s)\|_{L^2(\Sigma)}^2$. In a first step, a decay estimate of the same kind will be obtained for the maps $(g^{\nu})^{\ast}A^{\nu}$ with $A^{\nu}$ as defined in \eqref{eq:mapsAnu} and time-independent gauge transformations $g^{\nu}$ for which the conclusion of Lemma \ref{lem:approxcritptaftergauge} holds true. For a sufficiently small constant $\kappa>0$, which we shall fix in course of the proof, we let $\nu_0\in\mathbbm N$ be such that Lemma \ref{lem:approxcritptaftergauge} applies with $k=2$ and this constant $\kappa$. Hence for all $\nu\geq\nu_0$ there holds the inequality 
\begin{eqnarray}\label{eq:prelimestkappa}
\|(g^{\nu})^{\ast}A^{\nu}-A^{\infty}\|_{W^{2,p}(I\times\Sigma)}<\kappa.
\end{eqnarray}
Choosing $\nu_0$ still larger if necessary we conclude from the proof of Lemma \ref{lem:approxcritptaftergauge} that $A^{\nu}$ is a solution of the unperturbed equation $\partial_sA^{\nu}+d_{A^{\nu}}^{\ast}F_{A^{\nu}}=0$, for every $\nu\geq\nu_0$. For any fixed $\nu\geq\nu_0$ we temporarily denote $A\coloneqq(g^{\nu})^{\ast}A^{\nu}$. Differentiating the unperturbed gradient flow equation yields the identity 
\begin{eqnarray}\label{eq:timedifferentialA1}
\ddot A=-\frac{d}{ds}d_A^{\ast}F_A=-d_A^{\ast}d_A\dot A+\ast[\dot A\wedge\ast F_A]=-H_A\dot A.
\end{eqnarray}
We furthermore calculate
\begin{multline}\label{eq:timedifferentialA2}
\frac{d}{ds}(H_A\dot A)=H_A\ddot A+d_A^{\ast}[\dot A\wedge\dot A]-\ast[\dot A\wedge\ast d_A\dot A]+\ast[\ast d_A\dot A\wedge\dot A]\\
=H_A\ddot A+d_A^{\ast}[\dot A\wedge\dot A]-2\ast[\dot A\wedge\ast d_A\dot A].
\end{multline}
Combining \eqref{eq:timedifferentialA1} and \eqref{eq:timedifferentialA2} we obtain
\begin{eqnarray}\label{asymptconvex1}
\nonumber\frac{d^2}{ds^2}\frac{1}{2}\|\dot A\|_{L^2(\Sigma)}^2&=&\frac{d}{ds}\langle\ddot A,\dot A\rangle\\
\nonumber&=&\|\ddot A\|_{L^2(\Sigma)}^2-\langle\partial_s(H_A\dot A),\dot A\rangle\\
\nonumber&=&2\|H_A\dot A\|_{L^2(\Sigma)}^2-\langle\dot A,d_A^{\ast}[\dot A\wedge\dot A]\rangle+2\langle\dot A,\ast[\dot A\wedge\ast d_A\dot A]\rangle\\
&=&2\|H_A\dot A\|_{L^2(\Sigma)}^2-3\langle d_A\dot A,[\dot A\wedge\dot A]\rangle.
\end{eqnarray}
We use the orthogonal decomposition of $\dot A$ as in \eqref{eq:decompositiongradient} into $\dot A=\beta_0+\beta_1$ where $\beta_0\in\im H_{A_0}$ and $\beta_1\in\ker H_{A_0}$ for suitable $A_0\in\mathcal C_0$. Set $\alpha\coloneqq A-A_0$. The triangle inequality implies that $\|d_A\dot A\|_{L^2(\Sigma)}^2\leq2\|d_A\beta_0\|_{L^2(\Sigma)}^2+2\|d_A\beta_1\|_{L^2(\Sigma)}^2$. The last two terms are now estimated separately as follows. Using that $d_{A_0}^{\ast}d_{A_0}\beta_1+\ast[\ast F_{A_0}\wedge\beta_1]=0$ we obtain
\begin{eqnarray}\label{eq:estdAbeta1}
\nonumber\|d_A\beta_1\|_{L^2(\Sigma)}^2&\leq&2\|d_{A_0}\beta_1\|_{L^2(\Sigma)}^2+2\|[\alpha\wedge\beta_1]\|_{L^2(\Sigma)}^2\\
\nonumber&=&-2\langle\beta_1,\ast[\ast F_{A_0}\wedge\beta_1]\rangle+2\|[\alpha\wedge\beta_1]\|_{L^2(\Sigma)}^2\\
&\leq&c(A_0)\|\beta_1\|_{L^2(\Sigma)}^2+c\|\alpha\|_{L^{\infty}(\Sigma)}^2\|\beta_1\|_{L^2(\Sigma)}^2.
\end{eqnarray}
Furthermore, by standard elliptic theory and the fact that $\beta_0\in(\ker H_{A_0})^{\perp}$ it follows that
\begin{eqnarray}\label{ee:estbeta0deriv}
\|\beta_0\|_{L^2(\Sigma)}^2+\|d_A\beta_0\|_{L^2(\Sigma)}^2\leq c(A_0)\|H_{A_0}\beta_0\|_{L^2(\Sigma)}^2.
\end{eqnarray}
Using that $\|\alpha\|_{W^{2,p}(\Sigma)}<c_0\kappa\eqqcolon\kappa_2$ by \eqref{eq:prelimestkappa} and the first condition in \eqref{eq:normalspace}, and choosing $\kappa$ still smaller if necessary, we obtain for the last term in \eqref{asymptconvex1} the estimate
\begin{eqnarray}\label{eq:estremainderterm}
\nonumber\lefteqn{3\big|\langle d_A\dot A,[\dot A\wedge\dot A]\rangle\big|}\\
\nonumber&\leq&c\|\dot A\|_{L^{\infty}(\Sigma)}\big(\|\beta_0\|_{L^2(\Sigma)}^2+\|d_A\beta_0\|_{L^2(\Sigma)}^2+\|\beta_1\|_{L^2(\Sigma)}^2+\|d_A\beta_1\|_{L^2(\Sigma)}^2\big)\\
&\leq&c(A_0)\|\dot A\|_{L^{\infty}(\Sigma)}\big(\|H_A\beta_0\|_{L^2(\Sigma)}^2+\|\beta_1\|_{L^2(\Sigma)}^2\big).
\end{eqnarray}
Next we estimate $d_{A_0}^{\ast}\beta_1$ using the identity
\begin{eqnarray*}
0=d_A^{\ast}\beta=d_{A_0}^{\ast}(\beta_0+\beta_1)-\ast[\alpha\wedge\ast(\beta_0+\beta_1)]=d_{A_0}^{\ast}\beta_1-\ast[\alpha\wedge\ast(\beta_0+\beta_1)].
\end{eqnarray*}
Together with \eqref{eq:estdAbeta1}, \eqref{ee:estbeta0deriv}, and $\|\alpha\|_{L^{\infty}(\Sigma)}<\kappa_2$ this implies the estimate
\begin{eqnarray}\label{ee:estbeta1deriv}
\nonumber\|\beta_1\|_{W^{1,2}(\Sigma)}^2&\leq&c(A_0)(\|d_{A_0}\beta_1\|_{L^2(\Sigma)}^2+\|d_{A_0}^{\ast}\beta_1\|_{L^2(\Sigma)}^2+\|\beta_1\|_{L^2(\Sigma)}^2)\\
\nonumber&\leq&c(A_0)(\kappa_2^2\|\beta_0\|_{L^2(\Sigma)}^2+\|\beta_1\|_{L^2(\Sigma)}^2)\\
&\leq&c(A_0)(\kappa_2^2\|H_{A_0}\beta_0\|_{L^2(\Sigma)}^2+\|\beta_1\|_{L^2(\Sigma)}^2),
\end{eqnarray}
the first inequality being satisfied by Lemma \ref{lem:slicewiseLpsection}. Denoting by $\rho$ the square of the operator norm of $H_A-H_{A_0}\colon W^{1,2}(\Sigma)\to L^2(\Sigma)$ we can now estimate the term $\|H_A\dot A\|_{L^2(\Sigma)}^2$ appearing in \eqref{asymptconvex1} as
\begin{eqnarray}\label{eq:HdotAterm}
\nonumber2\|H_A\dot A\|_{L^2(\Sigma)}^2&\geq&\|H_A\beta_0\|_{L^2(\Sigma)}^2-2\|H_A\beta_1\|_{L^2(\Sigma)}^2\\
\nonumber&\geq&\|H_A\beta_0\|_{L^2(\Sigma)}^2-\|H_{A_0}\beta_1\|_{L^2(\Sigma)}^2-2\|(H_A-H_{A_0})\beta_1\|_{L^2(\Sigma)}^2\\
\nonumber&\geq&\|H_A\beta_0\|_{L^2(\Sigma)}^2-2\rho\|\beta_1\|_{W^{1,2}(\Sigma)}^2\\\
&\geq&(1-c(A_0)\rho\kappa_2^2)\|H_A\beta_0\|_{L^2(\Sigma)}^2-c(A_0)\rho\|\beta_1\|_{L^2(\Sigma)}^2.
\end{eqnarray}
In the second but last line we used $\beta_1\in\ker H_{A_0}$. The last estimate follows from \eqref{ee:estbeta1deriv}. Furthermore, because $\beta_0\in(\ker H_{A_0})^{\perp}$ there exists a constant $\lambda=\lambda(A_0)>0$ such that, after choosing $\|\alpha\|_{W^{2,p}(\Sigma)}<\kappa_2=c_0\kappa$ still smaller if necessary, we can estimate $\|H_A\beta_0\|_{L^2(\Sigma)}\geq\lambda\|\beta_0\|_{L^2(\Sigma)}$. Proposition \ref{prop:compdriftterm} implies the estimate $\|\beta_1\|_{L^2(\Sigma)}\leq\eps\|\dot A\|_{L^2(\Sigma)}$ for some $\eps>0$ which we will fix below. Combining \eqref{asymptconvex1}, \eqref{eq:estremainderterm}, and \eqref{eq:HdotAterm}, and denoting $\delta_1\coloneqq1-c(A_0)\rho\kappa_2^2-c(A_0)\|\dot A\|_{L^{\infty}(\Sigma)}$ (which is positive for $\kappa_2$ small enough) and $\delta_2\coloneqq c(A_0)(\rho+\|\dot A\|_{L^{\infty}(\Sigma)})$ this yields
\begin{eqnarray}\label{eq:estddsdotA}
\nonumber\frac{d^2}{ds^2}\frac{1}{2}\|\dot A\|_{L^2(\Sigma)}^2&\geq&\delta_1\|H_A\beta_0\|_{L^2(\Sigma)}^2-\delta_2\|\beta_1\|_{L^2(\Sigma)}^2\\
\nonumber&\geq&\delta_1\lambda^2\|\beta_0\|_{L^2(\Sigma)}^2-\delta_2\eps^2\|\dot A\|_{L^2(\Sigma)}^2\\
\nonumber&=&\delta_1\lambda^2\big(\|\dot A\|_{L^2(\Sigma)}^2-\|\beta_1\|_{L^2(\Sigma)}^2\big)-\delta_2\eps^2\|\dot A\|_{L^2(\Sigma)}^2\\
&\geq&\big(\delta_1\lambda^2-\delta_2\eps^2-\delta_1\lambda^2\eps^2\big)\|\dot A\|_{L^2(\Sigma)}^2.
\end{eqnarray}
Let $\eps>0$ be sufficiently small such that the factor $\delta_1\lambda^2-\delta_2\eps^2-\delta_1\lambda^2\eps^2$ is positive. By Proposition \ref{prop:compdriftterm} such a choice is possible after fixing the constant $\kappa>0$ still smaller if necessary. Note that the expression $\delta_1\lambda^2-\delta_2\eps^2-\delta_1\lambda^2\eps^2$ involves only constants which do not depend on $A$ but only on the critical manifold $\mathcal C_0$. We can replace this factor by a uniform one, depending only on the chosen energy level $a\geq0$, because the set of critical manifolds below level $a$ is compact up to gauge transformations. Thus we have shown that estimate \eqref{eq:estddsdotA} holds for all $A=(g^{\nu})^{\ast}A^{\nu}$ where $\nu\geq\nu_0$ and $\nu_0$ is the positive integer determined by $\kappa$ such that the conclusion of Lemma \ref{lem:approxcritptaftergauge} holds true. Note that estimate \eqref{eq:estddsdotA} is invariant under the time-independent gauge transformations $g^{\nu}$. Set $T_0\coloneqq\nu_0$. Then estimate \eqref{eq:estddsdotA} is satisfied for the original map $s\mapsto\frac{1}{2}\|\dot A(s)\|_{L^2(\Sigma)}^2$ where $s\leq-T_0$. Therefore Lemma \ref{apriori0} applies and shows \eqref{aprioriexpdecay}. The proof of an exponential decay result of the same type in the case $s\geq T$ follows analogously.
\end{proof} 

Before we turn to the proof of Theorem \ref{YMExponentialdecay} we need to establish a further estimate for the terms $\partial_s^k\dot A$, where $k\geq0$ is some integer. For this it seems necessary to deal with norms with respect to varying reference connections. Most of our argumentation in this passage follows \cite[Section 5]{SalWeh}. Let $\AA\in\A(\hat P_I)$ be a smooth reference connection. Throughout it will be chosen to be in temporal gauge, i.e.~of the form $\AA(s)=A(s)\in\A(P)$ for all $s\in I$. We include the reference connection in our notation and denote the respective Sobolev spaces of sections by $W_{\AA}^{k,r}(I\times\Sigma)$, where $1<r<\infty$ and $k\geq0$ is some integer, cf.~Section \ref{sec:prelim} for details. The resulting norms are invariant under gauge transformations $g\in\G(\hat P_I)$ in the sense that
\begin{eqnarray}\label{eq:gaugeinvariantnorm}
\|g^{-1}\alpha g\|_{W_{g^{\ast}\AA}^{k,r}(I\times\Sigma)}=\|\alpha\|_{W_{\AA}^{k,r}(I\times\Sigma)}
\end{eqnarray}
is satisfied for every $\alpha\in\Omega^k(\Sigma,\ad(\hat P_I))$. Similarly, we use the notation $C_{\AA}^{\ell}(I\times\Sigma)$ for the $C^{\ell}$ space defined with respect to the reference connection $\AA$. For the remainder of this section we denote
\begin{eqnarray}\label{eq:operatorlinparabolic}
(\mathcal D_{\AA}\alpha)(s)\coloneqq\frac{d}{ds}\alpha(s)+\Delta_{A(s)}\alpha(s)+\ast[\ast F_{A(s)}\wedge\alpha(s)]
\end{eqnarray}
for $\alpha\in\Omega^1(\Sigma,\ad(\hat P_I))$ and $s\in I$.

\begin{prop}\label{prop:gaugeinvarparabest}
Let $A\in\A_{\loc}^{1,2;p}(P)$ be as assumed in Theorem \ref{YMExponentialdecay} and $(A^{\nu})$ be the sequence defined in \eqref{eq:mapsAnu}. Let $\AA^{\infty}\in\A(\hat P_I)$ be the smooth limit connection as specified in Lemma \ref{lem:approxcritptaftergauge}. Fix a properly contained subinterval $I_1$ of $I$. Then for every integer $k\geq0$ there exist constants $c(\AA^{\infty},k)>0$ and $\nu_0\in\mathbbm N$ such that, for every $\nu\geq\nu_0$ and every time-dependent $\ad(P)$-valued $1$-form $\beta$,
\begin{eqnarray*}
\|\beta\|_{W_{A^{\nu}}^{k+1,2k+2;2}(I_1\times\Sigma)}\leq c(\AA^{\infty},k)\big(\|\mathcal D_{A^{\nu}}\beta\|_{W_{A^{\nu}}^{k,2k;2}(I\times\Sigma)}+\|\beta\|_{L^2(I\times\Sigma)}\big).
\end{eqnarray*}
(Here $W_A^{k,2k;2}(I\times\Sigma)$ denotes the Sobolev space of time-dependent $\ad(P)$-valued $1$-forms $\beta$ such that $\nabla_A^i\partial_s^j\beta$ is in $L^2(I\times\Sigma)$ for all integers $i,j\geq0$ with $2j+i\leq2k$.)
\end{prop}

\begin{proof}
There is a constant $c(\AA^{\infty},k)>0$ such that for every $\alpha\in\Omega^1(\Sigma,\ad(\hat P_I))$ it holds 
\begin{eqnarray*}
\|\alpha\|_{W_{\AA^{\infty}}^{k+1,2k+2;2}(I_1\times\Sigma)}\leq c(\AA^{\infty},k)\big(\|\mathcal D_{\AA^{\infty}}\alpha\|_{W_{\AA^{\infty}}^{k,2k;2}(I\times\Sigma)}+\|\alpha\|_{L^2(I\times\Sigma)}\big).
\end{eqnarray*}
This follows from standard estimate for linear parabolic operators with time-independent coefficients (cf.~e.g.~\cite[Theorem 2.2]{Web}) which here applies as by \eqref{eq:limitindepofs} $\AA^{\infty}$ does not depend on $s$. As can easily be checked, this estimate is stable under small variations of $\AA^{\infty}$ with respect to the $C^{\ell}(I\times\Sigma)$ norm, where $\ell=2k+1$. Therefore, after fixing $\kappa>0$ sufficiently small and applying Lemma \ref{lem:approxcritptaftergauge}, it persists to hold true with the new constant $2c(\AA^{\infty},k)$ and with $\AA^{\infty}$ replaced by $(g^{\nu})^{\ast}A^{\nu}$, for every $\nu\geq\nu_0(\kappa)$ sufficiently large. Here $(g^{\nu})\subseteq\G(\hat P_I)$ is a sequence of gauge transformations for which estimate \eqref{eq:kappaW1pbound} of Lemma \ref{lem:approxcritptaftergauge} is satisfied. Hence for every $\beta\in\Omega^1(\Sigma,\ad(\hat P_I))$ it follows, applying the above estimate with $\alpha\coloneqq(g^{\nu})^{-1}\beta g^{\nu}$, that
\begin{multline*}
\|(g^{\nu})^{-1}\beta g^{\nu}\|_{W_{(g^{\nu})^{\ast}A^{\nu}}^{k+1,2k+2;2}(I_1\times\Sigma)}\\
\leq2c(\AA^{\infty},k)\big(\|\mathcal D_{(g^{\nu})^{\ast}A^{\nu}}((g^{\nu})^{-1}\beta g^{\nu})\|_{W_{(g^{\nu})^{\ast}A^{\nu}}^{k,2k;2}(I\times\Sigma)}+\|(g^{\nu})^{-1}\beta g^{\nu}\|_{L^2(I\times\Sigma)}\big).
\end{multline*}
Now the claim follows, using \eqref{eq:gaugeinvariantnorm} together with the identity
\begin{eqnarray*}
\mathcal D_{(g^{\nu})^{\ast}A^{\nu}}((g^{\nu})^{-1}\beta g^{\nu})=(g^{\nu})^{-1}(\mathcal D_{A^{\nu}}\beta)g^{\nu},
\end{eqnarray*}
and gauge invariance of the $L^2$ norm.
\end{proof}

\begin{proof}{\bf(Theorem \ref{YMExponentialdecay})}
Let $A\in\A_{\loc}^{1,2;p}(P)$ be as assumed in Theorem \ref{YMExponentialdecay} and $(A^{\nu})$ be the sequence defined in \eqref{eq:mapsAnu}. Let $I_1\subseteq I$ be as in Proposition \ref{prop:gaugeinvarparabest}. Differentiating (with respect to $s$) the gradient flow equation $\partial_sA+d_A^{\ast}F_A=0$ satisfied by $A$ for $s\leq-T_0$ it follows that $\partial_s\dot A+d_A^{\ast}d_A\dot A+\ast[\ast F_A\wedge\dot A]=0$ and $d_A^{\ast}\dot A=0$. Hence $\mathcal D_{A^{\nu}}\dot A^{\nu}=0$ by definition \eqref{eq:operatorlinparabolic} of $\mathcal D_{A^{\nu}}$. Applying Proposition \ref{prop:gaugeinvarparabest} with $\beta=\dot A^{\nu}$ we obtain for every $k\geq0$ and $\nu\geq\nu_0=T$ (with $T\geq T_1$ sufficiently large, where $T_1$ is as stated in Theorem \ref{YMExponentialdecay}) the estimate 
\begin{eqnarray}\label{eq:estSobolevgrad}
\|\dot A^{\nu}\|_{W_{A^{\nu}}^{k,2k;2}(I_1\times\Sigma)}\leq c(\AA^{\infty},k)\|\dot A^{\nu}\|_{L^2(I\times\Sigma)}.
\end{eqnarray}
Fix an integer $\ell\geq0$ and let $k\geq0$ be sufficiently large such that there is a continuous embedding $W_{A^{\nu}}^{k,2k;2}(I_1\times\Sigma)\hookrightarrow C_{A^{\nu}}^{\ell}(I_1\times\Sigma)$. Integrating \eqref{aprioriexpdecay} over the interval $[-\nu-1,-\nu+1]$ we can estimate the right-hand side of \eqref{eq:estSobolevgrad} further and obtain
\begin{eqnarray}\label{eq:estSobolevgrad1}
\|\dot A^{\nu}\|_{C_{A^{\nu}}^{\ell}(I_1\times\Sigma)}\leq c(\AA^{\infty},\ell)Ce^{-\lambda(\nu-T)}
\end{eqnarray}
for constants $c(\AA^{\infty},\ell)>0$ and $C\coloneqq\frac{e^{\lambda}-e^{-\lambda}}{\lambda}$. We claim the existence of a smooth Yang--Mills connection $A^-$ and a constant $c_{\ell}$ for each integer $\ell\geq0$ such that
\begin{eqnarray}\label{eq:claimexpdecayCell}
\|A-A^-\|_{C^{\ell}([-\nu-1,-\nu+1]\times\Sigma)}\leq c_{\ell}e^{-\lambda(\nu-T)}
\end{eqnarray}
is satisfied for every $\nu\geq T$. (Note that we claim this to be satisfied with a norm taken with respect for some smooth reference connection independent of $A$). We prove the claim by induction on $\ell$. For $\ell=0$ it follows from \eqref{eq:estSobolevgrad1} that $\|\dot A\|_{C^0([-\nu-1,-\nu+1]\times\Sigma)}\leq c(\AA^{\infty},0)Ce^{-\lambda(\nu-T)}$ for every $\nu\geq T$, and hence the integral
\begin{eqnarray}\label{eq:deflimitconnection}
A^-\coloneqq A(-T)-\int_{-\infty}^{-T}\partial_sA(s)\,ds
\end{eqnarray}
converges in $C^0(\Sigma,T^{\ast}\Sigma\otimes\ad(P))$ and defines a continuous connection $A^-$. Furthermore, \eqref{eq:claimexpdecayCell} holds for $\ell=0$. Fix an integer $\ell\geq0$ and suppose estimate \eqref{eq:claimexpdecayCell} holds true with this $\ell$ and some $C^{\ell}$-regular connection $A^-$. From \eqref{eq:claimexpdecayCell} it follows that $A|_{(-\infty,T]}$ is bounded in $C^{\ell}((-\infty,T])$ (this space being defined with respect to some fixed smooth reference connection which we do not specify in our notation) and hence there exists a uniform constant  $\delta_{\ell}>0$ independent of $\nu$ such that
\begin{eqnarray*}
\|\alpha\|_{C^{\ell+1}(I\times\Sigma)}\leq\delta_{\ell}\|\alpha\|_{C_{A^{\nu}}^{\ell+1}(I\times\Sigma)}
\end{eqnarray*}
holds for all $\alpha\in\Omega^1(I\times\Sigma,T^{\ast}(I\times\Sigma)\otimes\ad(\hat P_I))$. The crucial point here is that the definition of $C_{A^{\nu}}^{\ell+1}$ involves derivatives of $A^{\nu}$ only up to order $\ell$. Hence with $\alpha\coloneqq\dot A$ and \eqref{eq:estSobolevgrad1} it follows that
\begin{eqnarray*}
\|\dot A\|_{C^{\ell+1}(I_1\times\Sigma)}\leq\delta_{\ell}c(\AA^{\infty},\ell+1)Ce^{-\lambda(\nu-T)}.
\end{eqnarray*}
This shows that the integral in \eqref{eq:deflimitconnection} converges in $C^{\ell+1}(\Sigma,T^{\ast}\Sigma\otimes\ad(P))$ to some connection $A^-$ is of class $C^{\ell+1}$. Furthermore, \eqref{eq:claimexpdecayCell} holds with $\ell$ replaced by $\ell+1$ and with constant $\delta_{\ell}c(\AA^{\infty},\ell+1)C$. In particular it follows that the connection $A^-$ is smooth. It is a Yang--Mills connection because \eqref{eq:claimexpdecayCell} implies that
\begin{eqnarray*}
d_{A^-}^{\ast}F_{A^-}=\lim_{s\to-\infty}d_{A(s)}^{\ast}F_{A(s)}=-\lim_{s\to-\infty}\partial_sA(s)=0.
\end{eqnarray*}
This proves the claim and hence the first part of the exponential decay estimate  asserted in Theorem \ref{YMExponentialdecay}. Forward exponential decay follows in a completely analogous manner. This completes the proof of Theorem \ref{YMExponentialdecay}.
\end{proof}

\section{Fredholm theory}\label{sec:Fredholm theorem}
\subsection{Yang--Mills Hessian} 
For $A\in\A(P)$ we let $\H_A$ denote the {\emph{augmented Yang--Mills Hessian}} defined by
\begin{multline*} 
\H_A\coloneqq\left(\begin{array}{cc}d_A^{\ast}d_A+\ast[\ast F_A\wedge\,\cdot\,]&-d_A\\-d_A^{\ast}&0\end{array}\right)\colon\\
\Omega^1(\Sigma,\ad(P))\oplus\Omega^0(\Sigma,\ad(P))\to\Omega^1(\Sigma,\ad(P))\oplus\Omega^0(\Sigma,\ad(P)).
\end{multline*}
In order to find a domain for $\H_A$ which makes the subsequent Fredholm theory work, we fix an irreducible smooth reference connection $A_0\in\A(P)$ and decompose the space $\Omega^1(\Sigma,\ad(P))$ of smooth $\ad(P)$-valued $1$-forms as the $L^2(\Sigma)$ orthogonal sum
\begin{multline*} 
\Omega^1(\Sigma,\ad(P))=\ker\big(d_A^{\ast}\colon\Omega^1(\Sigma,\ad(P))\to\Omega^0(\Sigma,\ad(P))\big)\\
\oplus\;\im\big(d_A\colon\Omega^0(\Sigma,\ad(P))\to\Omega^1(\Sigma,\ad(P))\big).
\end{multline*}
Let $W_0^{2,p}$ and $W_1^{1,p}$ denote the completions of the first component, respectively of the second component with respect to the $(k,p)$-Sobolev norm ($k=1,2$). We set $\mathcal W^p(\Sigma)\coloneqq W_0^{2,p}\oplus W_1^{1,p}$ and endow this space with the sum norm. It is independent of the choice of reference connection $A_0$ by the following proposition.

\begin{prop}
Let $A_0,A_1\in\A(P)$ be two irreducible smooth connections. Then the spaces $\mathcal W^p(\Sigma)$ and $\hat{\mathcal W}^p(\Sigma)$ defined above with respect to the reference connections $A_0$, respectively $A_1$ are isomorphic as Banach spaces.
\end{prop}

\begin{proof}
We have $\hat{\mathcal W}^p(\Sigma)=\hat W_0^{2,p}\oplus\hat W_1^{1,p}$ where the last two spaces are defined in analogy to $W_0^{2,p}$ and $W_1^{1,p}$ but with respect to the reference connection $A_1$. We define a bijective linear map $\Lambda\colon\mathcal W^p(\Sigma)\to\hat{\mathcal W}^p(\Sigma)$ as follows. Denote $\beta\coloneqq A_1-A_0$. For $\alpha_0\in W_0^{2,p}$ we set
\begin{eqnarray}\label{eq:mapLambda1}
\Lambda\colon\alpha_0\mapsto(\alpha_0-d_{A_1}\ph,d_{A_1}\ph)\in\hat W_0^{2,p}\oplus\hat W_1^{1,p},
\end{eqnarray}
where $\ph$ is the unique solution of
\begin{eqnarray}\label{eq:Laplaceph1}
\Delta_{A_1}\ph=-\ast[\beta\wedge\ast\alpha_0].
\end{eqnarray}
For $\alpha_1=d_{A_0}\ph_0\in W_1^{1,p}$ we define
\begin{eqnarray}\label{eq:mapLambda2}
\Lambda\colon\alpha_1\mapsto(d_{A_1}\gamma-[\beta\wedge\ph_0],d_{A_1}\ph)\in\hat W_0^{2,p}\oplus\hat W_1^{1,p},
\end{eqnarray}
where $\ph=\ph_0-\gamma$ and $\gamma$ is the unique solution of 
\begin{eqnarray}\label{eq:Laplaceph2}
\Delta_{A_1}\gamma=d_{A_1}^{\ast}[\beta\wedge\ph_0].
\end{eqnarray} 
The linear map $\Lambda$ is uniquely defined through \eqref{eq:Laplaceph1} and \eqref{eq:Laplaceph2}. Bijectivity of $\Lambda$ is easily checked. Boundedness of $\Lambda$ follows from ellipticity and bijectivity of the Laplacians $\Delta_{A_0}$ and $\Delta_{A_0}$. Namely, for $\ph$ in \eqref{eq:Laplaceph1} we have the bound
\begin{eqnarray*}
\|\ph\|_{W^{3,p}(\Sigma)}\leq c(A_0,A_1)\|\alpha_0\|_{W^{1,p}(\Sigma)}.
\end{eqnarray*}
Thus boundedness of the map in \eqref{eq:mapLambda1} follows. Similarly, the equation $d_{A_0}^{\ast}\alpha_1=\Delta_{A_0}\ph_0$ implies the estimate $\|\ph_0\|_{W^{2,p}(\Sigma)}\leq c(A_0)\|\alpha_1\|_{W^{1,p}(\Sigma)}$, and \eqref{eq:Laplaceph2} gives $\|\gamma\|_{W^{3,p}(\Sigma)}\leq c(A_0,A_1)\|\ph_0\|_{W^{2,p}(\Sigma)}$. Putting these together yields boundedness of the map in \eqref{eq:mapLambda2}. Hence the map $\Lambda\colon\mathcal W^p(\Sigma)\to\hat{\mathcal W}^p(\Sigma)$ is a Banach space isomorphism.
\end{proof}

We let $p>1$ and consider the augmented Yang--Mills Hessian as an operator
\begin{eqnarray*}
\H_A\colon\mathcal W^p(\Sigma)\oplus W^{1,p}(\Sigma,\ad(P))\to L^p(\Sigma,T^{\ast}\Sigma\otimes\ad(P))\oplus L^p(\Sigma,\ad(P)).
\end{eqnarray*}
In the case $p=2$ this is a densely defined symmetric operator on the Hilbert space $L^2(\Sigma,T^{\ast}\Sigma\otimes\ad(P))\oplus L^2(\Sigma,T^{\ast}\Sigma)$ with domain
\begin{eqnarray}\label{def:domainHA}
\dom\H_A\coloneqq\mathcal W^2(\Sigma)\oplus W^{1,2}(\Sigma,\ad(P)).
\end{eqnarray}
We show in Proposition \ref{YMselfad} below that it is self-adjoint. For the further discussion of the operator $\H_A$ it will be convenient to also decompose the first component $\beta$ of $\H_A(\alpha,\psi)^T$ as $\beta=\beta_0+\beta_1$, where $d_{A_0}^{\ast}\beta_0=0$ and $\beta_1=d_{A_0}(\omega-\psi)$ for some $0$-form $\omega$. A short calculation shows that for $\alpha=\alpha_0+\alpha_1=\alpha_0+d_{A_0}\varphi$ (with $d_{A_0}^{\ast}\alpha_0=0$) this $0$-form $\omega$ is determined as the unique solution of the equation
\begin{multline}\label{eqsplitting}
\Delta_{A_0}\omega=-[\ast F_A\wedge\ast d_A\alpha]+\ast[\theta\wedge\ast d_A^{\ast}d_A\alpha]+\ast d_{A_0}[\ast F_A\wedge\alpha]-d_{A_0}^{\ast}[\theta\wedge\psi].
\end{multline}
Here we denote $\theta\coloneqq A-A_0$. We furthermore define
\begin{align*}
K_A\alpha_0\coloneqq d_{A_0}^{\ast}[\theta\wedge\alpha_0]-\ast[\theta\wedge\ast d_A\alpha_0]+\ast[\ast F_A\wedge\alpha_0],\\
L_A\ph\coloneqq-d_A^{\ast}d_A[\theta\wedge\ph]+[d_A^{\ast}F_A\wedge\varphi]-\ast[\ast F_A\wedge[\theta\wedge\ph]].
\end{align*}

By direct calculation we obtain that with respect to the above decomposition of $\beta$ into $\beta=\beta_0+d_{A_0}(\omega-\psi)$ the augmented Hessian takes the form
\begin{eqnarray}\label{eqsplitting1}
\H_A\left(\begin{array}{c}\alpha_0\\\alpha_1\\\psi\end{array}\right)=\left(\begin{array}{c}\Delta_A\alpha_0+K_A\alpha_0+L_A\ph-[\theta\wedge\psi]-d_{A_0}\omega\\-d_{A_0}\psi+d_{A_0}\omega\\-d_{A_0}^{\ast}\alpha_1+\ast[\theta\wedge\ast\alpha]\end{array}\right),
\end{eqnarray}
where $\theta=A-A_0$, $\alpha_1=d_{A_0}\varphi$, and $\omega$ is the solution of \eqref{eqsplitting}. 

\begin{prop}\label{YMselfad}
For every $A\in\A(P)$, the operator $\H_A$ with domain $\dom\H_A$ as defined in \eqref{def:domainHA} is self-adjoint. It satisfies for all $(\alpha,\psi)\in\dom\,\H_A$ and $p>1$ the elliptic estimate
\begin{eqnarray}\label{YMHellipticestimate}
\|\alpha\|_{\mathcal W^p(\Sigma)}+\|\psi\|_{W^{1,p}(\Sigma)}\leq c\big(\|\H_A(\alpha,\psi)\|_{L^p(\Sigma)}+\|(\alpha,\psi)\|_{L^p(\Sigma)}\big)
\end{eqnarray}
with constant $c=c(A,p)$. If $A$ is a Yang--Mills connection, then the number of negative eigenvalues (counted with multiplicities) of $\H_A$ equals the Morse index of the Yang--Mills Hessian $H_A\mathcal{YM}$ as given by \eqref{eq:YMHess}.
\end{prop}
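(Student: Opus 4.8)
The proposition makes three assertions about $\H_A$, and the plan is to dispatch them in the following order: first the elliptic estimate \eqref{YMHellipticestimate}, then self-adjointness (which I want to deduce from the estimate together with symmetry), and finally the index identification, which I expect to be the real content.

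\textbf{Step 1: the elliptic estimate.} The operator $\H_A$ is, up to zeroth order terms with coefficients built from $F_A$, the block operator $\begin{pmatrix} d_A^\ast d_A & -d_A \\ -d_A^\ast & 0\end{pmatrix}$. On the subspace decomposition $\Omega^1 = \ker d_A^\ast \oplus \im d_A$ used to define $W_A^{2,p}(\Sigma)$, formula \eqref{eqsplitting1} shows that on the $\ker d_A^\ast$ component $\H_A$ acts essentially as the Hodge Laplacian $\Delta_A$ plus an order-zero term, while on the $\im d_A \oplus \Omega^0$ block it behaves like the first-order operator $d_A$ coupled with $d_A^\ast$, whose square is again $\Delta_A$ on $0$-forms. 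So the plan is: apply standard $L^p$ Calderón--Zygmund elliptic estimates for $\Delta_A$ on $1$-forms and on $0$-forms separately (these are classical on the closed surface $\Sigma$, cf.\ \cite[Appendix B]{Wehrheim}), absorb the $F_A$-dependent zeroth order terms into the $\|(\alpha,\psi)\|_{L^p(\Sigma)}$ term on the right, and reassemble using the definition of the $W_A^{2,p}$ sum norm. One must be a little careful that the splitting $\alpha = \alpha_0 + d_A\varphi$ is itself controlled in $L^p$, but this follows from the $L^p$ Hodge decomposition associated with $A$.

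\textbf{Step 2: self-adjointness.} For $p = 2$, symmetry of $\H_A$ on $\dom\H_A$ is immediate from integration by parts (the off-diagonal blocks $-d_A$ and $-d_A^\ast$ are formal adjoints, and $d_A^\ast d_A + \ast[\ast F_A \wedge \cdot\,]$ is formally self-adjoint since $\ast[\ast F_A \wedge \cdot\,]$ is pointwise symmetric). To upgrade symmetry to self-adjointness I would show $\H_A \pm i$ is surjective, or equivalently that $\H_A$ is closed with closed range and trivial cokernel on the appropriate pieces; the cleanest route is to note that the elliptic estimate \eqref{YMHellipticestimate} with $p=2$ shows $\H_A$ is semi-Fredholm with closed range and finite-dimensional kernel, and that a dense-range argument (using that any $L^2$ element orthogonal to the range is, by elliptic regularity bootstrapped from \eqref{YMHellipticestimate}, a smooth solution of $\H_A u = 0$, hence in the domain) forces $(\ran \H_A)^\perp = \ker \H_A$. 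Combined with symmetry this gives self-adjointness. (Alternatively one invokes that a symmetric operator with an elliptic a priori estimate of the stated form on a closed manifold is essentially self-adjoint on smooth sections.)

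\textbf{Step 3: the index identification — the main obstacle.} This is where the work lies. Let $A$ be Yang-Mills, so $d_A^\ast F_A = 0$; then the term $[d_A^\ast F_A \wedge \varphi]$ in \eqref{eqsplitting1} vanishes and also equation \eqref{eqsplitting} becomes $\Delta_A \omega = 0$, so one may take $\omega = 0$ in the relevant component. Consequently \eqref{eqsplitting1} decouples: on the $\ker d_A^\ast$ block, $\H_A$ acts as $\Delta_A + \ast[\ast F_A \wedge \cdot\,] = H_A$ restricted to $\ker d_A^\ast$; on the complementary $\im d_A \oplus \Omega^0$ block it acts as $\begin{pmatrix} 0 & -d_A \\ -d_A^\ast & 0 \end{pmatrix}$ (since $\omega=0$), i.e.\ as $\begin{pmatrix} \alpha_1 \\ \psi\end{pmatrix} \mapsto \begin{pmatrix} -d_A\psi \\ -d_A^\ast \alpha_1\end{pmatrix}$. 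The plan is then: (a) show the second block has no negative eigenvalues — in fact its spectrum is symmetric about $0$, being of the form $\begin{pmatrix} 0 & T^\ast \\ T & 0\end{pmatrix}$ with $T = -d_A^\ast|_{\im d_A}$, so eigenvalues come in $\pm\mu$ pairs and contribute nothing net; more precisely one checks its negative eigenspace has the same dimension as its positive eigenspace, so one must instead argue this block simply does not contribute negative directions \emph{relative to the Morse index count} because $H_A\YM$ is only considered on $\Omega^1$ and its negative eigenvectors can be taken in $\ker d_A^\ast$. The key classical fact (Atiyah--Bott) to invoke here is that for a Yang-Mills connection the negative eigenspace of $H_A = d_A^\ast d_A + \ast[\ast F_A \wedge \cdot\,]$ on $\Omega^1$ lies entirely in $\ker d_A^\ast$ — indeed $H_A$ preserves the splitting $\Omega^1 = \ker d_A^\ast \oplus \im d_A$ at a Yang-Mills connection (because $d_A^\ast F_A = 0$ makes the cross terms vanish), and on $\im d_A$ one has $H_A d_A \xi = d_A(d_A^\ast d_A \xi) + \ast[\ast F_A \wedge d_A\xi]$, which by a Bianchi-type computation equals $d_A \Delta_A^0 \xi$ up to sign, hence is non-negative. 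So $\ind_{H_A\YM} = $ (number of negative eigenvalues of $H_A|_{\ker d_A^\ast}$). (b) Match this with the negative eigenvalue count of $\H_A$: the $\ker d_A^\ast$ block of $\H_A$ is exactly $H_A|_{\ker d_A^\ast}$, contributing precisely $\ind_{H_A\YM}$ negative eigenvalues; the remaining block $\begin{pmatrix} 0 & -d_A \\ -d_A^\ast & 0\end{pmatrix}$ on $\im d_A \oplus \Omega^0$ — here I need to recount: since $\Omega^0 \xrightarrow{d_A} \im d_A$ is an isomorphism onto its image with inverse essentially $(\Delta_A^0)^{-1} d_A^\ast$, the block is conjugate to a self-adjoint operator of the form $\begin{pmatrix} 0 & -S \\ -S^\ast & 0\end{pmatrix}$ whose nonzero spectrum is symmetric, so it contributes \emph{equal} numbers of positive and negative eigenvalues and hence its presence would spoil the count — unless one observes that this block is precisely the part of $\H_A$ that is \emph{not} part of the Yang-Mills Hessian but rather the augmentation, and that the correct statement is obtained because the gauge directions $\im d_A$ in the $1$-form slot are paired off with the $0$-form slot $\psi$ to produce a hyperbolic (indefinite but balanced) piece whose net contribution to ``number of negative eigenvalues'' is... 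I would resolve this by instead defining the index via a homotopy: deform $\H_A$ through self-adjoint Fredholm operators (e.g.\ scaling the off-diagonal blocks, or adding $t\cdot\mathrm{diag}(0, c\,\mathrm{id})$ on the $\Omega^0$ factor) to a model operator for which the negative eigenvalue count is transparently $\ind_{H_A\YM}$, using that the spectral flow / number of negative eigenvalues is invariant under such a homotopy provided no eigenvalue crosses $0$ — and verifying $0 \notin \mathrm{spec}$ is preserved by using that $A$ is Yang-Mills and nondegeneracy of the critical manifold (the Morse--Bott condition) controls $\ker \H_A$. The hard part, and the step I would spend the most care on, is precisely this bookkeeping of the gauge-direction block: showing that augmenting $H_A$ by the $-d_A, -d_A^\ast$ coupling with the $0$-form adds no net negative eigenvalues. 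This is a finite-dimensional linear algebra fact once the splitting \eqref{eqsplitting1} is in hand, but it requires the Bianchi identity and $d_A^\ast F_A = 0$ in an essential way and is easy to get wrong.
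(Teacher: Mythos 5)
Your Steps 1 and 2 follow the paper's route. The elliptic estimate is obtained exactly as you describe, componentwise from the splitting \eqref{eqsplitting1} via $L^p$ elliptic regularity for $\Delta_A$ (the paper additionally records the auxiliary bound $\|\ph\|_{W^{1,p}}\leq c\|\alpha_1\|_{L^p}$ coming from $\Delta_A\ph=d_A^{\ast}\alpha_1$, which you need implicitly to control the splitting), and self-adjointness for $p=2$ is then read off from the estimate together with symmetry. No issues there.

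Step 3 is where your proposal is not a proof, and you admit as much. The paper's argument is far shorter than anything you sketch: for a Yang--Mills connection $d_A^{\ast}F_A=0$, hence $d_A\omega=0$ and $[d_A^{\ast}F_A\wedge\ph]=0$ in \eqref{eqsplitting1}, so the eigenvalue equation $\H_A(\alpha,\psi)=\lambda(\alpha,\psi)$ projects onto the $\ker d_A^{\ast}$ component as $H_A\alpha_0=\lambda\alpha_0$; conversely, a negative eigenvector $\alpha$ of $H_A$ must have $\alpha_1=0$ because $\im d_A\subseteq\ker H_A$ at a Yang--Mills connection (the Atiyah--Bott fact you cite), and then $(\alpha_0,0)^T$ is an eigenvector of $\H_A$. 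Your worry about the block coupling $\im d_A$ with $\Omega^0(\Sigma,\ad(P))$ through $-d_A$ and $-d_A^{\ast}$ is, however, entirely legitimate and is precisely the case $\alpha_0=0$ that this projection argument does not see: that block really does have negative spectrum. Indeed, if $\Delta_A\ph=\mu^2\ph$ with $\mu>0$, then $\H_A(d_A\ph,\mu\ph)=-\mu\,(d_A\ph,\mu\ph)$, so the augmentation contributes an infinite family of negative eigenvalues, and no homotopy of the kind you gesture at can remove them without moving infinitely many eigenvalues across zero. Consequently your plan (a)/(b) cannot be completed as stated, and the statement taken literally must be read either on the gauge slice (restrict the $1$-form slot to $\ker d_A^{\ast}$, where $\H_A$ reduces to $H_A|_{\ker d_A^{\ast}}$) or relatively, as a spectral-flow statement: the gauge-direction eigenvalues occur in symmetric $\pm\mu$ pairs, are present identically at both asymptotic ends, and cancel in the spectral-flow computation of Theorem \ref{thm:Fredholmtheorem}, which is the only place the count is actually used. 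Your Step 3 therefore remains an announced plan rather than an argument; to repair it you should drop the homotopy idea and instead prove the two clean implications the paper uses ($\lambda<0$ eigenvector of $\H_A$ with $\alpha_0\neq0$ gives $H_A\alpha_0=\lambda\alpha_0$, and every $\lambda<0$ eigenvector of $H_A$ lies in $\ker d_A^{\ast}$ and yields the eigenvector $(\alpha_0,0)^T$ of $\H_A$), while stating explicitly how the $\alpha_0=0$ gauge modes are to be discarded.
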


\begin{proof}
We show estimate \eqref{YMHellipticestimate}. All norms are with respect to the domain $\Sigma$, so we drop this in our notation. Then the equation $\alpha_1=d_{A_0}\ph$ and bijectivity of the operator $\Delta_{A_0}$ on $0$-forms imply the elliptic estimate
\begin{eqnarray}\label{eq:ellestvarphi}
\|\ph\|_{W^{1,p}}\leq c(A_0,p)\|d_{A_0}^{\ast}\alpha_1\|_{W^{-1,p}}\leq c(A_0,p)\|\alpha_1\|_{L^p}.
\end{eqnarray}
Similarly, we can estimate the solution $\omega$ of equation \eqref{eqsplitting} as
\begin{eqnarray}\label{eq:ellestvarphi1}
\|\omega\|_{L^p}\leq c(A,A_0,p)\big(\|\alpha\|_{L^p}+\|\ph\|_{L^p}\big)\leq c(A,A_0,p)\|\alpha\|_{L^p},
\end{eqnarray}
where in the second step we used \eqref{eq:ellestvarphi}. Let $(\beta_0,\beta_1,\gamma)\coloneqq\H_A(\alpha_0,\alpha_1,\psi)$. Ellipticity of the operator $\Delta_A$ applied to the first equation in \eqref{eqsplitting1} yields the estimate
\begin{eqnarray*}
\|\alpha_0\|_{W^{1,p}}&\leq&c(A,A_0,p)\big(\|\alpha_0\|_{L^p}+\|\beta_0\|_{W^{-1,p}}+\|K_A\alpha_0\|_{W^{-1,p}}\\
&&+\|L_A\ph\|_{W^{-1,p}}+\|\psi\|_{W^{-1,p}}+\|d_{A_0}\omega\|_{W^{-1,p}}\big)\\
&\leq&c(A,A_0,p)\big(\|\alpha_0\|_{L^p}+\|\beta_0\|_{L^p}+\|\ph\|_{W^{1,p}}+\|\psi\|_{L^p}+\|\omega\|_{L^p}\big)\\
&\leq&c(A,A_0,p)\big(\|\alpha\|_{L^p}+\|\beta_0\|_{L^p}+\|\psi\|_{L^p}\big).
\end{eqnarray*}
The last estimate follows from \eqref{eq:ellestvarphi} and \eqref{eq:ellestvarphi1}. Bootstrapping this estimate we obtain the bound
\begin{eqnarray*}
\|\alpha_0\|_{W^{2,p}}\leq c(A,A_0,p)\big(\|\alpha\|_{L^p}+\|\beta_0\|_{L^p}+\|\psi\|_{L^p}\big).
\end{eqnarray*}
This requires to bound $\|\omega\|_{W^{1,p}}$ for which the estimate 
\begin{eqnarray}\label{eq:ellestvarphi2}
\|\omega\|_{W^{1,p}}\leq c(A,A_0,p)\big(\|\alpha\|_{L^p}+\|\alpha_0\|_{W^{1,p}}+\|\ph\|_{W^{1,p}}\big).
\end{eqnarray}
holds. It is obtained as before from equation \eqref{eqsplitting}. The asserted estimate for $\alpha_1=d_{A_0}\ph$ follows similarly by elliptic regularity and the third equation in \eqref{eqsplitting1}. From the second equation in \eqref{eqsplitting1} we obtain again by elliptic regularity of $\Delta_{A_0}=d_{A_0}^{\ast}d_{A_0}$ that
\begin{eqnarray*}
\|\psi-\omega\|_{W^{1,p}}\leq c(A_0,p)\|\beta_1\|_{L^p}.
\end{eqnarray*}
Now the required estimate for $\psi$ follows, using \eqref{eq:ellestvarphi2} to bound $\|\omega\|_{W^{1,p}}$. Combining the estimates for $\alpha_0$, $\alpha_1$, and $\psi$, we obtain \eqref{YMHellipticestimate}. It implies self-adjointness in the case $p=2$. To prove the assertion on the index, let $A$ be a Yang--Mills connection and $(\alpha,\psi)^T$ be an eigenvector of $\H_A$ with corresponding eigenvalue $\lambda<0$. Let $\alpha=\alpha_0+\alpha_1$ be as above the Hodge decomposition of $\alpha$ with $d_{A}^{\ast}\alpha_0=0$ and $\alpha_1=d_{A}\ph$. Then the eigenvalue equation for $\H_A$ together with the first line of \eqref{eqsplitting1} yields $H_A\alpha_0=\lambda\alpha_0$. This uses that  $d_A\omega=0$ and $d_A^{\ast}F_A=0$ as $A$ is assumed to be Yang--Mills. Hence $\lambda$ is a negative eigenvalue of $H_A$. Conversely, if the eigenvalue equation $H_A\alpha=\lambda\alpha$ is satisfied for some $\lambda<0$ and $\alpha=\alpha_0+\alpha_1$, then necessarily $\alpha_1=0$ because $\alpha_1\in\ker H_A$. Thus $H_A\alpha_0=\lambda\alpha_0$, and this equation implies that $\lambda$  is also an eigenvalue of $\H_A$ with eigenvector $(\alpha_0,0)^T$.
\end{proof}

\subsection{Linearized operator}\label{subsect:linearizedoperator}
\setcounter{footnote}{0}
We next discuss the linearization of the perturbed $\G(\hat P)$-invariant Yang--Mills gradient flow equation \eqref{EYF}. Let $a>0$ be a regular value of $\YM$, and $p>1$ and $\delta>0$ be as in Section \ref{Banachmanifolds}. Throughout we fix an $a$-admissible perturbation $\V\in Y$ with $\|\V\|$ sufficiently small such that Proposition \ref{prop:samecritpoints} applies. We furthermore fix a pair $(\hat{\mathcal C}^-,\hat{\mathcal C}^+)\in\mathcal{CR}^a\times\mathcal{CR}^a$ of critical manifolds and denote $\mathcal C^{\pm}\coloneqq\hat{\mathcal C}^{\pm}/\G_0^{2,p}(P)$. Let $(A,\Psi)\in\hat{\mathcal M}(\hat{\mathcal C}^-,\hat{\mathcal C}^+)$ be a connecting Yang--Mills gradient flow line as defined in Section \ref{sect:modspaces}. Since all statements in this and the next section are formulated in a gauge-invariant way we may assume that $(A,\Psi)$ has the regularity properties as asserted in Proposition \ref{prop:regularity}, and in particular that $\Psi$ vanishes outside some finite interval $(-T,T)$. (Note that we may use Proposition 
\ref{prop:temporalgauge} to satisfy the assumptions made in Proposition \ref{prop:regularity}). We now define the Banach spaces
\begin{align*}
\Z^{\delta,p}\coloneqq\big(W_{\delta}^{1,p}(\R,L^p(\Sigma,T^{\ast}\Sigma\otimes\ad(P)))\cap L_{\delta}^p(\R,\mathcal W^p(\Sigma))\big)
\oplus W_{\delta}^{1,p}(\R\times\Sigma,\ad(P)),\\
\mathcal L^{\delta,p}\coloneqq L_{\delta}^p(\mathbbm R\times\Sigma,T^{\ast}\Sigma\otimes\ad(P))\oplus L_{\delta}^p(\mathbbm R\times\Sigma,\ad(P)).
\end{align*}
We let $H_A\V$ denote the Hessian of the map $\V$ as discussed in Proposition \ref{formuladifferential}. For $(\alpha,\psi)\in\Z^{\delta,p}$ we formally set $H_A\V(\alpha,\psi)^T\coloneqq(H_A\V\alpha,0)$. Furthermore, for $\Psi$ as before we define $M_{\Psi}(\alpha,\psi)^T\coloneqq([\Psi\wedge\alpha],-[\Psi\wedge\psi])$.

\begin{defn}\label{def:hordifferential}\upshape
The horizontal differential  at $(A,\Psi)$ of the section $\F$ as in \eqref{sectionF} is the linear operator
\begin{eqnarray}\label{eq:hordifferential}
\mathcal D_{(A,\Psi)}=\frac{d}{ds}+\mathcal H_{A}+M_{\Psi}+H_A\V\colon\Z^{\delta,p}\to\mathcal L^{\delta,p},
\end{eqnarray}
and the equation $\mathcal D_{(A,\Psi)}(\alpha,\psi)^T=0$ is called \emph{linearized perturbed Yang--Mills gradient flow equation}.
\end{defn}

We show in Section \ref{sect:Fredholmthm} that $\mathcal D_{(A,\Psi)}$ is a Fredholm operator and determine its index.

\begin{rem}\label{remBanachtangent}
\upshape
\begin{compactenum}[(i)]
\item
From the definition of $\F$ as a section $\F\colon\B\to\E$ (cf.~\eqref{sectionF}) it follows that its linearization $d\F(A,\Psi)$ acts on (gauge equivalence classes) of pairs $(\alpha,\psi)$ where $\alpha(s)$ converges exponentially to some $\alpha^{\pm}\in T_{A^{\pm}}\hat{\mathcal C}^{\pm}$ as $s\to\pm\infty$ for a given pair $\hat{\mathcal C}^{\pm}$ of critical manifolds. Hence with $\mathcal C^{\pm}\coloneqq\hat{\mathcal C}^{\pm}/\G_0^{2,p}(P)$, $d\F(A,\Psi)$ is properly considered as an operator on the space $\Z^{\delta,p}\oplus\R^{\dim\mathcal C^-}\oplus\R^{\dim\mathcal C^-}$. This is in contrast to the definition of $\mathcal D_{(A,\Psi)}$ in \ref{def:hordifferential}. However, it is easy to see that $d\F(A,\Psi)$ is Fredholm if and only if this property holds for $\mathcal D_{(A,\Psi)}$, and that the Fredholm indices are related via the formula
\begin{eqnarray*}
\ind d\F(A,\Psi)=\ind\mathcal D_{(A,\Psi)}+\dim\mathcal C^-+\dim\mathcal C^+.
\end{eqnarray*}
To see this, we view $d\F(A,\Psi)$ as a compact perturbation of the operator $\mathcal D_{(A,\Psi)}$, the latter being extended trivially to $\Z^{\delta,p}\oplus\R^{\dim\mathcal C^-}\oplus\R^{\dim\mathcal C^-}$.
\item
It suffices to discuss the Fredholm theory for the operator $\frac{d}{ds}+\H_A$, disregarding the terms $M_{\Psi}$ and $H_A\V$ in \eqref{eq:hordifferential}. This does not change the Fredholm property because both terms contribute only a compact perturbation. This follows from the fact that the support of both $\Psi$ and $\V$ is contained in some compact subset $[-T,T]\times\Sigma$ together with the Rellich compactness theorem and Proposition \ref{prop:Hessianbounded}.
\end{compactenum}
\end{rem}

From now on we write $\mathcal D_{(A,\Psi)}$ instead of $\mathcal D_A$, this notation being justified by the previous remark. Because the Hessians $\H_{A^{\pm}}$ will in general (i.e.~if $\dim\mathcal C^{\pm}\neq0$) have non-trivial zero eigenspaces, we cannot apply directly standard theorems on the spectral flow to prove the Fredholm theorem stated below. As an intermediate step we therefore use the Banach space isomorphisms 
\begin{eqnarray*}
\nu_1\colon\Z^{\delta,p}\to\Z^{0,p}\eqqcolon\Z^p\qquad\textrm{and}\qquad\nu_2\colon\mathcal L^{\delta,p}\to\mathcal L^{0,p}\eqqcolon\mathcal L^p
\end{eqnarray*}
given by multiplication with the weight function $e^{\delta\beta(s)s}$, where $\beta$ denotes the cut-off function introduced at the beginning of Section \ref{Banachmanifolds}. Then the assertion of Theorem \ref{thm:Fredholmtheorem} is equivalent to the analogous one for the operator
\begin{eqnarray*}
\mathcal D_A^{\delta}\coloneqq\nu_2\circ\mathcal D_A\circ\nu_1^{-1}\colon\quad\Z^p\to\mathcal L^p,
\end{eqnarray*}
which we shall prove instead. Note that the operator $\mathcal D_A^{\delta}$ takes the form 
\begin{eqnarray*}
\mathcal D_A^{\delta}=\frac{d}{ds}+\mathcal H_A-(\beta+\beta's)\delta,
\end{eqnarray*}
and hence, by our choice of $\beta$, the operator family $s\mapsto\mathcal H_A-(\beta+\beta's)\delta$ converges to the operators $\mathcal H_{A(s)}\mp\delta$ as $s\to\pm\infty$. These limit operators are invertible for a suitable choice of $\delta>0$ (which will be fixed in the next section). It follows that the spectral flow of this operator family is given by the right-hand side of \eqref{eq:indexD} below. In the following we denote $\H_A^{\delta}\coloneqq\H_A-(\beta+\beta's)\delta$.

\subsection{Fredholm theorem}\label{sect:Fredholmthm}
Let $\mathcal C^{\pm}=\hat{\mathcal C}^{\pm}/\G_0^{2,p}(P)$ be the pair of critical manifolds fixed in the previous section. We define the constant $\delta_0(\mathcal C^-,\mathcal C^+)$ to be the infimum of the set 
\begin{eqnarray*}
\{|\lambda|\in\R\mid\lambda\neq0\;\textrm{and}\;\lambda\;\textrm{is eigenvalue of}\;\mathcal H_A\;\textrm{for some}\;A\in \hat{\mathcal C}^-\cup\hat{\mathcal C}^+\}.
\end{eqnarray*}
We remark that $\delta_0(\mathcal C^-,\mathcal C^+)$ is positive as follows from compactness and non-degeneracy of the critical manifolds $\mathcal C^{\pm}$. In the following we assume that $0<\delta<\delta_0(\mathcal C^-,\mathcal C^+)$. For a critical point $A\in\hat{\mathcal C}^-\cup\hat{\mathcal C}^+$ we let $\ind A$ denote its Morse index, i.e.~the number of negative eigenvalues of $\mathcal H_{A^{\pm}}$, counted with multiplicities.

\begin{thm}[Fredholm theorem]\label{thm:Fredholmtheorem}
Let $(A,\Psi)\in\hat{\mathcal M}(\hat{\mathcal C}^-,\hat{\mathcal C}^+)$ be such that the asymptotic conditions 
\begin{eqnarray*} 
\lim_{s\to\pm\infty}A(s)=A^{\pm} 
\end{eqnarray*}
are satisfied for Yang--Mills connections $A^{\pm}\in\hat{\mathcal C}^{\pm}$ (the limits being understood in the sense of \eqref{eq:regularityatends}). Then the operator $\mathcal D_{(A,\Psi)}=\frac{d}{ds}+\mathcal H_{A}+M_{\Psi}+H_A\V\colon\Z^{\delta,p}\to\mathcal L^{\delta,p}$ as in Definition \ref{def:hordifferential} is a Fredholm operator of index
\begin{eqnarray}\label{eq:indexD}
\ind\mathcal D_{(A,\Psi)}=\ind A^--\ind A^+-\dim\mathcal C^+.
\end{eqnarray}  
\end{thm}

We first show Theorem \ref{thm:Fredholmtheorem} in the case $p=2$, where it follows from well-known results on the spectral flow for families of self-adjoint operators in Hilbert space, cf.~\cite{RobSal}. The case of general Sobolev exponents $p>1$ will afterwards be reduced to the Hilbert space case. The here relevant Hilbert space is 
\begin{eqnarray*}
H\coloneqq L^2(\Sigma,T^{\ast}\Sigma\otimes\ad(P))\oplus L^2(\Sigma,T^{\ast}\Sigma)
\end{eqnarray*}
on which for each $s\in\R$ the operator $\H_{A(s)}^{\delta}$ is self-adjoint with domain $W\coloneqq\dom\H_{A(s)}^{\delta}=\mathcal W^2(\Sigma)\oplus W^{1,2}(\Sigma,\ad(P))$. 

\begin{proof}{\bf{(Theorem \ref{thm:Fredholmtheorem} in the case $p=2$)}}
The result follows from \cite[Theorem A]{RobSal}. To apply this result we need to check that the following properties (i-v) are satisfied. (i) The inclusion $W\hookrightarrow H$ of Hilbert spaces is compact with dense image. This holds true by definition of the space $W$ and the Rellich--Kontrachov compactness theorem. (ii) The operator $\H_{A(s)}^{\delta}\colon H\to H$ is unbounded and self-adjoint with dense domain $W$. This is satisfied by Proposition \ref{YMselfad}. (iii) The norm of $W$ is equivalent to the graph norm of $\H_{A}^{\delta}(s)$ for every $s\in\R$. This holds by the elliptic estimate \eqref{YMHellipticestimate}. (iv) The map $\R\to\mathcal L(W,H)\colon s\mapsto\H_{A(s)}^{\delta}$ is continuously differentiable with respect to the weak operator topology. For this we need to verify that for every $\xi=(\alpha,\psi)\in W$ and $\eta=(\eta_1,\eta_2)\in H$ the map $s\mapsto\langle\H_{A(s)}^{\delta}\xi,\eta\rangle$ is of class $C^1(\R,\R)$. This amounts to check this property for the two maps 
\begin{align*}
g_1\colon s\mapsto\langle d_{A(s)}^{\ast}d_{A(s)}\alpha+\ast[\ast F_{A(s)}\wedge\alpha]-(\beta+\beta's)\delta\alpha-d_{A(s)}\psi,\eta_1\rangle,\\
g_2\colon s\mapsto\langle-d_{A(s)}^{\ast}\alpha-(\beta+\beta's)\delta\psi,\eta_2\rangle.
\end{align*}
Now
\begin{multline*}
\dot g_1(s)=\langle d_{A(s)}^{\ast}[\dot A(s)\wedge\alpha]-\ast[\dot A(s)\wedge\ast d_{A(s)}\alpha]+\ast[\ast d_{A(s)}\dot A(s)\wedge\alpha]\\
-\partial_s(\beta+\beta's)\delta\alpha-[\dot A(s)\wedge\psi],\eta_1\rangle.
\end{multline*}
We conclude continuity of $\dot g(s)$ at an arbitrary point $s_0\in\R$ from the following smoothness properties of the path $s\mapsto A(s)$. Let $I$ be some finite open interval containing $s_0$. By our regularity assumptions on $A$, $\dot A\in W^{1,2;p}(I\times\Sigma)$ for some $p>3$. Sobolev embedding thus implies that $\dot A\in C^0(I\times\Sigma)$. Interpolation moreover yields $\nabla_{A_0}\dot A\in C^0(I,L^p(\Sigma))$ for any continuous reference connection $A_0$. Now let $s_1\in I$. Then continuity of the map $s\mapsto\langle d_{A(s)}^{\ast}[\dot A(s)\wedge\alpha],\eta_1\rangle$ follows with $A_0\coloneqq A(s_0)$, $A_1\coloneqq A(s_1)$, and $\beta\coloneqq A_1-A_0$ from the estimate
\begin{eqnarray*}
|\langle d_{A_1}^{\ast}[\dot A_1\wedge\alpha]-d_{A_0}^{\ast}[\dot A_0\wedge\alpha], \eta_1\rangle|&\leq&|\langle d_{A_0}^{\ast}[(\dot A_1-\dot A_0)\wedge\alpha]-\ast[\beta\wedge\ast[\dot A_1\wedge\alpha]],\eta_1\rangle|\\
&\leq&c\|\eta_1\|_{L^2(\Sigma)}\|\nabla_{A_0}\alpha\|_{L^2(\Sigma)}\|\dot A_1-\dot A_0\|_{L^{\infty}(\Sigma)}\\
&&+c\|\eta_1\|_{L^2(\Sigma)}\|\alpha\|_{L^q(\Sigma)}\|\nabla_{A_0}(\dot A_1-\dot A_0)\|_{L^p(\Sigma)}\\
&&+c\|\eta_1\|_{L^2(\Sigma)}\|[\dot A_1\wedge\alpha]\|_{L^2(\Sigma)}\|\beta\|_{L^{\infty}(\Sigma)},
\end{eqnarray*}
where we let $p,q>1$ be such that $p^{-1}+q^{-1}=\frac{1}{2}$. Note that $\|\alpha\|_{L^q(\Sigma)}$ and $\|\nabla_{A_0}\alpha\|_{L^2(\Sigma)}$ are finite because of $\alpha\in\mathcal W^2\subseteq W^{1,2}(\Sigma)$ and continuity of the embedding $W^{1,2}(\Sigma)\hookrightarrow L^q(\Sigma)$ for every $1<q<\infty$. Continuity of the map $s\mapsto\langle\ast[\ast d_{A(s)}\dot A(s)\wedge\alpha],\eta_1\rangle$ follows similarly. The remaining terms in $\dot g_1$ and $\dot g_2$ can easily be estimated. (v) Let $A^{\pm}$ be the limiting connections as assumed in the theorem. Then the operators $\H_{A^{\pm}}^{\delta}\in\mathcal L(W,H)$ are invertible and are the limits of $\H_{A(s)}^{\delta}$ in the norm topology as $s\to\pm\infty$. Invertibility follows by the choice of the weight $\delta$. The exponential decay Theorem \ref{YMExponentialdecay} gives uniform convergence $A(s)\to A^{\pm}$, hence in particular norm convergence $\H_{A(s)}^{\delta}\to \H_{A^{\pm}}^{\delta}$ as $s\to\pm\infty$. Hence the assumptions of \cite[Theorem A]{RobSal} are satisfied by the operator family $s\mapsto\H_{A(s)}^{\delta}$. This proves Theorem \ref{thm:Fredholmtheorem} in the case $p=2$.
\end{proof}

The proof of Theorem \ref{thm:Fredholmtheorem} in the general case reduces by standard arguments to the case $p=2$.

\begin{proof}{\bf{(Theorem \ref{thm:Fredholmtheorem} in the case $1<p<\infty$)}}
We outline the proof. Full details can be found in \cite{Swoboda1}. Combining via a standard cut-off function argument the estimate \eqref{apriori2A} with bijectivity of the operator $\mathcal D_A^{\delta}$ for stationary paths $A(s)\equiv A^{\pm}$ we obtain the estimate
\begin{eqnarray*}
\|\xi\|_{\Z^p}\leq c(A)\big(\|\mathcal D_A^{\delta}\xi\|_{\L^p}+\|\xi\|_{L^p(I\times\Sigma)}\big)
\end{eqnarray*}
for a constant $c(A)$ and some compact interval $I\subseteq\R$. Hence it follows from the abstract closed range lemma (cf.~e.g.~\cite{Web}) that the operator $\mathcal D_A^{\delta}$ has finite-dimensional kernel and closed range. Similarly, one can show that $\coker\mathcal D_A^{\delta}$ is also finite-dimensional, and that the dimensions of the kernel and cokernel do not depend on $p$. This proves Theorem \ref{thm:Fredholmtheorem} in the general case.
\end{proof}

\section{Compactness}{\label{sec:Compactness}}
Let $a>0$ be a regular value of $\YM$. As in Section \ref{sect:modspaces} we fix an $a$-admissible perturbation $\V\in Y$ with $\|\V\|$ sufficiently small such that Proposition \ref{prop:samecritpoints} applies. We furthermore fix a pair $(\hat{\mathcal C}^-,\hat{\mathcal C}^+)\in\mathcal{CR}^a\times\mathcal{CR}^a$ of critical manifolds and denote $\mathcal C^{\pm}\coloneqq\hat{\mathcal C}^{\pm}/\G_0^{2,p}(P)$. We continue to use the notation introduced in Section \ref{sec:YMgradientflow} and denote by $\hat P_I\coloneqq I\times P$ the trivial extension of the principle $G$-bundle $P$ to the base manifold $I\times\Sigma$, where $I$ is some interval. As before we set $\hat P\coloneqq\hat P_{\R}$. Throughout we identify the pair 
\begin{eqnarray*}
(A,\Psi)\in C^{\infty}(\R,\A(P))\times C^{\infty}(\R,\Omega^0(\Sigma,\ad(P)))
\end{eqnarray*}
with the connection $\AA=A+\Psi\,ds\in\A(\hat P)$. As such, its curvature is given by 
\begin{eqnarray}\label{eq:formula3curv}
F_{\AA}=F_A+(d_A\Psi-\partial_sA)\wedge ds.
\end{eqnarray}
We use the symbols $\hat\ast$, $\hat d_{\mathbbm A}$, etc.~for the Hodge and differential operators acting on $\Omega^{\ast}(\mathbbm R\times\Sigma,\ad(\hat P_I))$. In particular, $\hat d_{\AA}$ and $\hat d_{\AA}^{\ast}=-\hat\ast\hat d_{\AA}\hat\ast$ act on $\ad(\hat P_I)$-valued $1$-forms $\alpha+\psi\,ds$ as
\begin{align}
\label{formulaextforms}\hat d_{\AA}(\alpha+\psi\,ds)=d_A\alpha+\big(d_A\psi+\partial_s\alpha-[\Psi,\alpha]\big)\wedge ds,\\
\label{formulaextforms1}\hat d_{\AA}^{\ast}(\alpha+\psi\,ds)=d_A^{\ast}\alpha-\partial_s\psi-[\Psi,\psi].
\end{align}
The Laplace operator $\hat\Delta_{\AA}$ on $0$-forms $\psi\in\Omega^0(\mathbbm R\times\Sigma,\ad(\hat P_I))$ is given by
\begin{eqnarray*} 
\hat\Delta_{\AA}\psi=\hat d_{\AA}^{\ast}\hat d_{\AA}\psi=\big(\Delta_A-\partial_s^2\big)\psi-\partial_s[\Psi,\psi]-\big[\Psi,\partial_s\psi+[\Psi,\psi]\big].
\end{eqnarray*}
A connection $\AA\in\A(\hat P)$ is said to be in local slice with respect to a reference connection $\AA_0\in\A(\hat P)$ if it satisfies the condition $\hat d_{\AA_0}^{\ast}(\AA-\AA_0)=0$. Writing $\AA=A+\Psi\,ds$ and $\AA_0=A_0+\Psi_0\,ds$ it follows from \eqref{formulaextforms1} that this condition is equivalent to
\begin{eqnarray}\label{locslicestep4}
d_{A_0}^{\ast}(A-A_0)-\partial_s(\Psi-\Psi_0)-[\Psi_0,\Psi]=0.
\end{eqnarray}

The aim of this section is to prove the following compactness theorem.  

\begin{thm}[Compactness]\label{thm:compactness}
Let $\AA^{\nu}=A^{\nu}+\Psi^{\nu}\,ds$, $\nu\in\mathbbm N$, be a sequence of solutions to the perturbed Yang--Mills gradient flow equation
\begin{eqnarray}\label{eqn:compactness1}
\partial_sA+d_A^{\ast}F_A-d_A\Psi+\nabla\V(A)=0
\end{eqnarray}
satisfying $(A^{\nu},\Psi^{\nu})\in\hat{\mathcal M}(\hat{\mathcal C}^-,\hat{\mathcal C}^+)$ for all $\nu\in\mathbbm N$. Here $\hat{\mathcal M}(\hat{\mathcal C}^-,\hat{\mathcal C}^+)$ is as defined in Section \ref{sect:modspaces} and $\hat{\mathcal C}^{\pm}$ denotes the pair of critical manifolds fixed before. Then there exists a sequence $(g^{\nu})\subseteq\mathcal G_{\loc}^{2,p}(\hat P)$ of gauge transformations such that a subsequence of $(g^{\nu})^{\ast}\AA^{\nu}\coloneqq A_1^{\nu}+\Psi_1^{\nu}\,ds$ converges to a solution $\AA^{\ast}=A^{\ast}+\Psi^{\ast}\,ds$ of \eqref{eqn:compactness1} in the sense that 
\begin{eqnarray}\label{eq:convergencesubsequ}
A_1^{\nu}\to A^{\ast}\quad\textrm{in}\quad\A^{1,2;p}(\hat P_I)\qquad\textrm{and}\qquad\Psi_1^{\nu}\to\Psi^{\ast}\quad\textrm{in}\quad W^{1,p}(I\times\Sigma)
\end{eqnarray}
as $\nu\to\infty$, for every compact interval $I$.
\end{thm}

\begin{proof}
The proof follows from Theorem \ref{thm:boundedness} below. This theorem asserts for every compact interval $I$ the existence of a sequence of gauge transformations $g_I^{\nu}\in\G^{2,p}(\hat P_I)$ such that a subsequence of $(g_I^{\nu})^{\ast}\AA^{\nu}\eqqcolon A_1^{\nu}+\Psi_1^{\nu}\,ds$ satisfies a uniform bound of the form
\begin{eqnarray}\label{eq:unifboundscompproof}
\|A_1^{\nu}\|_{W^{1,2;p}(I\times\Sigma)}+\|\dot A_1^{\nu}\|_{W^{1,2;p}(I\times\Sigma)}+\|\Psi_1\|_{W^{2,p}(I\times\Sigma)}\leq C(I).
\end{eqnarray}
Let $A_0\in\A(P)$ be a smooth reference connection. From \eqref{eq:unifboundscompproof} it follows that the sequences $(\partial_sA_1^{\nu})$ and $(\nabla_{A_0}^2A_1^{\nu})$ are bounded in $W^{1,p}(I\times\Sigma)$. The existence of a convergent subsequence of $(A_1^{\nu})$, and similarly of $(\Psi_1^{\nu})$ as asserted in \eqref{eq:convergencesubsequ} then follows from Rellich's theorem. Now denote $I_n\coloneqq[-n,n]$ for $n\in\mathbbm N$. As $\R\times\Sigma$ is exhausted by the compact sets $I_n\times\Sigma$ it follows from standard patching arguments as e.g.~in \cite[Theorem 4.16]{Frauenfelder2} and \cite[Proposition 3.6]{Wehrheim} that it exists a sequence $(g^{\nu})\subseteq\mathcal G_{\loc}^{2,p}(\hat P)$ such that the above convergence on $I_n\times\Sigma$ for every $n\in\mathbbm N$ holds with $g_{I_n}^{\nu}\in\G^{2,p}(\hat P_{I_n})$ replaced by $g^{\nu}|_{I_n}$. With this sequence $(g^{\nu})$ the theorem follows.
\end{proof}

\begin{thm}\label{thm:boundedness} 
Let $I=[b,c]$ be a compact interval and $\AA^{\nu}=A^{\nu}+\Psi^{\nu}\,ds$, $\nu\in\mathbbm N$, be a sequence of connections on $\hat P_I$ where $A^{\nu}\in\A^{1,2;p}(P)$ and $\Psi^{\nu}\in W^{1,p}(I\times\Sigma)$. Assume that  
\begin{eqnarray*}
\limsup_{\nu\to\infty}\YMV(A^{\nu}(b))\leq a
\end{eqnarray*}
is satisfied for the constant $a\geq0$ fixed initially. Then there exists a constant $C(I)$, a sequence $(g^{\nu})\subseteq\mathcal G^{2,p}(\hat P_I)$ of gauge transformations, and a smooth connection $\mathbbm A^{\infty}=A^{\infty}+\Psi^{\infty}\,ds\in\A(\hat P_I)$ such that (after extraction of a subsequence which we again label by $\nu$)
\begin{eqnarray*}
\beta^{\nu}\coloneqq(g^{\nu})^{\ast}A^{\nu}-A^{\infty},\qquad\psi^{\nu}\coloneqq(g^{\nu})^{\ast}\Psi^{\nu}-\Psi^{\infty}
\end{eqnarray*} 
satisfies the uniform bound
\begin{multline*}
\|\beta^{\nu}\|_{L^p(I\times\Sigma)}+\|\partial_s\beta^{\nu}\|_{W^{1,2;p}(I\times\Sigma)}+\|\nabla_{A^{\infty}}\beta^{\nu}\|_{W^{1,2;p}(I\times\Sigma)}\\
+\|\psi^{\nu}\|_{W^{2,p}(I\times\Sigma)}+\|\partial_s\psi^{\nu}\|_{W^{2,p}(I\times\Sigma)}\leq C(I)
\end{multline*}
for all $\nu\in\mathbbm N$.
\end{thm}

\begin{proof} 
The proof, which we divide into several steps, is based on Uhlenbeck's weak compactness theorem and the existence of local slices.
\setcounter{step}{0}
\begin{step}
Let $1<p<4$. There exists a constant $C(p)$ such that the curvature bound $\|F_{\AA^{\nu}}\|_{L^p(I\times\Sigma)}\leq C(p)$ is satisfied for all $\nu\in\mathbbm N$.
\end{step}
Since the estimate is invariant under gauge transformations in $\G(\hat P_I)$ it suffices to prove it for $\Psi^{\nu}=0$. Then, as follows from \eqref{eq:formula3curv} and \eqref{eqn:compactness1}, the curvature is given by 
\begin{eqnarray*}
F_{\AA^{\nu}}=F_{A^{\nu}}+(d_{A^{\nu}}^{\ast}F_{A^{\nu}}+\nabla\V(A^{\nu}))\,ds.
\end{eqnarray*}
Uniform $L^p$ bounds for the terms $F_{A^{\nu}}$ and $d_{A^{\nu}}^{\ast}F_{A^{\nu}}$ hold by Lemmata \ref{lem:univcurvatureestimate0} and \ref{lem:univcurvatureestimate1}. With $\V=\sum_{\ell=1}^{\infty}\lambda_{\ell}\V_{\ell}\in Y$, a uniform estimate for $\nabla\V(A^{\nu})$ is provided by condition (iii) in Section \ref{Bspaceperturbations} from which it follows that
\begin{eqnarray*}
\big\|\sum_{\ell=1}^{\infty}\lambda_{\ell}\nabla\V_{\ell}(A^{\nu})\big\|_{L^p(I\times\Sigma)}&\leq&\sum_{\ell=1}^{\infty}|\lambda_{\ell}|\cdot\|\nabla\V_{\ell}(A^{\nu})\|_{L^p(I\times\Sigma)}\\
&\leq&\sum_{\ell=1}^{\infty}|\lambda_{\ell}|\cdot\Big(\int_IC_{\ell}^p\big(1+\|F_{A^{\nu}(s)}\|_{L^3(\Sigma)}\big)^p\,ds\Big)^{\frac{1}{p}}\\
&\leq&2^p\sum_{\ell=1}^{\infty}C_{\ell}|\lambda_{\ell}|\cdot|I|+2^p\sum_{\ell=1}^{\infty}C_{\ell}|\lambda_{\ell}|\cdot\|F_{A^{\nu}}\|_{L^p(I,L^3(\Sigma))}\\
&=&2^p(|I|+\|F_{A^{\nu}}\|_{L^p(I,L^3(\Sigma))})\|\V\|.
\end{eqnarray*}
Again by Lemma \ref{lem:univcurvatureestimate0}, the term $\|F_{A^{\nu}}\|_{L^p(I,L^3(\Sigma))}$ is uniformly bounded.
  
\begin{step}
Let $3<p<4$ and choose $\eps>0$. There exists a sequence $g^{\nu}\in\G^{2,p}(\hat P_I)$ of gauge transformations and a smooth reference connection $\AA^{\infty}=A^{\infty}+\Psi^{\infty}\,ds$ such that (up to extraction of a subsequence) the sequence $(g^{\nu})^{\ast}\AA^{\nu}$ satisfies the following three conditions.  
\begin{compactenum}[(i)]
\item
Each connection $(g^{\nu})^{\ast}\AA^{\nu}$ satisfies the local slice condition \eqref{locslicestep4} with respect to the reference connection $\AA^{\infty}$. 
\item
The difference $\beta^{\nu}+\psi^{\nu}\,ds\coloneqq(g^{\nu})^{\ast}\AA^{\nu}-\AA^{\infty}$ is uniformly bounded in $W^{1,p}(I\times\Sigma)$.
\item 
The sequence $\beta^{\nu}+\psi^{\nu}\,ds$ satisfies the uniform bound 
\begin{eqnarray*}
\|\beta^{\nu}\|_{C^0(I\times\Sigma)}+\|\psi^{\nu}\|_{C^0(I\times\Sigma)}<\eps.
\end{eqnarray*}
\end{compactenum}
\end{step}
The sequence $\AA^{\nu}$ satisfies a uniform $L^p$ curvature bound by Step 1. Hence Uhlenbeck's weak compactness theorem (cf.~\cite[Theorem 7.1]{Wehrheim}) yields a sequence $g^{\nu}\in\G^{2,p}(\hat P_I)$ of gauge transformations such that a subsequence of $(g^{\nu})^{\ast}\AA^{\nu}$ converges weakly in $W^{1,p}(I\times\Sigma)$ to some limit connection $\AA'$. This sequence is in particular bounded in $W^{1,p}(I\times\Sigma)$ and contains (by compactness of the embedding $W^{1,p}(I\times\Sigma)\hookrightarrow C^0(I\times\Sigma)$ for $p>3$) a subsequence which converges in $C^0(I\times\Sigma)$ to $\AA'$. We label this subsequence again by $\nu$. Now let $C,\delta>0$ be the constants in the statement of the local slice Theorem \ref{thm:locslicethm} with parameters $p>3$ and $q=\infty$. We then replace $\AA'$ by a smooth reference connection $\AA^{\infty}=A^{\infty}+\Psi^{\infty}\,ds$ such that $\|\AA'-\AA^{\infty}\|_{C^0(\Sigma)}<\min\{\frac{\delta}{2},\frac{\eps}{C}\}$. It then follows from Theorem \ref{thm:locslicethm} that for every large enough $\nu$ the connection $(g^{\nu})^{\ast}\AA^{\nu}$ can be put in local slice with respect to $\AA^{\infty}$. Therefore condition (i) is satisfied. Moreover, the same theorem asserts that this can be done preserving the uniform bound in $W^{1,p}(I\times\Sigma)$ and the uniform bound (with constant $\eps$) in $C^0(I\times\Sigma)$. Thus also conditions (ii) and (iii) are satisfied.

\begin{step}
The sequences $(\psi)^{\nu}$ and  $(\beta)^{\nu}$ are uniformly bounded in $W^{1,2;p}(I\times\Sigma)$, respectively in $W^{2,p}(I\times\Sigma)$, for every $p<\infty$.
\end{step}
After applying a smooth gauge transformation to the sequence $(\AA^{\nu})$, we may assume that the assertions of Step 2 continue to hold with $\Psi^{\infty}=0$. For convenience we drop the index $\nu$ in the subsequent calculations. Expanding $d_A$, $d_A^{\ast}$ and $F_A$ as
\begin{eqnarray*}
d_A=d_{A^{\infty}}+[\beta\wedge\,\cdot\,],\quad d_A^{\ast}=d_{A^{\infty}}^{\ast}-\ast[\beta\wedge\ast\,\cdot\,],\quad F_A=F_{A^{\infty}}+d_{A^{\infty}}\beta+\frac{1}{2}[\beta\wedge\beta],
\end{eqnarray*}
equation \eqref{eqn:compactness1} reads
\begin{multline}\label{proofEYF1}
0=\partial_sA^{\infty}+\partial_s\beta+d_{A^{\infty}}^{\ast}F_{A^{\infty}}-\ast\big[\beta\wedge\ast\big(F_{A^{\infty}}+d_{A^{\infty}}\beta+\frac{1}{2}[\beta\wedge\beta]\big)\big]\\
+d_{A^{\infty}}^{\ast}d_{A^{\infty}}\beta+\frac{1}{2}d_{A^{\infty}}^{\ast}[\beta\wedge\beta]-d_{A^{\infty}}\psi-[\beta\wedge\psi]+\nabla\V(A).
\end{multline}
We combine this equation with the local slice condition \eqref{locslicestep4} to obtain for $\beta$ the parabolic PDE
\begin{multline}\label{proofalphaparabolic}
\partial_s\beta+\Delta_{A^{\infty}}\beta=-\partial_sA^{\infty}-d_{A^{\infty}}^{\ast}F_{A^{\infty}}-\frac{1}{2}d_{A^{\infty}}^{\ast}[\beta\wedge\beta]+d_{A^{\infty}}\partial_s\psi\\
+\ast\big[\beta\wedge\ast\big(F_{A^{\infty}}+d_{A^{\infty}}\beta+\frac{1}{2}[\beta\wedge\beta]\big)\big]+d_{A^{\infty}}\psi+[\beta\wedge\psi]-\nabla\V(A).
\end{multline}
Applying $d_{A^{\infty}}^{\ast}$ to both sides of equation \eqref{proofEYF1}, substituting
\begin{eqnarray*}
d_{A^{\infty}}^{\ast}\partial_s\beta=\partial_s^2\psi+\ast[\partial_sA^{\infty}\wedge\ast\beta]
\end{eqnarray*}
according to \eqref{locslicestep4}, and using that
\begin{eqnarray*}
d_{A^{\infty}}^{\ast}\nabla\V(A)=d_A^{\ast}\nabla\V(A)+\ast[\beta\wedge\ast\nabla\V(A)]=\ast[\beta\wedge\ast\nabla\V(A)]
\end{eqnarray*}
yields for $\psi$ the elliptic PDE
\begin{multline}\label{proofellipticpsi}
\hat\Delta_{A^{\infty}}\psi=d_{A^{\infty}}^{\ast}\partial_sA^{\infty}+\ast[\partial_sA^{\infty}\wedge\ast\beta]+\ast\big[\beta\wedge d_{A^{\infty}}\ast\big(F_{A^{\infty}}+\frac{1}{2}[\beta\wedge\beta]\big)\big]\\
+\ast[\beta\wedge\ast d_{A^{\infty}}^{\ast}d_{A^{\infty}}\beta]-\frac{1}{2}[\ast d_{A^{\infty}}\beta\wedge\ast[\beta\wedge\beta]]-\frac{1}{2}[\ast F_{A^{\infty}}\wedge\ast[\beta\wedge\beta]]\\
-d_{A^{\infty}}^{\ast}[\beta\wedge\psi]+\ast[\beta\wedge\ast\nabla\V(A)].
\end{multline}
Let $p>1$ arbitrary. We fix a compact interval $I_1$ such that $I\subseteq\interior I_1$ and a smooth cut-off function $\ph$ such that $\ph|_{I\times\Sigma}\equiv1$ and $\supp\ph\subseteq\interior I_1\times\Sigma$. To be able to obtain estimates on the whole domain $I\times\Sigma$ we replace $\beta$ and $\psi$ by $\hat\beta\coloneqq\ph\beta$ and $\hat\psi\coloneqq\ph\psi$. Then \eqref{proofalphaparabolic} and \eqref{proofellipticpsi} yield equations of the same type for $\hat\beta$ and $\hat\psi$, with a number of additional terms involving $\ph$ and derivatives of $\beta$ and $\psi$ of order at most one in the spacial variables (which are uniformly bounded in $L^p(I_1\times\Sigma)$ by Step 2). Then the arguments below give the desired uniform bound for $\hat\beta$ and $\hat\psi$ on the domain $I_1\times\Sigma$. By choice of the cut-off function $\ph$ this implies the claim for $\beta$ and $\psi$. To keep the exposition short (and avoid to write down further terms involving $\ph$) we only carry out the main argument for the original $\beta$ and $\psi$. From \eqref{proofalphaparabolic} it follows by standard parabolic regularity theory that, for a positive constant $c=c(A^{\infty},I,p)$,
\begin{multline}\label{est:parabolicbeta}
c^{-1}\|\beta\|_{W^{1,2;p}}\leq1+\|\beta\|_{L^p}+\|\{\beta,[\beta\wedge\beta]\}\|_{L^p}+\|\{\nabla_{A^{\infty}}\beta,\beta\}\|_{L^p}\\
+\|\{\beta,\psi\}\|_{L^p}+\|d_{A^{\infty}}\psi\|_{L^p}+\|d_{A^{\infty}}\partial_s\psi\|_{L^p}+\|\nabla\V(A)\|_{L^p}.
\end{multline}
From \eqref{proofellipticpsi} and elliptic regularity we obtain for a constant $c=c(I,p)$ the estimate
\begin{multline}\label{est:ellipticpsi}
c^{-1}\|\psi\|_{W^{2,p}}\leq1+\|\psi\|_{L^p}+\|\beta\|_{L^p}+\|\{\beta,\beta\}\|_{L^p}+\|\nabla_{A^{\infty}}\beta\|_{L^p}+\|\nabla_{A^{\infty}}[\beta\wedge\psi]\|_{L^p}\\
+\|d_{A^{\infty}}\{\beta,[\beta,\beta]\}\|_{L^p}+\|\{\beta,d_{A^{\infty}}^{\ast}d_{A^{\infty}}\beta\}\|_{L^p}+\|[\beta\wedge\ast\nabla\V(A)]\|_{L^p}.
\end{multline}
Now let $3<p<4$. By Step 2 there holds a uniform bound for $\|\beta\|_{C^0}$ and $\|\beta\|_{W^{1,p}}$. The term $\|\nabla\V(A)\|_{L^p}$ is uniformly bounded as shown in Step 1. It thus follows that each term on the right-hand side of \eqref{est:parabolicbeta}, except the term $\|d_{A^{\infty}}\partial_s\psi\|_{L^p}$, is uniformly bounded. It is estimated using \eqref{est:ellipticpsi}. Note that the expression
\begin{eqnarray*}
\|\{\beta,d_{A^{\infty}}^{\ast}d_{A^{\infty}}\beta\}\|_{L^p}\leq c\|\beta\|_{C^0}\|d_{A^{\infty}}^{\ast}d_{A^{\infty}}\beta\|_{L^p}
\end{eqnarray*}
appearing in \eqref{est:ellipticpsi} becomes absorbed by the left-hand side of \eqref{est:parabolicbeta} after fixing $\eps$ in condition (iii) of Step 2 sufficiently small. Hence it follows that the sequence $(\psi^{\nu})$ is uniformly bounded in $W^{2,p}(I\times\Sigma)$ and $(\beta^{\nu})$ is uniformly bounded in $W^{1,2;p}(I\times\Sigma)$, for every $p<4$. We now can iterate the argumentation so far to obtain the same bounds for every $p<\infty$. To be precise, Sobolev embedding yields that $\nabla_{A^{\infty}}\psi^{\nu}$ admits a uniform bound in $C^0(I\times\Sigma)$. From standard interpolation for anisotropic Sobolev spaces and the Sobolev embedding theorem we obtain a uniform bound for $\nabla_{A^{\infty}}\beta^{\nu}$ in $L^{p_1}(I\times\Sigma)$ for every $p_1<\frac{9p}{9-p}$. From this it follows that the previous elliptic and parabolic estimates apply with $p$ replaced by $\frac{3p}{2}$. Repeating this argument a finite number of times, we inductively obtain uniform bounds  for $(\psi)^{\nu}$ in $W^{1,2;p}(I\times\Sigma)$ and for $(\beta)^{\nu}$ in $W^{2,p}(I\times\Sigma)$, for every $p<\infty$.

\begin{step}
We prove the theorem.
\end{step}
Similarly to Step 1, a uniform estimate for $\partial_s\nabla\V(A^{\nu})$ is provided by condition (iv) of Section \ref{Bspaceperturbations} from which it follows for $p\geq3$ that
\begin{eqnarray}\label{eq:estpartialnablaV}
\nonumber\lefteqn{\big\|\sum_{\ell=1}^{\infty}\lambda_{\ell}\partial_s\nabla\V_{\ell}(A^{\nu})\big\|_{L^p(I\times\Sigma)}}\\
\nonumber&\leq&\sum_{\ell=1}^{\infty}|\lambda_{\ell}|\cdot\|H_A\V_{\ell}(\dot A^{\nu})\|_{L^p(I\times\Sigma)}\\
\nonumber&\leq&\sum_{\ell=1}^{\infty}|\lambda_{\ell}|\cdot\Big(\int_IC_{\ell}^p\big(1+\|F_{A^{\nu}(s)}\|_{L^3(\Sigma)}\big)^p\|\dot A^{\nu}\|_{L^p(\Sigma)}^p\,ds\Big)^{\frac{1}{p}}\\
\nonumber&\leq&2^p\sum_{\ell=1}^{\infty}C_{\ell}|\lambda_{\ell}|\cdot\|\dot A^{\nu}\|_{L^p(I\times\Sigma)}+2^p\sum_{\ell=1}^{\infty}C_{\ell}|\lambda_{\ell}|\cdot\|F_{A^{\nu}}\|_{L^p(I\times\Sigma)}\|\dot A^{\nu}\|_{L^p(I\times\Sigma)}\\
&=&2^p\big(1+\|F_{A^{\nu}}\|_{L^p(I\times\Sigma)}\big)\|\dot A^{\nu}\|_{L^p(I\times\Sigma)}\|\V\|.
\end{eqnarray}
Differentiating \eqref{proofalphaparabolic} with respect to $s$ and denoting
\begin{eqnarray*}
\dot\Delta_{A^{\infty}}\coloneqq-\ast[\dot A^{\infty}\wedge\ast d_{A^{\infty}}\,\cdot\,]+d_{A^{\infty}}^{\ast}[\dot A^{\infty}\wedge\,\cdot\,]+[\dot A^{\infty}\wedge d_{A^{\infty}}^{\ast}\,\cdot\,]-d_{A^{\infty}}\ast[\dot A^{\infty}\wedge\ast\,\cdot\,]
\end{eqnarray*}
we obtain for $\dot\beta$ the parabolic PDE
\begin{multline}\label{proofdotalphaparabolic}
\partial_s\dot\beta+\Delta_{A^{\infty}}\dot\beta=-\dot\Delta_{A^{\infty}}\beta-\ddot A^{\infty}-\partial_sd_{A^{\infty}}^{\ast}F_{A^{\infty}}+\frac{1}{2}\ast[\dot A^{\infty}\wedge\ast[\beta\wedge\beta]]\\
-d_{A^{\infty}}^{\ast}[\dot\beta\wedge\beta]+\partial_sd_{A^{\infty}}\partial_s\psi+\ast\big[\dot\beta\wedge\ast\big(F_{A^{\infty}}+d_{A^{\infty}}\beta+\frac{1}{2}[\beta\wedge\beta]\big)\big]\\
+\ast\big[\beta\wedge\ast\partial_s\big(F_{A^{\infty}}+d_{A^{\infty}}\beta+\frac{1}{2}[\beta\wedge\beta]\big)\big]+\partial_sd_{A^{\infty}}\psi+\partial_s[\beta\wedge\psi]-\partial_s\nabla\V(A).
\end{multline}
Differentiating \eqref{proofellipticpsi} with respect to $s$ we obtain for $\dot\psi$ the elliptic PDE
\begin{multline}\label{proofellipticdotpsi}
\hat\Delta_{A^{\infty}}\dot\psi=-\dot\Delta_{A^{\infty}}\psi+\partial_sd_{A^{\infty}}^{\ast}\partial_sA^{\infty}+\ast\partial_s[\partial_sA^{\infty}\wedge\ast\beta]+\ast[\dot\beta\wedge\ast d_{A^{\infty}}^{\ast}d_{A^{\infty}}\beta]\\
+\ast\partial_s\big[\beta\wedge d_{A^{\infty}}\ast\big(F_{A^{\infty}}+\frac{1}{2}[\beta\wedge\beta]\big)\big]+\ast[\beta\wedge\ast\partial_sd_{A^{\infty}}^{\ast}d_{A^{\infty}}\beta]-\frac{1}{2}\partial_s[\ast d_{A^{\infty}}\beta\wedge\ast[\beta\wedge\beta]]\\
-\frac{1}{2}\partial_s[\ast F_{A^{\infty}}\wedge\ast[\beta\wedge\beta]]
-\partial_sd_{A^{\infty}}^{\ast}[\beta\wedge\psi]+\ast[\dot\beta\wedge\ast\nabla\V(A)]+\ast[\beta\wedge\ast\partial_s\nabla\V(A)].
\end{multline}
We now use equations \eqref{proofdotalphaparabolic} and \eqref{proofellipticdotpsi} and argue as in Step 3 to obtain the asserted uniform bounds for $\|\dot\beta\|_{W^{1,2;p}(I\times\Sigma)}$ and $\|\dot\psi\|_{W^{2,p}(I\times\Sigma)}$. Namely, with the exception of the terms
\begin{eqnarray}\label{missingterms}
\partial_sd_{A^{\infty}}\dot\psi,\qquad d_{A^{\infty}}^{\ast}[\dot\beta\wedge\beta],\qquad[\beta\wedge\ast\partial_sd_{A^{\infty}}\beta],\qquad[\ast\partial_sd_{A^{\infty}}\beta\wedge\ast[\beta\wedge\beta]], 
\end{eqnarray}
the right-hand side of \eqref{proofdotalphaparabolic} is uniformly bounded in $L^p(I\times\Sigma)$ for some sufficiently small $1<r\leq p$. This follows from the uniform $W^{1,2;p}$ and $W^{2,p}$ bounds obtained in Step 3 for $\beta$, respectively $\psi$, and estimate \eqref{eq:estpartialnablaV}. The term $\partial_sd_{A^{\infty}}\dot\psi$ can be estimated using \eqref{proofellipticdotpsi}. The term $d_{A^{\infty}}^{\ast}[\dot\beta\wedge\beta]$ is bounded in $L^r$ (for $1<r<\frac{p}{2}$) as
\begin{eqnarray*}
\|d_{A^{\infty}}^{\ast}[\dot\beta\wedge\beta]\|_{L^r(I\times\Sigma)}\leq c\big(\|\{\dot\beta,\nabla_{A^{\infty}}\beta\}\|_{L^r(I\times\Sigma)}+\|\beta\|_{C^0(I\times\Sigma)}\|\nabla_{A^{\infty}}\dot\beta\|_{L^r(I\times\Sigma)}\big),
\end{eqnarray*}
the latter expression being absorbed in the left-hand side of the parabolic estimate for $\dot\beta$ after fixing $\eps$ in condition (iii) of Step 2 sufficiently small. The last two terms in \eqref{missingterms} are estimated in the same way. Similarly, the right-hand side of \eqref{proofellipticdotpsi} is uniformly bounded in $L^r(I\times\Sigma)$ with the exception of the terms $[\beta\wedge\partial_sd_{A^{\infty}}\ast[\beta\wedge\beta]]$, $[\beta\wedge\partial_sd_{A^{\infty}}^{\ast}d_{A^{\infty}}\beta]$, and $\partial_sd_{A^{\infty}}^{\ast}[\beta\wedge\psi]$. The first two of these can be absorbed as before. For the last one we estimate
\begin{multline}\label{eq:smallsecondderivbeta}
\|\partial_sd_{A^{\infty}}^{\ast}[\beta\wedge\psi]\|_{L^r(I\times\Sigma)}\leq c\big(\|\{\nabla_{A^{\infty}}\beta,\dot\psi\}+\{\dot\beta,\nabla_{A^{\infty}}\psi\}\\
+\{\beta,\partial_s\nabla_{A^{\infty}}\psi\}+\|\psi\|_{C^0(I\times\Sigma)}\|\partial_s\nabla_{A^{\infty}}\beta\|_{L^r(I\times\Sigma)}\big).
\end{multline}
After fixing $\eps$ in condition (iii) of Step 2 still smaller if necessary (and replacing $\partial_s\nabla_{A^{\infty}}\beta$ by $\nabla_{A^{\infty}}\dot\beta+[\dot A^{\infty}\wedge\beta]$), the last term in \eqref{eq:smallsecondderivbeta} can be absorbed in the left-hand side of the parabolic estimate for $\dot\beta$. We therefore arrive at uniform bounds for $\|\dot\beta\|_{W^{1,2;r}(I\times\Sigma)}$ and $\|\dot\psi\|_{W^{2,r}(I\times\Sigma)}$. Bootstrap arguments as in Step 3 the yield the desired uniform bounds for all $r\leq p$. It remains to show that also $\nabla_{A^{\infty}}\beta^{\nu}$ is uniformly bounded in $W^{1,2;p}(I\times\Sigma)$. With the estimates already proved this is implied by the following standard argument. Namely, using the Bochner--Weitzenb\"ock formula \eqref{BWformula} and the commutator identity \eqref{commutatorid} we obtain 
\begin{multline*}
\big(\frac{d}{ds}+\Delta_{A^{\infty}}\big)\nabla_{A^{\infty}}\beta^{\nu}=\nabla_{A^{\infty}}\big(\frac{d}{ds}+\Delta_{A^{\infty}}\big)\beta^{\nu}+\{\beta^{\nu},\nabla_{A^{\infty}}\beta^{\nu}\}\\
+\{F_{A^{\infty}},\nabla_{A^{\infty}}\beta^{\nu}+\beta^{\nu}\}+\{R_{\Sigma},\nabla_{A^{\infty}}\beta^{\nu}+\beta^{\nu}\}.
\end{multline*}
The right-hand side of this equation is uniformly bounded in $L^p(I\times\Sigma)$ as follows from what we have already shown and the parabolic equation \eqref{proofalphaparabolic} satisfied by $\beta^{\nu}$ (note that applying $\nabla_{A^{\infty}}$ to the right-hand side of \eqref{proofalphaparabolic} yields an expression uniformly bounded in $L^p(I\times\Sigma)$). Now parabolic regularity for the operator $\frac{d}{ds}+\Delta_{A^{\infty}}$ shows that $\nabla_{A^{\infty}}\beta^{\nu}$ is uniformly bounded in $W^{1,2;p}(I\times\Sigma)$, as desired. This completes the proof of the theorem. 
\end{proof}

\section{Transversality}\label{sect:transversality}

\subsection{Universal moduli space}
Throughout we fix a regular value $a>0$ of $\YM$. For the definition of the Banach space $Y$ we refer to Section \ref{Bspaceperturbations}. In this section we use the notation $Y^a$ for the closed subspace of $a$-admissible perturbations as introduced in Definition \ref{def:regperturbation}. We furthermore fix a pair of {\emph{disjoint}} critical manifolds $\hat{\mathcal C}^{\pm}\in\mathcal{CR}^a$. We set $\mathcal C^{\pm}\coloneqq\frac{\hat{\mathcal C}^{\pm}}{\G_0^{2,p}(P)}$ and consider the smooth Banach space bundle
\begin{eqnarray*}
\E=\E(\mathcal C^-,\mathcal C^+,\delta,p)\to\B(\mathcal C^-,\mathcal C^+,\delta,p)\times Y^a,
\end{eqnarray*}
cf.~Section \ref{Banachmanifolds} for definitions. We define the smooth section $\F$ of $\E$ by 
\begin{eqnarray}\label{eq:universalsection}
\mathcal F\colon[(A,\Psi,\mathcal V)]\mapsto[\partial_sA+d_A^{\ast}F_A-d_A\Psi+\nabla\V(A)],
\end{eqnarray}
and call its zero set $\mathcal M^{\univ}(\mathcal C^-,\mathcal C^+)\coloneqq\F^{-1}(0)$ the \emph{universal moduli space}. Thus the perturbation $\mathcal V$ which had been kept fixed so far is now allowed to vary over the Banach space $Y^a$.\\
\noindent\\
Let $u=[(A,\Psi,\V)]\in\mathcal M^{\univ}(\mathcal C^-,\mathcal C^+)$. In view of Proposition \ref{prop:temporalgauge} we may assume, after applying a suitable gauge transformation, that $\Psi=0$. As in \eqref{eq:universalsection}, we let $\hat{\mathcal D}_{(A,\V)}\coloneqq d_u\mathcal F$ denote the horizontal differential at $(A,0,\V)$ of the section $\F$. The discussion of the operator $\hat{\mathcal D}_{(A,\V)}$ parallels the one in Section \ref{subsect:linearizedoperator}. We put 
\begin{eqnarray*}
\hat{\mathcal D}_{(A,\V)}\colon\Z^{\delta,p}\times Y^a\to\mathcal L^{\delta,p},\quad(\alpha,\psi,v)\mapsto\mathcal D_{A}(\alpha,\psi)+\nabla v(A),
\end{eqnarray*}
with $\mathcal D_A$ being defined in \eqref{eq:hordifferential} and Banach spaces $\Z^{\delta,p}$ and $\mathcal L^{\delta,p}$ as in Section \ref{subsect:linearizedoperator}. Note that $\hat{\mathcal D}_{(A,\V)}$ is the sum of the Fredholm operator $\mathcal D_{A}$ and the bounded operator $v\mapsto\nabla v(A)$, and therefore has closed range. The Fredholm property of $\mathcal D_{A}$ has been shown in Theorem \ref{thm:Fredholmtheorem}. With $v=\sum_{\ell=1}^{\infty}\lambda_{\ell}\V_{\ell}\in Y^a$, the assertion on boundedness follows from the estimate
\begin{eqnarray*}
\|\nabla v(A)\|_{L^p(\R\times\Sigma)}=\|\nabla v(A)\|_{L^p([-T,T]\times\Sigma)}\leq2^p(2T+\|F_{A^{\nu}}\|_{L^p([-T,T],L^3(\Sigma))})\|v\|.
\end{eqnarray*}
The first identity holds for some constant $T=T(A)<\infty$ because $A(s)$ is contained in the support of $v$ only for some finite time interval (by assumption, $\supp v$ is contained in the complement of some $L^2$ neighborhood of $\hat{\mathcal C}^-\cup\hat{\mathcal C}^+$). The last inequality was shown in Step 1 of the proof of Theorem \ref{thm:boundedness}. 

\begin{thm}[Transversality]\label{thm:linsurjetivederef}
The horizontal differential $d_u\F$ of the map $\F$ is surjective for every $u\in\mathcal M^{\univ}(\mathcal C^-,\mathcal C^+)$.
\end{thm}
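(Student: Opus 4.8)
The plan is to prove surjectivity of the horizontal differential $d_u\F$ at a zero $u=[(A,\Psi,\V)]$ by the standard Sard--Smale scheme: show that the image of $d_u\F$ is closed (which follows because $\mathcal D_A$, the partial differential in the connection/gauge directions, is Fredholm by Theorem \ref{thm:Fredholmtheorem}, and adding the perturbation directions does not destroy closedness), and then show that the image is dense, hence everything. Concretely, assume $\eta\in(\im d_u\F)^{\perp}$, i.e. $\eta\in\mathcal L^{\delta,p'}$ (the dual space, with $1/p+1/p'=1$) annihilates the image; since already the restriction of $d_u\F$ to variations of $(A,\Psi)$ alone equals $\mathcal D_A$ up to compact terms, $\eta$ lies in $\ker\mathcal D_A^{\ast}$ and in particular satisfies a linear elliptic-parabolic equation, so by elliptic/parabolic regularity $\eta$ is smooth and, by the unique continuation principle for such operators, is either identically zero or nonvanishing on an open dense subset of $\R\times\Sigma$. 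It then remains to show that the derivative of $\F$ in the $Y$-direction alone, evaluated against $\eta$, can be made nonzero unless $\eta\equiv0$.

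The key computation is therefore the following: for $\hat\V\in Y$, the variation of $\F$ in the perturbation direction is $\nabla\hat\V(A)$, so I must show that the pairing $\int_{\R\times\Sigma}\langle\nabla\hat\V(A(s)),\eta(s)\rangle\,ds$ can be made nonzero by a suitable choice of $\hat\V$, whenever $\eta\not\equiv0$. Here is where the explicit structure of the model perturbations $\V_\ell$ in \eqref{modelperturbation} enters. One first localizes: since $\eta$ is smooth and nonvanishing on an open set, pick a time $s_0$ and a connection $A(s_0)$ that does not lie in any critical manifold (using that along a nonconstant trajectory $A(s_0)$ is not Yang--Mills for generic $s_0$, or alternatively that the relevant open set of $(s,x)$ avoids the neighborhoods $U_{\mathcal C}$), so that $A(s_0)$ is an irreducible connection in the interior of the region where admissible perturbations are permitted. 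By density of the sequence $(A_i)$ of irreducible connections and of the $1$-forms $(\eta_{ij})$ in the slices, one can choose a model perturbation $\V_\ell$ centered at a connection $A_i$ close to $A(s_0)$ in $L^2$, with bump function $\rho_k$ supported in a tiny $L^2$-ball, and with $\eta_j$ chosen to correlate positively with $\eta(s_0)$ after projecting to the local slice; then $\nabla\V_\ell(A(s))$ is supported in a short time interval around $s_0$ and points, to leading order, in the direction of $\eta_{ij}$, so that the pairing is nonzero. The local slice theorem (Theorem \ref{locslicethm}) and the properties of the perturbations collected in the appendix are exactly what make this localization and directional control possible.

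The main obstacle, and the step requiring the most care, is precisely this last density/localization argument: one must verify that the gradient $\nabla\V_\ell(A)$ at a connection $A$ near $A_i$ is, to leading order, a prescribed slice-projection of $\eta_j$, and that the cutoff-derivative term $\rho_k'(\|\alpha(A)\|_{L^2}^2)$ does not interfere — this is controlled by the condition $\|\rho'\|_{L^\infty}<1$ together with the $k^{-2}$ scaling of $\rho_k$, which makes the support of the derivative term negligible. A second subtlety is ensuring that the perturbation produced is \emph{admissible} in the sense of Definition \ref{def:regperturbation}, i.e.\ supported away from the neighborhoods $U_{\mathcal C}$ of the critical manifolds; this is where one uses that the open set on which $\eta$ is nonzero can be taken inside $\A(P)\setminus\bigcup_{\mathcal C}U_{\mathcal C}$, together with the fact (from exponential decay, Theorem \ref{YMExponentialdecay}) that a nonconstant trajectory spends only bounded time near each critical manifold, so there is always a usable window. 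Once a single $\V_\ell$ with nonzero pairing is found, linearity finishes the proof: $\eta$ cannot annihilate the whole image, contradiction, so $d_u\F$ is surjective.
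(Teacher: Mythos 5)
Your overall strategy coincides with the paper's: show $d_u\F$ has closed range (from Fredholmness of $\mathcal D_A$ plus boundedness of the $Y$-direction), reduce surjectivity to triviality of the annihilator $\eta\in\ker\mathcal D_A^{\ast}$, and then defeat $\eta$ by constructing an explicit model perturbation $\V_0$ with $\langle\nabla\V_0(A),\eta\rangle_{\R\times\Sigma}>0$. You also correctly flag the two ancillary issues — no-return of the trajectory to the $L^2$-ball around $A(s_0)$, and admissibility of $\V_0$ — although the paper's way of securing the first is via strict monotonicity of $\YMV$ along the flow (Proposition \ref{noreturn}), not the exponential-decay heuristic you sketch.

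The genuine gap is in the central estimate, and your stated reason for why the cutoff-derivative term is harmless is in fact wrong. You write that the contribution of $\rho_k'$ is negligible ``by the condition $\|\rho'\|_{L^\infty}<1$ together with the $k^{-2}$ scaling of $\rho_k$.'' But $\rho_k(r)=\rho(k^2r)$, so $\rho_k'(r)=k^2\rho'(k^2r)$ has sup-norm of order $k^2$ (equivalently $\eps^{-2}$ in the $\rho_\eps$ normalization of Proposition \ref{prop:modelpert}); the derivative term is \emph{large}, not small. What actually saves the estimate is the \emph{slicewise orthogonality} $\langle\dot A(s),\eta(s)\rangle=0$ (Proposition \ref{slicewiseorth}), which is a consequence of $\mathcal D_A\dot A=0$ and $\mathcal D_A^{\ast}\eta=0$. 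This identity forces $f(s)=\langle\alpha(s),\alpha'(s)\rangle$ and $g(s)=\langle\alpha(s),\eta_0\rangle$ to vanish to second order at $s_0$ (not merely first order, which is all one gets from $\alpha(s_0)=0$), whence the $\rho_\eps'$-term integrates to $O(\eps^3)$ while the main term is $\sim\eps\|\eta_0\|^2/\mu$, and the latter wins for small $\eps$. Without the slicewise orthogonality, $f,g$ are only $O(|s-s_0|)$ and the error term is $O(\eps)$, i.e. comparable to the main term: your argument would then not close. You should add this lemma and use it to establish the quadratic vanishing of $f$ and $g$; the unique-continuation step you invoke is not needed (the first-order ODE $\mathcal D_A^{\ast}\eta=0$ in $s$ already propagates $\eta\neq0$ to all times), and indeed the paper works directly at a single well-chosen time $s_0$.
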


\begin{proof} 
As argued before, the operator $\hat{\mathcal D}_{(A,\V)}$ has closed range. By Proposition \ref{prop:denserange} below this range is dense in $\mathcal L^{\delta,p}$. These two properties together imply that $\hat{\mathcal D}_{(A,\V)}$ and hence $d_u\F$ is surjective.
\end{proof}

We temporarily use the notation $\mathcal M(\mathcal C^-,\mathcal C^+;\mathcal V)$ for the moduli space as in \eqref{modulispace}, defined with respect to the fixed perturbation $\V\in Y^a$. From Theorem \ref{thm:linsurjetivederef} and the implicit function theorem it follows that the universal moduli space $\mathcal M^{\univ}(\mathcal C^-,\mathcal C^+)$ is a smooth Banach manifold. Let $\pi\colon\mathcal M^{\univ}(\mathcal C^-,\mathcal C^+)\to Y^a$ denote the projection to the second factor. Because $\mathcal D_A$ is Fredholm (Theorem \ref{thm:Fredholmtheorem}), $d_u\F$ is surjective (Theorem \ref{thm:linsurjetivederef}), and the map $v\mapsto\nabla v(A)$ is bounded (cf.~the preceding paragraph) we have the following two facts. Both follow from part (ii) of \cite[Proposition 3.3]{Weber0}. First, the map $\pi$ is a smooth Fredholm map whose index is given by the Fredholm index of $\mathcal D_A$. Second, the closed subspace $\ker d_u\F$ admits a topological complement in $\Z^{\delta,p}\times Y^a$. Hence we may apply to $\pi$ the Sard--Smale theorem for Fredholm maps between Banach manifolds, cf.~the book \cite[Theorem 3.6.15]{Abraham}, from which it follows that the set of regular values 
\begin{eqnarray*}
\mathcal R\coloneqq\big\{\mathcal V\in Y^a\,\big|\,d_u\pi\,\textrm{is surjective for all}\,u\in\mathcal M(\mathcal C^-,\mathcal C^+;\mathcal V)\big\}\subseteq Y^a
\end{eqnarray*}
is residual in $Y^a$. Hence in particular, there exists a regular value $\mathcal V^{\reg}\in\mathcal R$ in every arbitrarily small ball $B_{\eps}(0)$ (with respect to the norm on $Y^a$) around zero. For every such $\mathcal V^{\reg}$, the moduli space $\mathcal M(\mathcal C^-,\mathcal C^+;\mathcal V^{\reg})$ is a submanifold of $\mathcal M^{\univ}(\mathcal C^-,\mathcal C^+)$ of dimension equal to $\ind\mathcal D_A$.

\subsection{Surjectivity of linearized operators}\label{sec:Surjectivitylinoperators}

It remains to state and prove Proposition \ref{prop:denserange} used in the proof of Theorem \ref{thm:linsurjetivederef}. In the following let $1<q<\infty$ be the dual Sobolev exponent to $p$ and denote
\begin{eqnarray}\label{eq:compnentseta}
\eta=(\eta_1,\eta_2)\in(\mathcal L^{\delta,p})^{\ast}=L_{-\delta}^q(\mathbbm R\times\Sigma,T^{\ast}\Sigma\otimes\ad(P))\oplus L_{-\delta}^q(\mathbbm R\times\Sigma,\ad(P)).
\end{eqnarray}
Let $H_A=d_A^{\ast}d_A+\ast[\ast F_A\wedge\,\cdot\,]+H_A\V$ and set $D_A\coloneqq\frac{d}{ds}+H_A$ and $D_{A}^{\ast}\coloneqq-\frac{d}{ds}+H_A$.

\begin{prop}\label{prop:denserange}
The image of the operator $\hat{\mathcal D}_{(A,\V)}\colon\Z^{\delta,p}\times Y^a\to\mathcal L^{\delta,p}$ is dense in $\mathcal L^{\delta,p}$, for every $(A,0,\V)\in\mathcal M^{\univ}(\mathcal C^-,\mathcal C^+)$.
\end{prop}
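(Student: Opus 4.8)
The plan is to establish density of the range by the standard $L^2$-orthogonal complement argument: suppose $\zeta = (\zeta_1,\zeta_2) \in \L^{q}$ (with $q$ conjugate to $p$, and using an appropriate dual pairing accounting for the weights) annihilates the image of $\hat{\mathcal D}_{(A,\V)}$, and show $\zeta = 0$. Since $\hat{\mathcal D}_{(A,\V)}$ restricted to the first factor is the Fredholm operator $\mathcal D_A = \frac{d}{ds}+\mathcal H_A$, annihilation of $\mathcal D_A(\Z_A^{\delta,p})$ forces $\zeta$ to be a weak solution of the formal adjoint equation $-\frac{d}{ds}\zeta + \mathcal H_A^{*}\zeta = 0$; by elliptic/parabolic regularity (the operator $\mathcal H_A$ being, up to lower order terms, $d_A^{\ast}d_A$ in the $1$-form component and a Hodge-type operator overall) the section $\zeta$ is smooth on $\R\times\Sigma$, and by the weighted setup together with the exponential decay of $A$ towards $A^\pm$ it decays as $s\to\pm\infty$. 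Then the additional equations coming from the second factor $Y$ must be exploited: for every $\ell$, the pairing $\int_{\R\times\Sigma}\langle \zeta, \nabla\V_\ell(A)\rangle\,ds = 0$.

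The core of the argument is to show that this last family of orthogonality relations, combined with the density properties built into the definition of $Y$ (a dense sequence $(A_i)$ of irreducible connections, and for each $A_i$ a dense sequence $(\eta_{ij})$ of slice-divergence-free $1$-forms), forces $\zeta$ to vanish. First I would use a unique continuation / openness argument: if $\zeta \not\equiv 0$, then since $\zeta$ is a smooth solution of the adjoint flow equation, it is nonzero on an open set of times $s$, and at such times $A(s)$ is an irreducible connection not lying in any $U_{\mathcal C}$ (otherwise the trajectory would be trapped near a critical manifold and, by exponential decay, $\zeta$ would already be controlled). Pick such a time $s_0$ and approximate $A(s_0)$ in $L^2$ by some $A_i$ from the dense sequence, close enough that $A(s_0)$ lies in the local slice $\S_{A_i}(\eps)$ with $\|\alpha(A(s_0))\|_{L^2}^2$ inside the region where $\rho_k\equiv 1$. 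Near $s_0$ one then has $\nabla\V_\ell(A(s)) = $ (the $L^2$-gradient of $A\mapsto\langle\alpha(A),\eta_j\rangle$), which is a bounded linear-plus-lower-order expression in $\eta_j$ composed with the slice projection; letting $\eta_j$ range over the dense sequence $(\eta_{ij})$ and using a cutoff in $s$ localized near $s_0$, the vanishing of all these pairings forces the slice-component of $\zeta(s_0)$ to be $L^2$-orthogonal to a dense subspace of $\ker d_{A_i}^{\ast}$, hence to vanish there; combined with the gauge-fixing condition $L_{(A,0)}^{\ast}\zeta$-type constraint satisfied by the adjoint variable this yields $\zeta(s_0)=0$, contradicting the choice of $s_0$.

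The main obstacle I anticipate is the bookkeeping around the perturbation $\V_\ell$: one must verify that the derivative of the slice map $A \mapsto \alpha(A)$ (which exists and is smooth by the Mrowka–Wehrheim Theorem~\ref{locslicethm}) interacts cleanly with the $L^2$-gradient, so that $\nabla\V_\ell(A)$ near $s_0$ genuinely "sees" $\eta_j$ projected to the slice $\ker d_{A_i}^{\ast}$ modulo terms one can make small, and that the cutoff function $\rho_k$ can be taken constant on the relevant neighborhood (this is exactly the role of the condition $k > 4/\delta(A_i)$). A secondary technical point is the passage between the time-slice pairing and the full parabolic pairing: since $\zeta$ is only a priori in $L^q$ in $s$, to conclude $\zeta(s_0)=0$ pointwise in $s$ one uses the regularity of $\zeta$ established in the first step, then integrates against test perturbations of the form $\chi(s)\V_\ell$ with $\chi$ a bump function shrinking to $s_0$. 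Having shown the range is dense, one remarks separately (in the theorem that invokes this proposition) that the range is also closed because $\hat{\mathcal D}_{(A,\V)}$ is a bounded perturbation of the Fredholm operator $\mathcal D_A$, so dense plus closed gives surjectivity; but the present Proposition~\ref{prop:denserange} requires only the density.
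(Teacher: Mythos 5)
Your proposal begins correctly — annihilator argument, the adjoint equation $\mathcal D_A^{\ast}\eta=0$, regularity — but it contains a genuine gap at the point where you propose to pair against ``test perturbations of the form $\chi(s)\V_{\ell}$ with $\chi$ a bump function shrinking to $s_0$.'' Such time-dependent perturbations are \emph{not} elements of the Banach space $Y$: by \eqref{def:univspacepert}, $Y$ consists only of $s$-independent linear combinations $\sum_{\ell}\lambda_{\ell}\V_{\ell}$, and the perturbation enters the equation only through $\nabla\V(A(s))$, so the $s$-dependence is entirely dictated by the flow line $A$ and cannot be cut off externally. Consequently, your strategy of shrinking a time cutoff to $s_0$ to extract the pointwise orthogonality $\langle\zeta(s_0),\eta_j\rangle=0$ for a dense family $\eta_j$ does not go through: a single perturbation $v\in Y$ yields only the integrated relation $\int_{\R}\langle\nabla v(A(s)),\eta(s)\rangle\,ds=0$, and different time intervals may interfere.

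The paper circumvents this by localizing in \emph{configuration space} rather than in time. Fixing $s_0$ with $\eta(s_0)\neq0$, one constructs a single model perturbation $\V_0$ supported in a small $L^2$-ball around the gauge orbit of $A_0:=A(s_0)$ (Proposition \ref{prop:modelpert}); the ``No return'' Proposition \ref{noreturn} then guarantees that the flow line enters this support only for $s$ in an interval $(s_0-\delta,s_0+\delta)$, which effectively localizes the time integral. To then force the integral $\int\langle\nabla\V_0(A),\eta\rangle\,ds$ to be strictly positive (rather than zero), one needs the delicate estimate that the term coming from the derivative $\rho_{\eps}'$ of the cutoff is lower order in $\eps$ compared to the main term; this rests on the slicewise orthogonality $\langle\dot A(s),\eta(s)\rangle=0$ (Proposition \ref{slicewiseorth}, a consequence of $\mathcal D_A\dot A=0$ and $\mathcal D_A^{\ast}\eta=0$), which makes both $\langle\alpha,\alpha'\rangle$ and $\langle\alpha,\eta_0\rangle$ vanish to second order at $s_0$. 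These two ingredients — ``No return'' and slicewise orthogonality — are absent from your proposal and are precisely what replaces the unavailable time cutoff. Once $\langle\nabla\V_0(A),\eta\rangle>0$ is achieved, one approximates $\V_0$ by an element $v\in Y$ using the density built into the data $(A_i,\eta_{ij})$ and obtains the contradiction.
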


\begin{proof}
Density of the range is equivalent to triviality of its annihilator. This means that, given $\eta=(\eta_1,\eta_2)$ as in \eqref{eq:compnentseta} with
\begin{eqnarray}\label{eq:definitionannihilator}
\langle\hat{\mathcal D}_{A,\V}(\alpha,\psi,v),\eta\rangle_{\mathbbm R\times\Sigma}=0\qquad\textrm{for all}\quad(\alpha,\psi,v)\in\Z^{\delta,p}\times Y^a,
\end{eqnarray}
then $\eta=0$. Condition \eqref{eq:definitionannihilator} is equivalent to 
\begin{eqnarray}\label{eq:denserange}
\langle\mathcal D_A(\alpha,\psi),(\eta_1,\eta_2)\rangle_{\mathbbm R\times\Sigma}=0\qquad\textrm{and}\qquad\langle\nabla v(A),\eta_1\rangle_{\mathbbm R\times\Sigma}=0
\end{eqnarray}
for all $(\alpha,\psi,v)\in\Z^{\delta,p}\times Y^a$. Assume by contradiction that there exists $0\neq\eta\in\mathcal L^{-\delta,q}$ which satisfies both conditions in \eqref{eq:denserange}. Then choosing $\alpha=0$ the first of these conditions implies that $\langle\dot\psi,\eta_2\rangle_{\mathbbm R\times\Sigma}=0$ holds for all $\psi$ as before. Hence $\eta_2=0$. Choosing $\psi=0$ the first condition in \eqref{eq:denserange} reduces to $\langle D_A\alpha,\eta_1\rangle_{\mathbbm R\times\Sigma}=0$ for all $\alpha$, from which it follows that $\mathcal D_{A}^{\ast}\eta_1=0$. Hence Proposition \ref{prop:modelpert} below applies and yields the existence of a model perturbation $\V_0$ (not necessarily contained in the Banach space $Y^a$) such that $\langle\nabla\V_0(A),\eta_1\rangle_{\mathbbm R\times\Sigma}>0$. By construction of $\V_0$ from the data $A_0\in\A(P)$, $\eta_0\in\Omega^1(\Sigma,T^{\ast}\Sigma\otimes\ad(P))$, and $\eps>0$ there exists a close enough perturbation $v\in Y^a$ such that $\langle\nabla v(A),\eta_1\rangle_{\mathbbm R\times\Sigma}>0$. This contradicts the second equation in \eqref{eq:denserange}. Hence $\eta=0$, completing the proof of the proposition.
\end{proof}

It remains to prove Proposition \ref{prop:modelpert} below. For this we need the following auxiliary results. For the remainder of this section we rename $\eta_1\in L_{-\delta}^q(\mathbbm R\times\Sigma,T^{\ast}\Sigma\otimes\ad(P))$ to $\eta$.

\begin{prop}[Slicewise orthogonality]\label{slicewiseorth}
Fix $[(A,0,\V)]\in\mathcal M^{\univ}(\mathcal C^-,\mathcal C^+)$ and let $\eta$ satisfy $D_A^{\ast}\eta=0$ on $\R\times\Sigma$. Then for all $s\in\R$ there holds the relation $\langle\dot A(s),\eta(s)\rangle=0$.
\end{prop}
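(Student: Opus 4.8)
The plan is to exploit the fact that $\dot A(s)$ is, by the exponential decay theory of Section \ref{sec:Expdecay}, itself a solution of the linearized gradient flow equation $\mathcal D_A(\dot A, 0) = 0$ (in temporal gauge), together with the observation that $\eta$ lies in the kernel of the formal adjoint $\mathcal D_A^{\ast} = -\frac{d}{ds} + \mathcal H_A$. First I would record that since $(A,0)$ solves \eqref{EYF}, differentiating in $s$ gives $\ddot A = -H_A \dot A$ away from the support of $\mathcal V$; more precisely $\mathcal D_A(\dot A, 0) = 0$ on all of $\R\times\Sigma$ once one checks that $\dot A$ satisfies the gauge-fixing condition $d_A^{\ast}\dot A = 0$ (which follows by differentiating $d_A^{\ast}F_A = 0$ along the asymptotic ends and using that the perturbation term is $d_A$-exact on its support, or more simply from the structure of \eqref{EYF} in temporal gauge). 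Note also that $\dot A \in \Z_A^{\delta,p}$ by the exponential decay estimate \eqref{expdecayforward} of Theorem \ref{YMExponentialdecay}, so $\dot A$ is an admissible test section.

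Next I would consider the function $f(s) := \langle \dot A(s), \eta(s)\rangle_{L^2(\Sigma)}$ and compute its $s$-derivative, integrating the $\mathcal H_A$ terms by parts on $\Sigma$ using self-adjointness of $\mathcal H_{A(s)}$ (Proposition \ref{YMselfad}). The key computation is
\begin{eqnarray*}
\frac{d}{ds}\langle \dot A, \eta\rangle = \langle \ddot A, \eta\rangle + \langle \dot A, \dot\eta\rangle = \langle -\mathcal H_A \dot A, \eta\rangle + \langle \dot A, \mathcal H_A \eta\rangle = 0,
\end{eqnarray*}
where the first equality uses $\ddot A = -\mathcal H_A\dot A$ (in temporal gauge, incorporating the $\psi$-component), the second uses $\dot\eta = \mathcal H_A\eta$ from $\mathcal D_A^{\ast}\eta = 0$, and the last uses symmetry of $\mathcal H_A$ on $L^2(\Sigma)$ together with the fact that both $\dot A$ and $\eta$ lie in the appropriate domain for almost every $s$. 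Hence $f$ is constant in $s$.

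Finally, to conclude $f \equiv 0$ it suffices to show $f(s) \to 0$ as $s \to \pm\infty$. This follows from the weighted decay: $\dot A(s)$ decays exponentially in $L^2(\Sigma)$ by \eqref{aprioriexpdecay}, while $\eta \in \mathcal L^{-\delta,q}$ means $e^{-\delta\beta(s)s}\eta$ is in $L^q(\R\times\Sigma)$; pairing the exponential decay of $\dot A$ against the at-most-exponential-growth of $\eta$ (with decay rate $\lambda > \delta$ by the choice $\delta < \delta_0$) and using Hölder on each unit time slab gives $f(s)\to 0$. Therefore $f \equiv 0$, which is the claimed slicewise orthogonality. The main obstacle I anticipate is the careful bookkeeping of the $\psi$-component and the gauge-fixing terms: one must verify that $\dot A$ genuinely lifts to a solution of $\mathcal D_A(\dot A,0)=0$ rather than merely satisfying $\ddot A + H_A\dot A = 0$ pointwise, i.e. that the second (scalar) component of $\mathcal D_A(\dot A,0)$, namely $d_A^{\ast}\dot A$, vanishes — this is where the hypothesis that $A$ solves the flow (and not just that $\eta$ is in the cokernel) is used, via differentiating the relation $d_A^{\ast}F_A + \nabla\mathcal V(A) = -\dot A$ and the slice structure. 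Once that is in place the integration-by-parts identity and the decay argument are routine.
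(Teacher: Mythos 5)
Your argument is essentially identical to the paper's proof: set $f(s):=\langle\dot A(s),\eta(s)\rangle$, differentiate using $\ddot A=-\H_A\dot A$ (i.e.\ $\mathcal D_A\dot A=0$) and $\dot\eta=\H_A\eta$ (i.e.\ $\mathcal D_A^{\ast}\eta=0$), observe the two terms cancel by self-adjointness of $\H_A$, so $f$ is constant, and then conclude $f\equiv0$ from the decay of $\dot A$ as $s\to-\infty$. Your extra care about verifying $\mathcal D_A(\dot A,0)=0$ (including the gauge-fixing component) and about the growth/decay bookkeeping at the ends fills in exactly the points the paper dismisses with ``one easily checks'' and ``since $\lim_{s\to-\infty}\dot A=0$''.
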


\begin{proof}
Set $\beta(s)\coloneqq\langle\dot A(s),\eta(s)\rangle$. Using the equation $\partial_sA+d_A^{\ast}F_A+\nabla\V(A)=0$ one easily checks that $\dot A$ satisfies $D_A\dot A=0$. Hence by symmetry of $H_A$ it follows that
\begin{eqnarray*}
\dot\beta=\langle\dot A,\dot\eta\rangle+\langle\ddot A,\eta\rangle=\langle\dot A,H_A\eta\rangle+\langle-H_A\dot A,\eta\rangle=0.
\end{eqnarray*}
Since $\lim_{s\to-\infty}\langle\dot A(s),\eta(s)\rangle=0$ we conclude that $\beta$ vanishes identically.
\end{proof}

In the following two propositions we let $A_0\in\A^{0,p}(P)$ be a (not necessarily irreducible) connection and denote by
\begin{eqnarray*}
\mathfrak m_{A_0,\eps}\colon\big(\S_{A_0}(\varepsilon)\times\G^{1,p}(P)\big)/\Stab{A_0}\to\A^{0,p}(P)
\end{eqnarray*}
the map introduced in Theorem \ref{locslicethm}. It is well-defined for every sufficiently small $0<\eps<\eps(A_0)$. We also recall the definition of the closed $L^2$ neighborhoods $U_{\mathcal C^{\pm}}$ around the critical manifolds $\hat{\mathcal C}^{\pm}$ at the end of Section \ref{sect:criticalmanifolds}.

\begin{prop}[No return]\label{noreturn}
Let $[(A,0,\V)]\in\mathcal M^{\univ}(\mathcal C^-,\mathcal C^+)$ and denote $A^{\pm}\coloneqq\lim_{s\to\pm\infty}A(s)\in\hat{\mathcal C}^{\pm}$. Then for every $s_0\in\R$ such that $A_0\coloneqq A(s_0)\notin U_{\mathcal C^-}\cup U_{\mathcal C^+}$, and every $\delta>0$ there is a constant $0<\eps<\eps(A_0)$ with
\begin{eqnarray*}
A(s)\in\im\mathfrak m_{A_0,\eps}\qquad\Longrightarrow\qquad s\in(s_0-\delta,s_0+\delta).
\end{eqnarray*}
\end{prop}

\begin{proof}
Let $\kappa>0$ be such that 
\begin{eqnarray}\label{eq:noreturn}
\dist_{L^2(\Sigma)}(A_0,\hat{\mathcal C}^-\cup\hat{\mathcal C}^+)>\kappa.
\end{eqnarray}
By the choice of $A_0$ and definition of the neighborhoods $U_{\mathcal C^{\pm}}$ the existence of such a constant $\kappa$ follows. Assume by contradiction that there exists a sequence $(\eps_i)$ of positive numbers with $\varepsilon_i\to0$ as $i\to\infty$, and a sequence $(s_i)\subseteq\R$ such that for all $i$
\begin{eqnarray}\label{eq:noreturn0}
A(s_i)\in\im\mathfrak m_{A_0,\eps_i}
\end{eqnarray}
but $s_i\notin(s_0-\delta,s_0+\delta)$. Hence by definition of the map $\mathfrak m(A_0,\eps_i)$ there exist gauge transformations $g_i\in\G^{1,p}(P)$ such that $\alpha(s_i)\coloneqq g_i^{\ast}A(s_i)-A_0$ satisfies
\begin{eqnarray}\label{eq:noreturn1}
\|\alpha(s_i)\|_{L^2(\Sigma)}<\eps_i
\end{eqnarray}
for all $i$. Assume first that the sequence $(s_i)$ is unbounded. Hence we can choose a subsequence (without changing notation) such that $s_i$ converges to $-\infty$ or to $+\infty$. It follows that (for one sign $+$ or $-$)
\begin{eqnarray*}
A(s_i)\stackrel{L^2(\Sigma)}{\longrightarrow}A^{\pm}\in\hat{\mathcal C}^{\pm}\qquad\textrm{as}\quad i\to\pm\infty.
\end{eqnarray*}
For $i$ sufficiently large such that $\eps_i<\kappa$ it follows that \eqref{eq:noreturn} and \eqref{eq:noreturn0} cannot be satisfied simultaneously. Otherwise \eqref{eq:noreturn1} would imply for some $A_1^{\pm}$ in the gauge orbit of $A^{\pm}$ the inequality $\|A_1^{\pm}-A_0\|_{L^2(\Sigma)}\leq\eps_i$, in contradiction to \eqref{eq:noreturn}. This contradiction shows that the sequence $(s_i)$ has an accumulation point $s_{\ast}\notin(s_0-\delta,s_0+\delta)$. So there exists a subsequence $(s_i)$ with $\lim_{i\to\infty}s_i=s_{\ast}$. Because the gradient flow line $s\mapsto A(s)$ is continuous as a map $\R\to L^p(\Sigma)$ it follows that $\lim_{i\to\infty}A(s_i)=A(s_{\ast})$ in $L^p(\Sigma)$. Assumption \eqref{eq:noreturn0} shows that $A(s_{\ast})\in\im\mathfrak m_{A_0,\eps_i}$ for all $i$ which is possible only if $A(s_{\ast})$ is contained in the gauge orbit of $A_0$. So $\YMV(A(s_{\ast}))=\YMV(A_0)$. As the map $s\mapsto\YMV(A(s))$ is strictly monotone decreasing it follows that $s_{\ast}=s_0$, which contradicts $s_{\ast}\notin(s_0-\delta,s_0+\delta)$. Hence the assumption was wrong and the claim follows.
\end{proof}

From now on we assume that the reference connection $A_0\in\A(P)$ is smooth and irreducible and define for $0<\eps<\eps(A_0)$ the map 
\begin{eqnarray*}
\alpha_{A_0,\eps}\colon\A^{0,p}(\Sigma)\to L^p(\Sigma,T^{\ast}(\Sigma)\otimes\ad(P))
\end{eqnarray*}
as in \eqref{def:alphai} such that $\supp\alpha_{A_0,\eps}\subseteq\im\mathfrak m_{A_0,\eps}$. The following remarks concerning $\alpha_{A_0,\eps}$ will be of importance for the next proposition. Let $s\mapsto A(s)$ and $s\mapsto\eta(s)$ be paths of connections, respectively of $\ad(P)$-valued $1$-forms as in Proposition \ref{prop:modelpert} below. For $s\in\R$ we denote
\begin{eqnarray*}
\alpha_{A_0,\eps}'(s)\coloneqq d\alpha_{A_0,\eps}(A(s))\eta(s),\qquad\dot\alpha_{A_0,\eps}(s)\coloneqq d\alpha_{A_0,\eps}(A(s))\dot A(s).
\end{eqnarray*}
Assume now that for some $s_0\in\R$ we have that $A_0=A(s_0)$. Then $\alpha_{A_0,\eps}'(s_0)=\eta(s_0)\eqqcolon\eta_0$ and $\dot\alpha_{A_0,\eps}(s_0)=\dot A(s_0)$ as follows from Proposition \ref{formuladifferential} together with the fact that $\alpha_{A_0,\eps}(s_0)=0$. Thus by continuous differentiability of the map $A\colon\R\to L^2(\Sigma)$ and continuity of the map $\eta\colon\R\to L^2(\Sigma)$ there exists a constant $\delta>0$ with the following significance. For all $s\in(s_0-\delta,s_0+\delta)$ we have that
\begin{compactenum}[(A)]
\item
$\|\eta(s)\|_{L^2(\Sigma)}\leq2\|\eta_0\|_{L^2(\Sigma)}$,
\item 
$\langle\alpha_{A_0,\eps}'(s),\eta_0\rangle\geq\frac{1}{2}\|\eta_0\|_{L^2(\Sigma)}^2>0$,
\item 
and with $\mu\coloneqq\|\dot A(s_0)\|_{L^2(\Sigma)}>0$ that
\begin{eqnarray*}
\frac{1}{2}\mu\leq\frac{\|\alpha_{A_0,\eps}(A(s))\|_{L^2(\Sigma)}}{|s-s_0|}\leq\frac{3}{2}\mu.
\end{eqnarray*}
\end{compactenum}
Similar continuity arguments show that properties (A-C) continue to hold if the irreducible connection $A_0$ is only assumed to be close in $L^p(\Sigma)$ to $A(s_0)$ (even if $A(s_0)$ itself is not irreducible).

\begin{prop}[Model perturbation]\label{prop:modelpert}
Let $A_0=A(s_0)$ be as in Proposition \ref{noreturn}. Assume $\eta$ satisfies $D_A^{\ast}\eta=0$ and $\eta_0\coloneqq\eta(s_0)\neq0$. Then there exists $0<\eps_0<\eps(A_0)$ and a gauge-invariant smooth map $\V_0\colon\A(P)\to\mathbbm R$ such that the following three properties are satisfied,
\begin{compactenum}[(i)]
\item
$\supp\V_0\subseteq\im\mathfrak m_{A_0,\eps_0}$,
\item
$\langle\nabla\V_0(A_0),\eta_0\rangle_{\Sigma}=\|\eta_0\|_{L^2(\Sigma)}^2$,
\item
$\langle\nabla\V_0(A),\eta\rangle_{\mathbbm R\times\Sigma}>0$.
\end{compactenum}
\end{prop}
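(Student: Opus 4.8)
The plan is to build $\V_0$ explicitly as one of the model perturbations $\V_\ell$ associated with the data $(A_0,\eta_0,\eps)$, where $A_0 = A(s_0)$ is the reference connection furnished by Proposition~\ref{noreturn} and $\eta_0 = \eta(s_0)\neq 0$. First I would replace $\eta_0$ by its Hodge projection onto $\ker d_{A_0}^\ast$: writing $\eta_0 = \eta_0' + d_{A_0}\varphi$ with $d_{A_0}^\ast\eta_0' = 0$, I claim $\eta_0' = \eta_0$, i.e.\ $\eta_0$ is already in local slice with respect to $A_0$. This is exactly where the hypothesis $\mathcal D_A^\ast\eta = 0$ enters: applying the third (gauge-fixing) component of the operator $\H_A$ — which for the adjoint reads off the condition $d_{A(s)}^\ast\eta(s) = \partial_s(\text{the }0\text{-form part})+\dots$ — together with the asymptotic decay of $\eta$ as $s\to\pm\infty$, forces the $0$-form component of $\eta$ to vanish identically, hence $d_{A(s)}^\ast\eta(s) = 0$ for all $s$; in particular $d_{A_0}^\ast\eta_0 = 0$. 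So after rescaling I may take $\eta_0$ as the $1$-form $\eta_j$ in the definition \eqref{modelperturbation} of a model perturbation, with $\|\eta_0\|_{L^2(\Sigma)} = 1$ if desired; I will instead keep it unnormalized and absorb constants.

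Next I would define $\V_0(A) := \rho\bigl(\eps^{-2}\|\alpha(A)\|_{L^2(\Sigma)}^2\bigr)\,\langle\alpha(A),\eta_0\rangle$, where $\rho$ is the cutoff of item (iii) in Section~\ref{Bspaceperturbations}, $\alpha(A)$ is the local-slice representative relative to $A_0$ from \eqref{condlocslicepert}, and $\eps>0$ is small enough that $\S_{A_0}(2\eps)$ lies in the domain of the local slice chart of Theorem~\ref{locslicethm}. Property (i) is then immediate from $\supp\rho\subseteq[-4,4]$ (so $\rho(\eps^{-2}r)\neq 0$ forces $r\le 4\eps^2$, i.e.\ $\|\alpha(A)\|_{L^2}\le 2\eps$). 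For property (ii): at $A = A_0$ we have $\alpha(A_0) = 0$, so $\rho = 1$ near $A_0$ and the $\rho$-factor contributes nothing to the derivative; the Gâteaux derivative of $A\mapsto\langle\alpha(A),\eta_0\rangle$ at $A_0$ in direction $v$ is $\langle D\alpha(A_0)v,\eta_0\rangle$, and $D\alpha(A_0)$ is the $L^2$-orthogonal projection onto $\ker d_{A_0}^\ast$; since $\eta_0\in\ker d_{A_0}^\ast$ this equals $\langle v,\eta_0\rangle$. Identifying $\nabla\V_0(A_0)$ via the $L^2$ inner product gives $\langle\nabla\V_0(A_0),\eta_0\rangle_\Sigma = \langle\eta_0,\eta_0\rangle = \|\eta_0\|_{L^2(\Sigma)}^2$, which is (ii).

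Property (iii) is the main obstacle, because it is a statement about the integral over $\R\times\Sigma$, not just the slice $s = s_0$. The idea is a continuity/localization argument. By Proposition~\ref{slicewiseorth}, $\langle\dot A(s),\eta(s)\rangle = 0$ for every $s$, so $\eta$ is nowhere tangent to the flow; combined with $\mathcal D_A^\ast\eta = 0$ this should give that $\eta(s)$ varies continuously (indeed smoothly) in $s$ in, say, $C^0(\Sigma)$, and in particular $\langle\nabla\V_0(A(s)),\eta(s)\rangle$ is continuous in $s$ and positive at $s = s_0$ (by (ii), since $\nabla\V_0(A(s_0)) = \nabla\V_0(A_0)$ up to the gauge transformation relating $A(s_0)$ to $A_0$, and $\V_0$ is gauge-invariant so the pairing is unchanged). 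Proposition~\ref{noreturn} — the ``no return'' property — guarantees that $A(s)$ enters the region $\{\|\alpha(A)\|_{L^2} < 3\eps\}$ where $\V_0$ is supported only for $s$ in a small interval $(s_0-\delta,s_0+\delta)$. Choosing $\delta$ small enough (using continuity of $s\mapsto\langle\nabla\V_0(A(s)),\eta(s)\rangle$ and positivity at $s_0$) that this pairing stays positive on all of $(s_0-\delta,s_0+\delta)$, and observing that the integrand vanishes identically outside that interval because $\V_0$ is supported in $\{\|\alpha(A)\|_{L^2}\le 2\eps\}$ and $A(s)$ avoids that set for $|s-s_0|\ge\delta$, we conclude
\[
\langle\nabla\V_0(A),\eta\rangle_{\R\times\Sigma} = \int_{s_0-\delta}^{s_0+\delta}\langle\nabla\V_0(A(s)),\eta(s)\rangle_\Sigma\,ds > 0,
\]
which is (iii). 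The one technical point to nail down carefully is the regularity of $s\mapsto\nabla\V_0(A(s))$ as a map into $L^2(\Sigma)$ near $s_0$: since $\alpha(A(s))$ depends smoothly on $A(s)$ through the local slice chart and $A$ is a smooth trajectory by exponential decay (Theorem~\ref{YMExponentialdecay}) and interior parabolic regularity, this is genuinely a smooth dependence, so the continuity needed above holds. I expect the write-up to spend most of its effort on (iii) and on the identification $\eta_0\in\ker d_{A_0}^\ast$, with (i) and (ii) being short.
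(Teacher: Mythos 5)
Your construction of $\V_0$ and your treatment of (i) and (ii) match the paper's proof. The gap is in (iii). You argue that $s\mapsto\langle\nabla\V_0(A(s)),\eta(s)\rangle_{\Sigma}$ is continuous and positive at $s_0$, so one can shrink $\delta$ until it stays positive on $(s_0-\delta,s_0+\delta)$ and then invoke the no-return property. This does not work, because the integrand is not pointwise positive and the quantifiers cannot be ordered as you propose: the perturbation itself depends on $\eps$ through the cutoff $\rho_{\eps}(r)=\rho(\eps^{-2}r)$, and its gradient contains the term $2\rho_{\eps}'(\|\alpha\|_{L^2}^2)\langle\alpha,\eta_0\rangle\,\alpha$ with $\|\rho_{\eps}'\|_{L^{\infty}}\sim\eps^{-2}$. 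In the transition region of the cutoff (where $\|\alpha(A(s))\|_{L^2}$ is between $\eps$ and $2\eps$) the factor $\rho_{\eps}$ multiplying the good term $\langle\alpha',\eta_0\rangle$ can be arbitrarily close to $0$ while the $\rho_{\eps}'$ term is negative of size $O(\eps^2)$, so the pointwise pairing genuinely changes sign there. Only the \emph{integral} is positive, and showing this requires the bookkeeping you omit.

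Concretely, the paper splits
\begin{eqnarray*}
\langle\nabla\V_0(A),\eta\rangle_{\R\times\Sigma}=\int\rho_{\eps}(\|\alpha\|_{L^2}^2)\langle\alpha',\eta_0\rangle\,ds+2\int\rho_{\eps}'(\|\alpha\|_{L^2}^2)\langle\alpha,\alpha'\rangle\langle\alpha,\eta_0\rangle\,ds,
\end{eqnarray*}
bounds the first term below by $c\,\eps\,\|\eta_0\|_{L^2}^2$ (using that $\|\alpha(A(s))\|_{L^2}$ grows linearly in $|s-s_0|$ at rate comparable to $\mu=\|\partial_sA(s_0)\|_{L^2}$), and bounds the second term below by $-C\eps^{3}$. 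The $\eps^3$ bound is the crux: it uses that both $f(s)=\langle\alpha,\alpha'\rangle$ and $g(s)=\langle\alpha,\eta_0\rangle$ vanish to \emph{second} order at $s_0$, which in turn requires the slicewise orthogonality $\langle\dot A(s_0),\eta_0\rangle=0$ of Proposition \ref{slicewiseorth} applied to both $\dot f(s_0)$ and $\dot g(s_0)$. You cite Proposition \ref{slicewiseorth} only for regularity, not for this cancellation; without it the second term is only $O(\eps)$ and cannot be absorbed. Finally, taking $\eps$ smaller makes both bounds better simultaneously, which is how the paper closes the argument. (Your side remark that $\mathcal D_A^{\ast}\eta=0$ forces $d_{A_0}^{\ast}\eta_0=0$ is not needed for this proposition and is not justified as stated; the approximation of $\V_0$ by elements of $Y$ is handled separately in Proposition \ref{prop:denserange}.)
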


\begin{proof} 
We construct the map $\V_0$ in a first step under the assumption that $A_0$ be irreducible. The general case is similar and will be treated afterwards. Let $\delta>0$ be such that the above conditions (A-C) are satisfied. Fix the constant $0<\eps_0<\eps(A_0)$ such that the conclusion of Proposition \ref{noreturn} applies. Now let $\rho\colon\mathbbm R\to[0,1]$ be the smooth cut-off function which was part of the data for the construction of the perturbations $\V_{\ell}$ in Section \ref{Bspaceperturbations}. For $0<\eps<\frac{\eps_0}{2}$ we define $\rho_{\eps}(r)\coloneqq\rho(\varepsilon^{-2}r)$. Note that $\|\rho_{\varepsilon}'\|_{L^{\infty}(\R)}<\varepsilon^{-2}$ and $\supp\rho_{\eps}\subseteq[4\eps^2,4\eps^2]$. Define
\begin{eqnarray*}
\V_0(A)\coloneqq\rho_{\varepsilon}(\|\alpha_{A_0,\eps_0}(A)\|_{L^2(\Sigma)}^2)\langle\alpha_{A_0,\eps_0}(A),\eta_0\rangle.
\end{eqnarray*}
The perturbation $\V_0$ satisfies condition (i) by construction. After choosing $\eps$ still smaller if necessary, Proposition \ref{prop:perturbsmooth} applies and shows smoothness of $\V_0$. Furthermore, it follows with $\alpha'(s_0)=\eta_0$ and by the fact that $\rho_{\eps}$ is constant in an open neighborhood around $0$ that $d\V_0(A_0)\eta_0=\|\eta_0\|_{L^2(\Sigma)}^2$, so that condition (ii) is satisfied. It remains to show property (iii). We fix constants $\sigma_1,\sigma_2,s_1,s_2$ with $\sigma_1<s_1<s_0<s_2<\sigma_2$ as follows. Let $s_2$ be such that $\|\alpha_{A_0,\eps_0}(A(s_2))\|_{L^2(\Sigma)}=\eps$ and $\|\alpha_{A_0,\eps_0}(A(s))\|_{L^2(\Sigma)}<\eps$ for all $s\in(s_0,s_2)$, and analogously for $s_1$. Let $\sigma_2$ be such that $\|\alpha_{A_0,\eps_0}(A(\sigma_2))\|_{L^2(\Sigma)}=2\eps$ and for all $s>\sigma_2$ the following holds. Either $A(s)\notin\im\mathfrak m_{A_0,\eps_0}$ or otherwise $\|\alpha_{A_0,\eps_0}(A(s))\|_{L^2(\Sigma)}>2\eps$. The number $\sigma_1$ is defined analogously. In the following calculations we often drop the arguments $A$ in $\alpha_{A_0,\eps_0}(A)$ and $s$ in $\alpha_{A_0,\eps_0}'(s)$ and $\dot\alpha_{A_0,\eps_0}(s)$, for brevity. It follows from (i) and Proposition \ref{noreturn} that
\begin{multline}\label{eq:innerprodgradeta}
\langle\nabla\V_0(A),\eta\rangle_{\mathbbm R\times\Sigma}\\
=\int_{s_0-\delta}^{s_0+\delta}d\V_0(A(s))\eta(s)\,ds=\int_{s_0-\delta}^{s_0+\delta}\rho_{\varepsilon}(\|\alpha_{A_0,\eps_0}\|_{L^2(\Sigma)}^2)\langle\alpha_{A_0,\eps_0}',\eta_0\rangle\,ds\\
+2\int_{s_0-\delta}^{s_0+\delta}\rho_{\varepsilon}'(\|\alpha_{A_0,\eps_0}\|_{L^2(\Sigma)}^2)\langle\alpha_{A_0,\eps_0},\alpha_{A_0,\eps_0}'\rangle\langle\alpha_{A_0,\eps_0},\eta_0\rangle\,ds.
\end{multline}
We estimate the last two terms separately. For the first one we obtain 
\begin{eqnarray*}
\lefteqn{\int_{s_0-\delta}^{s_0+\delta}\rho_{\varepsilon}(\|\alpha_{A_0,\eps_0}\|_{L^2(\Sigma)}^2)\langle\alpha_{A_0,\eps_0}',\eta_0\rangle\,ds}\\
&\geq&\int_{s_1}^{s_2}1\cdot\langle\alpha_{A_0,\eps_0}',\eta_0\rangle\,ds\\
&\geq&\frac{1}{2}(s_2-s_1)\|\eta_0\|_{L^2(\Sigma)}^2\\
&\geq&\frac{1}{3\mu}\big(\|\alpha_{A_0,\eps_0}(A(s_1))\|_{L^2(\Sigma)}+\|\alpha_{A_0,\eps_0}(A(s_2))\|_{L^2(\Sigma)}\big)\|\eta_0\|_{L^2(\Sigma)}^2\\
&=&\frac{2}{3\mu}\|\eta_0\|_{L^2(\Sigma)}^2\varepsilon.
\end{eqnarray*}
The first and second inequality follow from property (B), while the third one is by property (C). We define functions $f,g\colon\R\to\R$ by
\begin{eqnarray*}
f(s)\coloneqq\langle\alpha_{A_0,\eps_0}(A(s)),\alpha_{A_0,\eps_0}'(s)\rangle\qquad\textrm{and}\qquad g(s)\coloneqq\langle\alpha_{A_0,\eps_0}(A(s)),\eta_0\rangle.
\end{eqnarray*}
As $\alpha_{A_0,\eps_0}(s_0)=0$ it follows that $f(s_0)=g(s_0)=0$. By Proposition \ref{formuladifferential} we have that $\dot\alpha_{A_0,\eps_0}(s_0)=\dot A(s_0)$ and $\alpha_{A_0,\eps_0}'(s_0)=\eta_0$. Using Proposition \ref{slicewiseorth} it follows that
\begin{eqnarray*}
\dot f(s_0)=\langle\dot\alpha_{A_0,\eps_0}(s_0),\alpha_{A_0,\eps_0}'(s_0)\rangle+\langle\alpha_{A_0,\eps_0}(A(s_0)),\partial_s\alpha_{A_0,\eps_0}'(s_0)\rangle=\langle\dot A(s_0),\eta_0\rangle=0,
\end{eqnarray*}
and similarly that
\begin{eqnarray*}
\dot g(s_0)=\langle\dot\alpha_{A_0,\eps_0}(s_0),\eta_0\rangle=\langle\dot A(s_0),\eta_0\rangle=0.
\end{eqnarray*}
Hence there exists a constant $C=C(A_0,\eta_0)$ such that for all $s\in(s_0-\delta,s_0+\delta)$
\begin{eqnarray*}
|f(s)|\leq C(s-s_0)^2\qquad\textrm{and}\qquad|g(s)|\leq C(s-s_0)^2.
\end{eqnarray*}
The second term in \eqref{eq:innerprodgradeta} is now estimated as follows.
\begin{eqnarray*}
\lefteqn{2\int_{s_0-\delta}^{s_0+\delta}\rho_{\varepsilon}'(\|\alpha_{A_0,\eps_0}\|_{L^2(\Sigma)}^2)\langle\alpha_{A_0,\eps_0},\alpha_{A_0,\eps_0}'\rangle\langle\alpha_{A_0,\eps_0},\eta_0\rangle\,ds}\\
&=&2\int_{\sigma_1}^{\sigma_2}\rho_{\varepsilon}'(\|\alpha_{A_0,\eps_0}\|_{L^2(\Sigma)}^2)\langle\alpha_{A_0,\eps_0},\alpha_{A_0,\eps_0}'\rangle\langle\alpha_{A_0,\eps_0},\eta_0\rangle\,ds\\
&\geq&-2\int_{\sigma_1}^{\sigma_2}\|\rho_{\eps}'\|_{L^{\infty}(\R)}|\langle\alpha_{A_0,\eps_0},\alpha_{A_0,\eps_0}'\rangle|\cdot|\langle\alpha_{A_0,\eps_0},\eta_0\rangle|\,ds\\
&\geq&-2\varepsilon^{-2}C^2\int_{\sigma_1}^{\sigma_2}(s-s_0)^4\,ds\\
&=&-\frac{2}{5}\varepsilon^{-2}C^2\big(|\sigma_1-s_0|^5+|\sigma_2-s_0|^5\big)\\
&\geq&-\frac{2}{5}\varepsilon^{-2}C^2\big(\frac{2}{\mu}\big)^5\big(\|\alpha_{A_0,\eps_0}(A(\sigma_1))\|_{L^2(\Sigma)}^5+\|\alpha_{A_0,\eps_0}(A(\sigma_2))\|_{L^2(\Sigma)}^5\big)\\
&=&-\frac{128}{5\mu^5}C^2\varepsilon^3.
\end{eqnarray*}
The last inequality follows from property (C). Combining these estimates we find that
\begin{eqnarray}\label{eq:inprodpositive}
\langle\nabla\V_0(A),\eta\rangle_{\mathbbm R\times\Sigma}\geq\frac{2}{3\mu}\|\eta_0\|_{L^2(\Sigma)}^2\varepsilon-\frac{128}{5\mu^5}C^2\varepsilon^3
\end{eqnarray}
Choosing $\eps>0$ still smaller if necessary (which does not affect the argumentation so far), the last expression becomes strictly positive. This shows property (iii) and completes the proof in the case $A_0$ irreducible. In the case where $A_0$ is reducible the map $\alpha_{A_0,\eps_0}$ is not well-defined and we have to modify our argumentation slightly. Because the set of irreducible smooth connections is dense in $\A^{0,p}(\Sigma)$ we can choose for every $\eps>0$ an irreducible connection $A(\eps)$ with $\|A(\eps)-A_0\|_{L^p(\Sigma)}<\eps^{10}$. Then we replace in the definition of $\V_0$ the map $\alpha_{A_0,\eps_0}$ by $\alpha_{A(\eps),\eps_0}$. For $\eps>0$ sufficiently small properties (A-C) are still satisfied with $\alpha_{A(\eps),\eps_0}$ in place of $\alpha_{A_0,\eps_0}$. Hence we can repeat the previous estimates of the terms on the right-hand side of \eqref{eq:innerprodgradeta}. This introduces some further expressions depending on $A(\eps)-A_0$ which can be estimated against a sufficiently high power of $\eps$. The conclusion that the right-hand side in \eqref{eq:inprodpositive} is positive for sufficiently small $\eps>0$ then follows as before. 
\end{proof}

\section{Yang--Mills Morse homology}\label{YMMorsehomology}
\subsection{Morse--Bott theory}

We briefly recall Frauenfelder's \emph{cascade construction} of Morse homology for Morse functions with degenerate critical points satisfying the Morse--Bott condition (cf.~\cite[Appendix C]{Frauenfelder1}). Let $(M,g)$ be a Riemannian (Banach) manifold. A smooth function $f\colon M\to\mathbbm R$ is called {\emph{Morse--Bott}} if the set $\crit(f)\subseteq M$ of its critical points is a finite-dimensional submanifold of $M$, and if for each $x\in\crit(f)$ the {\emph{Morse--Bott condition}} $T_x\crit(f)=\ker\Hess_xf$ is satisfied. As an additional datum, we fix a Morse function $h\colon\crit(f)\to\mathbbm R$ which satisfies the Morse--Smale condition, i.e.~the stable and unstable manifolds $W_h^s(x)$ and $W_h^u(y)$ of any two critical points $x,y\in\crit(h)$ intersect transversally. We assign to a critical point $x\in\crit(h)\subseteq\crit(f)$ the index
\begin{eqnarray*} 
\Ind(x)\coloneqq\textrm{ind}_f(x)+\textrm{ind}_h(x).
\end{eqnarray*}
\begin{defn}\upshape
Let $x^-,x^+\in\crit(h)$ and $m\in\mathbbm N$. A {\emph{flow line from}} $x^-$ {\emph{to}} $x^+$ {\emph{with}} $m$ {\emph{cascades}} is a tuple $(\mathtt{x},T)\coloneqq(x_1,\ldots,x_m,t_1,\ldots,t_{m-1})$ with $x_j\in C^{\infty}(\mathbbm R,M)$ and $t_j\in\mathbbm R^+$ such that the following conditions are satisfied.
\begin{compactenum}[(i)]
\item
Each $x_j$ is a nonconstant solution of the gradient flow equation $\partial_sx_j+\nabla f(x_j)=0$.
\item
For each $1\leq j\leq m-1$ there exists a solution $y_j\in C^{\infty}(\mathbbm R,\crit(f))$ of the gradient flow equation $\partial_sy_j+\nabla h(y_j)=0$ such that $\lim_{s\to\infty}x_j(s)=y_j(0)$ and $\lim_{s\to-\infty}x_{j+1}(s)=y_j(t_j)$.
\item
There exist points $p^-\in W_h^u(x^-)\subseteq\crit(f)$ and $p^+\in W_h^s(x^+)\subseteq\crit(f)$ such that $\lim_{s\to-\infty}x_1(s)=p^-$ and $\lim_{s\to\infty}x_m(s)=p^+$.
\end{compactenum}
A {\emph{flow line with $m=0$ cascades}} is an ordinary Morse flow line of $h$ on $\crit(f)$ from $x^-$ to $x^+$.
\end{defn}
Denote by $\mathcal M_m(x^-,x^+)$ the set of flow lines from $x^-$ to $x^+$ with $m\geq0$ cascades (modulo the action of the group $\mathbbm R^m$ by time-shifts on tuples $(x_1,\ldots,x_m)$). We call
\begin{eqnarray}\label{modspacecascades}
\mathcal M(x^-,x^+)\coloneqq\bigcup_{m\in\mathbbm N_0}\mathcal M_m(x^-,x^+)
\end{eqnarray}
the set of {\emph{flow lines with cascades from}} $x^-$ {\emph{to}} $x^+$. In analogy to usual Morse theory (where the Morse function is required to have only isolated non-degenerate critical points), a sequence of flow lines with cascades may converge to a limit configuration which is a connected chain of such flow lines with cascades. This limiting behaviour is captured in the following definition. 

\begin{defn}\upshape
Let $x^-,x^+\in\crit(h)$. A {\emph{broken flow line with cascades from $x^-$ to $x^+$}} is a tuple $\mathtt{v}=(v_1,\ldots,v_{\ell})$ where each $v_j$, $j=1,\ldots,\ell$, consists of a flow line with cascades from $x^{(j-1)}$ to $x^{(j)}\in\crit(h)$ such that $x^{(0)}=x^-$ and $x^{(\ell)}=x^+$.
\end{defn}

We let $CM_{\ast}(M,f,h)$ denote the chain complex generated (as a $\mathbbm Z_2$ vector space) by the critical points of $h$ and graded by the index $\Ind$. On generators of $CM_{\ast}(M,f,h)$ we set
\begin{eqnarray*}
\partial_kx\coloneqq\sum_{\Ind(x')=k-1}n(x,x')x'.
\end{eqnarray*}
Here $n(x,x')\in\mathbbm Z_2$ denotes the number, counted modulo $2$, of elements in the zero dimensional moduli space $\mathcal M(x,x')$. By linear extension to $CM_{\ast}(M,f,h)$ this formally defines a boundary operator $\partial_k\colon CM_{k}(M,f,h)\to CM_{k-1}(M,f,h)$. The aim of the next section is to show that Morse--Bott theory with cascades applies to the Yang--Mills functional on the Banach manifold $\A^{1,p}(P)/\G_0^{2,p}(P)$ and gives rise to a well-defined boundary operator and Morse--Bott homology groups
\begin{eqnarray*}
HM_k\left(\frac{\A(P)}{\G_0(P)},\YMV,h\right)\coloneqq\frac{\ker\partial_{k}^{\V}}{\im\partial_{k+1}^{\V}}\qquad(k\in\mathbbm N_0).
\end{eqnarray*}

\subsection{Yang--Mills Morse complex}\label{sect:YMMorsecomplex}

We fix a regular value $a\geq0$ of $\YM$ and an $a$-admissible perturbation $\V\in Y$ with $\|\V\|<\delta$ sufficiently small such that Proposition \ref{prop:samecritpoints} applies. Let
\begin{eqnarray*}
\p(a)\coloneqq\frac{\{A\in\A^{1,p}(P)\mid d_A^{\ast}F_A=0\;\textrm{and}\;\YM(A)\leq a\}}{\G_0^{2,p}(P)}
\end{eqnarray*}
denote the set of based gauge equivalence classes of (weak) Yang--Mills connections of energy at most $a$. Let $h\colon\p(a)\to\R$ be a smooth Morse function. We let
\begin{eqnarray*}
CM_{\ast}^a(\A(P)/\G_0(P),\YM,h)
\end{eqnarray*}
denote the complex generated as a $\mathbbm Z_2$ vector space by the set $\crit(h)\subseteq\p(a)$ of critical points of $h$. For $x^-,x^+\in\crit(h)$ we call the set $\mathcal M(x^-,x^+)$ as in \eqref{modspacecascades} the moduli space of Yang--Mills gradient flow lines with cascades from $x^-$ to $x^+$.

\begin{lem}\label{lem:cascadmodspace}
For generic, $a$-admissible perturbation $\V\in Y$ as above, Morse function $h\colon\p(a)\to\R$, and all $x^-,x^+\in\crit(h)$, the set $\mathcal M(x^-,x^+)$ is a smooth manifold (with boundary) of dimension
\begin{eqnarray*}
\dim\mathcal M(x^-,x^+)=\Ind(x^-)-\Ind(x^+)-1.
\end{eqnarray*}
\end{lem}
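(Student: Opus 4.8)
The plan is to assemble the dimension formula for $\mathcal M(x^-,x^+)$ from the dimension formulas for the individual pieces of a cascade configuration, exactly as in Frauenfelder's finite-dimensional construction, and then to invoke the transversality results proved earlier (Theorem~\ref{thm:linsurjetivederef} together with the Sard--Smale argument) to guarantee that all the relevant moduli spaces are cut out transversally for generic $\V$ and $h$. First I would recall that a flow line with $m$ cascades consists of $m$ Yang-Mills gradient flow trajectories $x_1,\dots,x_m$ together with $m-1$ ``waiting times'' $t_1,\dots,t_{m-1}\in\R^+$ along the Morse flow of $h$ on $\crit(\YM)$, subject to the matching conditions (ii) and (iii). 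The dimension count then proceeds by fixing the critical manifolds $\mathcal C^{(0)},\dots,\mathcal C^{(m)}$ through which the cascade passes (there are finitely many such choices by Proposition~\ref{prop:finiteset}), computing the dimension of the space of $j$-th cascades with free endpoints using the Fredholm index formula \eqref{eq:indexD}, namely $\ind\mathcal D_{A}=\ind H_{A^-}\YM-\ind H_{A^+}\YM+\dim\mathcal C^{(j-1)}$, and then imposing the matching conditions against the stable/unstable manifolds of $h$.

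The bookkeeping runs as follows. Summing the Fredholm indices of the $m$ cascades gives a contribution $\ind_{\YMV}(x^-)-\ind_{\YMV}(x^+)+\sum_{j=1}^{m}\dim\mathcal C^{(j-1)}$, where the interior Yang-Mills indices of the intermediate critical manifolds telescope. From this one subtracts the codimension of the incidence conditions: at each of the $m-1$ interior gluings the condition $\lim_{s\to\infty}x_j(s)=y_j(0)$, $\lim_{s\to-\infty}x_{j+1}(s)=y_j(t_j)$ together with the Morse flow $y_j$ of $h$ contributes, after accounting for the free parameter $t_j\in\R^+$ and the time-shift on $x_{j+1}$, a net codimension $\dim\mathcal C^{(j)}-1$; at the two ends the conditions $p^-\in W_h^u(x^-)$ and $p^+\in W_h^s(x^+)$ contribute $\dim\mathcal C^{(0)}-\ind_h(x^-)$ and $\ind_h(x^+)$ respectively, and one quotients by the overall time-shifts of the $m$ cascades, i.e. by $\R^m$. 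Combining, the $\dim\mathcal C^{(j)}$ terms cancel, one picks up $+\ind_h(x^-)-\ind_h(x^+)$ and an overall $-1$, and the total becomes $(\ind_{\YMV}(x^-)+\ind_h(x^-))-(\ind_{\YMV}(x^+)+\ind_h(x^+))-1=\Ind(x^-)-\Ind(x^+)-1$, as claimed. The case $m=0$, where $\mathcal M(x^-,x^+)$ is just the space of Morse trajectories of $h$ on $\crit(\YM)$ from $x^-$ to $x^+$, gives the same number directly from finite-dimensional Morse theory.

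For the manifold structure I would argue that, by Theorem~\ref{thm:linsurjetivederef} and the ensuing Sard--Smale argument, for $\V$ in a residual subset of $Y$ the operator $\mathcal D_A$ is surjective at every zero, so each moduli space $\mathcal M(\mathcal C^{(j-1)},\mathcal C^{(j)})$ of individual cascades is a smooth finite-dimensional manifold; genericity of $h$ (Morse--Smale) makes the stable and unstable manifolds of $h$ on each critical manifold intersect transversally; and a standard additional transversality argument (perturbing $\V$ and $h$ further within the residual set, and using that evaluation maps at $s=\pm\infty$ are submersions onto the critical manifolds) makes the fibered-product defining $\mathcal M(x^-,x^+)$ transverse, hence a smooth manifold with corners, whose top stratum has the dimension computed above. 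I expect the main obstacle to be precisely this last transversality step for the \emph{composed} configurations: one must check that the relevant evaluation maps meet the stable/unstable manifolds transversally, which requires knowing that the individual cascade moduli spaces carry enough freedom (via the perturbation $\V$) to move their asymptotic limits along the critical manifolds --- this is where the flexibility of the Banach space $Y$ of perturbations, and the ``no return'' and ``model perturbation'' propositions of Section~\ref{sec:Surjectivitylinoperators}, are essential. The boundary (corner) structure itself, describing how $\mathcal M_m$ degenerates to $\mathcal M_{m'}$ with $m'<m$, is modeled verbatim on \cite[Appendix C]{Frauenfelder1} and I would cite it rather than reprove it.
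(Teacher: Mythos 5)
Your proposal is correct and follows the same overall route as the paper's proof: write $\mathcal M(x^-,x^+)$ as the zero set of a Fredholm section and express cascade configurations through products of the moduli spaces $\mathcal M(\mathcal C_i^-,\mathcal C_j^+)$ handled in Section~\ref{sec:Fredholm theorem}, invoking Theorem~\ref{thm:linsurjetivederef} for transversality. The one difference is stylistic: the paper delegates the entire dimension count and the fibered-product reduction to Frauenfelder's \cite[Lemma C.12]{Frauenfelder1}, whereas you carry out the telescoping index bookkeeping, the codimensions of the matching and boundary conditions, and the $\R^m$ time-shift quotient explicitly. That explicit count is a useful supplement but amounts to reproving the cited lemma; in particular your observation that the decisive input is that the evaluation-at-infinity maps are submersions onto the critical manifolds (so that the transversality of the fibered product can be achieved by the perturbations $\V\in Y$, using the material of Section~\ref{sec:Surjectivitylinoperators}) is exactly what the paper subsumes under ``a consequence of Theorem~\ref{thm:linsurjetivederef}, no new arguments required.''
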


\begin{proof}
The proof that $\mathcal M(x^-,x^+)$ is a smooth manifold for generic perturbations $\V$ follows the standard routine by writing $\mathcal M(x^-,x^+)$ as the zero set of a Fredholm section $\tilde\F$ of a suitable Banach space bundle, and then applying the implicit function theorem. This Fredholm problem can be reduced to the one studied in Section \ref{sec:Fredholm theorem}. Namely, as shown in \cite[Lemma C.12]{Frauenfelder1}, the set $\mathcal M(x^-,x^+)$ arises as a submanifold of the product of moduli spaces $\mathcal M(\mathcal C_i^-,\mathcal C_j^+)$ for suitable pairs $\mathcal C_i^-,\mathcal C_j^+\in\mathcal{CR}$ of critical manifolds. This way, the Fredholm property of the linearized section $d\tilde\F$ and the formula for its index follow from Theorem \ref{thm:Fredholmtheorem}. Similarly, transversality of $\tilde\F$ is a consequence of Theorem \ref{thm:linsurjetivederef} and does not require any new arguments.
\end{proof}

For $k\in\mathbbm N$ we define the Morse boundary operator
\begin{eqnarray*}
\partial_k^{\V}\colon CM_k^a(\A(P)/\G_0(P),\YM,h)\to CM_{k-1}^a(\A(P)/\G_0(P),\YM,h)
\end{eqnarray*}
to be the linear extension of the map
\begin{eqnarray*}
\partial_k^{\V}x\coloneqq\sum_{x'\in\crit(h)\atop\Ind(x')=k-1}n(x,x')x',
\end{eqnarray*}
where $x\in\crit(h)\subseteq\p(a)$ is a critical point of index $\Ind(x)=k$. The numbers $n(x,x')$ are given by counting modulo $2$ the oriented flow lines with cascades (with respect to $\YMV$ and $h$) from $x$ to $x'$, i.e.~
\begin{eqnarray*}
n(x,x')\coloneqq\#\mathcal M(x^-,x^+)\qquad(\mod 2).
\end{eqnarray*}

\begin{lem}\label{lem:cascadechainmap}
The numbers $n(x,x')$ are well-defined and $\partial_{\ast}^{\V}$ has the property of a chain map, i.e., it satisfies $\partial_{\ast}^{\V}\circ\partial_{\ast+1}^{\V}=0$.
\end{lem}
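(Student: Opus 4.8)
The plan is to follow the standard architecture for proving $\partial\circ\partial=0$ in Morse--Bott homology with cascades, as laid out by Frauenfelder, now that all the analytic inputs are available. First I would establish that the numbers $n(x,x')$ are well-defined. This requires two facts. Compactness: any sequence in the zero-dimensional moduli space $\mathcal M(x^-,x^+)$ has a subsequence converging (in the appropriate sense) to a broken flow line with cascades, where the total index drop along the broken configuration equals $\Ind(x^-)-\Ind(x^+)-1=0$. Here I would invoke the compactness Theorem \ref{thm:compactness} for gradient flow lines on finite intervals, together with the exponential decay Theorem \ref{YMExponentialdecay} to control the behavior near the asymptotic critical manifolds, and the finiteness Proposition \ref{prop:finiteset} to rule out escaping energy. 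Transversality: by Lemma \ref{lem:cascadmodspace}, for generic $\V$ and Morse function $h$ each component of a broken configuration lies in a moduli space of dimension $\Ind-\Ind-1\geq 0$; since each nonconstant piece costs at least one dimension via time-shift, a broken configuration with at least two levels would have total virtual dimension $<0$, hence is empty. Therefore $\mathcal M(x^-,x^+)$ is already compact, hence a finite set, and $n(x,x')\in\mathbb{Z}_2$ is well-defined.

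Next I would prove $\partial_{k-1}\circ\partial_k=0$. The coefficient of $x''$ in $\partial_{k-1}\partial_k x$, for $\Ind(x)=k$ and $\Ind(x'')=k-2$, is $\sum_{\Ind(x')=k-1} n(x,x')\,n(x',x'')$ modulo $2$, and this counts the once-broken flow lines with cascades from $x$ to $x''$, which by Lemma \ref{lem:cascadmodspace} are exactly the boundary points of the one-dimensional manifold $\mathcal M(x,x'')$ (of dimension $\Ind(x)-\Ind(x'')-1=1$). The plan is to show that $\mathcal M(x,x'')$ has a compactification $\overline{\mathcal M}(x,x'')$ which is a compact one-dimensional manifold with boundary, whose boundary is precisely the set of once-broken configurations. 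The gluing direction (every once-broken configuration is a limit of a unique end of $\mathcal M(x,x'')$, and glued configurations fill out a half-open arc) is the usual implicit function theorem argument, carried out on the Fredholm setup of Section \ref{sec:Fredholm theorem}; the compactness direction uses Theorem \ref{thm:compactness} and exponential decay as above, now allowing exactly one breaking. Since a compact one-manifold has an even number of boundary points, $\sum_{x'} n(x,x')n(x',x'')\equiv 0\pmod 2$, which is the claim.

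A subtlety specific to the cascade construction, which I would address explicitly, is that breaking can occur in two distinct ways: either between two cascades (a new Morse trajectory of $h$ on $\crit(f)$ of positive length appears, i.e. some $t_j\to\infty$, splitting the cascade chain), or by genuine Morse-flow-line breaking of $h$ at the ends, or by a gradient flow line of $\YMV$ breaking at an intermediate Yang-Mills critical manifold. All of these must be accounted for in the boundary of $\overline{\mathcal M}(x,x'')$, and one must check that each corresponds bijectively to a term $n(x,x')n(x',x'')$ and that no other degenerations (bubbling, energy loss) occur — the latter being exactly what the Yang-Mills-over-a-surface setting rules out, via the a priori $L^p$ curvature bounds and Uhlenbeck compactness underlying Theorem \ref{thm:compactness}. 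I expect the main obstacle to be the gluing analysis at configurations where a cascade junction has $t_j$ small or where a flow line converges to a point on a critical manifold $\mathcal C$ that is not a critical point of $h$: one needs a gluing theorem that simultaneously handles the parabolic gradient flow of $\YMV$ and the finite-dimensional Morse flow of $h$ on $\mathcal C$, matching the exponential decay rates $\delta<\delta_0(\mathcal C^-,\mathcal C^+)$ from Section \ref{sect:Fredholmthm} with the Morse-theoretic ones. I would either carry this out directly, adapting the gluing constructions of \cite{Frauenfelder1,Schw} to the present Fredholm package, or cite \cite[Appendix C]{Frauenfelder1} for the abstract cascade gluing and compactness statements, having verified that the hypotheses there (Fredholm, transversal, exponential decay, compactness on finite cylinders) are furnished respectively by Theorems \ref{thm:Fredholmtheorem}, \ref{thm:linsurjetivederef}, \ref{YMExponentialdecay}, and \ref{thm:compactness}.
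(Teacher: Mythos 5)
Your proposal is correct and follows essentially the same route as the paper's proof, which simply invokes Theorem \ref{thm:compactness} together with the arguments of \cite[Theorem C.10]{Frauenfelder1} for compactness up to broken flow lines with cascades (giving finiteness of the zero-dimensional moduli spaces and hence well-definedness of $n(x,x')$), and then refers to the standard boundary-of-a-one-manifold argument, using the exponential decay Theorem \ref{YMExponentialdecay}, for $\partial_{\ast}\circ\partial_{\ast}=0$. Your write-up is considerably more explicit than the paper's two-sentence proof — in particular about the gluing step and the different breaking types in the cascade setting — but the underlying strategy and the inputs (Theorems \ref{thm:Fredholmtheorem}, \ref{thm:linsurjetivederef}, \ref{YMExponentialdecay}, \ref{thm:compactness}, and Frauenfelder's Appendix C) are identical.
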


\begin{proof}
It follows from Theorem \ref{thm:compactness} by repeating the arguments of \cite[Theorems C.10]{Frauenfelder1} that for every $x^-,x^+\in\crit(h)$ the moduli space $\mathcal M(x^-,x^+)$ is compact up to convergence to broken flow lines with cascades. For $\Ind(x')=\Ind(x)-1$ this means that it is a finite set and $n(x,x')$ is well-defined. The chain map property follows from standard arguments making use of Theorem \ref{YMExponentialdecay} on exponential decay of Yang--Mills gradient flow lines.
\end{proof}

With these preparations, we can finally proof the main result.

\begin{proof}{\bf{(Theorem \ref{thm:mainresult})}}
That the pair $\big(CM_{\ast}^a(\A(P)/\G_0(P),\YM,h),\partial_{\ast}^{\V}\big)$ is a chain complex for generic, $a$-admissible perturbation $\V\in Y$ as above and Morse function $h\colon\p(a)\to\R$ follows from Lemmata \ref{lem:cascadmodspace} and \ref{lem:cascadechainmap}. Hence Yang--Mills Morse homology exists and is well-defined. From homotopy arguments standardly used in Morse theory theory (cf.~\cite{Floer1,Sal2,Schw}) it follows that these homology groups do not depend on the choice of the perturbation $\V$ and the Morse function $h$.  
\end{proof}

\appendix

\section{Perturbations}\label{sect:Properties of the perturbations}

Subsequently we list some relevant properties of the perturbations $\V_{\ell}$ introduced in Section \ref{Bspaceperturbations}. Let $A\in\A(P)$ be an irreducible connection and $\alpha\in L^{\infty}(\Sigma,T^{\ast}\Sigma\otimes\ad(P))$ be a $1$-form such that $d_A^{\ast}\alpha=0$. We introduce the following operators.
\begin{eqnarray*}
&&L_{A,\alpha}\colon\Omega^0(\Sigma,\ad(P))\to\Omega^0(\Sigma,\ad(P)),\quad\lambda\mapsto\Delta_A\lambda+\ast[\ast\alpha\wedge d_A\lambda],\\
&&R_{A,\alpha}\coloneqq L_{A,\alpha}^{-1}\colon\Omega^0(\Sigma,\ad(P))\to\Omega^0(\Sigma,\ad(P)),\\
&&M_{\alpha}\colon\Omega^1(\Sigma,\ad(P))\to\Omega^0(\Sigma,\ad(P)),\quad\xi\mapsto\ast[\alpha\wedge\ast\xi],\\
&&T_{A,\alpha}\coloneqq R_{A,\alpha}\circ M_{\alpha}\colon\Omega^1(\Sigma,\ad(P))\to\Omega^0(\Sigma,\ad(P)).
\end{eqnarray*}

\begin{prop}\label{prop:propertiesL}
The operator 
\begin{eqnarray*}
L_{A,\alpha}\colon L^2(\Sigma,\ad(P))\to L^2(\Sigma,\ad(P))
\end{eqnarray*}
is a densely defined self-adjoint operator with domain $W^{2,2}(\Sigma,\ad(P))$. Furthermore, there exists a constant $c(A)$ such that for every $\alpha$ with $\|\alpha\|_{L^2(\Sigma)}<c(A)$ the inverse $R_{A,\alpha}$ exists as a bounded operator 
\begin{eqnarray*}
R_{A,\alpha}\colon L^2(\Sigma,\ad(P))\to W^{2,2}(\Sigma,\ad(P)).
\end{eqnarray*}
\end{prop}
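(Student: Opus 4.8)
The plan is to analyze the operator $L_{A,\alpha}=\Delta_A+\ast[\ast\alpha\wedge d_A\,\cdot\,]$ as a bounded perturbation of the covariant Hodge Laplacian $\Delta_A$ on $0$-forms, exploiting that the perturbing term involves only one derivative and is controlled by $\|\alpha\|$.

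First I would treat the self-adjointness claim. Since $A$ is a smooth connection on a closed surface, $\Delta_A=d_A^\ast d_A$ acting on $\Omega^0(\Sigma,\ad(P))$ is a nonnegative, essentially self-adjoint, elliptic second-order operator with domain $W^{2,2}(\Sigma,\ad(P))$; this is standard (and implicitly used throughout the paper). The additional term $K_\alpha\lambda:=\ast[\ast\alpha\wedge d_A\lambda]$ is a first-order operator with bounded measurable coefficients, hence by the elliptic estimate for $\Delta_A$ and interpolation ($\|d_A\lambda\|_{L^2}\le \epsilon\|\Delta_A\lambda\|_{L^2}+C_\epsilon\|\lambda\|_{L^2}$, valid since $\|d_A\lambda\|_{L^2}^2=\langle\Delta_A\lambda,\lambda\rangle\le\|\Delta_A\lambda\|_{L^2}\|\lambda\|_{L^2}$) it is relatively bounded with respect to $\Delta_A$ with relative bound zero. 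However, the paper only asserts \emph{self-adjointness}, not positivity; since $K_\alpha$ need not be symmetric I would instead observe that $L_{A,\alpha}$ has the same domain $W^{2,2}$ as $\Delta_A$ and note that the stated self-adjointness really only needs to be invoked in the regime $\|\alpha\|_{L^2}$ small, where $L_{A,\alpha}$ is invertible; alternatively one argues that $K_\alpha=\ast[\ast\alpha\wedge d_A\,\cdot\,]$ when $d_A^\ast\alpha=0$ is in fact formally skew-adjoint up to lower-order terms (integrating by parts, the leading part of $\langle K_\alpha\lambda,\mu\rangle$ antisymmetrizes because $d_A^\ast\alpha=0$), so $L_{A,\alpha}=\Delta_A+(\text{skew})+(\text{bounded symmetric})$ is self-adjoint on $\dom\Delta_A$ by the Kato--Rellich theorem. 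This is the step I expect to require the most care: pinning down exactly in what sense $K_\alpha$ is (skew-)symmetric given only $\alpha\in L^\infty$ with $d_A^\ast\alpha=0$ in the weak sense, and making the Kato--Rellich hypotheses rigorous rather than heuristic.

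Next I would establish invertibility and the mapping property of $R_{A,\alpha}$. On a closed surface $\Delta_A$ acting on $0$-forms has kernel the $A$-parallel sections; in general $\ker\Delta_A$ need not be trivial, so I would either (i) assume $A$ irreducible as the paper does for the connections used in the perturbation construction, whence $\Delta_A$ is invertible, or (ii) restrict to $(\ker\Delta_A)^\perp$ as done elsewhere in the paper. Granting $\Delta_A\colon W^{2,2}\to L^2$ is an isomorphism with inverse $G_A$, write $L_{A,\alpha}=\Delta_A(I+G_A K_\alpha)$; since $K_\alpha\colon W^{2,2}\to L^2$ satisfies $\|K_\alpha\lambda\|_{L^2}\le\|\alpha\|_{L^\infty}\|d_A\lambda\|_{L^2}$ and the Sobolev-type bound $\|K_\alpha\lambda\|_{L^2}\le c(A)\|\alpha\|_{L^2(\Sigma)}\|\lambda\|_{W^{2,2}}$ via the embedding $W^{1,2}(\Sigma)\hookrightarrow L^4(\Sigma)$ applied to $d_A\lambda$, the operator $G_A K_\alpha\colon W^{2,2}\to W^{2,2}$ has norm $\le c(A)\|\alpha\|_{L^2(\Sigma)}$. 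Choosing $c(A)$ so that $\|\alpha\|_{L^2(\Sigma)}<c(A)$ forces $\|G_AK_\alpha\|<1$, so $I+G_AK_\alpha$ is invertible by the Neumann series, and therefore $R_{A,\alpha}=(I+G_AK_\alpha)^{-1}G_A\colon L^2\to W^{2,2}$ is a bounded inverse. I would close the argument by recording that boundedness $L^2\to W^{2,2}$ is exactly the statement of the elliptic estimate together with the Neumann-series bound, and that this $R_{A,\alpha}$ is indeed a two-sided inverse to $L_{A,\alpha}$ on the stated spaces.
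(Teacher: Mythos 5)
Your proposal has two genuine gaps, one in each half of the proof.

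\textbf{Self-adjointness.} You correctly identify that Kato--Rellich needs the perturbing term $K_\alpha:=\ast[\ast\alpha\wedge d_A\,\cdot\,]$ to be symmetric, but you then guess that $K_\alpha$ ``is in fact formally skew-adjoint up to lower-order terms'' and propose to write $L_{A,\alpha}=\Delta_A+(\text{skew})+(\text{bounded symmetric})$. This cannot work: a self-adjoint operator plus a nonzero skew-symmetric perturbation is no longer symmetric, so Kato--Rellich would be inapplicable and the conclusion would actually fail. The correct observation --- and the key computational fact in the paper's argument --- is that under the hypothesis $d_A^{\ast}\alpha=0$ the operator $K_\alpha$ is genuinely \emph{symmetric}: integrating by parts gives
\begin{eqnarray*}
\langle\ast[\ast\alpha\wedge d_A\lambda],\mu\rangle
=\langle d_A\lambda,[\alpha\wedge\mu]\rangle
=\langle\lambda,-\ast d_A[\ast\alpha\wedge\mu]\rangle
=\langle\lambda,\ast[\ast\alpha\wedge d_A\mu]\rangle,
\end{eqnarray*}
where the last step uses $d_A\ast\alpha=-\ast d_A^{\ast}\alpha=0$ to kill the extra Leibniz term. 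Once symmetry is in hand, your relative-bound-zero argument closes via Kato--Rellich exactly as you sketch; but without the symmetry verification the self-adjointness claim is unproved. Your fallback (``self-adjointness really only needs to be invoked in the regime $\|\alpha\|_{L^2}$ small'') is not an argument: the proposition asserts self-adjointness for all admissible $\alpha$, and in any case symmetry must still be established.

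\textbf{Invertibility.} Your Neumann-series plan $L_{A,\alpha}=\Delta_A(I+G_AK_\alpha)$ is a clean alternative to the paper's route, but the crucial estimate you invoke, namely $\|K_\alpha\lambda\|_{L^2}\le c(A)\|\alpha\|_{L^2(\Sigma)}\|\lambda\|_{W^{2,2}}$, is false in dimension two. With $\alpha\in L^2$ and $d_A\lambda\in L^4$ (which is all $W^{1,2}(\Sigma)\hookrightarrow L^4(\Sigma)$ gives you) the product $\ast[\ast\alpha\wedge d_A\lambda]$ lands only in $L^{4/3}$, not $L^2$; to stay in $L^2$ with $\alpha$ merely in $L^2$ you would need $d_A\lambda\in L^{\infty}$, and $W^{1,2}(\Sigma)\not\hookrightarrow L^{\infty}(\Sigma)$ is exactly the borderline failure on a surface. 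This is why the paper does not attempt a Neumann series on $W^{2,2}\to L^2$ but instead passes to $L_{A,\alpha}^p\colon W^{1,p}\to W^{-1,p}$ for $p>2$, where the duality estimate $\|uv\|_{W^{-1,p}(\Sigma)}\le c(p)\|u\|_{L^2(\Sigma)}\|v\|_{L^p(\Sigma)}$ (Proposition \ref{prop:prodestimLp}) does give control by $\|\alpha\|_{L^2(\Sigma)}$ alone, establishes bijectivity of $L_{A,\alpha}^p$ by perturbation of the injective $\Delta_A=L_{A,0}^p$, and then transfers injectivity back to $W^{2,2}$ via the embedding $W^{2,2}\hookrightarrow W^{1,p}$, with bijectivity of $L_{A,\alpha}$ on $W^{2,2}\to L^2$ finally following from self-adjointness. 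Without the passage through negative Sobolev spaces the smallness hypothesis $\|\alpha\|_{L^2(\Sigma)}<c(A)$ cannot be exploited, so your Neumann-series step does not go through as written.
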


\begin{proof}
For every $\lambda,\mu\in L^2(\Sigma,\ad(P))$ it follows that
\begin{eqnarray*}
\langle\ast[\ast\alpha\wedge d_A\lambda],\mu\rangle=\langle d_A\lambda,[\alpha\wedge\mu]\rangle=\langle\lambda,-\ast d_A[\ast\alpha\wedge\mu]\rangle=\langle\lambda,\ast[\ast\alpha\wedge d_A\mu]\rangle,
\end{eqnarray*}
using $d_A^{\ast}\alpha=0$ in the last step. This implies symmetry of the operator $L_{A,\alpha}$. As the Laplace operator $\Delta_A$ is self-adjoint with domain $W^{2,2}(\Sigma,\ad(P))$ the same holds true for $L_{A,\alpha}$ by the Kato--Rellich theorem (cf.~\cite{ReSi}) because the perturbation $\ast[\ast\alpha\wedge d_A\lambda]$ is of relative bound zero. Assuming bijectivity of $L_{A,\alpha}$, boundedness of the operator 
\begin{eqnarray*}
R_{A,\alpha}\colon L^2(\Sigma,\ad(P))\to W^{2,2}(\Sigma,\ad(P))
\end{eqnarray*}
follows from elliptic regularity. It remains to show that $L_{A,\alpha}$ is bijective. For this we first consider for $p>2$ the bounded operator 
\begin{eqnarray*}
L_{A,\alpha}^p\colon W^{1,p}(\Sigma,\ad(P))\to W^{-1,p}(\Sigma,\ad(P)),\quad\lambda\mapsto L_{A,\alpha}\lambda.
\end{eqnarray*}
The assumption that $A$ is irreducible implies that $\Delta_A=L_{A,0}^p$ is injective and therefore (by symmetry) bijective. Bijectivity is preserved under small perturbations with respect to the operator norm, and thus $L_{A,\alpha}^p$ is bijective for $\|\alpha\|_{L^2(\Sigma)}\leq c(A)$ sufficiently small because
\begin{eqnarray*}
\|\ast[\ast\alpha\wedge d_A\lambda]\|_{W^{-1,p}(\Sigma)}\leq c\|\alpha\|_{L^2(\Sigma)}\|d_A\lambda\|_{L^p(\Sigma)}\leq c\|\alpha\|_{L^2(\Sigma)}\|\lambda\|_{W^{1,p}(\Sigma)}.
\end{eqnarray*}
The first estimate follows from Proposition \ref{prop:prodestimLp}. Now let $\lambda\in\ker L_{A,\alpha}$. Then the Sobolev embedding $W^{2,2}(\Sigma)\hookrightarrow W^{1,p}(\Sigma)$ for any $p<\infty$ implies that $\lambda\in\ker L_{A,\alpha}^p$, and therefore $\lambda=0$. Hence $L_{A,\alpha}$ is injective and (by self-adjointness) bijective. This completes the proof of the proposition. 
\end{proof}

For $\ell\in\mathbbm N$ let $\V_{\ell}\in Y$ be the perturbation defined in \eqref{modelperturbation}. As $\ell$ is kept fix throughout the subsequent propositions, we temporarily denote $\mathfrak m\coloneqq\mathfrak m_i$, $A_0\coloneqq A_i$,  $\eta\coloneqq\eta_{ij}$, and $\rho\coloneqq\rho_k$. For given $A\in\A^{0,p}(P)$ we also denote at several places $\alpha\coloneqq\alpha_i(A)$.

\begin{prop}\label{formuladifferential}
The map $\V\coloneqq\V_{\ell}\colon\A^{0,p}(P)\to\mathbbm R$ has the following properties. Let $A\in\im\mathfrak m$ and denote by $g\in\G^{1,p}(P)$ the unique gauge transformation such that \eqref{condlocslicepert} is satisfied. 
\begin{compactenum}[(i)]
\item
The differential and $L^2$ gradient of $\V$ at $A$ are given by
\begin{align*}
d\V(A)\xi=2\rho'(\|\alpha\|_{L^2(\Sigma)}^2)\langle\alpha,\hat\xi\rangle\langle\alpha,\eta\rangle+\rho(\|\alpha\|_{L^2(\Sigma)}^2)\langle\hat\xi-d_{g^{\ast}A}T_{A_0,\alpha}\hat\xi,\eta\rangle,\\
g^{-1}\nabla\V(A)g=2\rho'(\|\alpha\|_{L^2(\Sigma)}^2)\langle\alpha,\eta\rangle\alpha+\rho(\|\alpha\|_{L^2(\Sigma)}^2)(\eta+T_{A_0,\alpha}^{\ast}(\ast[\alpha\wedge\ast\eta])),
\end{align*}
with $\hat\xi\coloneqq g^{-1}\xi g$. Here we assume that $\xi\in\Omega^1(\Sigma,\ad(P))$ satisfies $d_A^{\ast}\xi=0$.
\item
Let $\beta\in\Omega^1(\Sigma,\ad(P))$ such that $d_{A}^{\ast}\beta=0$ and set $\gamma\coloneqq\hat\beta-d_{g^{\ast}A}T_{A_0,\alpha}\hat\beta$ with $\hat\beta\coloneqq g^{-1}\beta g$. The Hessian of $\V$ at $A$ is the map 
\begin{eqnarray*}
H_A\V\colon\Omega^1(\Sigma,\ad(P))\to\Omega^1(\Sigma,\ad(P))
\end{eqnarray*}
determined by the formula
\begin{eqnarray*}
\lefteqn{g^{-1}(H_A\V\beta)g-[g^{-1}\nabla\V(A)g,T_{A_0,\alpha}(g^{-1}\beta g)]}\\
&=&\rho(\|\alpha\|_{L^2(\Sigma)}^2)\left(S_{A_0,\alpha,\gamma}^{\ast}(\ast[\alpha\wedge\ast\eta])+T_{A_0,\alpha}^{\ast}(\ast[\gamma\wedge\ast\eta])\right)\\
&&+2\rho'(\|\alpha\|_{L^2(\Sigma)}^2)\left(\langle\alpha,\gamma\rangle T_{A_0,\alpha}^{\ast}(\ast[\alpha\wedge\ast\eta])+\langle\alpha,\gamma\rangle\eta+\langle\eta,\gamma\rangle\alpha+\langle\alpha,\eta\rangle\gamma\right)\\
&&+4\rho''(\|\alpha\|_{L^2(\Sigma)}^2)\langle\alpha,\gamma\rangle\langle\alpha,\eta\rangle\alpha.
\end{eqnarray*}
Here we denote 
\begin{eqnarray*}
S_{A_0,\alpha,\gamma}\coloneqq R_{A_0,\alpha}\circ M_{\gamma}\circ({\mathbbm 1}-d_{A_0}\circ R_{A_0,\alpha})\colon\Omega^1(\Sigma,\ad(P))\to\Omega^0(\Sigma,\ad(P)).
\end{eqnarray*}
\end{compactenum}
\end{prop}

\begin{proof}
\begin{compactenum}[(i)]
\item
Let $A(t)=A+t\xi$. Assume $A(t)$, $\alpha(t)=\alpha(A(t))$ and $g(t)$ satisfy condition \eqref{condlocslicepert} for all $t\in(-\varepsilon,\varepsilon)$ with $\varepsilon>0$ sufficiently small. We denote $g\coloneqq g(0)$ and set $\dot\alpha\coloneqq\left.\frac{d}{dt}\right|_{t=0}\alpha(t)$ and $\lambda\coloneqq g^{-1}\left.\frac{d}{dt}\right|_{t=0}g(t)$. Differentiating the equation $d_{A_0}^{\ast}(g^{\ast}A-A_0)=0$ at $t=0$ and using that $d_A^{\ast}\xi=0$ we obtain
\begin{eqnarray*}
0&=&d_{A_0}^{\ast}(g^{-1}\xi g+d_{g^{\ast}A}\lambda)\\
&=&g^{-1}(d_{(g^{-1})^{\ast}A_0}^{\ast}\xi)g+d_{A_0}^{\ast}d_{A_0}\lambda+d_{A_0}^{\ast}[\alpha\wedge\lambda]\\
&=&g^{-1}(d_{A-g\alpha g^{-1}}^{\ast}\xi)g+\Delta_{A_0}\lambda+d_{A_0}^{\ast}[\alpha\wedge\lambda]\\
&=&g^{-1}\ast[g\alpha g^{-1}\wedge\ast\xi]g+\Delta_{A_0}\lambda+d_{A_0}^{\ast}[\alpha\wedge\lambda]\\
&=&\ast[\alpha\wedge\ast g^{-1}\xi g]+\Delta_{A_0}\lambda+d_{A_0}^{\ast}[\alpha\wedge\lambda]\\
&=&M_{\alpha}\hat\xi+L_{A_0,\alpha}\lambda.
\end{eqnarray*}
Hence $\lambda=-T_{A_0,\alpha}\hat\xi$ by definition of $T_{A_0,\alpha}$, and
\begin{eqnarray}\label{eq:differentialalpha}
\dot\alpha=\left.\frac{d}{dt}\right|_{t=0}\big(g^{\ast}(t)A(t)-A_0\big)=\hat\xi-d_{g^{\ast}A}T_{A_0,\alpha}\hat\xi.
\end{eqnarray}
From this we obtain
\begin{eqnarray*}
\lefteqn{d\V(A)\xi=\left.\frac{d}{dt}\right|_{t=0}\rho(\|\alpha(t)\|_{L^2(\Sigma)}^2)\langle\alpha(t),\eta\rangle}\\
&=&2\rho'(\|\alpha\|_{L^2(\Sigma)}^2)\langle\alpha,\dot\alpha\rangle\langle\alpha,\eta\rangle+\rho(\|\alpha\|_{L^2(\Sigma)}^2)\langle\dot\alpha,\eta\rangle\\
&=&2\rho'(\|\alpha\|_{L^2(\Sigma)}^2)\langle\alpha,\hat\xi-d_{g^{\ast}A}T_{A_0,\alpha}\hat\xi\rangle\langle\alpha,\eta\rangle+\rho(\|\alpha\|_{L^2(\Sigma)}^2)\langle\hat\xi-d_{g^{\ast}A}T_{A_0,\alpha}\hat\xi,\eta\rangle\\
&=&2\rho'(\|\alpha\|_{L^2(\Sigma)}^2)\langle\alpha,\hat\xi\rangle\langle\alpha,\eta\rangle+\rho(\|\alpha\|_{L^2(\Sigma)}^2)\langle\hat\xi-d_{g^{\ast}A}T_{A_0,\alpha}\hat\xi,\eta\rangle.
\end{eqnarray*}
In the last line we used that $d_{g^{\ast}A}^{\ast}\alpha=0$. The formula for $\nabla\V(A)$ follows from this by taking adjoints and using that $d_{g^{\ast}A}^{\ast}\eta=d_{A_0}^{\ast}\eta-\ast[\alpha\wedge\ast\eta]=-\ast[\alpha\wedge\ast\eta]$.
\item 
The formula follows from differentiating the expression for $g^{-1}\nabla\V(A)g$ in (i) and formula \eqref{eq:differentialalpha}. The operator $S_{A_0,\alpha,\gamma}$ arises from differentiating
\begin{multline*}
\left.\frac{d}{dt}\right|_{t=0}T_{A_0,\alpha}=\left.\frac{d}{dt}\right|_{t=0}L_{A_0,\alpha}^{-1}\circ M_{\alpha}=-L_{A_0,\alpha}^{-1}\dot L_{A_0,\alpha}L_{A_0,\alpha}^{-1} M_{\alpha}+R_{A_0,\alpha}\dot M_{\alpha}\\
=-R_{A_0,\alpha}\dot L_{A_0,\alpha}R_{A_0,\alpha}+R_{A_0,\alpha} M_{\gamma}=R_{A_0,\alpha} M_{\gamma}(\mathbbm 1-d_{A_0}\circ R_{A_0,\alpha}),
\end{multline*}
using that $\dot L_{A_0,\alpha}=\ast[\ast\gamma\wedge d_{A_0}\,\cdot\,]=-M_{\gamma}\circ d_{A_0}$ and $\dot M_{\alpha}=M_{\gamma}$.
\end{compactenum}
\end{proof}

\begin{prop}\label{prop:prodestimLp}
Let $p>2$. There exists a constant $c(p)$ such the estimate
\begin{eqnarray*} 
\|uv\|_{W^{-1,p}(\Sigma)}\leq c(p)\|u\|_{L^2(\Sigma)}\|v\|_{L^p(\Sigma)}
\end{eqnarray*}
is satisfied for all $u\in L^2(\Sigma)$, $v\in L^p(\Sigma)$.
\end{prop}

\begin{proof}
Let $q<2$ denote the dual Sobolev exponent of $p$. Let $r\coloneqq\frac{2p}{2+p}<2$ and $s\coloneqq\frac{2q}{2-q}>2$, i.e.~$\frac{1}{r}+\frac{1}{s}=1$. Then the Sobolev embedding $W^{1,q}(\Sigma)\hookrightarrow L^s(\Sigma)$ implies the dual embedding $L^r(\Sigma)\hookrightarrow W^{-1,p}(\Sigma)$. Hence for some constant $c(p)$ it follows that
\begin{eqnarray*}
\|uv\|_{W^{-1,p}(\Sigma)}\leq c(p)\|uv\|_{L^r(\Sigma)},
\end{eqnarray*}
and H\"older's inequality (with exponents $\ell=\frac{2}{r}>1$ and $\ell'=\frac{2}{2-r}>1$) then implies that
\begin{eqnarray*}
\|uv\|_{L^r(\Sigma)}^r\leq\big(\int_{\Sigma}|u|^2\big)^{\frac{r}{2}}\big(\int_{\Sigma}|v|^{r\ell'}\big)^{\frac{1}{\ell'}}=\|u\|_{L^2(\Sigma)}^r\|v\|_{L^p(\Sigma)}^r,
\end{eqnarray*}
as claimed.
\end{proof}
 
We continue to use the notational conventions fixed before Proposition \ref{formuladifferential}.
 
\begin{prop}\label{prop:L2estgrad}
Let $A_0\in\A(P)$ and $p>2$. There exist constants $c(A_0)$, $c(A_0,p)$ and $\delta(A_0,p)$ such that the estimates 
\begin{align}
\label{eq:L2estgrad1}\|\alpha(A)\|_{W^{1,p}(\Sigma)}\leq c(A_0,p)\big(1+\|F_A\|_{L^p(\Sigma)}\big),\\
\label{eq:L2estgrad1A}\|\nabla\V(A)\|_{C^0(\Sigma)}\leq c(A_0)\big(1+\|F_A\|_{L^3(\Sigma)}\big),\\
\label{eq:L2estgrad2}\|d_A\nabla\V(A)\|_{L^p(\Sigma)}\leq c(A_0,p)\big(1+\|F_A\|_{L^p(\Sigma)}+\|\alpha(A)\|_{L^{2p}(\Sigma)}^2\big)
\end{align}
are satisfied for all $A\in\A^{0,p}(P)$ with $\|\alpha(A)\|_{L^2(\Sigma)}<\delta(A_0,p)$. 
\end{prop}

\begin{proof}
Throughout we denote $\alpha\coloneqq\alpha(A)$. To prove \eqref{eq:L2estgrad1} we claim that it suffices to consider $A\in\im\mathfrak m$ such $A$ is contained in the local slice with respect to $A_0$, i.e.~satisfying $d_{A_0}^{\ast}\alpha=0$ with $\alpha=A-A_0$. To see this note that $\alpha(A)=0$ for $A\notin\im\mathfrak m$, and for every $A\in\im\mathfrak m$ there exists by Proposition \ref{prop:condlocslicepert} a gauge transformation $g\in\G^{1,p}(P)$ such that $g^{\ast}A$ is in local slice with respect to $A_0$. As both sides of \eqref{eq:L2estgrad1} are invariant under the action of $\G^{1,p}(P)$ by gauge transformations the claim follows. Hence we may assume $A=A_0+\alpha$ and $d_{A_0}^{\ast}\alpha=0$. This implies $d_{A_0}\alpha=F_A-F_{A_0}-\frac{1}{2}[\alpha\wedge\alpha]$ and  
\begin{eqnarray*} 
\Delta_{A_0}\alpha=d_{A_0}^{\ast}(F_A-F_{A_0})-\frac{1}{2}d_{A_0}^{\ast}[\alpha\wedge\alpha].
\end{eqnarray*}
Elliptic regularity of the operator $\Delta_{A_0}\colon W^{1,p}(\Sigma)\to W^{-1,p}(\Sigma)$ yields for a constant $c(A_0,p)$ the estimate
\begin{eqnarray*}
\lefteqn{\|\alpha\|_{W^{1,p}(\Sigma)}}\\
&\leq&c(A_0,p)\big(\|d_{A_0}^{\ast}F_{A_0}\|_{W^{-1,p}(\Sigma)}+\|d_{A_0}^{\ast}F_A\|_{W^{-1,p}(\Sigma)}\\
&&+\|d_{A_0}^{\ast}[\alpha\wedge\alpha]\|_{W^{-1,p}(\Sigma)}+\|\alpha\|_{W^{-1,p}(\Sigma)}\big)\\
&\leq&c(A_0,p)\big(1+\|F_A\|_{L^p(\Sigma)}+\|\{\nabla_{A_0}\alpha,\alpha\}\|_{W^{-1,p}(\Sigma)}+\|\alpha\|_{L^2(\Sigma)}\big)\\
&\leq&c(A_0,p)\big(1+\|F_A\|_{L^p(\Sigma)}+\|\alpha\|_{L^2(\Sigma)}\|\nabla_{A_0}\alpha\|_{L^p(\Sigma)}+\|\alpha\|_{L^2(\Sigma)}\big).
\end{eqnarray*}
To obtain the last inequality we applied Proposition \ref{prop:prodestimLp}. Now fix the constant $\delta(A_0,p)>0$ sufficiently small such that 
\begin{eqnarray*}
c(A_0,p)\|\alpha\|_{L^2(\Sigma)}\|\nabla_{A_0}\alpha\|_{L^p(\Sigma)}\leq\|\alpha\|_{W^{1,p}(\Sigma)}
\end{eqnarray*}
holds for all $\alpha$ with $\|\alpha\|_{L^2(\Sigma)}<\delta(A_0,p)$, to conclude \eqref{eq:L2estgrad1}. We next show \eqref{eq:L2estgrad1A}. In view of the formula for $\nabla\V(A)$ given in Proposition \ref{formuladifferential} this amounts to estimate the $C^0$ norms of the terms $\alpha$ and $T_{A_0,\alpha}^{\ast}(\ast[\alpha\wedge\ast\eta])$. Using the Sobolev embedding $W^{1,3}(\Sigma)\hookrightarrow C^0(\Sigma)$ we obtain from \eqref{eq:L2estgrad1} and some constant $c(A_0)$ that
\begin{eqnarray}\label{eq:C0boundalphaFA}
\|\alpha\|_{C^0(\Sigma)}\leq c(A_0)(1+\|F_A\|_{L^3(\Sigma)}).
\end{eqnarray}
Similarly, the $C^0(\Sigma)$ norm of  
\begin{eqnarray*} 
T_{A_0,\alpha}^{\ast}(\ast[\alpha\wedge\ast\eta])=M_{\alpha}^{\ast}R_{A_0,\alpha}(\ast[\alpha\wedge\ast\eta])=[\alpha\wedge[R_{A_0,\alpha}(\ast[\alpha\wedge\ast\eta])]]
\end{eqnarray*}
can be estimated against 
\begin{multline*}
\|\alpha\|_{C^0(\Sigma)}\|R_{A_0,\alpha}(\ast[\alpha\wedge\ast\eta])\|_{C^0(\Sigma)}\leq\|\alpha\|_{C^0(\Sigma)}\|R_{A_0,\alpha}(\ast[\alpha\wedge\ast\eta])\|_{W^{2,2}(\Sigma)}\\
\leq\|\alpha\|_{C^0(\Sigma)}\|[\alpha\wedge\ast\eta]\|_{L^2(\Sigma)},
\end{multline*}
where in the last step we used Proposition \ref{prop:propertiesL}. Now $\|\alpha\|_{L^2(\Sigma)}\leq\delta(A_0,p)$ by assumption and hence the $C^0$ norm of $T_{A_0,\alpha}^{\ast}(\ast[\alpha\wedge\ast\eta])$ is dominated by some multiple of $\delta(A_0,p)\|\alpha\|_{C^0(\Sigma)}$. Together with \eqref{eq:C0boundalphaFA} this shows \eqref{eq:L2estgrad1A}. To prove \eqref{eq:L2estgrad2} we denote $\gamma\coloneqq R_{A_0,\alpha}(\ast[\alpha\wedge\ast\eta])$ and proceed as before. Again we may assume that $A=A_0+\alpha$ and $d_{A_0}^{\ast}\alpha=0$ by gauge invariance of both sides of \eqref{eq:L2estgrad2}. From the formula for $\nabla\V(A)$ stated in Proposition \ref{formuladifferential} we see that it suffices to estimate the $L^p(\Sigma)$ norms of the terms $d_A\alpha$ and 
\begin{eqnarray}\label{eq:L2estdAgrad}
d_A[\alpha\wedge\gamma]=[d_A\alpha\wedge\gamma]-[\alpha\wedge d_{A_0}\gamma]-[\alpha\wedge[\alpha\wedge\gamma]].
\end{eqnarray}
The asserted bound for $d_A\alpha=d_{A_0}\alpha+[\alpha\wedge\alpha]$ follows from \eqref{eq:L2estgrad1}. Proposition \ref{prop:propertiesL} yields a bound for $\|\gamma\|_{W^{2,2}(\Sigma)}$ in terms of $\|\alpha\|_{L^2(\Sigma)}$. Then the continuous embeddings $W^{2,2}(\Sigma)\hookrightarrow C^0(\Sigma)$ and $W^{1,2}(\Sigma)\hookrightarrow L^p(\Sigma)$ show that the $L^p(\Sigma)$ norms of the last two terms in \eqref{eq:L2estdAgrad} are bounded by a multiple of $\|\alpha\|_{L^2(\Sigma)}(\|\alpha\|_{C^0(\Sigma)}+\|\alpha\|_{L^{2p}(\Sigma)}^2)$. Applying \eqref{eq:L2estgrad1} to further estimate $\|\alpha\|_{C^0(\Sigma)}$ we obtain \eqref{eq:L2estgrad2}.
\end{proof}

\begin{prop}\label{prop:Hessianbounded}
Let $\V\coloneqq\V_{\ell}\colon\A(P)\to\mathbbm R$ be the perturbation as in Proposition \ref{formuladifferential}. Then for all $p>1$ and $A\in\A(P)$ such that $\alpha(A)\in L^{\infty}(\Sigma)$ the Hessian $H_A\V$ is a bounded linear operator
\begin{eqnarray*}
H_A\V:L^p(\Sigma,T^{\ast}\Sigma\otimes\ad(P))\to L^p(\Sigma,T^{\ast}\Sigma\otimes\ad(P)).
\end{eqnarray*}
Its operator norm satisfies $\|H_A\V\|\leq c(1+\|\alpha(A)\|_{L^{\infty}(\Sigma)})$ for some constant $c=c(p,\ell)$ independent of $A$.
\end{prop}

\begin{proof}
Consider the formula for $H_A\V$ obtained in Proposition \ref{formuladifferential}.
We first check boundedness of the operator $T_{A_0,\alpha}=R_{A_0,\alpha}\circ M_{\alpha}$. Its operator norm satisfies the required bound as follows from boundedness of $M_\alpha\colon L^p\to L^p$, Proposition \ref{prop:propertiesL}, and boundedness of the embedding $W^{2,2}(\Sigma)\hookrightarrow L^p(\Sigma)$. The same argument applies to the map $T_{A_0,\alpha}^{\ast}=M_{\alpha}^{\ast}\circ R_{A_0,\alpha}$, and similarly to the map $\beta\mapsto\gamma=\beta-d_{g^{\ast}A}T_{A_0,\alpha}\beta$. For the latter we use that the map $d_{g^{\ast}A}=d_{A_0}+[\alpha\wedge\,\cdot\,]\colon W^{2,2}(\Sigma)\to W^{1,2}(\Sigma)$ and the embedding $W^{1,2}(\Sigma)\hookrightarrow L^p(\Sigma)$ are bounded (with constant depending on $\|\alpha(A)\|_{L^{\infty}(\Sigma)}$ and $A_0$). Bounds for the map $\gamma\mapsto S_{A_0,\alpha,\gamma}^{\ast}(\ast[\alpha\wedge\ast\eta])$ and maps of the type $\gamma\mapsto\langle\eta,\gamma\rangle\alpha$ follow similarly. Likewise, the norm of the operator $\beta\mapsto[g^{-1}\nabla\V(A)g,T_{A_0,\alpha}(g^{-1}\beta g)]$ can be estimated as before using that $\|\nabla\V(A)\|_{L^{\infty}(\Sigma)}$ satisfies a bound in terms of $\|\alpha\|_{L^{\infty}(\Sigma)}$ (as follows from the formula for $\nabla\V(A)$ in Proposition \ref{formuladifferential}).
\end{proof}

\begin{prop}\label{prop:estimatespertVell}
For every $\ell\in\mathbbm N$ there exists a constant $C_{\ell}>0$ such that the estimates (i-iv) of Section \ref{Bspaceperturbations} concerning the perturbation $\V_{\ell}$ are satisfied.
\end{prop}

\begin{proof}
(i) follows from the definition \eqref{modelperturbation} of $\V_{\ell}$, the estimate $|\langle\alpha_i(A),\eta_{ij}\rangle|\leq\|\alpha_i(A)\|_{L^2(\Sigma)}\|\eta_{ij}\|_{L^2(\Sigma)}$, and our assumption $\|\alpha_i(A)\|_{L^2(\Sigma)}\leq\delta(A_i,p)$. To show (ii) we note, arguing as in the proof \eqref{eq:L2estgrad1A}, that $\|\nabla\V_{\ell}(A)\|_{L^2(\Sigma)}$ can be bounded in terms of $\|\alpha_i(A)\|_{L^2(\Sigma)}$. Hence (ii) follows from (i). Inequality (iii) follows from \eqref{eq:L2estgrad1A}. To obtain (iv) we note that $\|\alpha_i(A)\|_{L^{\infty}(\Sigma)}\leq c(A_i)(1+\|F_{A}\|_{L^3(\Sigma)})$ as follows from \eqref{eq:L2estgrad1} and the Sobolev embedding $W^{1,3}(\Sigma)\hookrightarrow L^{\infty}(\Sigma)$. Now combine this estimate with Proposition \ref{prop:Hessianbounded}.
\end{proof}

\section{Perturbed Yang--Mills gradient flow}

Throughout this section we fix a compact interval $I=[a,b]$. Let $\V=\sum_{\ell}\lambda_{\ell}\V_{\ell}\in Y$ be a perturbation, $Y$ denoting the universal space of perturbations as introduced in \eqref{def:univspacepert}. In this section we derive a priori estimates for solutions of the perturbed Yang--Mills gradient flow equation
\begin{eqnarray}\label{pertYMF}
\partial_sA+d_A^{\ast}F_A+\nabla\V(A)=0.
\end{eqnarray}
These are in particular solutions of \eqref{EYF} with $\Psi=0$. Conversely, any solution of \eqref{EYF} is by Proposition \ref{prop:temporalgauge} gauge equivalent under $\G_{\delta}^{2,p}(\hat P)$ to a solution of \eqref{pertYMF}, so for many purposes it is sufficient to have estimates only for these.  

\begin{prop}\label{FAuniversalbound}
Let $A$ be a solution of \eqref{pertYMF} on $I\times\Sigma$. Then for all $s\in I=[a,b]$ there holds the estimate
\begin{eqnarray*}
\|F_{A(s)}\|_{L^2(\Sigma)}\leq\|F_{A(a)}\|_{L^2(\Sigma)}+4\|\V\|.
\end{eqnarray*}
\end{prop}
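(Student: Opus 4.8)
The plan is to compute the $s$-derivative of $\tfrac12\|F_{A(s)}\|_{L^2(\Sigma)}^2$ along a solution of \eqref{pertYMF} and control it via a Gr\"onwall-type / integrating argument. First I would differentiate: using $\partial_s F_A = d_A\dot A$ and the Bianchi identity, together with the flow equation $\dot A = -d_A^\ast F_A - \nabla\V(A)$, one gets
\begin{align*}
\frac{d}{ds}\,\frac12\|F_A\|_{L^2(\Sigma)}^2
&= \langle d_A\dot A, F_A\rangle
= \langle \dot A, d_A^\ast F_A\rangle\\
&= -\|d_A^\ast F_A\|_{L^2(\Sigma)}^2 - \langle \nabla\V(A), d_A^\ast F_A\rangle.
\end{align*}
The first term is good (it has the right sign), and the second is estimated by Cauchy--Schwarz as $\|\nabla\V(A)\|_{L^2(\Sigma)}\,\|d_A^\ast F_A\|_{L^2(\Sigma)}$. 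Using condition (ii) in Section \ref{Bspaceperturbations} (extended to $\V=\sum\lambda_\ell\V_\ell$ exactly as in the proof of Proposition \ref{prop:samecritpoints}, giving $\|\nabla\V(A)\|_{L^2(\Sigma)}\le\|\V\|$), and absorbing $\|d_A^\ast F_A\|_{L^2(\Sigma)}^2$ appropriately, one obtains
\[
\frac{d}{ds}\,\frac12\|F_A\|_{L^2(\Sigma)}^2 \le -\|d_A^\ast F_A\|_{L^2(\Sigma)}^2 + \|\V\|\,\|d_A^\ast F_A\|_{L^2(\Sigma)} \le \tfrac14\|\V\|^2.
\]

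This already yields a bound, but it is quadratic in $s$, not the clean additive bound stated. To get the sharp statement, the better route is to work directly with $y(s):=\|F_{A(s)}\|_{L^2(\Sigma)}$ rather than its square. From the computation above, wherever $y(s)>0$ one has $y\,y' \le -\|d_A^\ast F_A\|_{L^2}^2 + \|\V\|\,\|d_A^\ast F_A\|_{L^2}$. I would then split into two cases: if $\|d_A^\ast F_A\|_{L^2(\Sigma)} \le \|\V\|$ at $s$, then I claim $y(s)$ cannot exceed the asserted bound because the curvature is near a critical configuration in a quantitative sense — more precisely, I would instead observe that the clean bound follows by comparing the Yang-Mills flow with the heat-flow-type evolution of $F_A$ or, most simply, by the following monotonicity trick: whenever $y(s)$ is larger than $\|F_{A(a)}\|_{L^2}+\|\V\|$ it is in particular larger than $\|\V\|$, and on such an interval one can show $y'\le 0$. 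Indeed on $\{y>\|\V\|\}$ we get $y\,y'\le \|d_A^\ast F_A\|_{L^2}(\|\V\|-\|d_A^\ast F_A\|_{L^2})$, which is $\le 0$ once $\|d_A^\ast F_A\|_{L^2}\ge\|\V\|$, and when $\|d_A^\ast F_A\|_{L^2}<\|\V\|$ the identity $d_A^\ast F_A = -\dot A - \nabla\V(A)$ forces $\|\dot A\|_{L^2}\le 2\|\V\|$, so the energy identity $\YMV(A(a))-\YMV(A(s))=\int_a^s\|\dot A\|^2$ combined with $\tfrac12\|F_A\|_{L^2}^2\le\YMV(A)-\V(A)\le\YMV(A)+\|\V\|$ pins down $y$.

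The cleanest self-contained argument, which I would actually write, is: integrate the differential inequality $y\,y'\le\|d_A^\ast F_A\|_{L^2}\bigl(\|\V\|-\|d_A^\ast F_A\|_{L^2}\bigr)$ and note the right-hand side is $\le \tfrac14\|\V\|\cdot\|\V\| $ only pointwise, but it is genuinely nonpositive except on the set where $\|d_A^\ast F_A\|_{L^2}<\|\V\|$; a short ODE comparison (if $y$ ever rises above $y(a)+\|\V\|$, pick the first such time and derive a contradiction with $y'\le 0$ there) closes it. The main obstacle is getting the additive constant $\|\V\|$ rather than a weaker $c\|\V\|$ or quadratic-in-$|I|$ bound; this requires the case analysis on the size of $\|d_A^\ast F_A\|_{L^2(\Sigma)}$ relative to $\|\V\|$ as above, rather than a naive Cauchy--Schwarz followed by Gr\"onwall. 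Everything else — the differentiation, the Bianchi identity, and the bound $\|\nabla\V(A)\|_{L^2(\Sigma)}\le\|\V\|$ — is routine given the material already in the excerpt.
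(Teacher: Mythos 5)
Your computation of $\tfrac{d}{ds}\tfrac12\|F_A\|_{L^2}^2=-\|d_A^\ast F_A\|_{L^2}^2-\langle d_A^\ast F_A,\nabla\V(A)\rangle$ is correct, but the route through this quantity is a detour from what the paper actually does, and your attempt to close it with a first-crossing argument does not succeed. The paper's proof is much shorter: $\YMV(A(s))=\tfrac12\|F_{A(s)}\|_{L^2}^2+\V(A(s))$ is monotone nonincreasing in $s$ along any solution of \eqref{pertYMF} (because $\tfrac{d}{ds}\YMV(A)=-\|\dot A\|_{L^2}^2\le0$), so
\begin{eqnarray*}
\tfrac12\|F_{A(s)}\|_{L^2}^2\le\YMV(A(s))+|\V(A(s))|\le\YMV(A(a))+\sup_{A}|\V(A)|\le\YMV(A(a))+\|\V\|,
\end{eqnarray*}
using condition (i) of Section \ref{Bspaceperturbations}. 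No differential inequality for $\|F_A\|$ and no case analysis appear.

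The gap in your case analysis is in Case B. When $\|d_A^\ast F_A\|_{L^2}<\|\V\|$, the inequality $y\,y'\le\|d_A^\ast F_A\|_{L^2}\bigl(\|\V\|-\|d_A^\ast F_A\|_{L^2}\bigr)$ has a genuinely positive right-hand side (as large as $\tfrac14\|\V\|^2$), so at a putative first time $s_0$ with $y(s_0)=y(a)+\|\V\|$ you get only $y'(s_0)\le\tfrac{\|\V\|^2}{4\,y(s_0)}\le\tfrac{\|\V\|}{4}>0$, which is not the contradiction with $y'\le0$ that you were aiming for. To rescue this case you invoke the identity $\YMV(A(a))-\YMV(A(s))=\int_a^s\|\dot A\|^2$ and $\tfrac12\|F_A\|^2\le\YMV(A)+\|\V\|$, but that \emph{is} the paper's whole argument, applied globally. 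Once you see you need monotonicity of $\YMV$ to handle Case B, the computation of $y\,y'$ and the first-crossing scaffolding are superfluous: the monotonicity argument handles all $s\in I$ uniformly, not only those $s$ where $\|d_A^\ast F_A\|_{L^2}$ is small.

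So the right move is to drop the direct differentiation of $\|F_A\|_{L^2}^2$ entirely and differentiate $\YMV$ instead. This is not just shorter; it sidesteps the structural problem that a pointwise differential inequality for $\|F_A\|_{L^2}$ cannot by itself deliver an $|I|$-independent bound when the forcing term $\nabla\V(A)$ is merely bounded, because on the time set where $\|d_A^\ast F_A\|_{L^2}$ is small the damping $-\|d_A^\ast F_A\|_{L^2}^2$ provides no help. Monotonicity of the full perturbed functional $\YMV$ is exactly the global quantity that packages both the curvature energy and the perturbation, and it is what the paper uses.
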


\begin{proof}
The energy $\YMV(A)=\frac{1}{2}\int_{\Sigma}|F_A|^2\dvol(\Sigma)+\V(A)$ is monotone decreasing along flow lines, hence
\begin{eqnarray*}
\frac{1}{2}\|F_{A(s)}\|_{L^2(\Sigma)}^2&\leq&\YMV(A(s))+|\V(A(s))|\\
&\leq&\YMV(A(a))+\sup_{A\in\A(P)}|\V(A)|\\
&\leq&\frac{1}{2}\int_{\Sigma}|F_{A(a)}|^2\dvol(\Sigma)+2\sup_{A\in\A(P)}|\V(A)|\\
&\leq&\frac{1}{2}\int_{\Sigma}|F_{A(a)}|^2\dvol(\Sigma)+2\|\V\|,
\end{eqnarray*}
where in the last line we used property (i) of Section \ref{Bspaceperturbations} and the definition of $\|\V\|$.  
\end{proof}

\begin{prop}\label{FALpuniversalbound}
For every $2<p<4$ there exists a constant $C(p,|I|,\|\V\|)$ such that
\begin{eqnarray*}
\|F_A\|_{L^p(I\times\Sigma)}\leq C(p,|I|,\|\V\|)\big(1+\YMV(A(a))^{\frac{1}{2}}\big)
\end{eqnarray*}
for every $A$ satisfying \eqref{pertYMF} on $I\times\Sigma$. 
\end{prop}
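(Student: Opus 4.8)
The plan is to upgrade the uniform bound $\sup_{s\in I}\|F_{A(s)}\|_{L^2(\Sigma)}<\infty$ furnished by Proposition~\ref{FAuniversalbound} to a space--time bound via the parabolic embedding $L^\infty(I,L^2(\Sigma))\cap L^2(I,W^{1,2}(\Sigma))\hookrightarrow L^4(I\times\Sigma)$. In dimension two this works cleanly because the flow equation controls $\nabla_AF_A$ in $L^2(\Sigma)$ directly. Concretely I would first record three facts. (a) By Proposition~\ref{FAuniversalbound} and the identity $\tfrac12\|F_{A(a)}\|_{L^2(\Sigma)}^2=\YMV(A(a))-\V(A(a))$ together with condition (i) of Section~\ref{Bspaceperturbations}, one has $\sup_{s\in I}\|F_{A(s)}\|_{L^2(\Sigma)}\le K$, where $K:=\bigl(2\YMV(A(a))+2\|\V\|\bigr)^{1/2}+\|\V\|$, and $K$ is bounded by a constant depending on $\|\V\|$ times $1+\YMV(A(a))^{1/2}$. (b) Since $\YMV$ is non-increasing along the flow and $\YMV\ge-\|\V\|$, the energy identity gives $\int_I\|\partial_sA\|_{L^2(\Sigma)}^2\,ds=\YMV(A(a))-\YMV(A(b))\le\YMV(A(a))+\|\V\|$. (c) For almost every $s$, equation~\eqref{pertYMF} reads $d_A^{\ast}F_{A(s)}=-\partial_sA(s)-\nabla\V(A(s))$, so by condition (ii) of Section~\ref{Bspaceperturbations} one gets $\|d_A^{\ast}F_{A(s)}\|_{L^2(\Sigma)}\le\|\partial_sA(s)\|_{L^2(\Sigma)}+\|\V\|$.

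The key observation is algebraic: on the surface $\Sigma$ the curvature is a top form, $F_A=u\,dV$ with $u:=\ast F_A\in\Omega^0(\Sigma,\ad(P))$ and $dV$ parallel, whence $|\nabla_AF_A|=|\nabla_Au|$ and $|d_A^{\ast}F_A|=\bigl|\ast d_A(\ast F_A)\bigr|=|\nabla_Au|$ pointwise; therefore $\|\nabla_AF_{A(s)}\|_{L^2(\Sigma)}=\|d_A^{\ast}F_{A(s)}\|_{L^2(\Sigma)}$, with no Weitzenböck curvature term to absorb. Combining this with Kato's inequality $\bigl|\nabla|u|\bigr|\le|\nabla_Au|$ and the scalar Gagliardo--Nirenberg inequality $\|f\|_{L^4(\Sigma)}^2\le c(\Sigma,g)\|f\|_{L^2(\Sigma)}\bigl(\|f\|_{L^2(\Sigma)}+\|\nabla f\|_{L^2(\Sigma)}\bigr)$ applied to $f=u$ yields the gauge-invariant slicewise estimate
\[
\|F_{A(s)}\|_{L^4(\Sigma)}^2\le c\,\|F_{A(s)}\|_{L^2(\Sigma)}\bigl(\|F_{A(s)}\|_{L^2(\Sigma)}+\|\nabla_AF_{A(s)}\|_{L^2(\Sigma)}\bigr)\le cK\bigl(K+\|\partial_sA(s)\|_{L^2(\Sigma)}+\|\V\|\bigr).
\]
Raising to the fourth power, integrating over $s\in I$, and inserting (b) and (c) gives $\|F_A\|_{L^4(I\times\Sigma)}^4\le cK^2\bigl(|I|(K+\|\V\|)^2+\YMV(A(a))+\|\V\|\bigr)$, which after substituting the estimate for $K$ from (a) is dominated by $C(|I|,\|\V\|)\bigl(1+\YMV(A(a))^{1/2}\bigr)^4$. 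Taking fourth roots gives the asserted bound with exponent $4$, and the case $2<p<4$ then follows by Hölder's inequality on the finite-measure domain $I\times\Sigma$.

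I do not expect any essential difficulty here. The only point needing attention is that all constants be kept gauge-invariant, which is precisely why the argument is routed through Kato's inequality and a scalar Sobolev inequality rather than through elliptic estimates for $d_A^{\ast}d_A$ (whose constants depend on $A$), and why one exploits the two-dimensional identity $\|\nabla_AF_A\|_{L^2(\Sigma)}=\|d_A^{\ast}F_A\|_{L^2(\Sigma)}$ to avoid the cubic curvature term that the Weitzenböck formula would otherwise produce. The remaining work is routine bookkeeping of the constant $\|\V\|$ through conditions (i)--(ii) on the model perturbations $\V_\ell$.
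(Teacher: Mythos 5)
Your proof is correct and rests on the same two a priori inputs as the paper's: the uniform bound $\sup_{s\in I}\|F_{A(s)}\|_{L^2(\Sigma)}\leq K$ from Proposition~\ref{FAuniversalbound}, and the bound $\int_I\|d_A^{\ast}F_A\|_{L^2(\Sigma)}^2\,ds\leq 2\YMV(A(a))+2\|\V\|+2|I|\,\|\V\|^2$ obtained from the energy identity together with condition (ii) of Section~\ref{Bspaceperturbations}. The difference is purely in how the interpolation is organized. The paper splits $|F_A|^p=|F_A|^{p-2}\cdot|F_A|^2$ by H\"older with exponents $r=\frac{2}{p-2}$ and $s=\frac{2}{4-p}$ and applies the slicewise Sobolev embedding $W^{1,2}(\Sigma)\hookrightarrow L^{4/(4-p)}(\Sigma)$, reaching each $p\in(2,4)$ directly; you instead establish the endpoint bound in $L^4(I\times\Sigma)$ via the two-dimensional Ladyzhenskaya inequality (routed through Kato's inequality so that the constant is gauge-independent) and then descend to $L^p$ by H\"older on the finite-measure domain $I\times\Sigma$. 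Your version is marginally stronger --- the paper's method degenerates at $p=4$, where it would require $W^{1,2}(\Sigma)\hookrightarrow L^{\infty}(\Sigma)$ --- and it makes explicit a point the paper uses silently, namely that $|\nabla_AF_A|=|d_A^{\ast}F_A|$ pointwise because $F_A$ is a parallel multiple of the volume form on a surface, which is what justifies replacing $\int_I\|\nabla_AF_A\|_{L^2(\Sigma)}^2$ by $\int_I\|d_A^{\ast}F_A\|_{L^2(\Sigma)}^2$ in the paper's final step.
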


\begin{proof}
We use H\"older's inequality with exponents $r=\frac{2}{p-2}$ and $s=\frac{2}{4-p}$ to obtain the estimate
\begin{eqnarray*}
\int_I\int_{\Sigma}|F_A|^p&\leq&\int_I\big(\int_{\Sigma}|F_A|^2\big)^{\frac{1}{r}}\big(\int_{\Sigma}|F_A|^{\frac{4}{4-p}}\big)^{\frac{1}{s}}\\
&\leq&\sup_{s\in I}\|F_{A(s)}\|_{L^2(\Sigma)}^{\frac{2}{r}}\int_I\|F_A\|_{L^{\frac{4}{4-p}}(\Sigma)}^2\\
&\leq&c(p)\|F_A\|_{L^{\infty}(I,L^2(\Sigma))}^{p-2}\int_I\big(\|F_A\|_{L^2(\Sigma)}^2+\|\nabla_AF_A\|_{L^2(\Sigma)}^2\big)\\
&\leq&c(p)|I|\cdot\|F_A\|_{L^{\infty}(I,L^2(\Sigma))}^p+c(p)\|F_A\|_{L^{\infty}(I,L^2(\Sigma))}^{p-2}\int_I\|\nabla_AF_A\|_{L^2(\Sigma)}^2.
\end{eqnarray*}
The third line is by the Sobolev embedding $W^{1,2}(\Sigma)\hookrightarrow L^{\frac{4}{4-p}}(\Sigma)$. Thanks to Proposition \ref{FAuniversalbound} we can  estimate the integral in the last line as 
\begin{eqnarray*}
\int_I\|\nabla_AF_A\|_{L^2(\Sigma)}^2&\leq&2\int_I\|d_A^{\ast}F_A+\nabla\V(A)\|_{L^2(\Sigma)}^2+2\int_I\|\nabla\V(A)\|_{L^2(\Sigma)}^2\\
&\leq&2\YMV(A(a))+2|I|\sup_{A\in\A(P)}\|\nabla\V(A)\|_{L^2(\Sigma)}^2\\
&\leq&2\YMV(A(a))+2|I|\cdot\|\V\|^2.
\end{eqnarray*}
The last inequality follows from the definition of $\|\V\|$ and condition (ii) in Section \ref{Bspaceperturbations}, with
\begin{multline*}
\sup_{A\in\A(P)}\|\nabla\V(A)\|_{L^2(\Sigma)}=\sup_{A\in\A(P)}\big\|\sum_{\ell=1}^{\infty}\lambda_{\ell}\V_{\ell}\big\|_{L^2(\Sigma)}\leq\sum_{\ell=1}^{\infty}C_{\ell}|\lambda_{\ell}|=\|\V\|.
\end{multline*}
Putting the previous estimates together, the claim follows.
\end{proof}

Let $\Delta_{\Sigma}=-\ast d\ast d$ denote the (positive semidefinite) Hodge Laplacian on functions on the Riemannian manifold $(\Sigma,g)$ and let $L_{\Sigma}\coloneqq\partial_s+\Delta_{\Sigma}$ be the corresponding heat operator. We also recall the Bochner--Weitzenb\"ock formula \eqref{BWformula}, relating the covariant Hodge Laplacian $\Delta_A$ and the Bochner Laplacian $\nabla_A^{\ast}\nabla_A$ on forms in $\Omega^k(\Sigma,\ad(P))$. For a form $\ph\in\Omega^k(\Sigma,\ad(P))$ we have the identity
\begin{eqnarray}\label{Laplaceformula}
\Delta_{\Sigma}\frac{1}{2}|\ph|^2=-|\nabla_A\ph|^2+\langle\nabla_A^{\ast}\nabla_A\ph,\ph\rangle.
\end{eqnarray}
We shall also make use of the commutator identity
\begin{eqnarray}\label{commutatorid}
[\nabla_A,\nabla_A^{\ast}\nabla_A]\ph=\{\ph,\nabla_A\ph\},
\end{eqnarray}
cf.~\cite[p.~17]{Donaldson}.

\begin{prop}\label{prop:curvevol}
Assume that $A$ satisfies \eqref{pertYMF} on $I\times\Sigma$. Consider (for $p\geq2$) the function $u_p\colon I\times\Sigma\to\mathbbm R$ defined by $u_p(s,z)\coloneqq\frac{1}{p}|\ast F_{A(s)}(z)|^p$. Denote $u\coloneqq u_2$. Then the following holds,
\begin{eqnarray*}
L_{\Sigma}u&=&-|d_A\ast F_A|^2-\langle\ast F_A,\ast d_A\nabla\V(A)\rangle,\\
L_{\Sigma}u_p&=&|\ast F_A|^{p-2}\left(-|d_A\ast F_A|^2-\langle\ast F_A,\ast d_A\nabla\V(A)\rangle\right)\\
&&-\ast(p-2)|\ast F_A|^{p-4}\langle\ast F_A,d_A\ast F_A\rangle\wedge\langle\ast F_A,\ast d_A\ast F_A\rangle.
\end{eqnarray*}
Moreover, the map $\ast\langle\ast F_A,d_A\ast F_A\rangle\wedge\langle\ast F_A,\ast d_A\ast F_A\rangle\colon I\times\Sigma\to\mathbbm R$ is non-negative.
\end{prop}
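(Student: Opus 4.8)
The plan is to compute $L_\Sigma u$ and $L_\Sigma u_p$ directly from the evolution equation satisfied by $F_A$ along the flow, using the identities \eqref{Laplaceformula}, \eqref{BWformula}, the commutator identity \eqref{commutatorid}, and the Bianchi identity $d_AF_A=0$. Since $\Sigma$ is two-dimensional, $\ast F_A$ is a $0$-form (a section of $\ad(P)$), which simplifies the pointwise algebra considerably; I will work with $w:=\ast F_A\in\Omega^0(\Sigma,\ad(P))$ throughout. First I would differentiate $w$ in $s$ using \eqref{pertYMF}: since $\partial_s F_A = d_A\dot A = -d_A d_A^\ast F_A - d_A\nabla\V(A)$ and $d_A d_A^\ast F_A = \Delta_A F_A$ (by Bianchi), applying $\ast$ gives $\partial_s w = -\Delta_A w - \ast d_A\nabla\V(A)$, where $\Delta_A$ now denotes the covariant Laplacian on $\ad(P)$-valued $0$-forms, i.e. $\Delta_A = d_A^\ast d_A = \nabla_A^\ast\nabla_A$ (on $0$-forms over a surface the curvature and scalar-curvature terms in \eqref{BWformula} drop out, or rather the Weitzenböck terms act trivially on the relevant bracket expressions — this should be checked but is standard).

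Next I would compute $L_\Sigma u = \partial_s \tfrac12|w|^2 + \Delta_\Sigma \tfrac12|w|^2$. The time-derivative term is $\langle w,\partial_s w\rangle = -\langle w,\Delta_A w\rangle - \langle w,\ast d_A\nabla\V(A)\rangle$. By \eqref{Laplaceformula} applied to the $0$-form $w$, $\Delta_\Sigma\tfrac12|w|^2 = -|\nabla_A w|^2 + \langle\nabla_A^\ast\nabla_A w, w\rangle = -|d_A w|^2 + \langle\Delta_A w, w\rangle$. Adding these, the $\langle\Delta_A w,w\rangle$ terms cancel and one is left with $L_\Sigma u = -|d_A w|^2 - \langle w,\ast d_A\nabla\V(A)\rangle$, which is exactly the first claimed identity once one writes $d_A\ast F_A$ for $d_A w$. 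For $u_p = \tfrac1p|w|^p$ the computation is the same in spirit but with an extra chain-rule factor: $\partial_s u_p = |w|^{p-2}\langle w,\partial_s w\rangle$, and $\Delta_\Sigma\tfrac1p|w|^p = |w|^{p-2}\Delta_\Sigma\tfrac12|w|^2 - (p-2)|w|^{p-4}|d\tfrac12|w|^2|^2$ — here one must expand $d\tfrac12|w|^2 = \langle w, d_A w\rangle$ and then split this $1$-form into its part parallel to... more precisely one uses $|\langle w,d_A w\rangle|^2 = \langle w,d_A w\rangle\wedge\ast\langle w,d_A w\rangle$ and rewrites $\ast$ of the relevant wedge as in the statement; combining with the $|w|^{p-2}$-multiple of the first identity yields the second displayed formula. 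I expect the bookkeeping of the $\ast$ operators and the wedge products in the lower-order term to be the main source of friction, but it is purely algebraic given that we are on a surface.

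Finally, for the non-negativity of $\ast\langle\ast F_A, d_A\ast F_A\rangle\wedge\langle\ast F_A,\ast d_A\ast F_A\rangle$: write $\xi:=\langle w, d_A w\rangle\in\Omega^1(\Sigma)$, an ordinary real-valued $1$-form. Then $\langle w,\ast d_A w\rangle = \ast\langle w,d_A w\rangle = \ast\xi$ (the Hodge star commutes with the pointwise pairing), so the expression is $\ast(\xi\wedge\ast\xi) = |\xi|^2 \ge 0$ pointwise, using the standard identity $\xi\wedge\ast\xi = |\xi|^2\,\mathrm{vol}_\Sigma$. This is the easy step. The only genuine obstacle I foresee is making sure the Weitzenböck and commutator corrections really do cancel or vanish as claimed — in dimension two and for $0$-forms this is benign, but it is worth verifying explicitly that no curvature-of-$\Sigma$ term survives, since the stated identities have none; if such a term did appear it would have to be absorbed into the definition of $d_A\nabla\V(A)$-type terms, which the statement does not do, so its absence is really being asserted and should be confirmed.
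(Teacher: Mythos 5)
Your proposal is correct and follows essentially the same route as the paper: both derive the evolution of $\ast F_A$ from \eqref{pertYMF} together with the Bianchi identity, combine it with the spatial Laplacian of $\tfrac12|\ast F_A|^2$ (you via \eqref{Laplaceformula} applied to the $0$-form $\ast F_A$, the paper by expanding $-\ast d\ast d$ directly — the same cancellation of the $\langle\ast F_A,\Delta_A\ast F_A\rangle$ terms), and obtain the $u_p$ identity by the chain rule. Your non-negativity argument, $\ast(\xi\wedge\ast\xi)=|\xi|^2\geq0$ for the real $1$-form $\xi=\langle\ast F_A,d_A\ast F_A\rangle$, is just the coordinate-free form of the paper's local computation, and your residual worry is moot: on $0$-forms $\nabla_A=d_A$ and $\nabla_A^{\ast}\nabla_A=d_A^{\ast}d_A$, so no Weitzenb\"ock or curvature term survives.
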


\begin{proof}
We calculate using \eqref{pertYMF}, 
\begin{eqnarray*}
\frac{d}{ds}\frac{1}{2}\langle\ast F_A,\ast F_A\rangle=\langle\ast F_A,\ast d_A\dot A\rangle=\langle\ast F_A,-\ast\Delta_AF_A-\ast d_A\nabla\V(A)\rangle.
\end{eqnarray*}
From this it follows that
\begin{eqnarray*}
L_{\Sigma}u&=&\big(\partial_s-\ast d\ast d\big)\frac{1}{2}\langle\ast F_A,\ast F_A\rangle\\
&=&-\langle\ast F_A,\ast\Delta_A F_A+\ast d_A\nabla\V(A)\rangle-\ast d\ast\langle\ast F_A,d_A\ast F_A\rangle\\
&=&-\langle\ast F_A,\ast\Delta_AF_A+\ast d_A\nabla\V(A)\rangle-\langle\ast F_A,\ast d_A\ast d_A\ast F_A\rangle-|d_A\ast F_A|^2\\
&=&-\langle\ast F_A,\ast d_A\nabla\V(A)\rangle-|d_A\ast F_A|^2.
\end{eqnarray*}
The formula for $u_p$ follows from that for $u$ and the further calculation
\begin{eqnarray*}
-\ast d\ast d\frac{1}{p}|\ast F_A|^p&=&-\ast d\big(|\ast F_A|^{p-2}\ast\langle\ast F_A,d_A\ast F_A\rangle\big)\\
&=&-\ast(p-2)|\ast F_A|^{p-4}\langle\ast F_A,d_A\ast F_A\rangle\wedge\langle\ast F_A,\ast d_A\ast F_A\rangle\\
&&-\ast|\ast F_A|^{p-2}d\langle\ast F_A,\ast d_A\ast F_A\rangle.
\end{eqnarray*}
The statement on non-negativity follows by a short calculation in local coordinates. Namely, if we write $\ast F_A=\beta$, $d_A\ast F_A=\alpha_1\,dx_1+\alpha_2\,dx_2$ with respect to local orthonormal coordinates $x_1,x_2$ and maps $\alpha_i,\beta\in C^{\infty}(U,\mathfrak g)$, then it follows that
\begin{eqnarray*}
\lefteqn{\langle\beta,d_A\ast F_A\rangle\wedge\langle\beta,\ast d_A\ast F_A\rangle}\\
&=&(\langle\beta,\alpha_1\rangle\,dx_1+\langle\beta,\alpha_2\rangle\,dx_2)\wedge(\langle\beta,\alpha_1\rangle\,dx_2-\langle\beta,\alpha_2\rangle\,dx_1)\\
&=&(\langle\beta,\alpha_1\rangle^2+\langle\beta,\alpha_2\rangle^2)\,dx_1\wedge dx_2,
\end{eqnarray*}
which is a non-negative multiple of the volume form.
\end{proof}

\begin{prop} 
Assume that $A$ satisfies \eqref{pertYMF} on $I\times\Sigma$. Consider the function $u\colon I\times\Sigma\to\mathbbm R$ defined by $u(s,z)\coloneqq\frac{1}{2}|\nabla_{A(s)}F_{A(s)}(z)|^2$. It satisfies
\begin{multline}\label{eq:curveevol1}
L_{\Sigma}u=-|\nabla_A^2F_A|^2+\big\langle\nabla_AF_A,\{\nabla_AF_A,F_A\}+\nabla_A\{R_{\Sigma},F_A\}+\{\nabla\V(A),F_A\}\\
-\nabla_Ad_A\nabla\V(A)\big\rangle.
\end{multline}
\end{prop}

\begin{proof}
We calculate
\begin{eqnarray*}
\frac{d}{ds}\nabla_AF_A&=&\nabla_Ad_A\dot A+\{\dot A,F_A\}\\
&=&\nabla_A\big(-d_Ad_A^{\ast}F_A-d_A\nabla\V(A)\big)+\{d_A^{\ast}F_A+\nabla\V(A),F_A\}\\
&=&\nabla_A\big(-\nabla_A^{\ast}\nabla_AF_A+\{F_A,F_A\}+\{R_{\Sigma},F_A\}-d_A\nabla\V(A)\big)\\
&&+\{d_A^{\ast}F_A+\nabla\V(A),F_A\}\\
&=&-\nabla_A^{\ast}\nabla_A\nabla_AF_A+\{\nabla_AF_A,F_A\}+\nabla_A\{R_{\Sigma},F_A\}\\
&&-\nabla_Ad_A\nabla\V(A)+\{\nabla\V(A),F_A\}.
\end{eqnarray*}
The third line is by the Bochner--Weitzenb\"ock formula \eqref{BWformula}, and the last line uses \eqref{commutatorid}. Combining this expression with \eqref{Laplaceformula} we obtain
\begin{multline*}
L_{\Sigma}u=-|\nabla_A^2F_A|^2+\big\langle\nabla_AF_A,\{\nabla_AF_A,F_A\}+\nabla_A\{R_{\Sigma},F_A\}\\
+\{\nabla\V(A),F_A\}-\nabla_Ad_A\nabla\V(A)\big\rangle,
\end{multline*}
as claimed.
\end{proof}

The following lemma is an adaption of \cite[Lemma 3.3]{Struwe} to the two dimensional situation considered here.
 
\begin{lem}\label{lem:slicewiseLpsection}
Let $A\in\A^{1,2}(P)$ be a fixed reference connection and let $p>1$. There exists a constant $c=c(p,P)$ such that for every form $\ph\in\Omega^k(\Sigma,\ad(P))$ there holds
\begin{eqnarray*}
\|\ph\|_{L^p(\Sigma)}^2\leq c(p,P)\big(\|d_A\ph\|_{L^2(\Sigma)}^2+\|d_A^{\ast}\ph\|_{L^2(\Sigma)}^2+\langle\{F_A,\ph\},\ph\rangle\big).
\end{eqnarray*}
\end{lem}

\begin{proof}
The proof of \cite[Lemma 3.3]{Struwe} applies with minor modifications stemming from the fact that the Sobolev embedding $W^{1,2}(\Sigma)\hookrightarrow L^p(\Sigma)$ holds for all $p<\infty$ (instead of only $W^{1,2}\hookrightarrow L^4$ in dimension $4$).  
\end{proof}

\begin{prop}\label{prop:curvL2Lp} 
Let $p>1$ and $I=[a,b]$. There exists a constant $c(I,p)$ such that if $A$ is a solution of \eqref{pertYMF} on $I\times\Sigma$, then
\begin{eqnarray*}
\int_I\|F_{A(s)}\|_{L^p(\Sigma)}^2\,ds\leq c(p,|I|,\|\V\|)\big(1+\YMV(A(a))^{\frac{3}{2}}\big).
\end{eqnarray*}
\end{prop}

\begin{proof}
We integrate the estimate of Lemma \ref{lem:slicewiseLpsection} with $\ph=F_A$ and use the Bianchi identity $d_AF_A=0$ to obtain 
\begin{multline*}
c(p,P)^{-1}\int_I\|F_{A}\|_{L^p(\Sigma)}^2\leq\int_I\big(\|d_A^{\ast}F_A\|_{L^2(\Sigma)}^2+\|F_A\|_{L^3(\Sigma)}^3\big)\\
\leq2\YMV(A(a))+2|I|\cdot\|\V\|^2+\|F_A\|_{L^3(I\times\Sigma)}^3.
\end{multline*}
The second inequality follows as in the proof of Proposition \ref{FALpuniversalbound}. An estimate for the remaining term $\|F_A\|_{L^3(I\times\Sigma)}^3$ is provided by Proposition \ref{FALpuniversalbound} with $p=3$.
\end{proof}

\begin{lem}[$L^p$ curvature estimate]\label{lem:univcurvatureestimate0} 
Let $1<p<4$ and $I=[a,b]$. There exists a constant $c(p,|I|,\|\V\|)$ such that if $A$ is a solution of \eqref{pertYMF} on $I\times\Sigma$, then
\begin{eqnarray*}
\|F_A\|_{L^p(I\times\Sigma)}^2\leq c(p,|I|,\|\V\|)\big(1+\YMV(A(a))^{1+\frac{1}{p}}\big).
\end{eqnarray*}
\end{lem}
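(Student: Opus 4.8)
The strategy is to upgrade the slicewise bound from Proposition \ref{prop:curvL2Lp} into a genuine spacetime $L^p$ estimate by means of the parabolic maximum principle applied to the evolution equations for $|F_A|^p$ derived in Proposition \ref{prop:curvevol}. First I would recall from Proposition \ref{prop:curvevol} that, with $u_p(s,z):=\tfrac1p|\ast F_{A(s)}(z)|^p$, one has
\begin{eqnarray*}
L_\Sigma u_p = |\ast F_A|^{p-2}\bigl(-|d_A\ast F_A|^2-\langle\ast F_A,\ast d_A\nabla\V(A)\rangle\bigr) - (p-2)\ast|\ast F_A|^{p-4}\langle\ast F_A,d_A\ast F_A\rangle\wedge\langle\ast F_A,\ast d_A\ast F_A\rangle,
\end{eqnarray*}
where the last term is a nonpositive multiple of the volume form. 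Hence, dropping the manifestly good terms and estimating $|\langle\ast F_A,\ast d_A\nabla\V(A)\rangle|\le|F_A|\,|d_A\nabla\V(A)|$, we obtain a differential inequality of the form $L_\Sigma u_p \le |F_A|^{p-1}|d_A\nabla\V(A)|$. The term $d_A\nabla\V(A)$ is controlled pointwise in terms of $F_A$ and $\alpha$ via Lemma \ref{prop:L2estgrad} (estimate \eqref{eq:L2estgrad2}) together with \eqref{eq:L2estgrad1}, so after a Young's-inequality split this inhomogeneity is absorbed into a bounded multiple of $u_p$ plus a constant depending on $\|\V\|$ and $|I|$; i.e. we land on $L_\Sigma u_p \le C(\|\V\|)(u_p + 1)$.

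Next I would invoke standard parabolic $L^p$--$L^\infty$ or mean-value estimates for the scalar heat operator $L_\Sigma=\partial_s+\Delta_\Sigma$ on the compact surface $\Sigma$ (a local parabolic estimate à la Moser, or the $L^p$ smoothing bound used in Weber's heat-flow paper): a nonnegative subsolution of $L_\Sigma u\le C(u+1)$ on $[a',b]\times\Sigma$ with $[a',b]\subsetneq I$ satisfies a bound of the shape
\begin{eqnarray*}
\sup_{[a'+\tau,b]\times\Sigma} u_p \le c(I,p,\|\V\|)\Bigl(1+\int_{a'}^{b}\int_\Sigma u_p\Bigr) = c(I,p,\|\V\|)\Bigl(1+\|F_A\|_{L^p(I\times\Sigma)}^p\Bigr).
\end{eqnarray*}
The right-hand integral $\int_I\|F_{A(s)}\|_{L^p(\Sigma)}^p\,ds$ is, up to Hölder in $s$, controlled by $\int_I\|F_{A(s)}\|_{L^{p'}(\Sigma)}^2\,ds$ for an appropriate exponent, and that slicewise quantity is exactly what Proposition \ref{prop:curvL2Lp} bounds by $c(I,p)\bigl(\|\V\|^2+\YMV(A(a))+\YMV(A(a))^{3/2}\bigr)$. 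Combining the pointwise sup-bound with Proposition \ref{prop:curvevol} (or directly integrating $u_p$ over a slightly smaller cylinder and feeding back) then yields the asserted inequality $\|F_A\|_{L^p(I\times\Sigma)}^2\le c(I,p,\|\V\|)(1+\YMV(A(a))^{1+1/p})$; the exponent $1+1/p$ arises precisely from converting the $L^p$-in-spacetime norm to the power appearing after the Hölder step on the $\YMV(A(a))$-dependence, and one checks that $1+3/(2p)$ and similar stray exponents are all dominated by $1+1/p$ after possibly enlarging the constant, since energies below a fixed regular value $a$ are uniformly bounded.

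Care is needed with the boundary in time: because the parabolic estimate loses a collar near $s=a$, one cannot bound $\|F_A\|_{L^p(I\times\Sigma)}$ on all of $I$ by the mean-value inequality alone. The fix is the usual one — either use the monotonicity $\YMV(A(s))\le\YMV(A(a))$ together with Proposition \ref{FAuniversalbound} to control $\|F_{A(s)}\|_{L^2(\Sigma)}$ uniformly and hence handle the collar by the cruder slicewise estimate of Proposition \ref{prop:curvL2Lp}, or run the argument on the interval $[a-1,b]$ (extending the solution backwards, which is harmless for a priori estimates) and restrict. \textbf{The main obstacle} I anticipate is the bookkeeping of exponents and of the dependence on $\YMV(A(a))$: one must track how each application of Hölder, Young, and Sobolev in two dimensions ($W^{1,2}(\Sigma)\hookrightarrow L^q(\Sigma)$ for all $q<\infty$) affects the power of the energy, and verify that after absorbing the cubic term $\|F_A\|_{L^3}^3$ from Proposition \ref{prop:curvL2Lp} and the quadratic perturbation contributions, the final exponent collapses to exactly $1+1/p$ rather than something larger. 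No new analytic input beyond the cited propositions and a standard scalar parabolic mean-value inequality should be required.
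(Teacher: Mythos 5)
Your proposal departs substantially from the paper's argument, and it has a genuine gap. The paper's proof is a pure interpolation in time: by H\"older's inequality, for $2<p<4$,
\begin{eqnarray*}
\|F_A\|_{L^p(I\times\Sigma)}^p\leq\|F_A\|_{L^{\infty}(I,L^2(\Sigma))}^{p-2}\,\|F_A\|_{L^{2}(I,L^{4/(4-p)}(\Sigma))}^2,
\end{eqnarray*}
and the two factors are handled directly by Proposition \ref{FAuniversalbound} and Proposition \ref{prop:curvL2Lp}. The exponent $1+\tfrac1p$ then drops out by raising both sides to the power $2/p$. No parabolic estimate for $|F_A|^p$ enters anywhere.

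Your attempt to go through the evolution equation for $u_p=\tfrac1p|\ast F_A|^p$ and a parabolic mean-value inequality breaks at the step where you claim the inhomogeneity $|F_A|^{p-1}|d_A\nabla\V(A)|$ can be absorbed pointwise into $C(\|\V\|)(u_p+1)$. The estimates you cite, \eqref{eq:L2estgrad1} and \eqref{eq:L2estgrad2} in Lemma \ref{prop:L2estgrad}, bound $\|d_A\nabla\V(A)\|_{L^p(\Sigma)}$, not the pointwise value $|d_A\nabla\V(A)|(z)$; the perturbation $\nabla\V(A)$ is a nonlocal functional of $A$ defined through the $L^2$ local slice theorem, and there is no pointwise bound of the type $|d_A\nabla\V(A)|\le C(1+|F_A|)$ available. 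Without it there is no scalar differential inequality $L_\Sigma u_p\le C(u_p+1)$ to which Moser or mean-value machinery can be applied. A second, smaller, problem is that even granting such a subsolution estimate, the parabolic mean-value inequality produces a bound on $\sup u_p$ in terms of $\int\!\!\int u_p=\tfrac1p\|F_A\|_{L^p(I\times\Sigma)}^p$, i.e.\ precisely the quantity you are trying to estimate; the "feed back" you invoke is circular unless one already has the interpolation bound, at which point the parabolic detour is unnecessary. The direct route via H\"older in $s$ between $L^\infty(I,L^2)$ and $L^2(I,L^{4/(4-p)})$ is both correct and much shorter.
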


\begin{proof}
H\"older's inequality yields for $p<4$ the estimate
\begin{eqnarray*}
\|F_A\|_{L^p(I\times\Sigma)}^p\leq\|F_A\|_{L^{\infty}(I,L^2(\Sigma))}^{p-2}\|F_A\|_{L^{2}(I,L^{\frac{4}{4-p}}(\Sigma))}^2.
\end{eqnarray*}
Estimates for the last two factors are provided by Propositions \ref{FAuniversalbound} and \ref{prop:curvL2Lp}. Putting these together, the claim follows.
\end{proof}

\begin{prop}\label{prop:curvscliceLp}
Let $p>1$. There exists a constant $c(p,P)$ such that if $A$ is a solution of \eqref{pertYMF} on $I\times\Sigma$, then
\begin{eqnarray*}
\int_I\|d_A^{\ast}F_A\|_{L^p(\Sigma)}^2\leq c(p,P)\int_I\|d_Ad_A^{\ast}F_A\|_{L^2(\Sigma)}^2+c(p,P)\int_I\int_{\Sigma}|F_A|\cdot|d_A^{\ast}F_A|^2.
\end{eqnarray*}
\end{prop}

\begin{proof}
We integrate the estimate of Lemma \ref{lem:slicewiseLpsection} with $\ph=d_A^{\ast}F_A$ over the interval $I$ and use $d_A^{\ast}d_A^{\ast}F_A=0$ to obtain
\begin{eqnarray*}
c(p,P)^{-1}\int_I\|d_A^{\ast}F_A\|_{L^p(\Sigma)}^2\leq\int_I\|d_Ad_A^{\ast}F_A\|_{L^2(\Sigma)}^2+\int_I\int_{\Sigma}\langle\{F_A,d_A^{\ast}F_A\},d_A^{\ast}F_A\rangle.
\end{eqnarray*}
Hence the claim follows.
\end{proof}

\begin{prop}\label{prop:estnablaFtime}
Suppose $A$ is a solution of \eqref{pertYMF} on $I\times\Sigma$. Then the map $s\mapsto R(s)\coloneqq\frac{1}{2}\|d_{A(s)}^{\ast}F_{A(s)}\|_{L^2(\Sigma)}^2$ satisfies the estimate
\begin{multline*}
\sup_{a\leq s\leq b}R(s)\leq R(a)+\int_I\|d_A\nabla\V(A)\|_{L^2(\Sigma)}^2\\
+\int_I\big(\big|\langle d_A^{\ast}F_A,\{d_A^{\ast}F_A,F_A\}\rangle\big|+\big|\langle  F_A,\{d_A\nabla\V(A),F_A\}\rangle\big|\big),
\end{multline*}
where $\{\,\cdot\,,\,\cdot\,\}$ denotes a certain bilinear expression with smooth time-independent coefficients.
\end{prop}

\begin{proof}
From equation \eqref{pertYMF} it follows for every $a\leq s\leq b$ that
\begin{eqnarray*}
\lefteqn{\frac{d}{ds}R(s)=\langle d_A^{\ast}F_A,d_A^{\ast}d_A\dot A-\ast[\dot A\wedge\ast F_A]\rangle}\\
&=&-\langle d_A^{\ast}F_A,d_A^{\ast}d_Ad_A^{\ast}F_A+d_A^{\ast}d_A\nabla\V(A)-\ast[(d_A^{\ast}F_A+\nabla\V(A))\wedge\ast F_A]\rangle\\
&=&-\|d_Ad_A^{\ast}F_A\|_{L^2(\Sigma)}^2-\langle d_Ad_A^{\ast}F_A,d_A\nabla\V(A)\rangle\\
&&+\langle d_A^{\ast}F_A,\ast[(d_A^{\ast}F_A+\nabla\V(A))\wedge\ast F_A]\rangle\\
&\leq&\|d_A\nabla\V(A)\|_{L^2(\Sigma)}^2+\big|\langle d_A^{\ast}F_A,\{d_A^{\ast}F_A,F_A\}\rangle\big|+\big|\langle F_A,\{d_A\nabla\V(A),F_A\}\rangle\big|.
\end{eqnarray*}
To obtain the final estimate, we applied the Cauchy--Schwarz inequality to the term $\langle d_Ad_A^{\ast}F_A,d_A\nabla\V(A)\rangle$. Now integrate this inequality over the interval $[a,s]\subseteq I$ and take the supremum over $s\in I$ to conclude the result.
\end{proof}

\begin{prop}\label{prop:estnablaFtime1}
Let $I=[a,b]$ and $I'=[a_1,b]$, where $a_1\in(a,b)$. There exists a constant $c(p,|I|,|I'|,\|\V\|)$ such that if $A$ is a solution of \eqref{pertYMF} on $I\times\Sigma$, then 
\begin{multline*}
\sup_{s\in I'}\|d_{A(s)}^{\ast}F_{A(s)}\|_{L^2(\Sigma)}^2\leq c(p,|I|,|I'|,\|\V\|)\Big(1+\YMV(A(a))\\
+\int_I\|d_A\nabla\V(A)\|_{L^2(\Sigma)}^2+\int_{I}\int_{\Sigma}|F_A|\cdot|d_A^{\ast}F_A|^2+|F_A|^2\cdot|d_A\nabla\V(A)|\Big).
\end{multline*}
\end{prop}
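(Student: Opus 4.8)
The plan is to combine the differential inequality for $R(s)=\frac12\|d_{A(s)}^{\ast}F_{A(s)}\|_{L^2(\Sigma)}^2$ from Proposition~\ref{prop:estnablaFtime} with an averaging (mean value) argument over the interval $I$ in order to pass from a bound involving the starting value $R(a)$ — which is not controlled by the energy alone — to a bound on $\sup_{s\in I'}R(s)$ over the shrunken interval $I'=[a_1,b]$. First I would integrate the pointwise estimate of Proposition~\ref{prop:estnablaFtime} starting not from the fixed endpoint $a$ but from an arbitrary $\sigma\in[a,a_1]$, obtaining
\begin{eqnarray*}
\sup_{s\in I'}R(s)\leq R(\sigma)+\int_I\|d_A\nabla\V(A)\|_{L^2(\Sigma)}^2+\int_I\Big(\big|\langle d_A^{\ast}F_A,\{d_A^{\ast}F_A,F_A\}\rangle\big|+\big|\langle d_A^{\ast}F_A,\{\nabla\V(A),F_A\}\rangle\big|\Big).
\end{eqnarray*}
Averaging this over $\sigma\in[a,a_1]$ replaces $R(\sigma)$ by $\frac{1}{a_1-a}\int_a^{a_1}R(\sigma)\,d\sigma\leq\frac{1}{a_1-a}\int_I R(s)\,ds$, and the latter is exactly (a constant multiple of) $\int_I\|d_A^{\ast}F_A\|_{L^2(\Sigma)}^2\,ds$, which by Proposition~\ref{prop:curvscliceLp} (with $p=2$) is controlled by $\int_I\|d_Ad_A^{\ast}F_A\|_{L^2(\Sigma)}^2+\int_{I\times\Sigma}|F_A|\cdot|d_A^{\ast}F_A|^2$; alternatively and more directly, from equation~\eqref{pertYMF} one has $\|d_A^{\ast}F_A\|_{L^2(\Sigma)}\leq\|\partial_sA\|_{L^2(\Sigma)}+\|\nabla\V(A)\|_{L^2(\Sigma)}$, so $\int_I\|d_A^{\ast}F_A\|_{L^2(\Sigma)}^2\leq 2\,\YMV(A(a))+2|I|\,\|\V\|^2$ after using that $\int_I\|\partial_sA\|_{L^2(\Sigma)}^2\leq\YMV(A(a))$ and condition~(ii) of Section~\ref{Bspaceperturbations}. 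This already disposes of the $R(\sigma)$ term.

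Next I would estimate each of the remaining integrals on the right-hand side in terms of the energy, $\|\V\|$, and the one genuinely uncontrolled quantity $\int_{I\times\Sigma}|F_A|\cdot|d_A^{\ast}F_A|^2$ which is allowed to appear in the final bound. The term $\int_I\|d_A\nabla\V(A)\|_{L^2(\Sigma)}^2$ is bounded using estimate~\eqref{eq:L2estgrad2} of Lemma~\ref{prop:L2estgrad} (equivalently condition~(iii) of Section~\ref{Bspaceperturbations}) together with the $L^p$ curvature bound of Lemma~\ref{lem:univcurvatureestimate0}; this produces a factor of the form $c(I,p,\|\V\|)(1+\YMV(A(a))^{1+1/p})$, and choosing $p$ close enough to $4$ the exponent $1+1/p$ is below $\frac54$, which is why the final power $\frac52$ appears only through the $\|\V\|^{5/2}$ and $\YMV^{5/2}$ terms — one should keep track of these exponents carefully. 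The mixed term $\big|\langle d_A^{\ast}F_A,\{\nabla\V(A),F_A\}\rangle\big|$ is handled by Cauchy--Schwarz followed by Hölder, using $\sup|\nabla\V|\leq c_{\ell}$-type bounds and the curvature estimates again. The term $\big|\langle d_A^{\ast}F_A,\{d_A^{\ast}F_A,F_A\}\rangle\big|$ is pointwise bounded by $c\,|F_A|\cdot|d_A^{\ast}F_A|^2$, so its integral over $I\times\Sigma$ is exactly (a constant times) the quantity that is permitted to survive in the statement.

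The main obstacle — and the reason the interval has to be shrunk to $I'$ — is precisely the passage from $R(a)$ to an averaged quantity: there is no a priori pointwise control of $\|d_A^{\ast}F_A\|_{L^2(\Sigma)}$ at the single time $s=a$ in terms of the energy, only an $L^2$-in-time bound, so one cannot avoid losing a sliver of the interval. A secondary technical point is bookkeeping the Sobolev exponents so that every curvature norm that enters is an $L^p$ norm with $p<4$ (where Lemma~\ref{lem:univcurvatureestimate0} applies) and so that the worst resulting power of $\YMV(A(a))$ is at most $\frac52$; this forces the choice $p$ slightly less than $4$ when estimating $\|d_A\nabla\V(A)\|_{L^2}$ and the product terms. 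Once these exponents are pinned down, collecting all contributions and absorbing constants into $c(I,I')$ yields the asserted inequality.
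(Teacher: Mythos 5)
Your overall strategy coincides with the paper's: use a Fubini/averaging argument over $[a,a_1]$ to trade the uncontrolled pointwise value $R(a)$ for the time-averaged quantity $\frac{1}{a_1-a}\int_a^{a_1}\|d_A^{\ast}F_A\|_{L^2}^2$ (bounded directly from the flow equation and condition (ii) on the perturbation), then apply Proposition~\ref{prop:estnablaFtime} from the chosen intermediate time, and finally estimate the three integrals on the right. Your treatment of the term $|\langle d_A^{\ast}F_A,\{d_A^{\ast}F_A,F_A\}\rangle|$ as pointwise $\lesssim |F_A|\,|d_A^{\ast}F_A|^2$ is exactly what the paper does.

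However, there are two places where your account is not merely imprecise but would not go through as stated. First, for $\int_I\|d_A\nabla\V(A)\|_{L^2}^2$ you propose to read off a bound $c(I,p,\|\V\|)(1+\YMV(A(a))^{1+1/p})$ from Lemma~\ref{lem:univcurvatureestimate0} and claim that, choosing $p$ near $4$, the exponent stays below $5/4$, so the $5/2$ power in the conclusion ``appears only through'' other terms. But the starting estimate $\|d_A\nabla\V(A)\|_{L^2(\Sigma)}\leq c(1+\|F_A\|_{L^{2+\eps}(\Sigma)}^2)$ squared and integrated in time gives $\int_I\|F_A\|_{L^{2+\eps}(\Sigma)}^4\,ds$, which is not the space-time $L^p$ norm that Lemma~\ref{lem:univcurvatureestimate0} controls. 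The paper handles this by a Young/interpolation split of $\|F_A\|_{L^{2+\eps}}^4$ into a term $\int_I\|F_A\|_{L^2(\Sigma)}^5$ and a genuinely space-time $L^q$ term with $q<4$; the first of these is then bounded by $c|I|(\YMV(A(a))^{5/2}+\|\V\|^{5/2})$ via the monotone decay of $\YMV(A(s))$ and the pointwise bound on $\V$. In other words, the $5/2$ exponent in the statement is produced precisely by the $\nabla\V$ term --- the very term you claim stays below $5/4$ --- so the exponent bookkeeping you sketch is incorrect and the source of the $5/2$ is never identified in your argument.

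Second, your handling of the mixed term $|\langle d_A^{\ast}F_A,\{\nabla\V(A),F_A\}\rangle|$ via ``Cauchy--Schwarz followed by H\"older, using $\sup|\nabla\V|\leq c_\ell$'' does not close: conditions (i)--(iii) of Section~\ref{Bspaceperturbations} bound $\|\nabla\V_\ell(A)\|_{L^2}$ and $\|\nabla\V_\ell(A)\|_{L^p}$ in terms of $\|F_A\|_{L^4}$, not a pointwise sup, and after Cauchy--Schwarz one is left with $\int_I\|d_A^{\ast}F_A\|_{L^2}\cdot\|\{\nabla\V(A),F_A\}\|_{L^2}$, which still involves the quantity $\|d_A^{\ast}F_A\|_{L^2}$ that one is trying to bound uniformly in $s$. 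The paper instead reduces this term to $\int_{I\times\Sigma}|F_A|^2\,|d_A\nabla\V(A)|$ and invokes Proposition~\ref{prop:prodestimate0}; a more elementary alternative in the spirit of your approach would be a pointwise Young's inequality $|d_A^{\ast}F_A|\,|\nabla\V(A)|\,|F_A|\leq\tfrac12|F_A|\,|d_A^{\ast}F_A|^2+\tfrac12|F_A|\,|\nabla\V(A)|^2$, absorbing the first piece into the permitted $\int|F_A|\,|d_A^{\ast}F_A|^2$ term, but you do not do this and the step is left genuinely open.
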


\begin{proof}
By Fubini's theorem we can find $s_0\in(a,a_1)$ such that 
\begin{multline*}
\|d_{A(s_0)}^{\ast}F_{A(s_0)}\|_{L^2(\Sigma)}^2\leq2(a_1-a)^{-1}\int_a^{a_1}\|d_{A(s)}^{\ast}F_{A(s)}\|_{L^2(\Sigma)}^2\,ds\\
\leq c(a_1-a)^{-1}\big(\YMV(A(a))+|I|\cdot\|\V\|^2\big).
\end{multline*}
The last step follows as in the proof of Proposition \ref{FALpuniversalbound}. Next we apply Proposition \ref{prop:estnablaFtime} (where now $s_0$ takes the role of the parameter $a$ there) to obtain  
\begin{multline*} 
\sup_{s\in I'}\frac{1}{2}\|d_{A(s)}^{\ast}F_{A(s)}\|_{L^2(\Sigma)}^2\leq c(a_1-a)^{-1}\big(\YMV(A(a))+|I|\cdot\|\V\|^2\big)\\
+\int_I\|d_A\nabla\V(A)\|_{L^2(\Sigma)}^2+\int_I\big(\big|\langle d_A^{\ast}F_A,\{d_A^{\ast}F_A,F_A\}\rangle\big|+\big|\langle F_A,\{d_A\nabla\V(A),F_A\}\rangle\big|\big).
\end{multline*}
The claim then follows.
\end{proof}

\begin{lem}[$L^p$ gradient estimate]\label{lem:univcurvatureestimate1}
Let $1<p<4$, $I=[a,b]$, and $I'=[a_1,b]$ such that $a_1\in(a,b)$. There exists a constant $C>0$ which depends only on $p$, $|I|$, $|I'|$, $\|\V\|$, $\YMV(A(a)))$ such that if $A$ is a solution of \eqref{pertYMF} on $I\times\Sigma$, then $\|d_A^{\ast}F_A\|_{L^p(I'\times\Sigma)}\leq C$.
\end{lem}

\begin{proof}
H\"older's inequality yields for $p<4$ the estimate
\begin{eqnarray*}
\|d_A^{\ast}F_A\|_{L^p(I\times\Sigma)}^p\leq\|d_A^{\ast}F_A\|_{L^{\infty}(I,L^2(\Sigma))}^{p-2}\|d_A^{\ast}F_A\|_{L^{2}(I,L^{\frac{4}{4-p}}(\Sigma))}^2.
\end{eqnarray*}
For the two factors appearing on the right-hand side we have established in Propositions \ref{prop:curvscliceLp} and \ref{prop:estnablaFtime1} bounds only involving $p$, $|I|$, $|I'|$, $\|\V\|$, $\YMV(A(a)))$, and the terms
\begin{align*}
\int_I\|d_A\nabla\V(A)\|_{L^2(\Sigma)}^2,\quad\int_{I}\int_{\Sigma}|F_A|^2\cdot|d_A\nabla\V(A)|,\quad\int_I\int_{\Sigma}|F_A|\cdot|d_A^{\ast}F_A|^2,\\
\int_I\|d_Ad_A^{\ast}F_A\|_{L^2(\Sigma)}^2.
\end{align*}
To required estimates for the latter terms then follow from Propositions \ref{prop:prodestimate0}, \ref{prop:prodestimate}, \ref{prop:W12boundFA}, and \ref{prop:boundDDAnablaV}.
\end{proof} 

In the remainder of this section we establish auxiliary results needed in the proof of Lemma \ref{lem:univcurvatureestimate1}.

\begin{prop}\label{prop:prodestimate0}
Let $A$ solve \eqref{pertYMF} on $I\times\Sigma$. Then the product $|F_A|^2|d_A\nabla\V(A)|$ admits the estimate 
\begin{eqnarray*}
\int_I\|d_A\nabla\V(A)\|_{L^2(\Sigma)}^2+\int_{I\times\Sigma}|F_A|^2|d_A\nabla\V(A)|\leq C 
\end{eqnarray*} 
for some constant $C=C(p,|I|,\|\V\|,\YMV(A(a)))$ independent of $A$.
\end{prop}

\begin{proof}
We combine \eqref{eq:L2estgrad1} and \eqref{eq:L2estgrad2} and use the Sobolev embedding $W^{1,2}(\Sigma)\hookrightarrow L^p(\Sigma)$ to obtain for any $p>2$ and fixed $\eps>0$ that
\begin{eqnarray}\label{eq:estFtime2}
\|d_A\nabla\V(A)\|_{L^p(\Sigma)}\leq c\big(1+\|F_A\|_{L^p(\Sigma)}+\|F_A\|_{L^{2+\eps}(\Sigma)}^2\big),
\end{eqnarray}
with constant $c=c(\eps,p,\|\V\|)$. For $p=3$ and $0<\eps\leq1$ we obtain, using  H\"older's inequality, that 
\begin{eqnarray*}
\lefteqn{\int_{I\times\Sigma}|F_A|^2|d_A\nabla\V(A)|\leq\int_I\big(\int_{\Sigma}|F_A|^3\big)^{\frac{2}{3}}\cdot\big(\int_{\Sigma}|d_A\nabla\V(A)|^3\big)^{\frac{1}{3}}}\\
&\leq&c\int_I\big(\int_{\Sigma}|F_A|^3\big)^{\frac{2}{3}}\cdot\big(1+\big(\int_{\Sigma}|F_A|^3\big)^{\frac{1}{3}}+\big(\int_{\Sigma}|F_A|^{2+\eps}\big)^{\frac{2}{2+\eps}}\big)\\
&\leq&c\int_I\big(\int_{\Sigma}|F_A|^3\big)^{\frac{2}{3}}+c\int_{I\times\Sigma}|F_A|^3+c\int_I\big(\int_{\Sigma}|F_A|^3\big)^{\frac{4}{3}}\\
&&+c\int_I\big(\int_{\Sigma}|F_A|^{2+\eps}\big)^{\frac{4}{2+\eps}}\\
&\leq&c\big(\|F_A\|_{L^3(I\times\Sigma))}^3+\|F_A\|_{L^4(I,L^3(\Sigma))}^4\big).
\end{eqnarray*}
The asserted bound for $\|F_A\|_{L^3(I\times\Sigma)}^3$ follows from Proposition \ref{FALpuniversalbound}. Concerning the last term, we apply H\"older's inequality to obtain  
\begin{eqnarray*}
\|F_A\|_{L^4(I,L^3(\Sigma))}^4\leq\|F_A\|_{L^{\infty}(I,L^2(\Sigma))}^2\|F_A\|_{L^2(I,L^6(\Sigma))}^2,
\end{eqnarray*}
and then estimate both factors separately, using Propositions \ref{FAuniversalbound} and \ref{prop:curvL2Lp}. It remains to estimate the term $\int_I\|d_A\nabla\V(A)\|_{L^2(\Sigma)}^2$. Inequality \eqref{eq:estFtime2} yields for $\eps>0$ and $p=2+\eps$ that
\begin{eqnarray*} 
\|d_A\nabla\V(A)\|_{L^2(\Sigma)}^2\leq c\big(1+\|F_A\|_{L^{2+\eps}(\Sigma)}^4\big).
\end{eqnarray*}
Apply the Cauchy--Schwarz inequality and integrate this estimate over $I$ to obtain
\begin{eqnarray*}
\int_I\|d_A\nabla\V(A)\|_{L^2(\Sigma)}^2\leq c\big(|I|+\int_I\|F_A\|_{L^2(\Sigma)}^{\frac{4}{2+\eps}}+\int_I\|F_A\|_{L^{2(1+\eps)}(\Sigma)}^{\frac{4(1+\eps)}{2+\eps}}\big).
\end{eqnarray*}
Proposition \ref{FAuniversalbound} gives the required bound for the first integral on the right-hand side. Choosing $\eps=\frac{1}{2}$ we can apply Lemma \ref{lem:univcurvatureestimate0} to obtain the asserted estimate for the second integral.  
\end{proof}

\begin{prop}\label{prop:prodestimate}
Let $I=[a,b]$ and $I'=[a_1,b]$ such that $a_1\in(a,b)$, and assume $A$ satisfies\eqref{pertYMF} on $I\times\Sigma$. Then the map $|F_A|\cdot|d_A^{\ast}F_A|^2\colon I\times\Sigma\to\mathbbm R$ satisfies the estimate
\begin{eqnarray*}
\int_{I'\times\Sigma}|F_A|\cdot|d_A^{\ast}F_A|^2\leq C 
\end{eqnarray*}
with constant $C=C(|I|,|I'|,\|\V\|,\YMV(A(a)))$ independent of $A$.
\end{prop}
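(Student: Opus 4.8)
The plan is to interpolate the cubic integrand $|F_A|\cdot|d_A^{\ast}F_A|^2$ in such a way that it is controlled by the parabolic quantities already estimated, and then to invoke a differential inequality for the pointwise function $u(s,z)=\frac12|\ast F_A(s)|^2$ established in Proposition \ref{prop:curvevol}. First I would apply H\"older's inequality on $\Sigma$ (a $2$-dimensional domain) to write, for each fixed $s$,
\begin{eqnarray*}
\int_{\Sigma}|F_A|\cdot|d_A^{\ast}F_A|^2\leq\|F_A\|_{L^p(\Sigma)}\|d_A^{\ast}F_A\|_{L^{2p'}(\Sigma)}^2,
\end{eqnarray*}
with $p$ close to $2$ and $p'$ its conjugate, and then use the slicewise estimate of Lemma \ref{lem:slicewiseLpsection} (with $\ph=d_A^{\ast}F_A$ and the identity $d_A^{\ast}d_A^{\ast}F_A=0$, exactly as in Proposition \ref{prop:curvscliceLp}) to bound $\|d_A^{\ast}F_A\|_{L^{2p'}(\Sigma)}^2$ by $c(\|d_Ad_A^{\ast}F_A\|_{L^2(\Sigma)}^2+\int_{\Sigma}|F_A|\cdot|d_A^{\ast}F_A|^2)$. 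This reveals the cubic term on both sides, so one gains nothing unless the factor $\|F_A\|_{L^p(\Sigma)}$ can be made small; since it cannot in general, the correct route is instead to time-integrate and absorb.

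Concretely, the key steps in order: (1) integrate the above over $I'$ and apply H\"older in $s$ to reduce the problem to bounding $\|F_A\|_{L^{\infty}(I,L^2(\Sigma))}^{?}\,\cdot\,\|d_A^{\ast}F_A\|_{L^2(I,L^q(\Sigma))}^2$ for a suitable $q>2$, together with a lower-order remainder; here $\|F_A\|_{L^{\infty}(I,L^2(\Sigma))}$ is controlled by Proposition \ref{FAuniversalbound}. (2) Use Proposition \ref{prop:curvscliceLp} to replace $\int_{I}\|d_A^{\ast}F_A\|_{L^q(\Sigma)}^2$ by $c\int_I\|d_Ad_A^{\ast}F_A\|_{L^2(\Sigma)}^2+c\int_{I\times\Sigma}|F_A|\cdot|d_A^{\ast}F_A|^2$. (3) For the term $\int_I\|d_Ad_A^{\ast}F_A\|_{L^2(\Sigma)}^2$ invoke Proposition \ref{prop:estnablaFtime} (or its localized version, Proposition \ref{prop:estnablaFtime1}), whose proof produces exactly $\|d_Ad_A^{\ast}F_A\|_{L^2(\Sigma)}^2$ with a favorable sign after integrating $\frac{d}{ds}R(s)$; this yields $\int_{I'}\|d_Ad_A^{\ast}F_A\|_{L^2(\Sigma)}^2\leq C(I,I',\YMV(A(a)),\|\V\|)+c\int_{I\times\Sigma}|F_A|\cdot|d_A^{\ast}F_A|^2$, again with the cubic term reappearing. (4) Finally, combine all the inequalities: the cubic term $\int_{I'\times\Sigma}|F_A|\cdot|d_A^{\ast}F_A|^2$ appears on the right-hand side multiplied by a power of $\|F_A\|_{L^{\infty}(I,L^2(\Sigma))}$ or by a small constant coming from a cut-off in $s$ (restricting from $I$ to a slightly larger interval than $I'$ but inside $I$), so it can be absorbed into the left-hand side, leaving only the quantities $\YMV(A(a))$, $\|\V\|$, $|I|$, $\dist(I',\partial I)$.

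The main obstacle, as the sketch above makes clear, is the self-referential character of the estimate: every elliptic/parabolic bound on $\Sigma$ for $d_A^{\ast}F_A$ in an $L^q$ norm with $q>2$ reproduces the cubic term $\int|F_A|\cdot|d_A^{\ast}F_A|^2$, so one never closes the estimate by a single application. The resolution is to arrange the constant in front of the reappearing cubic term to be strictly less than one — this is done either by the smallness of $\|F_A\|_{L^{\infty}(I,L^2(\Sigma))}$ on short subintervals (splitting $I'$ into finitely many pieces of controlled energy, using that $\YMV$ is monotone and the total energy drop is finite) or by a Gronwall-type iteration in $s$ starting from the slice $s_0$ supplied by Fubini as in the proof of Proposition \ref{prop:estnablaFtime1}. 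I would carry out the subinterval-splitting version: choose the pieces so small that $c\,\|F_A\|_{L^{\infty}(I_j,L^2(\Sigma))}^{2}<\tfrac12$ on each, absorb, and sum the finitely many resulting bounds. The remaining terms (those not involving $d_A^{\ast}F_A$ quadratically) are handled by Lemma \ref{lem:univcurvatureestimate0}, Proposition \ref{FALpuniversalbound}, Proposition \ref{prop:prodestimate0}, and Lemma \ref{prop:L2estgrad}, exactly as in the neighboring proofs, and I would not grind through those routine H\"older manipulations here.
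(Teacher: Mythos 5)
Your proposal does not close. The central difficulty you correctly identify --- that every slicewise $L^q$ bound on $d_A^{\ast}F_A$ with $q>2$ reproduces the cubic term $\int|F_A|\cdot|d_A^{\ast}F_A|^2$ --- is real, but your absorption mechanism fails. After H\"older on $\Sigma$ and Lemma \ref{lem:slicewiseLpsection}, the cubic term reappears on the right multiplied by $c(p,P)\,\|F_A\|_{L^{\infty}(I,L^2(\Sigma))}$, where $c(p,P)$ is the purely spatial elliptic constant of Lemma \ref{lem:slicewiseLpsection}. Neither factor is small: $c(p,P)$ does not shrink when you shorten the time interval, and $\|F_{A(s)}\|_{L^2(\Sigma)}$ does not become small on short subintervals --- it is comparable to $\sqrt{2\,\YMV(A(s))}$, which converges to the (generically nonzero) energy of the limiting Yang--Mills connection. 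What becomes small on short intervals by monotonicity of $\YMV$ is the kinetic energy $\int_{I_j}\|\partial_sA\|_{L^2(\Sigma)}^2\,ds$, not the curvature; your subinterval-splitting step conflates the two. The same order-one constant sits in front of the cubic term reproduced by Proposition \ref{prop:estnablaFtime}. So the coefficient of the reappearing cubic term cannot be arranged to be less than one, and the estimate never closes.

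The paper's proof sidesteps absorption entirely. One takes $u_{0,3}=\frac13|\ast F_A|^3$ and uses Proposition \ref{prop:curvevol} with $p=3$: the term $-|\ast F_A|\,|d_A\ast F_A|^2$ appears in $L_{\Sigma}u_{0,3}$ with a \emph{favorable} sign, and the extra term $-(p-2)|\ast F_A|^{p-4}\langle\ast F_A,d_A\ast F_A\rangle\wedge\langle\ast F_A,\ast d_A\ast F_A\rangle$ is non-positive by the last assertion of that proposition, so $-L_{\Sigma}u_{0,3}\geq |F_A|\,|d_A^{\ast}F_A|^2-|F_A|\,|\langle\ast F_A,\ast d_A\nabla\V(A)\rangle|$. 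Lemma \ref{lem:subparabolic} converts this pointwise differential inequality directly into $\int_{I'\times\Sigma}|F_A|\,|d_A^{\ast}F_A|^2\leq c(I,I')\int_{I\times\Sigma}\big(\frac13|F_A|^3+|F_A|\,|\langle\ast F_A,\ast d_A\nabla\V(A)\rangle|\big)$, and the right-hand side is finished off by Lemma \ref{lem:univcurvatureestimate0} with $p=3$ and Proposition \ref{prop:prodestimate0}. You gesture at Proposition \ref{prop:curvevol} in your opening sentence but with $p=2$, which only yields control of $\int_{I'}|d_A^{\ast}F_A|^2$ without the weight $|F_A|$; the exponent $p=3$ is exactly what produces the cubic integrand with the right sign.
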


\begin{proof}
Consider the function $u_3=\frac{1}{3}|\ast F_A|^3\colon I\times\Sigma\to\mathbbm R$. By Proposition \ref{prop:curvevol} it satisfies 
\begin{multline*}
L_{\Sigma}u_3=-|\ast F_A|\big(|d_A\ast F_A|^2+\langle\ast F_A,\ast d_A\nabla\V(A)\rangle\big)\\
-\ast|\ast F_A|^{-1}\langle\ast F_A,d_A\ast F_A\rangle\wedge\langle\ast F_A,\ast d_A\ast F_A\rangle,
\end{multline*}
where the term $\langle\ast F_A,d_A\ast F_A\rangle\wedge\langle\ast F_A,\ast d_A\ast F_A\rangle$ is non-negative. Lemma \ref{lem:subparabolic} thus yields the estimate
\begin{eqnarray*}
\int_{I'\times\Sigma}|F_A|\cdot|d_A^{\ast}F_A|^2\leq c(|I|,|I'|)\int_{I\times\Sigma}\frac{1}{3}|F_A|^3+|F_A|\cdot|\langle\ast F_A,\ast d_A\nabla\V(A)\rangle|.
\end{eqnarray*}
Now apply Lemma \ref{lem:univcurvatureestimate0} with $p=3$ and Proposition \ref{prop:prodestimate0} to obtain the result.  
\end{proof}

\begin{prop}\label{prop:W12boundFA}
Let $I=[a,b]$ and $I'=[a_1,b]$ such that $a_1\in(a,b)$. There exists a constant $C=C(|I|,|I'|,\|\V\|,\YMV(A(a)))$ such that if $A$ is a solution of \eqref{pertYMF} on $I\times\Sigma$, then
\begin{eqnarray*}
\int_{I'\times\Sigma}|\nabla_A^2F_A|^2\leq C.
\end{eqnarray*}
\end{prop}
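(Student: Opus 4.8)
The plan is to integrate the evolution equation \eqref{eq:curveevol1} for $u=\frac12|\nabla_AF_A|^2$ established in Proposition \ref{prop:curvevol1} and to exploit the ``good'' negative term $-|\nabla_A^2F_A|^2$ appearing on its right-hand side. First I would note that $L_\Sigma u = \partial_s u + \Delta_\Sigma u$, so that $\partial_s u + \Delta_\Sigma u + |\nabla_A^2F_A|^2 = \langle \nabla_AF_A, \Phi\rangle$, where $\Phi$ abbreviates the bilinear/perturbation terms $\{\nabla_AF_A,F_A\}+\nabla_A\{R_\Sigma,F_A\}+\{\nabla\V(A),F_A\}-\nabla_Ad_A\nabla\V(A)$ on the right of \eqref{eq:curveevol1}. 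Integrating over $\Sigma$ kills the $\Delta_\Sigma u$ term (Stokes), giving $\frac{d}{ds}\int_\Sigma u + \int_\Sigma|\nabla_A^2F_A|^2 = \int_\Sigma\langle\nabla_AF_A,\Phi\rangle$. The strategy is then to run the standard cutoff-in-time argument: choose a smooth $\chi$ with $\chi\equiv 0$ near $s=a$, $\chi\equiv1$ on $I'$, multiply by $\chi(s)$, integrate over $I\times\Sigma$, and absorb the $\chi' u$ term using an $L^2(I\times\Sigma)$ bound on $\nabla_AF_A$; this is exactly the Struwe-type subparabolic device already used via Lemma \ref{lem:subparabolic} and in Proposition \ref{prop:prodestimate}.

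The second ingredient is controlling the right-hand side $\int_{I\times\Sigma}\chi\,|\langle\nabla_AF_A,\Phi\rangle|$. The cubic term $\langle\nabla_AF_A,\{\nabla_AF_A,F_A\}\rangle$ is the dangerous one; I would estimate it by $\int|F_A|\,|\nabla_AF_A|^2$, which by interpolation and the Sobolev embedding $W^{1,2}(\Sigma)\hookrightarrow L^p(\Sigma)$ is bounded by a small constant times $\int|\nabla_A^2F_A|^2$ (absorbed into the left-hand side) plus lower-order terms in $\|F_A\|_{L^2}$ and $\|\nabla_AF_A\|_{L^2}$. The term $\nabla_A\{R_\Sigma,F_A\}$ is harmless since $R_\Sigma$ and its derivatives are fixed and it is at most linear in $\nabla_AF_A,F_A$. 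The perturbation terms $\{\nabla\V(A),F_A\}$ and $\nabla_Ad_A\nabla\V(A)$ are handled by the pointwise bounds on $\nabla\V$ and $\nabla_A d_A\nabla\V(A)$ from Lemma \ref{prop:L2estgrad} and Proposition \ref{prop:regalphainterval}, which express them in terms of $1$, $|\alpha|$, $|F_A|$ and $|\nabla_{A_0}F_A|$; together with $\|\alpha\|_{W^{1,p}}\le c(1+\|F_A\|_{L^p})$ this reduces everything to powers of $\|F_A\|_{L^p(I\times\Sigma)}$, which are already controlled by Lemma \ref{lem:univcurvatureestimate0} and Proposition \ref{prop:curvL2Lp} in terms of $\YMV(A(a))$ and $\|\V\|$.

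Finally I would collect the estimates: after absorption one obtains $\sup_{s\in I'}\int_\Sigma u(s) + \int_{I'\times\Sigma}|\nabla_A^2F_A|^2 \le C\big(\int_{I\times\Sigma}|\nabla_AF_A|^2 + \text{(powers of }\|F_A\|_{L^p(I\times\Sigma)})\big)$, and the term $\int_{I\times\Sigma}|\nabla_AF_A|^2$ is estimated via $\nabla_AF_A = d_AF_A + d_A^\ast(\ast F_A)$-type identities together with the already-available bound $\int_I\|d_A^\ast F_A\|_{L^2(\Sigma)}^2$ from the proof of Proposition \ref{prop:curvL2Lp} (equivalently, directly from $\int_I\|d_A^\ast F_A + \nabla\V(A)\|_{L^2}^2 \le 2\YMV(A(a))$ plus the $\nabla\V$ bound), and a slicewise elliptic estimate controlling $\|\nabla_AF_A\|_{L^2(\Sigma)}$ by $\|d_A^\ast F_A\|_{L^2(\Sigma)}$ modulo a $\|F_A\|_{L^3}^3$ correction from the Bochner-Weitzenböck formula \eqref{BWformula} and Lemma \ref{lem:slicewiseLpsection}. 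The main obstacle I expect is the bookkeeping in the absorption step: one must verify that the interpolation constant multiplying $\int|\nabla_A^2F_A|^2$ can genuinely be made small uniformly in $A$, using only the \emph{a priori} $L^2$-in-time bound on $F_A$ rather than an $L^\infty$ bound, so the absorption has to be done after integrating in $s$ and distributing a Young's inequality with care; everything else is routine once the cubic term is tamed.
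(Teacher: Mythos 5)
Your proposal follows the same architecture as the paper's proof: integrate the Bochner evolution identity \eqref{eq:curveevol1} for $u=\tfrac12|\nabla_AF_A|^2$ with a cut-off in time (what you describe is exactly what the paper does by invoking its Struwe-type parabolic estimate --- Lemma \ref{lem:subparabolic} applied to $-L_\Sigma u \ge |\nabla_A^2F_A|^2 - \langle\nabla_AF_A,\Phi\rangle$ --- so the mechanism is identical even though the paper's proof text cites the mean value inequality rather than the subparabolic lemma), then control the resulting right-hand side term by term using the already-established $L^p$ curvature estimates and the perturbation bounds of Proposition \ref{prop:regalphainterval}/\ref{prop:boundDDAnablaV}.

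The one place you genuinely depart from the paper is the cubic term $\int|F_A||\nabla_AF_A|^2$. You propose a Gagliardo--Nirenberg/Ladyzhenskaya interpolation $\|\nabla_AF_A\|_{L^4(\Sigma)}^2 \le C\|\nabla_AF_A\|_{L^2(\Sigma)}\|\nabla_AF_A\|_{W^{1,2}(\Sigma)}$ followed by Young's inequality, absorbing $\eps\int|\nabla_A^2F_A|^2$ into the left-hand side and leaving a lower-order term controlled by $\sup_s\|F_A\|_{L^2(\Sigma)}^2\int_I\|\nabla_AF_A\|_{L^2(\Sigma)}^2$. The paper instead simply quotes Proposition \ref{prop:prodestimate}, which has already bounded $\int_{I'\times\Sigma}|F_A|\,|d_A^\ast F_A|^2$ (pointwise equal to $|F_A|\,|\nabla_AF_A|^2$ on a surface) by running the \emph{same} subparabolic device on the evolution of the auxiliary quantity $\tfrac13|\ast F_A|^3$. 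Both routes are valid. The paper's is cleaner in context because it defers the cubic difficulty to an independent estimate and never needs to argue absorption; yours is more self-contained but, as you rightly flag, requires making the covariant interpolation constant uniform in $A$ --- this is fine once one uses Kato's inequality $|\nabla|\phi||\le|\nabla_A\phi|$ to reduce to the scalar Sobolev embedding on $\Sigma$, and the small parameter then comes from Young rather than from any $A$-dependent constant. One small bookkeeping remark: on a Riemann surface $F_A$ is a rank-one $2$-form, so $|\nabla_AF_A|=|d_A(\ast F_A)|=|d_A^\ast F_A|$ pointwise, and hence $\int_{I\times\Sigma}|\nabla_AF_A|^2$ is literally $\int_I\|d_A^\ast F_A\|_{L^2(\Sigma)}^2\,ds$, which is bounded directly by $2\YMV(A(a))+2|I|\|\V\|^2$ as in the proof of Proposition \ref{prop:curvL2Lp}; you do not need a separate Bochner--Weitzenb\"ock step for this piece.
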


\begin{proof}
We apply Lemma \ref{lem:subparabolic} in the situation of equation \eqref{eq:curveevol1}. After applying the Cauchy--Schwarz inequality to the inner product appearing in \eqref{eq:curveevol1} we obtain for a constant $c=c(|I|,|I'|)$ the bound
\begin{multline*}
\int_{I'\times\Sigma}|\nabla_A^2F_A|^2\leq c\int_{I\times\Sigma}\big(|\nabla_AF_A|^2+|F_A|^2+|\nabla_AF_A|^2|F_A|\\
+|F_A|\cdot|\nabla_AF_A|\cdot|\nabla\V(A)|+|\nabla_Ad_A\nabla\V(A)|^2\big).
\end{multline*}
We estimate each of the five terms on the right-hand side separately. The integral over $|\nabla_AF_A|^2$ is bounded in terms of $|I|$, $\|\V\|$, and $\YMV(A(a))$ as shown along the proof of Proposition \ref{FALpuniversalbound}. Likewise, a bound for the integral over $|F_A|^2$ is provided by Proposition \ref{prop:curvL2Lp}. The same type of estmate holds for $|\nabla_AF_A|^2|F_A|$ by Proposition \ref{prop:prodestimate}. To estimate the term $|\nabla_Ad_A\nabla\V(A)|^2$ we apply Proposition \ref{prop:boundDDAnablaV} with $p=3$. This yields a bound in terms of $\int_I\|F_A\|_{L^3(\Sigma)}^4$ (which can be dealt with as in the proof of Proposition \ref{prop:prodestimate0}) and again of $\int_I\|\nabla_AF_A\|_{L^2(\Sigma)}^2$. The remaining integral over $|F_A|\cdot|\nabla_AF_A|\cdot|\nabla\V(A)|$ can be estimated similarly, using that $\|\nabla\V(A)\|_{C^0(\Sigma)}\leq(1+\|F_A\|_{L^3(\Sigma)})\|\V\|$ as follows from property (iii) in Section \ref{Bspaceperturbations}.
\end{proof}

\begin{prop}\label{prop:boundDDAnablaV}
Let $I=[a,b]$. For every $p>2$ there exists a constant $c(|I|,p,\|\V\|)$ such that  
\begin{eqnarray*}
\int_{I\times\Sigma}|\nabla_Ad_A\nabla\V(A)|^2\leq c(|I|,p,\|\V\|)\big(1+\int_I\|F_A\|_{L^p(\Sigma)}^4+\int_I\|\nabla_AF_A\|_{L^2(\Sigma)}^2\big)
\end{eqnarray*}
holds for all $\V\in Y$ and continuous paths $A\colon I\to\A(P)$ of connections.
\end{prop}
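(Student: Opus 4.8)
The plan is to reduce to a single model perturbation and then to expand $\nabla_A d_A\nabla\V_\ell(A)$ by means of the explicit formula of Proposition~\ref{formuladifferential}(ii). Writing $\V=\sum_\ell\lambda_\ell\V_\ell$, one has $\nabla_A d_A\nabla\V(A)=\sum_\ell\lambda_\ell\,\nabla_A d_A\nabla\V_\ell(A)$, so by Minkowski's inequality in $L^2(I\times\Sigma)$ it is enough to prove, for each $\ell$, a bound $\|\nabla_A d_A\nabla\V_\ell(A)\|_{L^2(I\times\Sigma)}^2\le c(I,p)\,C_\ell^2\,\bigl(1+\int_I\|F_A\|_{L^p(\Sigma)}^4+\int_I\|\nabla_A F_A\|_{L^2(\Sigma)}^2\bigr)$, with $c(I,p)$ independent of $\ell$ and the $A_i$- and $\eta_{ij}$-dependence absorbed into $C_\ell$ as in the definition of $Y$; taking square roots, summing, using $\sum_\ell C_\ell|\lambda_\ell|=\|\V\|$, and squaring once more then gives the assertion with $c(I,p,\|\V\|)=\|\V\|^2 c(I,p)$.

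For a fixed $\V=\V_\ell$, with model data $A_0=A_i$, $\eta=\eta_{ij}$, $\rho=\rho_k$, I would set $\alpha=\alpha(A)$ and $\gamma:=R_{A_0,\alpha}(\ast[\alpha\wedge\ast\eta])$, so that $T_{A_0,\alpha}^{\ast}(\ast[\alpha\wedge\ast\eta])=[\alpha\wedge\gamma]$ (cf.\ the proof of Lemma~\ref{prop:L2estgrad}), and collect the relevant structural facts. The $\ell$-th contribution to the integrand is supported where $\|\alpha(A(s))\|_{L^2(\Sigma)}\le\frac{1}{2}\delta(A_i)$, which is the regime in which the slice estimate \eqref{eq:L2estgrad1} yields $\|\alpha\|_{W^{1,r}(\Sigma)}\le c(1+\|F_A\|_{L^r(\Sigma)})$ for every finite $r$ and in which $R_{A_0,\alpha}$ is bounded as in Proposition~\ref{prop:propertiesL}; from the equation $L_{A_0,\alpha}\gamma=\ast[\alpha\wedge\ast\eta]$, Proposition~\ref{prop:propertiesL}, and a short elliptic bootstrap one then obtains $\|\gamma\|_{W^{2,r}(\Sigma)}\le c(1+\|F_A\|_{L^r(\Sigma)})$ for every finite $r$, in particular $\|\gamma\|_{C^0(\Sigma)}\le c\|\alpha\|_{L^2(\Sigma)}\le c$ on the support. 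The scalar factors $\rho(\|\alpha\|_{L^2}^2)$, $\rho'(\|\alpha\|_{L^2}^2)$, $\langle\alpha,\eta\rangle$ are uniformly bounded there and, being functions of $s$ alone, are annihilated by $\nabla_A$; and $d_A\alpha=F_A-F_{A_0}+\frac{1}{2}[\alpha\wedge\alpha]$.

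Then I would apply $\nabla_A$ to $d_A\nabla\V_\ell(A)=\rho(\|\alpha\|_{L^2}^2)\bigl(d_A\eta+d_A T_{A_0,\alpha}^{\ast}(\ast[\alpha\wedge\ast\eta])\bigr)+2\rho'(\|\alpha\|_{L^2}^2)\langle\alpha,\eta\rangle\,d_A\alpha$, substitute $d_A T_{A_0,\alpha}^{\ast}(\ast[\alpha\wedge\ast\eta])=[d_A\alpha\wedge\gamma]-[\alpha\wedge d_A\gamma]$ from \eqref{eq:L2estdAgrad}, and expand into a finite sum. Exactly the terms arising from $\nabla_A d_A\alpha=\nabla_A F_A-\nabla_A F_{A_0}+[\nabla_A\alpha\wedge\alpha]$ carry a full second covariant derivative of $A$; in the two terms in which such a contribution occurs the prefactor ($2\rho'\langle\alpha,\eta\rangle$ in one case, $\rho\cdot\gamma$ in the other) is uniformly bounded, so these terms have $L^2(\Sigma)$ norm $\lesssim\|\nabla_A F_A\|_{L^2(\Sigma)}+(1+\|F_A\|_{L^p(\Sigma)})^2$ and contribute $\lesssim\int_I\|\nabla_A F_A\|_{L^2}^2+|I|+\int_I\|F_A\|_{L^p}^4$ after squaring and integrating over $I$. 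Every remaining term is a product of uniformly bounded scalars, the fixed smooth form $\eta$ together with its covariant derivatives, and at most two of the factors $F_A,\alpha,\nabla_A\alpha,\gamma,\nabla_A\gamma$; using the bounds just listed, the two-dimensional Sobolev embeddings $W^{1,p}(\Sigma)\hookrightarrow C^0(\Sigma)$ ($p>2$) and $W^{1,2}(\Sigma)\hookrightarrow L^q(\Sigma)$ (every $q<\infty$), and Hölder's inequality, the $L^2(\Sigma)$ norm of each such term is $\lesssim(1+\|F_A\|_{L^p(\Sigma)})^2$, hence $\lesssim|I|+\int_I\|F_A\|_{L^p}^4$ after squaring and integrating. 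Adding up the contributions would give the per-$\ell$ bound, and the reduction step then finishes the proof.

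The hard part will be the exponent bookkeeping in the last step: one must check that no nonlinear term produces a power of $\|F_A\|_{L^p(\Sigma)}$ larger than $2$, equivalently larger than $4$ after squaring and integrating. This rests on two features already present in the cited results — estimate \eqref{eq:L2estgrad1} controls $\nabla_{A_0}\alpha$ in $L^r(\Sigma)$ \emph{linearly} by $\|F_A\|_{L^r(\Sigma)}$ (the quadratic self-interaction $[\alpha\wedge\alpha]$ having been absorbed there using the smallness of $\|\alpha\|_{L^2}$ on $\supp\V_\ell$), and the elliptic regularity for $L_{A_0,\alpha}$ likewise keeps $\|\gamma\|_{W^{2,r}}$ linear in $\|F_A\|_{L^r}$ — together with the uniformity in $\ell$, which is taken care of by the normalization of the constants $C_\ell$ in the definition of $Y$.
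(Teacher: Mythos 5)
Your proposal is correct and follows essentially the same route as the paper's proof: expand $d_A\nabla\V(A)$ via Proposition~\ref{formuladifferential}(ii), reduce $\nabla_Ad_A\alpha$ to $\nabla_AF_A$ plus $\nabla_A[\alpha\wedge\alpha]$, handle the $T_{A_0,\alpha}^{\ast}$ term through the four-term expansion of $\nabla_Ad_A[\alpha\wedge\gamma]$, and close the estimates with H\"older, the two-dimensional Sobolev embeddings, the linear bound \eqref{eq:L2estgrad1} for $\|\alpha\|_{W^{1,r}}$, and the elliptic bound on $\gamma=R_{A_0,\alpha}(\ast[\alpha\wedge\ast\eta])$ from Proposition~\ref{prop:propertiesL}. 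The only point where you assert slightly more than the definitions give is the reduction to a single $\V_\ell$: the constants $C_\ell$ are fixed only by conditions (i)--(iii) of Section~\ref{Bspaceperturbations}, so the claim that the per-$\ell$ constant is $c(I,p)C_\ell^2$ with the $A_i$- and $\eta_{ij}$-dependence ``absorbed into $C_\ell$'' requires (harmlessly) enlarging the $C_\ell$ to also control these second-derivative quantities — a looseness the paper itself shares, since its proof works with a single model perturbation and never carries out the summation over $\ell$ explicitly.
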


\begin{proof}
Let $\V=\sum_{\ell}\lambda_{\ell}\V_{\ell}\in Y$. As one can check easily, it suffices to show the estimate for $\V=\V_{\ell}$. Recall the formula for $\nabla\V(A)$ in Proposition \ref{formuladifferential}. It follows, keeping the notation used there, that it suffices to estimate the terms $\nabla_Ad_A\alpha$ and $\nabla_Ad_A[\alpha\wedge\gamma]$, where $\gamma\coloneqq R_{A_0,\alpha}(\ast[\alpha\wedge\ast\eta])$. Consider first $\nabla_Ad_A\alpha$. Because of the identity $d_A\alpha=F_A-F_{A_0}+\frac{1}{2}[\alpha\wedge\alpha]$ it remains to estimate the term $|\nabla_A[\alpha\wedge\alpha]|^2$. Let $r\coloneqq\frac{p}{2}>1$ and $s$ be the H\"older conjugate exponent of $r$. It follows that 
\begin{multline*}
\int_{I\times\Sigma}|\nabla_A[\alpha\wedge\alpha]|^2\leq c\int_{I\times\Sigma}|\nabla_A\alpha|^2|\alpha|^2\leq c\int_I\|\nabla_A\alpha\|_{L^{2r}(\Sigma)}^4+c\int_I\|\alpha\|_{L^{2s}(\Sigma)}^4\\
\leq c(|I|,\|\V\|)\big(1+\int_I\|F_A\|_{L^p(\Sigma)}^4\big).
\end{multline*}
In the last inequality we made use of \eqref{eq:L2estgrad1}. The remaining estimate for
\begin{multline*} 
\nabla_Ad_A[\alpha\wedge\gamma]=\nabla_A[d_A\alpha\wedge\gamma]-\nabla_A[\alpha\wedge d_A\gamma]\\
=[\nabla_Ad_A\alpha\wedge\gamma]+[d_A\alpha\wedge\nabla_A\gamma]-[\nabla_A\alpha\wedge d_A\gamma]-[\alpha\wedge\nabla_Ad_A\gamma]
\end{multline*}
is obtained similarly as for $\nabla_Ad_A\alpha$. In particular, to estimate the integral over $|[\nabla_Ad_A\alpha\wedge\gamma]|$ we use Proposition \ref{prop:propertiesL} which implies boundedness of $\|\gamma\|_{W^{2,2}(\Sigma)}$, hence of $\|\gamma\|_{C^0(\Sigma)}$ in terms of $\|\alpha\|_{L^2(\Sigma)}$.
\end{proof}

\section{Auxiliary results}

Let $I$ be a compact interval and $A\in W^{2,p}(I\times\Sigma)$. We give an a priori estimate for the linearized (unperturbed) Yang--Mills gradient flow along the path $s\mapsto A(s)$ ($s\in I$) of connections. Recall from Section \ref{sec:Fredholm theorem} that this linearization is given by the operator $\mathcal D_A=\frac{d}{ds}+\H_A-(\beta+\beta's)\delta\colon\Z^{\delta,p}\to\mathcal L^{\delta,p}$ with $\H_A$ the augmented Hessian as in \eqref{eqsplitting1}. Let $A_0\in\A(P)$ be a fixed smooth reference connection. Recall the $L^2$ orthogonal splitting of $\alpha\in\Omega^1(\Sigma,T^{\ast}\Sigma\otimes\ad(P))$ into $\alpha=\alpha_0+\alpha_1$ where $d_{A_0}^{\ast}\alpha_0=0$ and $\alpha_1=d_{A_0}\ph$ for some $\ph\in\Omega^0(\Sigma,\ad(P))$.

\begin{prop}[Linear estimate] 
Fix $p>3$ and compact intervals $I=[a,b]$ and $I'=[a_1,b]$ such that $a<a_1<b$. For any path $A\in W^{2,p}(I\times\Sigma)$ of connections  there exists a constant $c=c(p,A,|I|,|I'|)$ such that the   estimate 
\begin{multline}\label{apriori2A}
\|\alpha_0\|_{W^{1,2;p}(I'\times\Sigma)}+\|\alpha_1\|_{W^{1,p}(I'\times\Sigma)}+\|\psi\|_{W^{1,p}(I'\times\Sigma)}\\
\leq c\big(\|\mathcal D_A\xi\|_{L^p(I\times\Sigma)}+\|\alpha\|_{L^p(I\times\Sigma)}+\|\psi\|_{L^p(I\times\Sigma)}\big)
\end{multline}
is satisfied for every $\xi=(\alpha_0,\alpha_1,\psi)\in\Z^{\delta,p}$.
\end{prop}

\begin{proof} 
To simplify notation we assume $\delta=0$ in $\mathcal D_A$. This does not affect the claimed estimate. Denote $\eta=(\eta_0,\eta_1,\eta_2)\coloneqq\mathcal D_A\xi$. Let $A_0\in\A(P)$ be the smooth reference connection as above. We remark that by the assumptions $A\in W^{2,p}(I\times\Sigma)$  and $p>3$ the difference $D_A\coloneqq\Delta_A-\Delta_{A_0}$ is a differential operator of order one with continuous coefficients. From the expression of $\H_A$ in \eqref{eqsplitting1} (using the notation introduced there) it follows that $\alpha_0$ satisfies 
\begin{eqnarray*}
\big(\frac{d}{ds}+\Delta_{A_0}\big)\alpha_0=\xi_0+d_{A_0}\omega-D_A\alpha_0-K_A\alpha_0-L_A\ph+[\theta\wedge\psi].
\end{eqnarray*}
The last four terms on the right-hand side of this equation involve derivatives of order at most one in $\alpha$ or $\psi$. The remaining term $\|d_{A_0}\omega\|_{L^p(I'\times\Sigma)}$ is dominated by $\|\alpha_0\|_{W^{1,p}(I\times\Sigma)}+\|\alpha_1\|_{L^p(I\times\Sigma)}$ as follows by elliptic regularity from equation \eqref{eqsplitting}. The estimate for $\alpha_0$ thus follows from a standard parabolic estimate for the linear heat operator $\frac{d}{ds}+\Delta_{A_0}$, together with the subsequent ones for $\|\alpha_1\|_{W^{1,p}(I'\times\Sigma)}$ and $\|\psi\|_{W^{1,p}(I'\times\Sigma)}$. To estimate the terms $\alpha_1$ and $\psi$ we define the linear operator
\begin{eqnarray*} 
B\coloneqq\left(\begin{array}{cc}0&-d_{A_0}\\-d_{A_0}^{\ast}&0\end{array}\right)
\end{eqnarray*}
acting on pairs $(\alpha_1,\psi)^T$. We set
\begin{eqnarray*}
L\coloneqq\big(-\frac{d}{ds}+B\big)\big(\frac{d}{ds}+B\big)=-\frac{d^2}{ds^2}+B^2+M\eqqcolon\diag(L_1,L_2)+M
\end{eqnarray*}
with $L_1=-\frac{d^2}{ds^2}+d_{A_0}d_{A_0}^{\ast}$ and $L_2=-\frac{d^2}{ds^2}+d_{A_0}^{\ast}d_{A_0}$. Recall the Laplace operator $\hat\Delta_{A_0}=-\frac{d^2}{ds^2}+\Delta_{A_0}$ introduced in Section \ref{sec:Compactness}, acting on $\ad(\hat P_I)$-valued $k$-forms over $I\times\Sigma$. Note that $L_2=\hat\Delta_{A_0}$ (acting on $0$-forms). Similarly, $L_1$ acts on $\alpha_1=d_{A_0}\ph$ as
\begin{eqnarray*}
L_1\alpha_1&=&\big(-\frac{d^2}{ds^2}+d_{A_0}d_{A_0}^{\ast}+d_{A_0}^{\ast}d_{A_0}\big)\alpha_1-d_{A_0}^{\ast}d_{A_0}d_{A_0}\ph\\
&=&\hat\Delta_{A_0}\alpha_1+[\ast F_{A_0}\wedge\ast\alpha_1]-[d_{A_0}^{\ast}F_{A_0}\wedge\ph].
\end{eqnarray*}
We consider $-\frac{d}{ds}+B$ as a bounded operator $L^p(I\times\Sigma)\to W^{-1,p}(I\times\Sigma)$ and denote by $K$ its norm. The claimed estimate then follows from elliptic regularity of the Laplace operator $\hat\Delta_A\colon W^{1,p}(I\times\Sigma)\to W^{-1,p}(I\times\Sigma)$, because
\begin{eqnarray*}
\lefteqn{c^{-1}\big(\|\alpha_1\|_{W^{1,p}}+\|\psi\|_{W^{1,p}}\big)}\\
&\leq&\|\hat\Delta_{A_0}\alpha_1\|_{W^{-1,p}}+\|\hat\Delta_{A_0}\psi\|_{W^{-1,p}}+\|\alpha_1\|_{L^p}+\|\psi\|_{L^p}\\
&\leq&\|L_1\alpha_1\|_{W^{-1,p}}+\|[\ast F_{A_0}\wedge\ast\alpha_1]-[d_{A_0}^{\ast}F_{A_0}\wedge\ph]\|_{W^{-1,p}}+\|L_2\psi\|_{W^{-1,p}}\\
&&+\|\alpha_1\|_{L^p}+\|\psi\|_{L^p}\\
&\leq&\|L(\alpha_1,\psi)^T\|_{W^{-1,p}}+(1+\|F_{A_0}\|_{L^{\infty}})\|\alpha_1\|_{L^p}+\|\psi\|_{L^p}\\
&&+\|d_{A_0}^{\ast}F_{A_0}\|_{L^p}\|\ph\|_{L^{\infty}}\\
&\leq&K\|(\frac{d}{ds}+B)(\alpha_1,\psi)^T\|_{L^p}+(1+\|F_{A_0}\|_{L^{\infty}})\|\alpha_1\|_{L^p}+\|\psi\|_{L^p}\\
&&+\|d_{A_0}^{\ast}F_{A_0}\|_{L^p}\|\ph\|_{L^{\infty}}\\
&\leq&K\|\dot\alpha_1-d_{A_0}\psi+d_{A_0}\omega\|_{L^p}+\|d_{A_0}\omega\|_{L^p}+K\|\dot\psi-d_{A_0}^{\ast}\alpha_1\|_{L^p}\\
&&+(1+\|F_{A_0}\|_{L^{\infty}})\|\alpha_1\|_{L^p}+\|d_{A_0}^{\ast}F_{A_0}\|_{L^p}\|\alpha_1\|_{L^p}+\|\psi\|_{L^p}.
\end{eqnarray*}
Note that the terms $\dot\alpha_1-d_A\psi+d_A\omega$ and $\dot\psi-d_A^{\ast}\alpha_1$ correspond to the last two components of the operator $\frac{d}{ds}+\H_A$ (up to a term $\ast[\theta\wedge\ast\alpha]$ which may be absorbed). The term $d_{A_0}\omega$ appearing here can be estimated as before. The asserted estimate now follows.
\end{proof}

\begin{lem}\label{apriori0}
Let $f\colon(-\infty,T]\to\mathbbm R$ be a bounded $C^2$ function with $f\geq0$, and such that the differential inequality
\begin{eqnarray*} 
f''\geq c_0f+c_1f'
\end{eqnarray*}
is satisfied for constants $c_0>0$ and $c_1\in\mathbbm R$. Then $f$ satisfies the decay estimate
\begin{eqnarray*}
f(s)\leq e^{-\lambda(T-s)}f(T)
\end{eqnarray*}
for a constant $\lambda=\lambda(c_0,c_1)>0$ and all $-\infty<s\leq T$.
\end{lem}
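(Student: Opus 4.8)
The plan is to compare $f$ with the exponential solution of the associated linear ODE. First I would identify the characteristic equation $\mu^2 - c_1\mu - c_0 = 0$, whose two real roots $\mu_\pm = \tfrac{1}{2}\bigl(c_1 \pm \sqrt{c_1^2+4c_0}\bigr)$ satisfy $\mu_- < 0 < \mu_+$ because $c_0 > 0$. Set $\lambda := -\mu_- = \tfrac{1}{2}\bigl(\sqrt{c_1^2+4c_0}-c_1\bigr) > 0$; this is the decay rate we want. The idea is that the differential inequality $f'' - c_1 f' - c_0 f \ge 0$ forces $f$ to lie below the solution of the corresponding equality problem with the same boundary data at $T$ and boundedness at $-\infty$.

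The cleanest way to carry this out is a maximum-principle/comparison argument. Define, for fixed $s_0 < T$, the comparison function $g(s) := e^{-\lambda(T-s)}f(T) + \varepsilon\, e^{\mu_+(s - T)}$ on $[s_0, T]$ for arbitrary $\varepsilon > 0$; each summand is annihilated by the operator $L u := u'' - c_1 u' - c_0 u$ (since $-\lambda = \mu_-$ and $\mu_+$ are the characteristic roots), so $Lg = 0$, while $Lf \ge 0$. Thus $w := f - g$ satisfies $L w \le 0$, i.e. $w'' \le c_1 w' + c_0 w$. At $s = T$ we have $w(T) = f(T) - f(T) - \varepsilon = -\varepsilon < 0$. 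I claim $w \le 0$ on all of $(-\infty, T]$: if not, then since $f$ is bounded and $g(s) \to +\infty$ as $s \to -\infty$ (because $\mu_+ > 0$ means $e^{\mu_+(s-T)}\to 0$... wait) — here I would instead note $e^{\mu_+(s-T)} \to 0$ as $s \to -\infty$, so $g(s) \to 0$ whereas... let me restructure: use $g(s) := e^{-\lambda(T-s)}f(T) + \varepsilon e^{-\mu_+(T-s)}$ is still not blowing up. The correct auxiliary term to dominate a bounded function near $-\infty$ is one that grows; so take $g(s) := e^{-\lambda(T-s)}f(T) + \varepsilon\bigl(e^{\mu_+(s-T)} - 1\bigr) + C$ — this is getting delicate, so let me state the robust version: since $f$ is bounded, say $0 \le f \le K$, and $e^{-\lambda(T-s)}f(T)$ is also bounded on $(-\infty,T]$, a positive interior maximum of $w$ on $(-\infty, T]$ would have to be attained at some finite point $s_* < T$ (it cannot escape to $-\infty$ once we add a term growing like $e^{\mu_+(s-T)}$, which $\to 0$ as $s\to-\infty$; rather the escape is prevented by boundedness of $f$ combined with $g \ge -\varepsilon$). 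At such an interior maximum $w'(s_*) = 0$ and $w''(s_*) \le 0$, so $0 \ge w''(s_*) \ge c_1 w'(s_*) + c_0 w(s_*) = c_0 w(s_*)$, forcing $w(s_*) \le 0$, contradiction. Hence $w \le 0$, i.e. $f(s) \le e^{-\lambda(T-s)}f(T) + \varepsilon e^{\mu_+(s-T)}$, and letting $\varepsilon \downarrow 0$ gives the claim.

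The main obstacle — and the step requiring the most care — is making the "no interior maximum escaping to $-\infty$" argument fully rigorous, since $(-\infty,T]$ is noncompact. The clean fix is: consider $\sup_{(-\infty,T]} w$. If this sup is $> 0$, boundedness of $f$ together with $g(s) \ge e^{-\lambda(T-s)}f(T) \ge 0$ (and $g$ bounded below) shows $w$ is bounded above, and the $\varepsilon e^{\mu_+(s-T)}$ term is irrelevant near $-\infty$ — one instead observes that a maximizing sequence cannot run off to $-\infty$ because there $w(s) \le f(s) - e^{-\lambda(T-s)}f(T) \to \limsup f(s) - 0$, which need not be $\le 0$. To sidestep this entirely I would simply add a term $\eta(T - s)$ or work with $g_\eta(s) := e^{-\lambda(T-s)}f(T) + \varepsilon + \eta(1 - e^{\mu_-(s-T)})$-type barriers, or — most simply — prove the estimate first on each finite interval $[T_0, T]$ with the boundary term at $T_0$ controlled by the uniform bound $\sup f =: K$, obtaining $f(s) \le e^{-\lambda(T-s)}f(T) + e^{-\lambda(s - T_0)}K$ for $s \in [T_0,T]$, and then let $T_0 \to -\infty$ so the second term vanishes for each fixed $s$. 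This finite-interval comparison is a routine application of the ODE comparison principle (the operator $u \mapsto u'' - c_1 u' - c_0 u$ satisfies the weak maximum principle on bounded intervals since $-c_0 < 0$), and the passage to the limit is immediate. This is the version I would write up.
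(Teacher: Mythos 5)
Your overall strategy (a barrier function plus the weak maximum principle for $u\mapsto u''-c_1u'-c_0u$ on finite intervals $[T_0,T]$, then letting $T_0\to-\infty$ and using boundedness of $f$ to kill the term anchored at $T_0$) is viable and genuinely different from the paper's argument. The paper instead factors the problem into first-order pieces: with $\lambda=\mu_+$ and $k=-\mu_-$ it shows that $g(s):=e^{ks}(f'(s)-\lambda f(s))$ is monotone increasing, deduces from boundedness of $f$ that $f'-\lambda f\geq0$ everywhere (otherwise $f'\to-\infty$), and then integrates $(e^{-\lambda s}f)'\geq0$. Both routes use boundedness of $f$ in the same essential way, namely to exclude the growing characteristic mode at $-\infty$; your finite-interval version, once written correctly, is a perfectly clean alternative.

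There is, however, a concrete error in your choice of the decay rate. The exponential $e^{-\lambda(T-s)}=e^{-\lambda T}e^{\lambda s}$ is annihilated by $L u=u''-c_1u'-c_0u$ only if $\lambda$ is itself a root of $\mu^2-c_1\mu-c_0=0$, i.e.\ $\lambda=\mu_+=\tfrac12\bigl(c_1+\sqrt{c_1^2+4c_0}\bigr)$ (the positive root); your choice $\lambda=-\mu_-$ gives $L\bigl(e^{\lambda s}\bigr)=2c_1\mu_-e^{\lambda s}$, which is not zero and has the wrong sign for the comparison precisely when $c_1<0$. Indeed, for $c_1<0$ the conclusion with rate $-\mu_->\mu_+$ is simply false: $f(s)=e^{\mu_+(s-T)}$ is a bounded, nonnegative exact solution of $f''=c_0f+c_1f'$ violating $f(s)\leq e^{-(-\mu_-)(T-s)}f(T)$ for $T-s$ large. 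The fix is to use both roots in the barrier: on $[T_0,T]$ take $g(s)=f(T)e^{\mu_+(s-T)}+Ke^{\mu_-(s-T_0)}$ with $K:=\sup f$, so that $Lg=0$, $g(T)\geq f(T)$, $g(T_0)\geq K\geq f(T_0)$, and the maximum principle (your computation $0\geq w''(s_*)\geq c_0w(s_*)$ at a positive interior maximum of $w=f-g$, which requires $Lw\geq0$ — note your text says $Lw\leq0$, another sign slip) gives $f\leq g$; letting $T_0\to-\infty$ kills the second term since $\mu_-<0$, and yields the estimate with $\lambda=\mu_+$, matching the paper's constant. With these corrections your write-up goes through; the meandering discussion of the $\varepsilon e^{\mu_+(s-T)}$ barrier in the middle should simply be deleted in favor of the finite-interval version.
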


\begin{proof}
Set
\begin{eqnarray*}
k\coloneqq-\frac{c_1}{2}+\frac{1}{2}\sqrt{4c_0+c_1^2}>0\qquad\textrm{and}\qquad\lambda\coloneqq\frac{c_1}{2}+\frac{1}{2}\sqrt{4c_0+c_1^2}>0. 
\end{eqnarray*}
Then $k\lambda=-c_0$ and $k-\lambda=-c_1$. Assume by contradiction that $f'(s_0)-\lambda f(s_0)<0$ for some $s_0\leq T$ and set $g(s)\coloneqq e^{ks}(f'(s)-\lambda f(s))$. Then
\begin{eqnarray*}
g'=e^{ks}(f''+(k-\lambda)f'-k\lambda f)=e^{ks}(f''-c_1f'-c_0f)\geq0,
\end{eqnarray*}
so $g$ is monotone increasing. Therefore $g(s)\leq g(s_0)$ for all $s\leq s_0$ and hence
\begin{eqnarray*}
f'(s)\leq\lambda f(s)+e^{k(s_0-s)}(f'(s_0)-\lambda f(s_0)).
\end{eqnarray*}
Because $f$ is bounded and $f'(s_0)-\lambda f(s_0)<0$ by assumption, it follows that $f'(s)\to-\infty$ as $s\to-\infty$. This contradicts the boundedness of $f$ as $f(s_0)-f(s)=\int_s^{s_0}f'(\sigma)\,d\sigma$. Therefore the assumption was wrong and $f'(s)-\lambda f(s)\geq0$ holds for all $-\infty<s\leq T$. Then with $h(s)\coloneqq e^{-\lambda s}f(s)$ it follows that
\begin{eqnarray*}
h'=e^{-\lambda s}(f'-\lambda f)\geq0,
\end{eqnarray*}
which implies $f(s)\leq e^{-\lambda(T-s)}f(T)$ for all $s\leq T$.
\end{proof}

\begin{prop}\label{prop:l2w-12estimate}
For every map $u\in L^2(\Sigma)$ there holds the estimate $\|u\|_{W^{-1,2}(\Sigma)}\leq\|u\|_{L^2(\Sigma)}$.
\end{prop}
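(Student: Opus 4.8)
The statement to prove is Proposition~\ref{prop:l2w-12estimate}: for every $u\in L^2(\Sigma)$ one has $\|u\|_{W^{-1,2}(\Sigma)}\leq\|u\|_{L^2(\Sigma)}$.

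The plan is to work directly from the duality definition of the negative Sobolev norm. Recall that $W^{-1,2}(\Sigma)$ is the dual of $W^{1,2}(\Sigma)$ (with the convention that $W^{1,2}$ carries the full norm $\|v\|_{W^{1,2}}^2=\|v\|_{L^2}^2+\|\nabla v\|_{L^2}^2$, so that the $L^2$ inner product identifies $L^2$ with a subspace of $W^{-1,2}$), and the norm of $u\in L^2(\Sigma)\subseteq W^{-1,2}(\Sigma)$ is
\begin{eqnarray*}
\|u\|_{W^{-1,2}(\Sigma)}=\sup\Big\{\langle u,v\rangle_{L^2(\Sigma)}\;\Big|\;v\in W^{1,2}(\Sigma),\;\|v\|_{W^{1,2}(\Sigma)}\leq1\Big\}.
\end{eqnarray*}
First I would fix such a test function $v$ with $\|v\|_{W^{1,2}(\Sigma)}\leq1$. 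Then by the Cauchy--Schwarz inequality on $L^2(\Sigma)$,
\begin{eqnarray*}
\langle u,v\rangle_{L^2(\Sigma)}\leq\|u\|_{L^2(\Sigma)}\|v\|_{L^2(\Sigma)}\leq\|u\|_{L^2(\Sigma)}\|v\|_{W^{1,2}(\Sigma)}\leq\|u\|_{L^2(\Sigma)},
\end{eqnarray*}
where the middle inequality is just $\|v\|_{L^2(\Sigma)}\leq\|v\|_{W^{1,2}(\Sigma)}$, which holds because the $W^{1,2}$ norm dominates its $L^2$ component. Taking the supremum over all admissible $v$ yields $\|u\|_{W^{-1,2}(\Sigma)}\leq\|u\|_{L^2(\Sigma)}$, which is the claim.

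There is essentially no obstacle here; the only point requiring a word of care is making the convention for the $W^{-1,2}$ norm explicit, since the estimate is sharp precisely when one uses the full (non-homogeneous) $W^{1,2}$ norm on test functions. If the paper's convention for $W^{-1,2}$ were instead built on the homogeneous seminorm $\|\nabla v\|_{L^2}$, the statement would fail, so I would briefly recall the definition being used (as in \cite[Appendix B]{Wehrheim}) before running the two-line estimate above. No further auxiliary results are needed.
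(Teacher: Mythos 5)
Your proof is correct and follows the same route as the paper's: the duality definition of the $W^{-1,2}$ norm, Cauchy--Schwarz, and the trivial bound $\|v\|_{L^2(\Sigma)}\leq\|v\|_{W^{1,2}(\Sigma)}$. Your remark about the convention for the $W^{-1,2}$ norm is a sensible precaution but does not change the argument.
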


\begin{proof}
The assertion follows from  
\begin{multline*}
\|u\|_{W^{-1,2}(\Sigma)}=\sup_{\|\ph\|_{W^{1,2}(\Sigma)}=1}\big|\int_{\Sigma}u\ph\big|\leq\sup_{\|\ph\|_{W^{1,2}(\Sigma)}=1}\|u\|_{L^2(\Sigma)}\|\ph\|_{L^2(\Sigma)}\\\leq\|u\|_{L^2(\Sigma)}\sup_{\|\ph\|_{W^{1,2}(\Sigma)}=1}\|\ph\|_{W^{1,2}(\Sigma)}=\|u\|_{L^2(\Sigma)},
\end{multline*}
where we used the Cauchy--Schwarz inequality in the second step.
\end{proof}

\begin{thm}[Local slice theorem]\label{thm:locslicethm}
Let $M$ be a compact Riemannian $n$-manifold with smooth boundary (that might be empty). Let $1<p\leq q<\infty$ such that
\begin{eqnarray*}
p>\frac{n}{2}\qquad\textrm{and}\qquad\frac{1}{n}>\frac{1}{q}>\frac{1}{p}-\frac{1}{n},
\end{eqnarray*}
or $q=\infty$ in the case $p>n$. Fix a reference connection $\hat A\in\A^{1,p}(P)$ and a constant $c_0>0$. Then there exist constants $\delta>0$ and $C$ such that the following holds: For every $A\in\A^{1,p}(P)$ with
\begin{eqnarray*}
\|A-\hat A\|_{L^q(M)}\leq\delta\qquad\textrm{and}\qquad\|A-\hat A\|_{W^{1,p}(M)}\leq c_0
\end{eqnarray*}
there exists a gauge transformation $g\in\G^{2,p}(P)$ such that
\begin{compactenum}[(i)]
\item
$d_{\hat A}^{\ast}(g^{\ast}A-\hat A)=0$,
\item
$\ast(g^{\ast}A-\hat A)|_{\partial M}=0$,
\item
$\|g^{\ast}A-\hat A\|_{L^q(M)}\leq C\|A-\hat A\|_{L^q(M)}$,
\item
$\|g^{\ast}A-\hat A\|_{W^{1,p}}\leq C\|A-\hat A\|_{W^{1,p}(M)}$.
\end{compactenum}
\end{thm}

\begin{proof}
For a proof we refer to \cite[Theorem F]{Wehrheim}.
\end{proof}

In the following lemma, we let $\Delta_{\Sigma}=d^{\ast}d$ denote the (positive semidefinite) Hodge Laplacian on functions, and $L_{\Sigma}\coloneqq\partial_s+\Delta_{\Sigma}$ the corresponding heat operator. We also use the notation $B_r(x)$ for the open ball of radius $r>0$ around $x\in\Sigma$, and $P_r(x)\coloneqq(-r^2,0)\times B_r(x)$ for the corresponding parabolic cylinder.

\begin{lem}\label{lem:subparabolic}
Let $R,r>0$, $u\colon P_{R+r}\to\mathbbm R$ be a $C^2$ function and $f,g\colon P_{R+r}\to\mathbbm R$ be continuous functions such that
\begin{eqnarray*}
-L_{\Sigma}u\geq g-f,\qquad u\geq0,\quad f\geq0,\quad g\geq0.
\end{eqnarray*}
Then 
\begin{eqnarray*}
\int_{P_{R/2}}g\leq2\left(1+\frac{r}{R}\right)\left(\int_{P_{R+r}}f+\big(\frac{4}{r^2}+\frac{1}{Rr}\big)\int_{P_{R+r}}u\right).
\end{eqnarray*}
\end{lem}

\begin{proof}
For a proof we refer to \cite[Lemma B.5]{SalWeb}.
\end{proof}

%
%
%
%


\begin{thebibliography}{0}

\bibitem{Abraham}R.~Abraham, J.~E.~Marsden, T.~Ratiu, \emph{Manifolds, Tensor Analysis, and Applications. Second Edition}, Springer, New York, Berlin, Heidelberg, 1988.
\bibitem{AbMajer}A.~Abbondandolo, P.~Majer, Lectures on the Morse complex for infinite dimensional manifolds, in \emph{Morse theoretic methods in nonlinear analysis and in 
symplectic topology}, 1-74, NATO Science Series II: Mathematics, Physics and Chemistry, P.~Biran, O.~Cornea, and F.~Lalonde Eds, Springer, 2006. 
\bibitem{AbSchwarz}A.~Abbondandolo, M.~Schwarz, On the Floer Homology of Cotangent Bundles, \emph{Comm.~Pure and Applied Math.} {\bf LIX}, 254-316, 2006.
\bibitem{AbSchwarz1}A.~Abbondandolo, M.~Schwarz, Notes on Floer homology and loop space homology, in \emph{Morse theoretic methods in nonlinear analysis and in symplectic topology}, 1-74, NATO Science Series II: Mathematics, Physics and Chemistry, P.~Biran, O.~Cornea, and F.~Lalonde Eds, Springer, 2006. 
\bibitem{AB}M.~F.~Atiyah, R.~Bott, The Yang--Mills Equations over Riemann Surfaces, \emph{Phil.~Trans.~R.~Soc.~Lond.~A} {\bf 308}, 523-615, 1982.
\bibitem{BiranCornea}P.~Biran, O.~Cornea, Quantum Structures for Lagrangian Submanifolds, \emph{preprint}, arXiv:0708.4221, 2007.
\bibitem{Bott1}R.~Bott, Morse theory indomitable, \emph{Publ.~Math.~I.H.E.S.} {\bf 68}, 99-114, 1988.
\bibitem{CGMS}K.~Cieliebak, A.~R.~Gaio, I.~Mundet i Riera, D.~A.~Salamon, The symplectic vortex equations and invariants of Hamiltonian group actions, \emph{J.~Symplectic Geom.~}{\bf 1} (2002), no.~3, 543-645.  
\bibitem{CohenSchwarz}R.~L.~Cohen, M.~Schwarz, A Morse theoretic description of string topology, in \emph{New perspectives and challenges in symplectic field theory}, 147-172, CRM Proc. Lecture Notes, {\bf{49}}, Amer.~Math.~Soc., Providence, 2009. 
\bibitem{Donaldson}S.~K.~Donaldson, Anti self-dual Yang--Mills connections over complex algebraic surfaces and stable vector bundles, \emph{Proc.~London Math.~Soc.} {\bf 50}, 1-26, 1985.
\bibitem{Donaldson1}S.~K.~Donaldson, \emph{Floer homology groups in Yang--Mills theory (Cambridge Tracts in Mathematics)}, Cambridge University Press, 2002.
\bibitem{Floer1}A.~Floer, Symplectic fixed points and holomorphic spheres, \emph{Comm.~Math.~Phys.} {\bf 120}, 575-611, 1989.
\bibitem{Frauenfelder1}U.~Frauenfelder, \emph{Floer Homology of Symplectic Quotients and the Arnold--Givental Conjecture}, Diss.~ETH no.~14981, ETH Z\"urich, 2003.
\bibitem{Frauenfelder2}U.~Frauenfelder, The Arnold--Givental Conjecture and Moment Floer Homology, \emph{Int.~Math.~Res.~ Not.} {\bf 42}, 2179-2269, 2004.
\bibitem{Kirwan}F.~Kirwan, Moduli Spaces of Bundles over Riemann Surfaces and the Yang--Mills Stratification Revisited, In \emph{Strings and Geometry}, edited by M.~Douglas, J.~Gauntlett, and M.~Gross, Proceedings of the Clay Mathematics Institute 2002 Summer School, American Mathematical Society, Providence, 2004, pp 239-283.
\bibitem{Milnor}J.~W.~Milnor, \emph{Morse theory}, Ann.~of.~Math.~Studies {\bf{51}}, Princeton Univ.~Press, 1963.
\bibitem{MrowkaWehrheim}T.~Mrowka, K.~Wehrheim, $L^2$-topology and Lagrangians in the space of connections over a Riemann surface, to appear in \emph{GAFA}, 2010.
\bibitem{Rade}J.~R\r{a}de, On the Yang--Mills heat equation in two and three dimensions, \emph{J~ reine angew.~Math.} {\bf 431}, 123-163, 1992.
\bibitem{Rade2}J.~R\r{a}de, Compactness theorems for invariant connections, \emph{preprint}, arXiv:math/0004041v1, 2000.
\bibitem{ReSi}M.~Reed, B.~Simon, \emph{Methods of modern mathematical physics. II. Fourier analysis, self-adjointness}, Academic Press, New York, London, 1975.  
\bibitem{RobSal}J.~W.~Robbin, D.~A.~Salamon, The spectral flow and the Maslov index, \emph{Bull.~London Math.~Soc.} {\bf 27}, 1-33, 1995.
\bibitem{Sal2}D.~A.~Salamon, Lectures on Floer Homology, In \emph{Symplectic Geometry and Topology}, edited by Y.~ Eliashberg and L.~Traynor, IAS/Park City Mathematics Series, Vol {\bf 7}, American Mathematical Society, Providence, 1999, pp 143-230.
\bibitem{SalWeb}D.~A.~Salamon, J.~Weber, Floer homology and the heat flow, \emph{GAFA} {\bf 16}, 1050-1138, 2006.
\bibitem{SalWeh}D.~A.~Salamon, K.~Wehrheim, Instanton Floer homology with Lagrangian boundary conditions, \emph{Geometry \& Topology} {\bf 12}, 747-918, 2008.
\bibitem{Schw}M.~Schwarz, \emph{Morse Homology}, Birkh\"auser Verlag, Basel, 1993.
\bibitem{Smale}S.~Smale, On gradient dynamical systems, \emph{Ann.~of Math.} {\bf 74}, 199-206, 1961.
\bibitem{Struwe}M.~Struwe, The Yang--Mills flow in four dimensions, \emph{Calc.~Var.} {\bf 2}, 123-150, 1994.
\bibitem{Swoboda}J.~Swoboda, \emph{The Yang--Mills Gradient Flow and Loop Groups}, Diss.~ETH no.~18296, ETH Z\"urich, 2009.
\bibitem{Swoboda1}J.~Swoboda, Yang--Mills Morse complex and homology of loop groups, \emph{submitted}, arXiv:math/1104.5514, 2011.
\bibitem{Thom}R.~Thom, Sur une partition en cellules associ\'es \`a une fonction sur une vari\'et\'e, \emph{C.~R.~Acad.~Sci.~Paris} {\bf 228}, 973-975, 1949.
\bibitem{Wehrheim}K.~Wehrheim, \emph{Uhlenbeck Compactness}, EMS Series of Lectures in Mathematics, Z\"urich: European Mathematical Society Publishing House, 2004.
\bibitem{Weber0}J.~Weber, Perturbed closed geodesics are periodic orbits: index and transversality, \emph{Math.~Z.} {\bf 241}, 45-82, 2002.
\bibitem{Web}J.~Weber, \emph{The heat flow and the homology of the loop space}, Habilitationsschrift, Humboldt--Universit\"at zu Berlin, 2010.
\bibitem{Witten}E.~Witten, Supersymmetry and Morse theory, \emph{J.~Diff.~Geom.} {\bf 17}, 661-692, 1982.
\bibitem{Ziltener}F.~Ziltener, \emph{Symplectic vortices on the complex plane and quantum cohomology}, Diss.~ETH no.~16556, ETH Z\"urich, 2006.
\end{thebibliography}
\end{document}